\DeclareRobustCommand{\em}{%
	\@nomath\em \if b\expandafter\@car\f@series\@nil
	\normalfont \else \slshape \fi}
\newcommand{\spaceplease}{\needspace{10\baselineskip}}
\newcommand{\SkAlg}{\catf{SkAlg}}
\newcommand{\E}{\mathbb{E}}
\newcommand{\HOM}{\underline{\catf{Hom}}} \newcommand{\END}{\underline{\catf{\End}}}
\newcommand{\ot}{\boxtimes}
\tikzstyle{tikzfig}=[baseline=-0.25em,scale=0.5]
\tikzstyle{none}=[inner sep=0mm]
\newcommand{\tikzfig}[1]{%
	{\tikzstyle{every picture}=[tikzfig]
		\IfFileExists{#1.tikz}
		{\input{#1.tikz}}
		{%
			\IfFileExists{./figures/#1.tikz}
			{\input{./figures/#1.tikz}}
			{\tikz[baseline=-0.5em]{\node[draw=red,font=\color{red},fill=red!10!white] {\textit{#1}};}}%
	}}%
}
\tikzstyle{every loop}=[]
\tikzstyle{black dot}=[fill=black, draw=black, shape=circle, minimum size=3pt, inner sep=0pt]
\tikzstyle{black dot small}=[fill=black, draw=black, shape=circle, minimum size=2pt, inner sep=0pt]
\tikzstyle{fblack dot}=[fill=black, draw=red, shape=circle, minimum size=2pt, inner sep=0pt]
\tikzstyle{wbox}=[fill=white, draw=black, shape=rectangle, minimum height=0.5cm, minimum width=0.01cm]
\tikzstyle{bbox}=[fill=white, draw=blue, shape=rectangle, minimum height=0.5cm, minimum width=0.01cm]
\tikzstyle{rbox}=[fill=white, draw=red, shape=rectangle, minimum height=0.5cm, minimum width=0.01cm]
\tikzstyle{bwbox}=[draw=blue, shape=rectangle, minimum width=2cm, minimum height=0.5cm]
\tikzstyle{bbwbox}=[draw=blue, shape=rectangle, minimum width=1cm, minimum height=1cm]
\tikzstyle{big white circle}=[fill=white, draw=black, shape=circle, minimum width=0.75cm]
\tikzstyle{white dot big}=[fill=white, draw=black, shape=circle, inner sep=1pt]
\tikzstyle{white dot}=[fill=white, draw=black, shape=circle, minimum size=3pt, inner sep=0pt]
\tikzstyle{flat box}=[fill=white, draw=black, shape=rectangle, minimum width=1.3cm, minimum height=0.5cm,fill=morphismcolor]
\tikzstyle{square}=[fill=white, draw=black, shape=rectangle]
\tikzstyle{flat box 2}=[fill=white, draw=black, shape=rectangle, minimum height=0.5cm, minimum width=0.01cm,fill=morphismcolor]
\tikzstyle{bigbox}=[fill=white, draw=black, shape=rectangle, minimum height=0.5cm, minimum width=0.8cm,fill=white]
\tikzstyle{over }=[front]
\tikzstyle{theta}=[fill=blue, draw=blue, shape=ellipse, minimum height=6pt, minimum width=6pt, inner sep=0pt]
\tikzstyle{thetabig}=[fill=blue, draw=blue, shape=ellipse, minimum width=1cm, minimum height=0.01cm]
\tikzstyle{thetainv}=[fill=blue, draw=red, shape=ellipse, minimum height=6pt, minimum width=6pt, inner sep=0pt]
\tikzstyle{thetabinv}=[fill=blue, draw=red, shape=ellipse, minimum width=1cm, minimum height=0.01cm]
\tikzstyle{bigdisk}=[draw=black, shape=circle, minimum width=3cm]
\tikzstyle{wdisk}=[shape=circle, minimum width=0.48cm,fill=white]
\tikzstyle{bigdisk2}=[draw=black, fill=lightgray, shape=circle, minimum width=3cm]
\tikzstyle{little disk}=[fill=white, draw=black, shape=circle, minimum width=0.5cm]
\tikzstyle{mid arrow}=[-, postaction={on each segment={mid arrow}}]
\tikzstyle{end arrow}=[->]
\tikzstyle{mover}=[-, link]
\tikzstyle{string}=[-, draw=blue,postaction={on each segment={mid arrow}}]
\tikzstyle{stringd}=[-, dotted,draw=blue,postaction={on each segment={mid arrow}}]
\tikzstyle{red}=[-, dotted,draw=red]
\tikzstyle{mydots}=[-,dotted,dashed,draw=gray]
\tikzstyle{mydotsblack}=[-,dotted,dashed,draw=black]
\tikzstyle{open}=[-, line width=1pt,draw=blue]
\tikzstyle{thick}=[-,line width=1pt]
\tikzstyle{rarrow}=[->,draw=red]
\tikzstyle{red mid arrow}=[-, draw={rgb,255: red,214; green,42; blue,51}, postaction={on each segment={mid arrow}}, line width=1pt]
\tikzstyle{RED}=[-, draw={rgb,255: red,214; green,42; blue,51}]
\tikzstyle{REDdashed}=[-,dashed, draw={rgb,255: red,214; green,42; blue,51}]
\tikzstyle{REDarrow}=[->, draw={rgb,255: red,214; green,42; blue,51}]
\tikzstyle{darrow}=[->,dotted]
\tikzstyle{blue}=[-, draw=blue]
\tikzstyle{blue mid arrow}=[-, draw={rgb,255: red,23; green,37; blue,167}, postaction={on each segment={mid arrow}}, line width=1pt]
\tikzstyle{over}=[-, link]
\tikzstyle{bover}=[-, blink]
\tikzstyle{mover}=[-, link]
\tikzstyle{mapsto}=[{|->}]
\tikzset{
	% style to apply some styles to each segment of a path
	on each segment/.style={
		decorate,
		decoration={
			show path construction,
			moveto code={},
			lineto code={
				\path [#1]
				(\tikzinputsegmentfirst) -- (\tikzinputsegmentlast);
			},
			curveto code={
				\path [#1] (\tikzinputsegmentfirst)
				.. controls
				(\tikzinputsegmentsupporta) and (\tikzinputsegmentsupportb)
				..
				(\tikzinputsegmentlast);
			},
			closepath code={
				\path [#1]
				(\tikzinputsegmentfirst) -- (\tikzinputsegmentlast);
			},
		},
	},
	% style to add an arrow in the middle of a path
	mid arrow/.style={postaction={decorate,decoration={
				markings,
				mark=at position .7 with {\arrow[#1]{stealth}}
	}}},
}
\tikzset{%
	link/.style    = { white, double = black, line width = 1.8pt,
		double distance = 0.4pt },
	channel/.style = { white, double = black, line width = 0.8pt,
		double distance = 0.8pt },
}
\tikzset{%
	blink/.style    = { white, double = blue, line width = 2pt,
		double distance = 1pt },
	channel/.style = { white, double = blue, line width = 2pt,
		double distance = 1pt },
}
\tikzstyle{tikzfig}=[baseline=-0.25em,scale=0.5]
\tikzstyle{none}=[inner sep=0mm]
\tikzstyle{every loop}=[]
\newtheoremstyle{mytheorem}% name of the style to be used
{\topsep}% measure of space to leave above the theorem. E.g.: 3pt
{\topsep}% measure of space to leave below the theorem. E.g.: 3pt
{\slshape}% name of font to use in the body of the theorem
{0pt}% measure of space to indent
{\bfseries}% name of head font
{.}% punctuation between head and body
{ }% space after theorem head; " " = normal interword space
{\thmname{#1}\thmnumber{ #2}\thmnote{ {\normalfont\slshape(#3)}}}
\newtheoremstyle{mydefinition}% name of the style to be used
{\topsep}% measure of space to leave above the theorem. E.g.: 3pt
{\topsep}% measure of space to leave below the theorem. E.g.: 3pt
{\normalfont}% name of font to use in the body of the theorem
{0pt}% measure of space to indent
{\bfseries}% name of head font
{.}% punctuation between head and body
{ }% space after theorem head; " " = normal interword space
{\thmname{#1}\thmnumber{ #2}\thmnote{ {\normalfont\slshape(#3)}}}
\theoremstyle{mytheorem}
\newtheorem{theorem}{Theorem}[section]
\newtheorem*{rep@theorem}{\rep@title}
\newcommand{\newreptheorem}[2]{%
	\newenvironment{rep#1}[1]{%
		\def\rep@title{#2 \ref{##1}}%
		\begin{rep@theorem}}%
		{\end{rep@theorem}}}
\newtheorem{lemma}[theorem]{Lemma}
\newtheorem{proposition}[theorem]{Proposition}
\newtheorem{corollary}[theorem]{Corollary}
\theoremstyle{mydefinition}
\newtheorem{definition}[theorem]{Definition}
\newenvironment{example}
{\pushQED{\qed}\exx}
{\popQED\endexx}
\newenvironment{remark}
{\pushQED{\qed}\remm}
{\popQED\endremm}
\numberwithin{equation}{section}
\newenvironment{pnum}{\begin{enumerate}[topsep=2pt,parsep=2pt,partopsep=2pt,itemsep=0pt,label={(\roman{*})}]}{\end{enumerate}}
\DeclareMathSymbol{\Phiit}{\mathalpha}{letters}{"08}\let\Phi\undefined\newcommand{\Phi}{\Phiit}
\DeclareMathSymbol{\Psiit}{\mathalpha}{letters}{"09}\let\Psi\undefined\newcommand{\Psi}{\Psiit}
\DeclareMathSymbol{\Sigmait}{\mathalpha}{letters}{"06}\let\Sigma\undefined\newcommand{\Sigma}{\Sigmait}
\DeclareMathSymbol{\Xiit}{\mathalpha}{letters}{"04}
\DeclareSymbolFont{extraup}{U}{zavm}{m}{n}
\DeclareMathSymbol{\vardiamond}{\mathalpha}{extraup}{87}
\DeclareMathSymbol{\Lambdait}{\mathalpha}{letters}{"03}\let\Lambda\undefined\newcommand{\Lambda}{\Lambdait}
\DeclareMathSymbol{\Piit}{\mathalpha}{letters}{"05}\let\Pi\undefined\newcommand{\Pi}{\Piit}
\DeclareMathSymbol{\Gammait}{\mathalpha}{letters}{"00}\let\Gamma\undefined\newcommand{\Gamma}{\Gammait}
\DeclareMathSymbol{\Omegait}{\mathalpha}{letters}{"0A}\let\Omega\undefined\newcommand{\Omega}{\Omegait}
\DeclareMathSymbol{\Upsilonit}{\mathalpha}{letters}{"07}\let\Upsilon\undefined\newcommand{\Upsilon}{\Upilonit}
\DeclareMathSymbol{\Thetait}{\mathalpha}{letters}{"02}\let\Theta\undefined\newcommand{\Theta}{\Thetait}
\def\Hom{\catf{Hom}}
\def\End{\catf{End}}
\def\id{\mathrm{id}}
\def\dim{\mathrm{dim}}
\let\to\undefined\newcommand{\to}{\longrightarrow}
\let\mapsto\undefined\newcommand{\mapsto}{\longmapsto}
\newcommand{\catf}[1]{\mathsf{#1}}
\newcommand{\Proj}{\operatorname{\catf{Proj}}}\newcommand{\Inj}{\operatorname{\catf{Inj}}}
\newcommand{\Map}{\catf{Map}}
\newcommand{\dg}{^{\normalfont\textbf{\dag}}}
\newcommand{\dgr}{\normalfont\textbf{\dag}}
\def\op{\mathrm{op}}
\newcommand{\qss}{\cat{O}_\Sigma^\cat{A}}
\newcommand{\qssp}{\cat{O}_{\Sigma'}^\cat{A}}
\newcommand{\framed}{\catf{f}E_2}
\newcommand{\Nat}{\catf{Nat}}
\newcommand{\Rexfsi}{\catf{Rex}^\catf{f}_\catf{si}}
\newcommand{\ra}[1]{\xrightarrow{\   #1    \ }}
\def\Grpd{\catf{Grpd}}
\newcommand{\Lexf}{\catf{Lex}^\catf{f}}
\newcommand{\Rexf}{\catf{Rex}^\catf{f}}
\newcommand{\Rex}{\catf{Rex}}
\def\Cat{\catf{Cat}}
\newcommand{\trace}{\catf{t}}
\newcommand{\moduli}{\mathfrak{a}}
\newcommand{\act}{\triangleright}
\newcommand{\FA}{\mathfrak{F}_{\! \cat{A}}}
\newcommand{\vect}{\catf{vect}}
\newcommand{\refl}{\catf{R}}
\newcommand{\PhibarA}{\Phi_{\! \bar{\cat{A}}}}
\newcommand{\cat}[1]{\mathcal{#1}}
\newcommand{\Hbdy}{\catf{Hbdy}}
\newcommand{\Surf}{\catf{Surf}}
\newcommand{\SkA}{\catf{Sk}_{\! \cat{A}}}
\newcommand{\Graphs}{\catf{Graphs}}
\newcommand{\Legs}{\catf{Legs}}
\newcommand{\Add}{\catf{Add}}
\newcommand{\RForests}{\catf{RForests}}
\newcommand{\Forests}{\catf{Forests}}
\newcommand{\skcatA}{\catf{skcat}_\cat{A}}
\newcommand{\PhiAdg}{\Phi_{\! \cat{A}\dg}}
\newcommand{\SurfA}{\Surf_{\! \cat{A}}}
\newcommand{\ma}{\moduli_\Sigma}
\newcommand{\PhiA}{\Phi_{\! \cat{A}}}
\newcommand{\skA}{\catf{sk}_\cat{A}}
\newcommand{\nakal}{\catf{N}^\ell}\newcommand{\nakar}{\catf{N}^r}
\newcommand{\rmod}{\catf{mod}_{\cat{A}}\!-\!}\newcommand{\rmodc}{\catf{mod}_{\cat{C}}\!-\!}
\definecolor{Blue}  {rgb} {0.282352,0.239215,0.803921}
\definecolor{Green} {rgb} {0.133333,0.545098,0.133333}
\definecolor{Red}   {rgb} {0.803921,0.000000,0.000000}
\definecolor{Violet}{rgb} {0.580392,0.000000,0.827450}
\newcounter{jfctodoab}
\newcounter{jfctodolw}
\renewcommand{\today}{\ifcase \month \or January\or February\or March\or %
	April\or May \or June\or July\or August\or September\or October\or November\or %
	December\fi {} \number  \year} 
\newcommand{\monthyeardate}{%
	\DTMenglishmonthname{\@dtm@month}, \@dtm@year
}
\newtheorem*{theorem*}{Theorem}
\newtheorem*{corollary*}{Corollary}
\renewcommand\section{\@startsection {section}{1}{\z@}%
	{-3.5ex \@plus -1ex \@minus -.2ex}%
	{2.3ex \@plus.2ex}%
	{\normalfont\scshape\centering}}
\titleformat{\subsection}[runin]% runin puts it in the same paragraph
{\normalfont\slshape}% formatting commands to apply to the whole heading
{\thesubsection}% the label and number
{0.5em}% space between label/number and subsection title
{}% formatting commands applied just to subsection title
[.]% punctuation or other commands following subsection title
\begin{document}

	\vspace*{-0.5cm}	\begin{center}	\textbf{\large{Reflection Equivariance 
				and the Heisenberg Picture \\[0.5ex] for Spaces of Conformal Blocks}} \\ 
		\vspace{1cm}{\large Lukas Woike }\\ 	\vspace{5mm}{\slshape  Université Bourgogne Europe\\ CNRS\\ IMB UMR 5584\\ F-21000 Dijon\\ France }\end{center}	\vspace{0.3cm}	
	\begin{abstract}\noindent 
		Monoidal product, braiding, balancing and weak duality are pieces of algebraic information that are well-known to have their origin in oriented genus zero surfaces and their mapping classes. More precisely, each of them correspond to operations of the cyclic framed $E_2$-operad. We extend this correspondence to include another algebraic piece of data, namely the modified trace, by showing that it amounts to a homotopy fixed point structure with respect to the homotopy involution that reverses the orientation of surfaces and dualizes the state spaces. We call such a homotopy fixed point structure \emph{reflection equivariance}. As an application, we describe the effect of orientation reversal on spaces of conformal blocks and skein modules in the non-semisimple setting, throughout relying on their factorization homology description. This has important consequences: For a modular functor that is reflection equivariant relative to a rigid duality, i) the circle category is modular, and the resulting mapping class group representations are automatically the ones built by Lyubashenko, and ii) the modules over the internal skein algebras are generated by one simple representation, carrying a unique projective mapping class group representation making the action equivariant. While i) is a new topological characterization of not necessarily semisimple modular categories, ii) generalizes the implicit description of spaces of conformal blocks purely through the representation theory of moduli algebras given by Alekseev-Grosse-Schomerus from rational conformal field theories admitting a Hopf algebra description to finite rigid logarithmic conformal field theories. This also generalizes several results of Faitg from ribbon factorizable Hopf algebras to arbitrary modular categories.
\end{abstract}

	\tableofcontents
	\normalsize

	\section{Introduction and summary}
The backbone of many constructions in quantum topology are state spaces associated to surfaces or sometimes three-dimensional handlebodies.
Some of these state spaces are referred to as 
\emph{spaces of conformal blocks} 
if the connection to conformal field theory is supposed to be emphasized; 
they are organized into what is called a \emph{modular functor}~\cite{Segal,ms89,turaev,tillmann,baki}. 
These state spaces have in common that they play a double role: They carry \emph{representations of extensions of mapping class groups} (sometimes called \emph{quantum representations of mapping class groups}), and they are, in some shape of form, \emph{skein modules}~\cite{turaevck,turaevskein,hp92,Przytycki,Walker}. 

The purpose of this article is to study the behavior of these quantities under orientation reversal and to derive several consequences.
More precisely, we will consider a certain homotopy fixed point structure with respect to orientation reversal and relate it to theory of modified traces~\cite{geerpmturaev,mtrace1,mtrace2,mtrace3,mtrace}. 
Finally, for modular functors compatible in a specific way with orientation reversal, we will prove that the spaces of conformal blocks automatically 
can be described as the unique simple modules over the skein algebras. This amounts to a vast generalization of the approach of Alekseev-Grosse-Schomerus to spaces of conformal blocks~\cite{alekseevmoduli,agsmoduli,asmoduli} based on algebras of observables called \emph{moduli algebras}.

Before discussing the specific problems, we have to cover a minimum of background and terminology: Throughout, we will need the  \emph{operad $\framed$ of framed little disks} \cite{bv68,mayoperad,bv73,WahlThesis,salvatorewahl}, which is the operad of oriented genus zero surfaces. It is a cyclic operad in the sense of Getzler-Kapranov~\cite{gk}. Moreover, we need two modular operads~\cite{gkmod}:
The
\emph{surface operad $\Surf$}~\cite{gkmod}, which is the 
operad of compact oriented surfaces with parametrized boundary, and the \emph{handlebody operad} $\Hbdy$~\cite{giansiracusa}, which is the operad of compact oriented three-dimensional handlebodies with parametrized
disks embedded in their boundary surface.
A concise and mostly self-contained introduction to cyclic and modular operads is given in~Section~\ref{seccycmodop}.

A modular algebra over a suitable extension of the modular operad of surfaces with values in a suitable symmetric monoidal bicategory of linear categories is called a \emph{modular functor}. It corresponds to the monodromy data of a two-dimensional conformal field theory~\cite{jfcs}.
If the underlying category for the circle is semisimple, the conformal field theory is \emph{rational}; otherwise, one calls it \emph{logarithmic}
(we will put aside for the moment the question to what extent  people typically include finiteness and rigidity assumptions; we will make our framework precise soon).
Early definitions of a modular functor that have helped shape the notion over the last few decades can be found in the above-mentioned texts
\cite{Segal,ms89,turaev,tillmann,baki}.
We should warn the reader that these definitions
are partially inequivalent and do not cover the generality needed in this article.
Instead, we will use the definition in~\cite{brochierwoike} that of course builds on these previous definitions. 
The perspective on modular functors in this article is mostly mathematical. For an introduction to the field-theoretic background, we refer e.g.\
to the encyclopedia entry~\cite{algcften}.

Cyclic framed $E_2$-algebras are exactly \emph{genus zero} modular functors. They may not extend to modular functors, but if they do, the extension is unique and can be characterized in terms of factorization homology~\cite{brochierwoike}. A semisimple version of the uniqueness of extensions is given in \cite{andersenueno}. However, all cyclic framed $E_2$-algebras extend uniquely to modular algebras over the handlebody operad $\Hbdy$, the so-called \emph{ansular functors} as was shown in \cite{mwansular} building on \cite{giansiracusa,cyclic}.

For all the cyclic and modular algebras in this article, we will fix an ambient symmetric monoidal bicategory to take their values in. One of the standard choices in quantum algebra is  $\Rexf$,
the symmetric monoidal bicategory of \begin{itemize}
	\item \emph{finite categories} over an algebraically closed field $k$ fixed throughout (\emph{finite} means here: linear abelian, finite-dimensional morphism spaces, enough projective objects, finitely many simple objects, finite length for every object) \item  
	whose 1-morphisms are right exact functors, \item and whose 2-morphisms 
	are linear natural transformations. \end{itemize}
The monoidal product is the Deligne product $\boxtimes$. 
Finite categories form one of the standard frameworks of 
 quantum algebra~\cite{etingofostrik,egno}. Just like its twin $\Lexf$ (featuring left exact functors instead of right exact ones), $\Rexf$ is a model for the Morita category of finite-dimensional algebras, finite-dimensional bimodules and intertwiners, see~\cite{fss} for more background. 	Working in $\Rexf$ will have some slight technical advantages in the article, but all results transfer to $\Lexf$ after applying the necessary modifications. 

Roughly, a $\Rexf$-valued modular functor $\mathfrak{F}$ with underlying category $\cat{A}\in\Rexf$ of boundary labels
associates to a surface $\Sigma$ with $n\ge 0$ boundary components (for us, all surfaces are compact, oriented and with parametrized boundary)
a right exact functor $\mathfrak{F}(\Sigma;-): \cat{A}^{\boxtimes n} \to \vect$ taking values in finite-dimensional vector spaces over $k$, with an extension of the mapping class group $\Map(\Sigma)$ acting 
 through natural automorphisms.
Once we evaluate on objects $X_1,\dots,X_n \in \cat{A}$, we obtain a vector space $\mathfrak{F}(\Sigma;X_1,\dots,X_n)$ that is often referred to as the 
\emph{space of conformal blocks} for the surface $\Sigma$ and boundary labels $X_1,\dots,X_n$. There is a compatibility with the gluing of surfaces.
For the description of cyclic framed $E_2$-algebras, one restricts this description to genus zero; for ansular functors, one replaces the surfaces with handlebodies.

Explicitly, a framed $E_2$-algebra in $\Rexf$ is a category $\cat{A}\in\Rexf$
with a \emph{balanced braided structure}~\cite{WahlThesis,salvatorewahl},
which is in more detail
\begin{itemize}\item 
	a monoidal product $\otimes :\cat{A}\boxtimes\cat{A}\to\cat{A}$, \item a braiding $c_{X,Y}:X\otimes Y \cong Y \otimes X$ for $X,Y \in \cat{A}$, \item and a balancing $\theta$ which is a natural automorphism $\theta_X :X\to X$ with $\theta_I=\id_I$ (for the monoidal unit $I\in\cat{A}$) and $\theta_{X\otimes Y}=c_{Y,X}\circ c_{X,Y}\circ (\theta_X \otimes \theta_Y)$. \end{itemize}
In order to make $\cat{A}$ additionally a \emph{cyclic} framed $E_2$-algebra, one needs by~\cite{cyclic} an equivalence $D:\cat{A}\to\cat{A}^\op$
that makes $\cat{A}$ a  \emph{ribbon Grothendieck-Verdier category} in the sense of Boyarchenko-Drinfeld~\cite{bd}, see also Barr's work on $*$-autonomous categories~\cite{barr}. This means roughly that $D$ equips $\cat{A}$ with a weak form of duality that satisfies $D\theta_X = \theta_{DX}$ for all $X\in \cat{A}$. We review the details in Section~\ref{seccycmodalgss}. Every modular functor inherits on its category of boundary labels a ribbon Grothendieck-Verdier structure such that $D$ is the orientation reversal for the labels~\cite{cyclic}.

The reader might be more familiar with the notion of a \emph{rigid duality} the existence of which seems to be one of the standing hypotheses in most classical texts in quantum algebra~\cite{turaev,egno}. For a rigid duality, the dual $X^\vee$ of $X$ comes with an evaluation $d_X:X^\vee \otimes X \to I$ and a coevaluation $b_X:I\to X\otimes X^\vee$ satisfying the zigzag identities.
A priori, there is the notion of a left and a right duality, but for all categories that appear in this article, these can be canonically identified through  a pivotal structure (a monoidal trivialization of the double dual), so we allow ourselves to not make the distinction.
A Grothendieck-Verdier duality does not have to be rigid, see~\cite{alsw} for counterexamples coming from vertex operator algebras such as~\cite{grw}, and even if $\cat{A}$ happens to be rigid, $D$ and the rigid duality need not coincide, see~\cite[Section~8.3 \& 8.4]{brochierwoike}, \cite{sn} and~\cite[Example~11.5]{microcosm}.

\subsection*{Reflection equivariance}
The collection of all oriented manifolds 
comes with an involution, namely the \emph{orientation reversal} or \emph{reflection operation}
that sends any oriented manifold $M$ to the manifold $\bar M$ obtained by reversing the orientation of $M$.
It equips the cyclic framed $E_2$-operad, the modular handlebody operad and the modular surface operad with an involution.
As a result, the moduli spaces of genus zero modular functors, ansular functors and modular functors come with a homotopy involution.
One concern of this article is to single out those (genus zero) modular functors  $\mathfrak{F}$ whose spaces of conformal blocks dualize under orientation reversal:
\begin{align}
	\mathfrak{F}(\bar \Sigma; X_1,\dots,X_n) \cong \mathfrak{F}(\Sigma;DX_n,\dots,DX_1)^* \ ,   \label{eqnisoeq} \tag{$*$} \end{align}
where $D:\cat{A}\to \cat{A}^\op$ is the Grothendieck-Verdier duality implementing the orientation reversal on labels; it is implicit here that these isomorphisms are $\Map(\Sigma)$-equivariant (on the right hand side,
we have the contragredient representation) and compatible with the gluing of surfaces. 

If we build modular functors 
without any topological field theory in the background, 
there is no reason for~\eqref{eqnisoeq} to hold. It has to be required as an additional structure~\cite[Definition~5.1.14]{baki}. 
Often \eqref{eqnisoeq} seems to be treated more like a property instead of additional structure without any justification. 
Related notions that are close to the ones that we are interested in here concerning language and motivation, but in an entirely different context are discussed for quantum field theories in \cite[Section~4]{freedhopkins}.

If~\eqref{eqnisoeq} holds for all objects, then the underlying balanced braided category is necessarily semisimple.
One might want to ask~\eqref{eqnisoeq} just for projective objects, but this still leaves the problem  whether~\eqref{eqnisoeq} is property or structure (if so, what structure?). The situation is even a bit  more complicated: Clearly, \eqref{eqnisoeq} should be an equivalence of modular functors,
but generally the right hand side, if we just define it on projective objects, might not even come from a modular functor. In the general case, we will see in fact that this is not necessarily well-defined. Another question that does not seem to be addressed anywhere beyond the semisimple case is the reflection behavior of the state spaces as skein modules (we will address this in Section~\ref{sechandlebodyrefl}). 

We will now describe a framework in which these problems can be resolved: Consider a genus zero modular functor in $\Rexf$, i.e.\ a ribbon Grothendieck-Verdier category $\cat{A}$ in $\Rexf$.
We can then try to define a new cyclic framed $E_2$-algebra by the right hand side of~\eqref{eqnisoeq}, i.e.\ via simultaneous dualization of the spaces of conformal blocks and the boundary labels, but just on projective objects. Afterwards, we extend as a right exact functor.
If $\cat{A}$ is self-injective and the monoidal product exact, this produces indeed a cyclic framed $E_2$-algebra that we denote by $\cat{A}\dg$
(Section~\ref{secdual}), and $\cat{A}\mapsto {\bar{\cat{A}}}\dg$ defines a homotopy involution ($\bar{\cat{A}}$ is obtained from $\cat{A}$ by applying the orientation reversal).
This is in particular the case
if $\cat{A}$, as monoidal category, is rigid and $D$ coincides with the rigid duality. Then we call $\cat{A}$ 
\emph{strongly rigid}. 
If $\cat{A}$ is additionally \emph{simple}, i.e.\ the endomorphisms of the unit are the ground field, $\cat{A}$ is a \emph{finite ribbon category} in the sense of~\cite{egno}. 
Under this assumption, $\cat{A}$ is, at the level of \emph{non-cyclic} framed $E_2$-algebras, already a homotopy fixed point for this involution. We will show this in Corollary~\ref{cormulfinrib}, and it will be based on the fact that generally the monoidal product on $\cat{A}\dg$ can be proven to be the `second' monoidal product in the sense of~\cite[Section~4.1]{bd}, which in the rigid case of course agrees with the original one.

This leads us to the main technical notion of this article: Let $\cat{A}$ still be strongly rigid. We  say that $\cat{A}$ is \emph{reflection equivariant}, as cyclic framed $E_2$-algebra, if it is equipped with the structure of a homotopy fixed point with respect to the homotopy involution $\cat{A}\mapsto {\bar{\cat{A}}}\dg$ as \emph{cyclic} algebra, with the homotopy fixed point structure being \emph{relative} to the non-cyclic one that it already has (Definition~\ref{deffixedpointrel}). 
We will see that this notion of reflection equivariance is the most reasonable way to make precise the genus zero version of~\eqref{eqnisoeq}.

Defining the reflection equivariance relative to a rigid duality seems awfully technical, 
but, for the moment, the rigid case will be our main field of application, and the rigid duality needs to be taken into account to arrive at a reasonable notion: Just as an illustration, on a sphere with three insertions (two incoming, one outgoing), the reflection equivariance is an isomorphism $(Y^\vee \otimes X^\vee) ^\vee \cong X \otimes Y$. Clearly, we already have such an isomorphism
if $-^\vee$ is a rigid pivotal duality, so this should be appropriately built in, and this is what the definition just given allows us to do. 
Through the following result, we achieve a characterization of cyclic reflection equivariance:

\spaceplease
\begin{reptheorem}{thmrefl}
	Let $\cat{A}$ be a cyclic framed $E_2$-algebra in $\Rexf$. Assume that $\cat{A}$ is strongly rigid and simple.
	Then the following structures are equivalent:
	\begin{pnum}
		\item The choice of a two-sided modified trace on $\Proj \cat{A}$, which is unique up to an element in $k^\times$. 
		\label{thmrefli}

		\item Cyclic reflection equivariance for $\cat{A}$, i.e.\ the structure of a $\mathbb{Z}_2$-homotopy fixed point of $\cat{A}$ for the homotopy involution $\cat{A}\mapsto \bar{\cat{A}}\dg$ on cyclic framed $E_2$-algebras relative to the canonical non-cyclic homotopy fixed point structure induced by the rigid pivotal duality of $\cat{A}$.  
		\label{thmreflii}

		\item Reflection equivariance for the unique ansular functor $\widehat{\cat{A}}$ extending $\cat{A}$, i.e.\ the choice of coherent isomorphisms
		\begin{align}
			\widehat{\cat{A}}(\bar H; P_1,\dots,P_n) \cong 	\widehat{\cat{A}}(H; DP_n,\dots,DP_1)^* \qquad \text{with}\qquad P_1,\dots,P_n \in \Proj \cat{A} \label{eqnisoHi}
		\end{align}
		for every handlebody $H$ with at least one embedded disk per connected component,
		such that these isomorphisms 
		\begin{align}
			\left. \begin{array}{l} - \text{are compatible with gluing,} \\ - \text{are involutive,} \\ - \text{and extend, on the level of non-cyclic framed $E_2$-algebras,} \\ \phantom{-} \text{the canonical isomorphisms coming from the rigid pivotal duality.}\end{array}\right\} \label{eqnreflconditions} \tag{$**$}
		\end{align}
		\label{thmrefliii}
	\end{pnum}
	Suppose further \begin{itemize}
		\item that any of the structures \ref{thmrefli}-\ref{thmrefliii} can be chosen and is fixed,
		\item and that $\cat{A}$ extends to a modular functor $\mathfrak{F}$.\end{itemize}
		Then 
	$\cat{A}$ has a non-degenerate braiding, i.e.\ it is modular, and $\mathfrak{F}$ agrees with the Lyubashenko modular functor for $\cat{A}$,
	and the isomorphisms~\eqref{eqnisoHi} extend to isomorphisms
	\begin{align}
		\mathfrak{F}(\bar \Sigma; P_1,\dots,P_n) \cong \mathfrak{F}(\Sigma; DP_n,\dots,DP_1)^* \qquad \text{with}\qquad P_1,\dots,P_n \in \Proj \cat{A}
	\end{align}
	for every surface $\Sigma$ with at least one boundary component per connected component for which the analoga of the conditions~\eqref{eqnreflconditions} hold.
\end{reptheorem}

Theorem~\ref{thmrefl} tells us that reflection equivariance can be easily controlled algebraically through the \emph{modified trace}.
From a different perspective, the modified trace is encoded topologically through a symmetry of the framed $E_2$-operad.
Recall that modified traces, a non-semisimple replacement for quantum traces, are one of the key tools in modern quantum topology; a list of  references that can serve as a starting point is~\cite{geerpmturaev,mtrace1,mtrace2,mtrace3,mtrace}. 
In the just listed texts for the modified trace, it is shown that a one-sided modified trace on $\cat{A}$ exists if and only if $\cat{A}$ is \emph{unimodular} in the sense of~\cite{eno-d}, i.e.\ if the distinguished invertible object $\alpha \in \cat{A}$ characterizing the quadruple dual via the \emph{Radford formula} $-^{\vee\vee\vee\vee}\cong \alpha \otimes-\otimes\alpha^{-1}$ is isomorphic to the unit $I$.
If $\cat{A}$ is additionally a finite ribbon category,
this modified trace is two-sided~\cite[Corollary 6.12]{shibatashimizu}.

In addition to the characterization of reflection equivariance for cyclic framed $E_2$-algebras,
Theorem~\ref{thmrefl} tells us that, if we are in the strongly rigid case, reflection equivariance and the fact that the cyclic framed $E_2$-algebra extends to a modular functor already implies that the category is modular, and that the modular functor is given by the Lyubashenko construction~\cite{lyubacmp,lyu,lyulex,kl}.	Concluding modularity of the circle category from the fact that it produces a modular functor is a subtle point. 
In the semisimple case, results in this direction are available in~\cite{baki,BDSPV15,etingofpenneys}. In the non-semisimple rigid case, 
an argument is given	 in \cite[Section~6.7]{kl} in the context of non-semisimple topological field theories, but this makes strong assumptions on an extension to higher-dimensional manifolds because it relies on the value for the one-holed torus being a Hopf algebra object. 
The more fundamental statement, just for modular functors,
 seems to be new. 
Also note that, without all these assumptions, the restriction of a modular functor to genus zero is generally \emph{not} a modular category, see
\cite[Section~8.3 \& 8.4]{brochierwoike}, \cite{sn} and~\cite[Example~10.5]{microcosm}.
Implications for finite ribbon categories originating from vertex operator algebras are discussed
in Example~\ref{exvoa}.

Theorem~\ref{thmrefl} may be seen as a \emph{globalization
	for modified traces}:
In~\ref{thmrefli}, we have the modified trace, i.e.\ traces for
hom spaces of projective objects, which are exactly spaces of conformal
blocks for spheres with two insertions.
In~\ref{thmreflii}, we have, as will become clear in the main
text, trace functions for spheres with arbitrarily many insertions.
Finally, \ref{thmrefliii} concludes the `globalization' to
higher genus handlebodies and eventually surfaces in a coherent mapping class group equivariant way.
The main point of Theorem~\ref{thmrefl} is that all of these
structures in each step are equivalent.

The main difficulty is the following:
When going from \ref{thmrefli} to~\ref{thmreflii}, it will be
necessary to produce from the traces on genus zero two-point blocks and
the partial trace property the traces for more insertions and the
homotopy fixed point structure.
This part will be done very indirectly without any explicit
reference to the usual list of properties characterizing the modified
trace. Instead, we will use
the equivalent description of modified traces through certain
trivialization of the Nakayama functor as a module functor developed
in~\cite{shibatashimizu,tracesw} building on the Morita invariant description of
Nakayama functors and their relation to the distinguished invertible
object in~\cite{fss}.
Moreover, we will heavily rely on the description of the space of cyclic
structures on a fixed non-cyclic framed $E_2$-algebra $\cat{A}$
as a torsor over the Picard group of the balanced Müger center
of $\cat{A}$ in~\cite{mwcenter}.

\subsection*{The topological characterization of modular categories that Theorem~\ref{thmrefl} relies on}
The part of Theorem~\ref{thmrefl} involving modular functors is technically rather demanding. It
will follow from a more fundamental result that is of independent interest and that we state here separately.
In \cite{brochierwoike} modular functors are defined in a purely topological way (no normalization, no prescribed set of simple objects, no rigid duality, \dots). As a consequence, one may build modular functors that do not come from modular categories. Instead, modular functors are classified by \emph{connected} cyclic framed $E_2$-algebras; we give a summary in Section~\ref{secmf}.

If modular categories are \emph{not} in one-to-one correspondence to modular functors, one should ask: What are modular categories equivalent to? How can we capture them (and only them) topologically? According to the preprint \cite{BDSPV15}, modular fusion categories are equivalent to anomalous once-extended three-dimensional topological field theories; this is contingent on a claimed presentation of the three-dimensional bordism category that is yet to appear.
The following result tells us what \emph{not necessarily semisimple modular categories} are equivalent to:

\begin{reptheorem}{thmmodularcats}
	Genus zero restriction provides a canonical bijection between
	\begin{itemize}
		\item equivalence classes of $\Rexf$-valued modular functors which are strongly rigid, simple and have the property of admitting a reflection equivariant structure,
		
		\item and ribbon equivalence classes of modular categories. 
	\end{itemize}
	The inverse to genus zero restriction is the Lyubashenko construction.
\end{reptheorem}

We recover in particular the recent classification of Etingof-Penneys~\cite{etingofpenneys} in the semisimple case (Corollary~\ref{corssimf}).

\subsection*{The Heisenberg picture for spaces of conformal blocks}

The projective mapping class group representations on the spaces of conformal blocks \label{introagsalgebras}
are notoriously difficult to construct and to investigate.
This is  already true in the semisimple setting, and even more so in 
the non-semisimple setting. 
In the most common approaches, the state spaces on which the mapping class group acts are constructed directly, sometimes in terms of generators and relations for the mapping class group~\cite{lyubacmp,lyu,lyulex,kl}, possibly with a topological field theory as surrounding structure~\cite{rt1,rt2,turaev,gai2} to the extent that this is possible.
This `state space approach' to quantum field theory is the \emph{Schrödinger picture}.

There is a second approach that a priori does not talk about state spaces at all, namely the \emph{Heisenberg picture} that focuses on \emph{algebras of observables}. 
 For this paper, this is not necessarily meant in the sense of \emph{algebraic quantum field theory}; we will not attempt to reflect this body of work by giving references and therefore just point at the overview text~\cite{Halvorson2006-HALAQF}. Instead, we focus on the approach more geared towards the topological description of spaces of conformal blocks.
In the rational case, a precise framework for this  is given by the \emph{moduli algebras} of Alekseev-Grosse-Schomerus~\cite{alekseevmoduli,agsmoduli,asmoduli} and Buffenoir-Roche~\cite{br1,br2}. 
Instead of explaining the original construction, let us first describe our general result in the language established for moduli algebras in \cite{bzbj,bzbj2} using factorization homology. 
 A further thoroughly motivated explanation how the factorization homology is really a refined, Morita invariant version of the Heisenberg picture can be found in~\cite{jfheisenberg}. 
Afterwards, we comment on the case that was known before.

Suppose that $\mathfrak{F}$ is any $\Rexf$-valued modular functor describing the monodromy data of \emph{any} two-dimensional conformal field theory. 
By evaluation in genus zero, we obtain a cyclic framed $E_2$-algebra $\cat{A}$~\cite{cyclic}, and the value of $\mathfrak{F}$ on a surface $\Sigma$ (let us focus on the closed case for the moment) comes by \cite{brochierwoike} automatically with an action of the skein algebra
\begin{align}
	\SkAlg_\cat{A}(\Sigma):= \End_{\int_\Sigma \cat{A}}(\qss)  
	\end{align}
which is defined as the endomorphism algebra of the quantum structure sheaf $\qss\in \int_\Sigma \cat{A}$ in factorization homology introduced in~\cite{bzbj}, see Section~\ref{secfh} for a brief introduction to factorization homology and how it recovers classical skein-theoretic constructions.
The algebra  $\SkAlg_\cat{A}(\Sigma)$ comes with a mapping class group representation making the action
$\SkAlg_\cat{A}(\Sigma)\to\End\, \mathfrak{F}(\Sigma)$ mapping class group equivariant, where the action on $\End\, \mathfrak{F}(\Sigma)$ is the adjoint one (Theorem~\ref{thmequiv}). 
We will now see how $\mathfrak{F}(\Sigma)$ is characterized by  the representation theory of the algebra $	\SkAlg_\cat{A}(\Sigma)$:

\begin{reptheorem}{thmskeinalg}
	Let $\mathfrak{F}$ be a $\Rexf$-valued modular functor whose value on the circle is the cyclic framed $E_2$-algebra $\cat{A}$.
	Suppose that $\mathfrak{F}$ is reflection equivariant relative to a rigid duality and simple, thereby making $\mathfrak{F}$ the Lyubashenko modular functor for the modular category $\cat{A}$.
	Then the following holds:
	\begin{pnum}
		\item
		For any closed surface $\Sigma$, the space of conformal blocks $\mathfrak{F}(\Sigma)$ is entirely characterized as follows:
		\begin{itemize}
		\item	$\mathfrak{F}(\Sigma)$ is the unique simple module over $\SkAlg_\cat{A}(\Sigma)$.
			\item The projective mapping class group action on $\mathfrak{F}(\Sigma)$ is the only one making the action mapping class group equivariant.\end{itemize}
		\item The mapping class group action on $\SkAlg_\cat{A}(\Sigma)$ is by inner automorphisms.
		\end{pnum}
	\end{reptheorem}

In \cite[Theorem~10 \& 11, moreover Section~9.3]{asmoduli} this result is proven, in the language of moduli algebras, in the case when $\cat{A}$ is given by finite-dimensional modules over a 
 semisimple ribbon factorizable Hopf algebra. 
 In a skein-theoretic language, for a certain class of  examples,  related  results appeared in \cite{roberts,masbaumroberts} and amount essentially to the important insight that skein algebras are matrix algebras. 
 
 Theorem~\ref{thmskeinalg} provides a generalization to the non-semisimple case, i.e.\ to finite rigid logarithmic conformal field theories, and it is independent of a Hopf algebra description.
 Characterizing the representations of the Kauffman bracket skein algebras to quantum groups is a problem that has been addressed in \cite{fkbl}. This is a statement that at present we do not know how to interpret using the framework of finite tensor categories, but by means of the tools for the comparison of skein theory and factorization homology \cite{cooke,skeinfin,brochierwoike,brownhaioun,mwskein}, it is conceivable that the techniques leading to Theorem~\ref{thmskeinalg} will eventually allow us to treat generalizations of \cite{fkbl}.
 
 The situation of surfaces with boundary is treated in Theorem~\ref{thmmodulishort}: Suppose that our surface $\Sigma$ is connected with $n\ge 1$ boundary components. We prove that, under the assumptions of Theorem~\ref{thmskeinalg},
 the representations of the internal skein algebra \cite{bzbj,bzbj2,skeinfin} are freely generated, as $\cat{A}^{\boxtimes n}$-module category, by one simple representation, namely the space of conformal blocks, here seen as object in $\cat{A}^{\boxtimes n}$. If $\cat{A}$ is given by finite-dimensional modules over a ribbon factorizable Hopf algebra and if $\Sigma$ has one boundary component, then Theorem~\ref{thmskeinalg} recovers several results of Faitg, and thereby generalizes these results beyond Hopf algebras, see Remark~\ref{remhopfalg}.
 A direct analogue of Theorem~\ref{thmskeinalg} for the situation with boundary that directly characterizes the value of the modular functor through its skein action is given in Corollary~\ref{corboundary}.

 The non-internal skein algebras for a surface with one boundary component, however, are not so well behaved: In Example~\ref{exHmod}, we discuss a situation where this algebra is not semisimple. In other situations, they are, regardless of non-semisimplicity of $\cat{A}$, still semisimple, for example if the mapping class group representations have the Torelli groups in their kernel (Example~\ref{extorelli}).

 Theorem~\ref{thmskeinalg} covers only modular categories, but in
 Example~\ref{exffbs} we show by explicit calculations that an analogue of the result still holds for the 
  Feigin-Fuchs boson from~\cite{alsw}, a module category for a lattice vertex operator algebra that is not ribbon.

The Theorems~\ref{thmskeinalg} and~\ref{thmmodulishort} solve the problem of generalizing the main results of \cite{asmoduli} to finite rigid logarithmic conformal field theories, but we want to remark that there is actually also an application in the rational case. It pertains to one of the comparison problems mentioned in \cite[Section~4.3]{asmoduli}: In this article, we treat spaces of conformal blocks axiomatically through the theory of modular functors. If however we describe the conformal field theory through a vertex operator algebra, there is also an approach that builds the spaces of conformal blocks as vector bundles over a suitable moduli space of curves equipped with the Knizhnik-Zamolodchikov connection. The transfer of the above `Heisenberg picture' to \emph{those} spaces of conformal blocks is far from clear. We address this in Section~\ref{subsecvoa} through a comparison to the spaces of conformal blocks of Ben-Zvi and Frenkel~\cite{frenkelbenzvi} for a strongly rational, self-dual vertex operator algebra $V$. In that case, we know that the 
spaces of conformal blocks $\mathbb{V}$ built from $V$ form a modular functor~\cite{damioliniwoike} (beyond the strongly rational self-dual case, this is open). At genus $g$ (no non-trivial insertions), there is a flat algebra bundle $\mathbb{S}_g$ over the moduli space. The projectively flat connection on $\mathbb{V}_g$ is determined by the requirement that the action $\mathbb{S}_g \to \mathbb{V}_g^* \otimes \mathbb{V}_g$ be compatible with the connection.

		\vspace*{0.2cm}\textsc{Acknowledgments.} For insightful comments on moduli algebras constructed from Hopf algebras, I would like to thank Matthieu Faitg. 
		I would also like to thank 
		Max Demirdilek and Christoph Schweigert for helpful comments on the $\dgr$-operation for cyclic algebras. 
		A version of Theorem~\ref{thminv} had already been mentioned in the introduction of the first version of \cite{brochierwoike}, with the core argument for the proof contained in the proof of other statements in that article. In the present article, a generalized version is fleshed out, and I thank Adrien Brochier  for helpful feedback.
	Moreover, I thank 
	Jorge Becerra, Chiara Damiolini, Simon Lentner,
	Lukas Müller, Kenichi Shimizu, Yang Yang and Deniz Yeral for helpful discussions related to the manuscript.
		LW  gratefully acknowledges support
		by the ANR project CPJ n°ANR-22-CPJ1-0001-01 at the Institut de Mathématiques de Bourgogne (IMB).
		The IMB receives support from the EIPHI Graduate School (ANR-17-EURE-0002).

		\section{Preliminaries on cyclic and modular operads and their algebras\label{seccycmodop}}
	Operads were introduced by Boardman-Vogt and May~\cite{bv68,mayoperad,bv73}
	to describe algebraic structures with multiple inputs and one output. This is done by giving separately for every $n\ge 0$ an object $\cat{O}(n)$ of operations with $n$ inputs and one output, the \emph{$n$-ary operations}. The objects $\cat{O}(n)$ can live in any symmetric monoidal category. This allows us to discuss not only sets of operations, but also chain complexes, categories or spaces of operations. The permutation group $\Sigma_n$ on $n$ letters acts on $\cat{O}(n)$ by 
	permutation of the inputs. Moreover, there is a composition operation that allows us to insert the output of one operation into any of the input slots of another operation. 
	The data is subject to several compatibility conditions. A textbook reference for the theory of operads is \cite[Chapters~1-3]{FresseI}. 
	
	\subsection{The operad of framed little disks}
	One class of examples for operads, that in fact motivated the introduction of the concept in~\cite{bv68,mayoperad,bv73}, are the topological
	\emph{operads $\catf{f}E_r$
		of framed little $r$-disks} with $r\ge 1$
	whose space $\catf{f}E_r(n)$ of arity $n$ operations is given
	by the space of embeddings of $n$ many $r$-dimensional disks into one $r$-dimensional
	disk that are composed of translations, rescalings and rotations. The operad $\framed$ will be of particular interest to us; it takes values in aspherical topological spaces, namely the classifying spaces of ribbon braid groups. Therefore, it can be seen as a groupoid-valued operad. The connection to ribbon braid groups and the resulting groupoid-model for $\framed$ is given in \cite{WahlThesis,salvatorewahl}. 
	
	\subsection{The surface operad as prototypical modular operad}
	Cyclic operads were defined by Getzler and Kapranov~\cite{gk} 
	to describe operads equipped with a prescription how to cyclically permute inputs  with the output. In particular, the $\Sigma_n$-action on $\cat{O}(n)$ 
	will come with 
	an extension to a $\Sigma_{n+1}$-action in this case.
	Since the arity is $n$, we have $n$ inputs and one output, so the \emph{total arity} is $n+1$.
	The same authors define modular operads~\cite{gkmod} as cyclic operads that additionally come with a self-composition of operations. 
	The motivating example is the modular operad $\Surf$ of surfaces. The groupoid version of $\Surf$ (this is the only version that we need because we will consider bicategorical algebras, see Section~\ref{seccycmodalgss})
	has as groupoid $\Surf(n)$ of arity $n$ operations the groupoid whose objects are connected surfaces (for us, surfaces are always compact oriented two-dimensional smooth manifolds) with $n+1$ parametrized boundary components. The boundary parametrization is an orientation-preserving diffeomorphism $(\mathbb{S}^1)^{\sqcup (n+1)} \ra{\cong} \partial \Sigma$. 
	A morphism between two such surfaces is an isotopy class of orientation-preserving and parametrization-preserving diffeomorphisms; in other words, the morphisms are mapping classes, see \cite{farbmargalit} for an textbook introduction to mapping class groups.  The operadic composition is given by the gluing of surfaces. 
	If we restrict the modular operad $\Surf$ to genus zero, we obtain a cyclic operad that is equivalent to $\framed$.
	This allows us to think of $\framed$ as the cyclic operad of genus zero surfaces. 
	
	\subsection{Costello's description of cyclic and modular operads}
	In this paper, we will use a definition of cyclic and 
	modular operads along the lines of \cite{costello}. For this,
	we need  the graph categories $\Graphs$ and $\Forests$. 
	Recall that a graph consists of a set $H$ of \emph{half edges},
	a set $V$ of \emph{vertices}, a map $s : H \to V$ specifying the vertex that each half edge is attached to and an involution on the half edges that maps half edges that are glued together to one another.
	The orbits of the involution are the \emph{edges}; an orbit of size two is an \emph{internal edge} while an orbit of size one will be called a \emph{leg}. 
	
	A graph with one vertex and finitely many external legs (possibly no legs) attached to it will be called  a \emph{corolla}. Finite disjoint unions of corollas will be the objects of the category $\Graphs$ that we need in the sequel. To define the morphisms, we denote for each finite graph $\Gamma$ (finite means that the set of half edges and the set of vertices are finite) \begin{itemize}
		\item by $\nu(\Gamma)$ the disjoint union of corollas obtained by cutting $\Gamma$ open at all internal edges,\item  and by $\pi_0(\Gamma)$ the disjoint union of corollas obtained by contracting all internal edges.\end{itemize} 
	A morphism $ T \to T'$ is an equivalence class of triples $(\Gamma,\varphi_0,\varphi_1)$ of a finite graph $\Gamma$ together with
	identifications $\varphi_1 :\nu(\Gamma)\cong T$ and $\varphi_2 : \pi_0(\Gamma)\cong T'$. 
	Two such triples $(\Gamma,\varphi_0,\varphi_1)$ and $(\Gamma',\varphi_0',\varphi_1')$
	are equivalent if there is an identification $\Gamma \cong \Gamma'$ 
	compatible in the obvious way with $\varphi_0,\varphi_1,\varphi_0'$ and $\varphi_1'$. 
	Figure~\ref{figexgraph} shows an example of a morphism in $\Graphs$.
	The composition $\Gamma_2 \circ \Gamma_1$ of two morphisms is obtained by `inserting' $\Gamma_1$ into
	the vertices of $\Gamma_2$. Disjoint union of graphs endows $\Graphs$ with a symmetric monoidal structure.  For more details, we refer besides~\cite{costello} also to the summary in~\cite[Section~2.1]{cyclic}.

	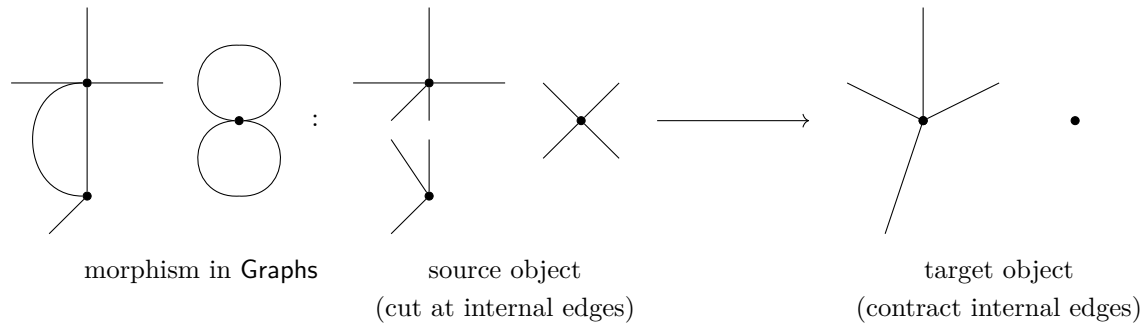
\begin{figure}[h]
	\begin{tikzpicture}[scale=0.5]
				\begin{pgfonlayer}{nodelayer}
					\node [style=none] (0) at (-8, 4) {};
					\node [style=none] (1) at (-8, 2) {};
					\node [style=none] (2) at (-4, 3) {};
					\node [style=none] (3) at (-8, 6) {};
					\node [style=none] (4) at (-10, 4) {};
					\node [style=none] (5) at (-6, 4) {};
					\node [style=black dot] (6) at (-8, 4) {};
					\node [style=black dot] (7) at (-8, 1) {};
					\node [style=black dot] (8) at (-4, 3) {};
					\node [style=none] (9) at (-9, 0) {};
					\node [style=none] (10) at (-4, 5) {};
					\node [style=none] (11) at (-4, 1) {};
					\node [style=none] (12) at (-2, 3) {$:$};
					\node [style=none] (13) at (1, 4) {};
					\node [style=none] (14) at (1, 2) {};
					\node [style=none] (15) at (5, 3) {};
					\node [style=none] (16) at (1, 6) {};
					\node [style=none] (17) at (-1, 4) {};
					\node [style=none] (18) at (3, 4) {};
					\node [style=black dot] (19) at (1, 4) {};
					\node [style=black dot] (20) at (1, 1) {};
					\node [style=black dot] (21) at (5, 3) {};
					\node [style=none] (22) at (0, 0) {};
					\node [style=none] (25) at (0, 3) {};
					\node [style=none] (26) at (1, 3) {};
					\node [style=none] (27) at (0, 2.5) {};
					\node [style=none] (28) at (1, 2.5) {};
					\node [style=none] (29) at (4, 2) {};
					\node [style=none] (30) at (6, 2) {};
					\node [style=none] (31) at (4, 4) {};
					\node [style=none] (32) at (6, 4) {};
					\node [style=none] (33) at (7, 3) {};
					\node [style=none] (34) at (11, 3) {};
					\node [style=none] (35) at (14, 4) {};
					\node [style=none] (36) at (14, 2) {};
					\node [style=none] (37) at (18, 3) {};
					\node [style=none] (38) at (14, 6) {};
					\node [style=none] (39) at (12, 4) {};
					\node [style=none] (40) at (16, 4) {};
					\node [style=black dot] (41) at (14, 3) {};
					\node [style=black dot] (42) at (14, 3) {};
					\node [style=black dot] (43) at (18, 3) {};
					\node [style=none] (44) at (13, 0) {};
					\node [style=none] (45) at (18, 5) {};
					\node [style=none] (46) at (3, -2) {(cut at internal edges)};
					\node [style=none] (47) at (16, -2) {(contract internal edges)};
					\node [style=none] (48) at (-5, -1) {morphism in $\Graphs$};
					\node [style=none] (49) at (3, -1) {source object};
					\node [style=none] (50) at (16, -1) {target object};
				\end{pgfonlayer}
				\begin{pgfonlayer}{edgelayer}
					\draw (6) to (3.center);
					\draw (6) to (5.center);
					\draw (6) to (4.center);
					\draw [bend right=90, looseness=1.50] (6) to (7);
					\draw (7) to (9.center);
					\draw (6) to (7);
					\draw [bend right=270, looseness=1.75] (8) to (10.center);
					\draw [bend left=90, looseness=1.75] (8) to (11.center);
					\draw [bend right=90, looseness=1.75] (8) to (10.center);
					\draw [bend right=90, looseness=1.75] (8) to (11.center);
					\draw (19) to (16.center);
					\draw (19) to (18.center);
					\draw (19) to (17.center);
					\draw (20) to (22.center);
					\draw (19) to (26.center);
					\draw (25.center) to (19);
					\draw (20) to (28.center);
					\draw (20) to (27.center);
					\draw (21) to (29.center);
					\draw (21) to (30.center);
					\draw (21) to (31.center);
					\draw (21) to (32.center);
					\draw [style=end arrow] (33.center) to (34.center);
					\draw (41) to (38.center);
					\draw (41) to (40.center);
					\draw (41) to (39.center);
					\draw (42) to (44.center);
				\end{pgfonlayer}
			\end{tikzpicture}
		\caption{An example of a morphism in $\Graphs$.}
		\label{figexgraph}
	\end{figure}

	A \emph{modular operad} with values in a (higher) symmetric monoidal category $\cat{M}$ is a symmetric monoidal functor $\cat{O} : \Graphs \to \cat{M}$ that also comes with operadic identities (we explain the latter in Remark~\ref{remoperadicidentities} below). It is always understood 
	that $\cat{O}$ is a symmetric monoidal functor up to coherent isomorphism (the symmetric monoidal 1-category $\Graphs$ is seen here as symmetric monoidal higher category with no non-trivial higher morphisms). In this article, the case in which $\cat{M}$ is a symmetric monoidal bicategory in the sense of \cite[Section~2]{schommerpries} suffices; this case is the one worked out in \cite[Section~2.1]{cyclic} to which we refer to details.
	
	One defines $\Forests \subset \Graphs$ as the symmetric monoidal subcategory with the same objects,
	 but only those morphisms coming from graphs whose connected components are contractible, so-called \emph{forests}. A \emph{cyclic operad} with values in $\cat{M}$ is now defined as a symmetric monoidal functor $\cat{O}:\Forests \to \cat{M}$.

	If $\cat{O}$ is a cyclic or modular operad and $T$ a corolla with $n\ge 0$ legs, the object $\cat{O}(T) \in \cat{M}$ has to be understood
	as the object of operations of arity $n-1$ or total arity $n$,
	to be thought of as $n-1$ inputs and one output, even though this distinction is not made for cyclic and modular operads. In order to make this distinction, one needs to equip each corolla with the choice of a preferred leg, a so-called \emph{root}.  This leads to the symmetric monoidal category $\RForests$, and symmetric monoidal functors out of it describe ordinary, non-cyclic operads (but we will not need this in this paper).
	
	If $\cat{O}$ is a cyclic operad,
	the permutation group $\Sigma_n$ on $n$ letters acts through the permutation of the $n$ legs of $T$. As a result, $\cat{O}(T)$ comes with a homotopy coherent $\Sigma_n$-action. The evaluation of $\cat{O}$ on forests whose underlying graph is not a disjoint union of corollas gives us the operadic composition. If $\cat{O}$ is modular, we additionally obtain composition operations by evaluation of $\cat{O}$ on morphisms that are not forests. These give us the  self-compositions of operations. 
	
	\begin{remark}\label{remoperadicidentities}
	 The definition of cyclic and modular operads as symmetric monoidal functors out of $\Forests$ and $\Graphs$, respectively, does not yet include operadic identities. These have to be encoded by hand~\cite[Definition~2.3]{cyclic}. In this paper, all operads are equipped with operadic identities, i.e.\ an operation of total arity two whose defining feature is that it behaves neutrally with respect to operadic composition up to coherent isomorphism. 
	Moreover, morphisms of cyclic and modular operads (defined as symmetric monoidal transformations) are required to preserve operadic identities up to coherent isomorphism. 
	\end{remark}

	\subsection{Our target category: finite linear categories\label{secrexf}}
	If we are given a category-valued cyclic or modular operad $\cat{O}$, 
	we can consider cyclic or modular $\cat{O}$-algebras, 
	respectively, in any symmetric monoidal 
	bicategory $\cat{S}$ as defined in 
	\cite[Section~2.2-2.4]{cyclic}.
	Before recalling this in Section~\ref{seccycmodalgss} below, let us mention our
	running example for 
	the symmetric monoidal bicategory 
	the cyclic and modular algebra
	take their values in: $\Rexf$,
	the symmetric monoidal bicategory of finite linear categories over our fixed algebraically closed field $k$ (for short: finite categories because the linear context is implicit) and right exact functors whose definition was already given in the introduction.
	The symmetric monoidal product is the Deligne product $\boxtimes$; the monoidal unit is the category $\vect$ of finite-dimensional $k$-vector spaces.  
	By replacing right exact functors with left exact ones, one can also define the symmetric monoidal bicategory $\Lexf$, but $\Rexf$ will have the advantage that it will facilitate the connection to the works~\cite{bzbj,bzbj2,brochierwoike}.
	
Later we will need a standard fact that follows for instance from~\cite[Section~1.1]{skeinfin} or~\cite[Proposition~5.6]{sn}:
%3
For a finite category $\cat{A}$ and right exact functors $F,G:\cat{A}\to\vect$ the natural transformations $\Nat(F,G)=\int_{X\in\cat{A}}F(X)^*\otimes G(X)$ from $F$ to $G$ can be calculated via restriction to projective objects, i.e.\ the restriction map
\begin{align}
	\catf{Nat}(F,G)\ra{\cong} \int_{P \in\Proj \cat{A}} F(P)^* \otimes G(P) \ \label{eqnnat}
\end{align}
is a canonical isomorphism.
	
	\subsection{Cyclic and modular algebras\label{seccycmodalgss}}
	Classically, an algebra $\cat{A}$ over an operad $\cat{O}$ is an operad map from $\cat{O}$ to the endomorphism operad of the underlying object of the algebra $\cat{A}$. In order to generalize this to cyclic and modular operads, the endomorphism operad needs to be made modular (and therefore in particular cyclic). This is done in \cite[Section~2]{cyclic} for the bicategorical case, following the principles of~\cite{gk,gkmod,costello}: 
	First we recall 
	the notion of a non-degenerate symmetric pairing 
	$\kappa : \cat{A}\boxtimes\cat{A}\to I$ on an object $\cat{A}$ in a symmetric monoidal bicategory $\cat{S}$ with monoidal product $\boxtimes$ and monoidal unit $I$, where 
	\begin{itemize}
		\item \emph{non-degeneracy} means that $\kappa$ exhibits $\cat{A}$ as its own dual in the homotopy category of $\cat{S}$,
		\item and \emph{symmetry} means that $\kappa$ is a homotopy fixed point in $\cat{S}(\cat{A}\boxtimes\cat{A},I)$ with respect to the $\mathbb{Z}_2$-action coming from the symmetric braiding of $\cat{S}$.
	\end{itemize}
	The non-degeneracy can also be expressed by saying  that there is a coevaluation object $\Delta : I \to \cat{A}\ot \cat{A}$ 
	that together with $\kappa$ satisfies the zigzag identities up to isomorphism.
	Thanks to the non-degenerate symmetric pairing, we can define a symmetric monoidal
	functor $\Graphs \to \Cat$ sending a corolla $T$ to the morphism category $\cat{S}(\cat{A}^{\boxtimes \Legs(T)},I)$
	from the unordered monoidal product $\cat{A}^{\boxtimes \Legs(T)}$ of $\cat{A}$ over the set $\Legs(T)$ of legs of $T$
	to the monoidal unit $I\in \cat{S}$. This $\Cat$-valued modular operad is the \emph{endomorphism operad of $(\cat{A},\kappa)$} and denoted by $\End_\kappa^\cat{A}$.
	
	A \emph{modular $\cat{O}$-algebra} in $\cat{S}$ is now defined as an object $\cat{A}$ with non-degenerate symmetric pairing $\kappa$ plus a map $\cat{O}\to \End_\kappa^\cat{A}$ of modular operads. A cyclic algebra is defined analogously. Both cyclic and modular algebras over a fixed cyclic or modular operad, respectively, with values in a symmetric monoidal bicategory form themselves bicategories in which all morphisms turn out to be invertible, i.e.\ they form 2-groupoids~\cite[Proposition~2.18]{cyclic}.  
	
	Let us partly unpack the definition of a $\Rexf$-valued modular $\cat{O}$-algebra (all of this is worked out in detail in~\cite[Section~2.5]{cyclic} where the dual case of $\Lexf$ is treated): It has an underlying object $\cat{A} \in \Rexf$, i.e.\ a finite category,
	and a non-degenerate symmetric pairing $\kappa : \cat{A}\boxtimes \cat{A} \to \vect$. By virtue of right exactness, $\kappa$ determines an equivalence $D:\cat{A}\to\cat{A}^\op$ via $\kappa(X,Y)=\cat{A}(X,DY)^*$, and the symmetry of $\kappa$ induces an isomorphism $D^2 \cong \id_\cat{A}$. 
	The coevaluation is a right exact functor
	$\Delta : \vect \to \cat{A}\boxtimes \cat{A}$ and is hence an object in $\cat{A}\boxtimes\cat{A}$
	given by the end \begin{align} \label{eqncoev} \Delta = \int_{X \in \cat{A}} DX \boxtimes X\ ; \end{align} we refer to \cite{fss} for an introduction to (co)ends in finite linear categories.
	Moreover, the structure of a modular $\cat{O}$-algebra on $\cat{A}$ provides us for each operation $o\in \cat{O}(T)$ for a corolla $T$ with a right exact functor $\cat{A}_o : \cat{A}^{\boxtimes \Legs(T)} \to \vect$. 
	If $T \in \Graphs$ is not connected, i.e.\ $T=\sqcup_{\ell \in L}T^{(\ell)}$ with corollas $T^{(\ell)}$, then to $o=(o_\ell)_{\ell\in L} \in \cat{O}(T)\simeq \prod_{\ell\in L} \cat{O}(T^{(\ell)})$ the modular algebra associates the family of right exact functors $(  \cat{A}_{o_\ell})_{\ell\in L}$; it gives us also a right exact functor $\boxtimes_{\ell\in L} \cat{A}_{o_\ell}\to\vect$ (by abuse of notation, we will not distinguish between these two objects). The data of a modular algebra includes also the compatibility with composition: Suppose that a morphism $\Gamma : T \to T'$ glues two legs together, 
	and denote by $o' \in \cat{O}(T')$ the image of some $o\in \cat{O}(T)$ under the operadic composition $\cat{O}(\Gamma):\cat{O}(T)\to\cat{O}(T')$.
	Then we obtain an isomorphism of right exact functors between $\cat{A}_{o'}$ and $\cat{A}_o$, but with $\Delta$ inserted into the two arguments of $\cat{A}_o$ corresponding to the legs that are glued together, i.e.\
	\begin{align} \cat{A}_{o'} \cong \cat{A}_o (\dots,\Delta',\dots,\Delta'',\dots) \label{eqnexcision}
	\end{align}
	with Sweedler notation $\Delta = \Delta' \boxtimes \Delta''$ for the coevaluation object (as usual, the notion does not imply that $\Delta$ is a pure tensor).
	In some contexts, \eqref{eqnexcision} is called \emph{excision}.

	\subsection{Explicit description of cyclic framed $E_2$-algebras\label{secexplicitribgv}}
	From \cite{WahlThesis,salvatorewahl}, one can deduce that a non-cyclic framed $E_2$-algebra in $\Rexf$ is a \emph{balanced braided monoidal category in $\Rexf$} whose definition we recalled in the introduction.
	It is the main result of \cite{cyclic}
	that a \emph{cyclic} framed $E_2$-algebra in $\Rexf$ is a ribbon Grothendieck-Verdier category in $\Rexf$ as defined by Boyarchenko-Drinfeld in~\cite{bd} building on Barr's notion of a $*$-autonomous category~\cite{barr}. Clearly, we have an underlying non-cyclic framed $E_2$-algebra, i.e.\ a balanced braided category. Additionally, there is an object $K\in \cat{A}$, the \emph{dualizing object}, making the functors $\cat{A}(-\otimes X,K)$ for $X\in \cat{A}$ representable via $\cat{A}(-\otimes X,K)\cong \cat{A}(-,DX)$ with an equivalence $D:\cat{A}\to\cat{A}^\op$, called \emph{Grothendieck-Verdier duality}, subject to the condition $\theta_{DX}=D\theta_X$ for all $X\in \cat{A}$. 
	By \cite{alsw} vertex operator algebras subject to mild conditions produce, after taking modules over the vertex operator algebra, ribbon Grothendieck-Verdier categories, with the Grothendieck-Verdier dual being the contragredient representation.

	\subsection{Rigid duality}
	There is an important special case in which the duality $D$ is \emph{rigid} in the sense that $D$ sends $X \in \cat{A}$ to an object $X^\vee$ that comes with morphisms $d:X^\vee \otimes X \to I$ and $b: I \to X\otimes X^\vee$ satisfying the classical zigzag identities. A priori, there would be two versions of rigid duality that can be defined, a left and a right duality. In the cases treated in this article, they would necessarily coincide, so we do not make this distinction. 
	A ribbon Grothendieck-Verdier category in $\Rexf$
	whose Grothendieck-Verdier duality is rigid and whose unit is simple
	is a \emph{finite ribbon category} in the sense of \cite{egno}. It is in particular a \emph{finite tensor category}~\cite{etingofostrik}, a finite linear category with rigid bilinear monoidal product and simple unit. 
	An example of a finite ribbon category is the category of finite-dimensional modules over a finite-dimensional ribbon Hopf algebra, see \cite{kassel} for a further textbook reference. A finite ribbon category is also a \emph{pivotal finite tensor category}, i.e.\ a finite tensor category equipped with a monoidal trivialization $\id_\cat{A} \cong -^{\vee\vee}$ of the double dual functor called \emph{pivotal structure}.

	The quadruple dual of a finite tensor category can be described, thanks to~\cite{eno-d},
	via $-^{\vee\vee\vee\vee} \cong \alpha \otimes-\otimes\alpha^{-1}$ with the \emph{distinguished invertible object} $\alpha$, see also \cite{shimizuunimodular}. A finite tensor category whose
	 distinguished invertible object is isomorphic to the monoidal unit is called \emph{unimodular}.

	For a braided monoidal category $\cat{A}$, we denote by $Z_2(\cat{A})$ the \emph{Müger center} of $\cat{A}$, the full subcategory of all $X\in\cat{A}$ such that the double braiding $c_{Y,X}c_{X,Y}$ is the identity $\id_{X\otimes Y}$ for all $Y\in \cat{A}$. Clearly, there is a right exact functor $\vect \to Z_2(\cat{A})$ sending the ground field to the monoidal unit. If this functor is an equivalence, we call the braiding of $\cat{A}$ \emph{non-degenerate}.
	A \emph{modular category} is a finite ribbon category with non-degenerate braiding. For equivalent characterizations of the non-degeneracy of the braiding, we refer to \cite{shimizumodular}.

	\subsection{Factorization homology\label{secfh}}
	\emph{Factorization homology} was defined in \cite{higheralgebra,AF} inspired by the concept of a \emph{chiral algebra} developed by Beilinson-Drinfeld in~\cite{bdca}. It allows us to `integrate' a framed $E_2$-algebra in $\Rex$ 
	 over a surface $\Sigma$ (for us, as always compact and oriented).
	 Here $\Rex$ is the symmetric monoidal bicategory of finitely cocomplete linear categories, right exact functors and linear natural transformations (in contrast to $\Rexf$, there are no finiteness conditions).
	For all $m\ge 0$, consider all oriented embeddings $\varphi : \sqcup_{m} \mathbb{D}^2 \to \Sigma$  of $m$ disks into $\Sigma$.
	The factorization homology $\int_\Sigma \cat{A}$ of $\cat{A}$ on $\Sigma$ is, roughly speaking, the homotopy colimit of $\cat{A}^{\boxtimes m}$ ranging over all such embeddings for all $m \ge 0$.
	It  comes therefore with a structure map $\varphi_* : \cat{A}^{\boxtimes m}\to \int_\Sigma \cat{A}$ for every such embedding.
	An introduction to factorization homology in the setting of quantum algebra can be found in~\cite[Section~1.1]{bzbj}.
	In the same article, it is also observed that, by the functoriality of factorization homology 
	with respect to oriented embeddings, the embedding $\emptyset \to \Sigma$ induces an object $\qss \in \int_\Sigma \cat{A}$, the so-called  \emph{quantum structure sheaf}
	that is a homotopy fixed point for the action of the diffeomorphism group of $\Sigma$ on $\int_\Sigma \cat{A}$.
	Its endomorphism algebra
	\begin{align} \SkAlg_\cat{A}(\Sigma):= \End_{\int_\Sigma \cat{A}}(\qss) \label{eqnskeinalgebras}
	\end{align} is the \emph{skein algebra} of $\cat{A}$ at $\Sigma$~\cite[Definition~2.6]{skeinfin}.
	This definition is in line with the classical definition of the skein algebra by the results in \cite{cooke}.
	
		\subsection{The monoidal product of cyclic and modular algebras\label{secmonoidalproductofalgebras}}
	Let $\cat{A}$ and $\cat{B}$ be cyclic or modular $\cat{O}$-algebras in a symmetric monoidal bicategory.
	Denote by $\kappa_\cat{A}:\cat{A}\boxtimes \cat{A}\to I$ and $\kappa_\cat{B}:\cat{B}\boxtimes\cat{B}\to I$ the underlying non-degenerate symmetric pairings.
	Then
	\begin{align}
		\kappa_{\cat{A}\boxtimes\cat{B}} : \cat{A}\boxtimes\cat{B} \boxtimes \cat{A}\boxtimes\cat{B} \ra{\substack{\text{symmetric braiding } \\ \text{of second and third factor}}}  \cat{A}\boxtimes\cat{A}\boxtimes\cat{B}\boxtimes\cat{B} \ra{\kappa_\cat{A}\boxtimes \kappa_\cat{B}} I
	\end{align}
	is a non-degenerate symmetric pairing whose coevaluation is
	\begin{align}
		\label{eqndeltaAA}	\Delta_{\cat{A}\boxtimes\cat{B}}: I \ra{\Delta_{\cat{A}}\boxtimes\Delta_\cat{B}} \cat{A}\boxtimes\cat{A}\boxtimes\cat{B}\boxtimes\cat{B} 
		\ra{\substack{\text{symmetric braiding } \\ \text{of second and third factor}}}     \cat{A}\boxtimes\cat{B} \boxtimes \cat{A}\boxtimes\cat{B} \ . 
	\end{align} 
	With the pairing $\kappa_{\cat{A}\boxtimes \cat{B}}$, the object $\cat{A}\boxtimes \cat{B}$
	becomes 
	an $\cat{O}$-algebra that on an operation $o$ of arity $n$ is given by
	\begin{align}
		(\cat{A}\boxtimes \cat{B})_o : (\cat{A}\boxtimes \cat{B})^{\boxtimes n} \ra{\substack{\text{exchange $\cat{A}$ and $\cat{B}$} \\ \text{via symmetric braiding}}} \cat{A}^{\boxtimes n}\boxtimes \cat{B}^{\boxtimes n}\ra{\cat{A}_o \boxtimes \cat{B}_o} I \ . 
	\end{align}
This modular algebra structure on $\cat{A}\boxtimes\cat{B}$ will be implicitly used in the sequel.
	
	\subsection{Calabi-Yau structures}	We will conclude this preliminary section with a reminder on Calabi-Yau structures on finite categories that are studied in a quantum algebra context for instance in \cite{mtrace,shibatashimizu,tracesw}.
	The following statements are all standard; we list them for later reference:
	
	\begin{lemma}\label{lemmacyfin}
		Let $\cat{A}$ be a finite linear category with a Calabi-Yau structure on $\Proj \cat{A}$, i.e.\
		natural isomorphisms $\Psi_{P,Q}:\cat{A}(P,Q)^* \cong \cat{A}(Q,P)$ for all $P,Q \in \Proj \cat{A}$ such that
		$\Psi_{P,Q}^* = \Psi_{Q,P}$ under the identification of a finite-dimensional vector space with its bidual.
		\begin{pnum}
			\item The Calabi-Yau structure extends to isomorphisms $\cat{A}(X,P)^*\cong \cat{A}(P,X)$,
			 where $P$ is projective and $X$ an arbitrary object. \label{lemmacyfini}
			\item The finite category $\cat{A}$ is self-injective, i.e.\ the projective objects coincide with the injective ones. \label{lemmacyfinii}
			\item	Any anti-autoequivalence of $\cat{A}$ preserves projective objects.\label{lemmacyfiniii}
		\end{pnum}
	\end{lemma}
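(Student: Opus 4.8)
The plan is to do part (a) by hand and then obtain (b) and (c) as quick formal consequences. For (a), fix $P\in\Proj\cat{A}$ and view both $X\mapsto\cat{A}(X,P)^*$ and $X\mapsto\cat{A}(P,X)$ as functors $\cat{A}\to\vect$. I would first check that both are \emph{right exact}: for $X\mapsto\cat{A}(X,P)^*$ this is because the contravariant $\cat{A}(-,P)$ sends a right exact sequence to a left exact one, which becomes right exact after applying the exact functor $(-)^*$ on $\vect$; for $X\mapsto\cat{A}(P,X)$ it is because $P$ is projective, so that $\cat{A}(P,-)$ is exact. On $\Proj\cat{A}$ the given data $\Psi_{X,P}$ (with $X\in\Proj\cat{A}$) is a natural isomorphism between these two restrictions. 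Since natural transformations between right exact functors on a finite category are detected on projectives --- precisely the content of \eqref{eqnnat} --- the isomorphism $\Psi|_{\Proj\cat{A}}$ and its inverse lift uniquely to natural transformations between $\cat{A}(-,P)^*$ and $\cat{A}(P,-)$ on all of $\cat{A}$, which are mutually inverse by that same uniqueness, yielding $\cat{A}(X,P)^*\cong\cat{A}(P,X)$; one checks it is also natural in $P$ because $\Psi$ is natural in both slots. (Equivalently, transport $\Psi_{P_0,P}$ and $\Psi_{P_1,P}$ through a projective presentation $P_1\to P_0\to X\to 0$.) The only subtlety is keeping the directions of the naturality constraints on $\Psi$ straight, since the two slots of $\cat{A}(-,-)$ play asymmetric roles in the statement; this bookkeeping is the main (and only mild) obstacle.

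For (b), I would first show $\Proj\cat{A}\subseteq\Inj\cat{A}$: for $P$ projective, part (a) rewrites the always left exact functor $\cat{A}(P,-)$ as $\cat{A}(-,P)^*$; then for a monomorphism $A\hookrightarrow B$, left exactness of $\cat{A}(P,-)$ makes $\cat{A}(P,A)\to\cat{A}(P,B)$ injective, hence --- via (a) --- $\cat{A}(A,P)^*\to\cat{A}(B,P)^*$ injective, hence $\cat{A}(B,P)\to\cat{A}(A,P)$ surjective; so $\cat{A}(-,P)$ is exact and $P$ is injective. For the reverse inclusion I would use finiteness: a projective generator exhibits $\cat{A}\simeq A\modu$ for a finite-dimensional algebra $A$, under which $\Proj\cat{A}\subseteq\Inj\cat{A}$ says that ${}_AA$ is injective, i.e.\ $A$ is self-injective, and a self-injective finite-dimensional algebra has the same projective and injective modules (its finitely many pairwise non-isomorphic indecomposable projectives are indecomposable injectives, hence exhaust them). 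Thus $\Proj\cat{A}=\Inj\cat{A}$.

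For (c), an anti-autoequivalence $F\colon\cat{A}\to\cat{A}^\op$ is exact, and for $P$ projective, using that $F$ is fully faithful and essentially surjective one gets $\cat{A}(-,F(P))\cong\cat{A}(P,-)\circ F^{-1}$, a composite of exact functors (the contravariant $F^{-1}$ and the exact $\cat{A}(P,-)$), so it takes monomorphisms to epimorphisms; hence $F(P)$ is injective, and by part (b) therefore projective. So $F$ preserves $\Proj\cat{A}$. Neither (b) nor (c) presents a genuine difficulty once (a) is available, together with the classical fact about self-injective finite-dimensional algebras used in (b).
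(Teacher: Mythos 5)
Your proof is correct. Note that the paper does not actually supply an argument for this lemma --- it declares the statements standard and lists them only for later reference --- so there is no proof to compare against; your write-up fills that gap with exactly the expected standard reasoning. In particular, your use of the restriction-to-projectives isomorphism \eqref{eqnnat} to extend $\Psi$ from $\Proj\cat{A}$ to a natural isomorphism $\cat{A}(-,P)^*\cong\cat{A}(P,-)$ of right exact functors is the natural route to \ref{lemmacyfini}, your deduction of $\Proj\cat{A}\subseteq\Inj\cat{A}$ and the counting argument for indecomposables over a self-injective finite-dimensional algebra correctly yield \ref{lemmacyfinii} (the paper's own Remark~\ref{remnakal} would give an alternative via the Nakayama functor), and \ref{lemmacyfiniii} follows as you say from the fact that a contravariant equivalence turns projectives into injectives.
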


	From Lemma~\ref{lemmacyfin}, one concludes quickly:
	\begin{lemma}\label{lemmanondegenerate}
		For any non-degenerate symmetric pairing $\kappa : \cat{A} \boxtimes \cat{A} \to \vect$ on a finite category $\cat{A}\in\Rexf$, the right exact functor $\kappa(X,-)\cong \kappa(-,X)$ is exact if and only
		if $X$ is projective. If $\Proj \cat{A}$ comes with a Calabi-Yau structure,
		we obtain natural isomorphisms
		\begin{align}
			\kappa(DX,DP)^* \cong  \kappa(P,X) \ , 
		\end{align}
		where $P$ is projective, $X$ an arbitrary object
		and $D: \cat{A}\to \cat{A}^\op$ is the duality induced by $\kappa(X,Y)=\cat{A}(X,DY)^*$ for $X,Y \in \cat{A}$. Moreover, for $X \in \cat{A}$, the following are equivalent:\label{lemmacyfiniv}
		\begin{itemize}
			\item $X$ is projective.
			\item $X$ is injective.
			\item $\kappa(X,-)$  is exact.
			\item $\kappa(-,X)$ is exact. \end{itemize}
	\end{lemma}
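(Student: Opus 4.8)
The plan is to derive everything from the defining identity $\kappa(X,Y)=\cat{A}(X,DY)^*$ together with the facts, recalled in Section~\ref{seccycmodalgss}, that the non-degeneracy and symmetry of $\kappa$ produce an equivalence $D:\cat{A}\to\cat{A}^\op$ with $D^2\cong\id_\cat{A}$. First I would settle the exactness criterion: since linear duality $(-)^*$ on $\vect$ is exact and $D$ is an exact equivalence, the functor $\kappa(X,-)\cong\cat{A}(X,D(-))^*$ is exact if and only if $\cat{A}(X,-)$ is exact, i.e.\ if and only if $X$ is projective. Because $\kappa$ is symmetric we have $\kappa(-,X)\cong\kappa(X,-)$, so the same holds for $\kappa(-,X)$; alternatively one reads off directly that $\kappa(-,X)\cong\cat{A}(-,DX)^*$ is exact iff $\cat{A}(-,DX)$ is exact iff $DX$ is injective, and since the equivalence $D:\cat{A}\to\cat{A}^\op$ interchanges projective and injective objects, this is again equivalent to $X$ being projective. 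This reformulation already feeds into the claimed list of equivalent conditions once we invoke part~\ref{lemmacyfinii} of Lemma~\ref{lemmacyfin}, which says that a finite category carrying a Calabi-Yau structure on its projectives is self-injective, so that ``projective'' and ``injective'' coincide; combining this with the equivalences just established closes the four-way equivalence.

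For the natural isomorphism $\kappa(DX,DP)^*\cong\kappa(P,X)$, the plan is a short chain of natural isomorphisms. Using $D^2\cong\id_\cat{A}$, rewrite $\kappa(DX,DP)=\cat{A}(DX,D(DP))^*\cong\cat{A}(DX,P)^*$; dualizing once more and using that all morphism spaces are finite-dimensional gives $\kappa(DX,DP)^*\cong\cat{A}(DX,P)$. Now apply part~\ref{lemmacyfini} of Lemma~\ref{lemmacyfin}: the Calabi-Yau structure on $\Proj\cat{A}$ extends to natural isomorphisms $\cat{A}(Y,P)^*\cong\cat{A}(P,Y)$ for $P$ projective and $Y$ arbitrary, hence, dualizing and using the compatibility $\Psi_{P,Q}^*=\Psi_{Q,P}$, $\cat{A}(DX,P)\cong\cat{A}(P,DX)^*$; and $\cat{A}(P,DX)^*=\kappa(P,X)$ by definition. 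Composing these isomorphisms, all of which are natural in $X$ (and functorial in the projective argument $P$), yields the desired $\kappa(DX,DP)^*\cong\kappa(P,X)$.

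There is no genuinely hard step here; the whole statement is bookkeeping layered on top of Lemma~\ref{lemmacyfin}, which is why the excerpt advertises it as following ``quickly''. The only points that require a little care are: keeping track of the identification $D^2\cong\id_\cat{A}$ so that the isomorphisms compose coherently and involutively; making sure that the ``dual'' form $\cat{A}(Y,P)\cong\cat{A}(P,Y)^*$ used above really is the given Calabi-Yau structure itself (this is exactly what the symmetry condition $\Psi_{P,Q}^*=\Psi_{Q,P}$ in Lemma~\ref{lemmacyfin} guarantees) rather than some unrelated trivialization; and checking naturality in the arbitrary argument $X$, which holds because every isomorphism in the chain — the instances of $(-)^*$, the equivalence $D$, and the extended Calabi-Yau isomorphism of part~\ref{lemmacyfini} of Lemma~\ref{lemmacyfin} — is natural. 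I would present the argument in this order: exactness criterion, then the four-way equivalence, then the displayed natural isomorphism.
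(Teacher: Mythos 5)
Your proof is correct and follows exactly the route the paper intends: the paper gives no written proof, merely asserting that the lemma "follows quickly" from Lemma~\ref{lemmacyfin}, and your bookkeeping — reducing exactness of $\kappa(X,-)\cong\cat{A}(X,D(-))^*$ to projectivity of $X$, using self-injectivity from part~\ref{lemmacyfinii} for the four-way equivalence, and chaining $D^2\cong\id$ with the extended Calabi-Yau isomorphism of part~\ref{lemmacyfini} for the displayed isomorphism — is precisely that quick deduction. No gaps.
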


		\section{A duality operation for cyclic framed $E_2$-algebras\label{secdual}}
	In this section, we will introduce an important
	 technical tool of this article, a duality operation for cyclic algebras that we will need in Section~\ref{secreflection} to set up the concept of reflection equivariance.
		This operation takes as input a certain type of cyclic framed $E_2$-algebra $\cat{A}$ in $\Rexf$
		(we will see which kind of restriction we will have to make as we go along).
		By what was recalled in Section~\ref{secexplicitribgv}, $\cat{A}$ is a ribbon Grothendieck-Verdier category in $\Rexf$.
		The associated cyclic algebra over the operad of genus zero surfaces is given on a genus zero surface with $n$ boundary circles labeled with $X_1,\dots,X_n$ as follows:
		\begin{align} \Sigma_{0,n} \ \text{plus}\ \text{labels $X_1,\dots,X_n$} \mapsto \cat{A}(X_1\otimes\dots \otimes X_n,K)^* \ . \label{eqnconfblockgenus0}\end{align} Here $K\in \cat{A}$ is the dualizing object. 
		The vector space appearing on the right hand side of~\eqref{eqnconfblockgenus0} is the space of conformal blocks for $\cat{A}$ on the genus zero surface $\Sigma_{0,n}$. 
		The fact that the space of conformal blocks looks like this is a consequence of how the equivalence between cyclic framed $E_2$-algebras and ribbon Grothendieck-Verdier categories is constructed in~\cite[Section~5]{cyclic}. Note that we are treating here the $\Rexf$-valued case, not the $\Lexf$-valued one as in~\cite{cyclic}.
		
		The duality operation $\cat{A}\mapsto \cat{A}\dg$
		 that we want to define in this section takes unsurprisingly the linear dual of the spaces of conformal blocks while simultaneously dualizing the labels. 
		This means that we replace
		$\cat{A}(X_1\otimes\dots \otimes X_n,K)^*$
		with  
		$\cat{A}(DX_n\otimes\dots \otimes DX_1,K)$,
		where $D$ denotes the Grothendieck-Verdier duality.
		The order is reversed because we have to apply an iterated duality on Deligne tensor powers of $\cat{A}$. We will formalize this in  Definition~\ref{defdgr} below. 
		There is  the obvious problem that 
		$\cat{A}(DX_n\otimes\dots \otimes DX_1,K)$
		is not even right exact in the labels $X_i$, which is needed to get a cyclic $\framed$-algebra in $\Rexf$. 
		The construction presented in this section will resolve this issue under a few technical assumptions on $\cat{A}$.
		
		\subsection{Left and right exact extensions, self-injectivity}
		In this subsection, we collect some facts about Nakayama functors of finite categories. 
		For  a finite category $\cat{A}$,
		recall that with the definition of the right Nakayama functor $\nakar : \cat{A}\to \cat{A}$
		via the coend $\nakar = \int^{X \in\cat{A}} \cat{A}(-,X)^* \otimes X$
		and the left Nakayama functor via the end $\nakal = \int_{X \in \cat{A}} \cat{A}(X,-)\otimes X$
		\cite[Section~3.5]{fss},
		one can observe \cite[Lemma~4.7]{shibatashimizu}, see also \cite[Corollary~2.3]{tracesw}, that one obtains natural isomorphisms
		\begin{align}
			\cat{A}(P,X)^*&\cong \cat{A}(X,\nakar P)  \quad  \quad \text{for}\quad P \in \Proj\cat{A} \ , \quad X \in \cat{A} \ \ ,   \label{eqncystructurewithN} \\
			\cat{A}(X,J)^*&\cong \cat{A}(\nakal J,X) \quad \quad \text{for} \quad J\in\Inj\cat{A}\ , \quad X \in \cat{A} \ . \label{eqncystructurewithNN}
		\end{align}%5
		Let $F:\cat{A}\to \cat{B}$ be a
		right exact functor between finite categories $\cat{A}$ and $\cat{B}$.
		The functor \begin{align}
			X \mapsto \int_{J\in\Inj \cat{A}} \cat{A}(\nakal J,X) \otimes FJ\end{align} is left exact as functor $\cat{A} \to \cat{B}$. Thanks to \eqref{eqncystructurewithNN} and the Yoneda Lemma, it is the essentially unique left exact functor agreeing with $F$ on injective objects. 
		Then the functor
		\begin{align}
			\label{eqnwidetildeF}	\widetilde F : \cat{A}^\op \to \cat{B}^\op \ , \quad X \mapsto \int_{J\in\Inj \cat{A}} \cat{A}(\nakal J,X) \otimes FJ
		\end{align}
		is right exact and
		\begin{align}
			\widetilde{F}(J)\cong F(J)\label{eqntildeonproj}
		\end{align}
		by a natural isomorphism
		for any injective object $J \in \cat{A}$. 
		A 2-isomorphism $\alpha : F \to G$ between right exact functors $F,G:\cat{A}\to\cat{B}$ induces a 2-isomorphism
		$\widetilde G \to \widetilde F$, i.e.\
		$\alpha^{-1}$ induces a 2-isomorphism $\widetilde \alpha : \widetilde F \to \widetilde G$. 
		These assignments assemble into
		a  functor \begin{align}\widetilde{-}:\Rexf(\cat{A},\cat{B})\to \Rexf(\cat{A}^\op,\cat{B}^\op) \label{eqnwidetildelexf}\end{align} 
		natural in $\cat{A}$ and $\cat{B}$.
		Here $\Rexf$ is treated as $(2,1)$-category, i.e.\ we only allow 2-morphisms that are isomorphisms.

			\begin{remark}\label{remnakal} 
				Recall that a finite category $\cat{A}$ is called \emph{self-injective} if 
				 the classes of projective and injective objects coincide. By \cite[Proposition~IV.3.1]{ars} and~\cite[Proposition~3.24]{fss} the self-injectivity of $\cat{A}$ is equivalent to $\nakal$ being an equivalence whose quasi-inverse is $\nakar$.
		\end{remark}

		\begin{lemma}\label{lemmawidetilde}
			The subcategory $\Rexfsi\subset \Rexf$ of self-injective finite categories is the largest monoidal
			sub\-cate\-gory such that the functors~\eqref{eqnwidetildelexf}, when restricted to this subcategory, are equivalences.
			After restriction to self-injective finite categories, $\widetilde{-}$ consists of homotopy involutions.
		\end{lemma}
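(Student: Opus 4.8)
\emph{Proof strategy.} The plan is to prove the two assertions together: for $\cat{A}$ self-injective and $\cat{B}$ any finite category I would produce a natural isomorphism $\widetilde{\widetilde F}\cong F$ on $\Rexf(\cat{A},\cat{B})$, and for $\cat{A}$ not self-injective I would show that already $\widetilde{-}_{\cat{A},\cat{A}}$ fails to be an equivalence. Beforehand one records that $\Rexfsi$ is a monoidal sub-bicategory of $\Rexf$ closed under $(-)^\op$: it contains the monoidal unit $\vect$; if $\Proj\cat{A}=\Inj\cat{A}$ and $\Proj\cat{B}=\Inj\cat{B}$ then $\Proj(\cat{A}\boxtimes\cat{B})=\Inj(\cat{A}\boxtimes\cat{B})$, both being the additive closure of the objects $X\boxtimes Y$ with $X\in\Proj\cat{A}$, $Y\in\Proj\cat{B}$; and self-injectivity is preserved by $(-)^\op$ since $\Proj(\cat{A}^\op)=\Inj\cat{A}$ and $\Inj(\cat{A}^\op)=\Proj\cat{A}$.

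\emph{Positive direction.} Recall that a right exact functor out of a finite category is the left Kan extension of its restriction to projectives, so restriction gives an equivalence $\Rexf(\cat{C},\cat{D})\xrightarrow{\sim}\catf{Fun}_k(\Proj\cat{C},\cat{D})$ for all finite $\cat{C},\cat{D}$; together with $\eqref{eqntildeonproj}$ this characterizes $\widetilde F$, for $F:\cat{A}\to\cat{B}$ right exact, as the unique right exact functor $\cat{A}^\op\to\cat{B}^\op$ whose restriction to $\Proj(\cat{A}^\op)=\Inj\cat{A}$ recovers $F|_{\Inj\cat{A}}$, naturally. Applying this again with $\cat{A},\cat{B},F$ replaced by $\cat{A}^\op,\cat{B}^\op,\widetilde F$ (and $\cat{A}^\op$ still self-injective), the functor $\widetilde{\widetilde F}:\cat{A}\to\cat{B}$ is right exact with restriction to $\Proj\cat{A}=\Inj(\cat{A}^\op)$ isomorphic to $\widetilde F|_{\Proj\cat{A}}$. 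Self-injectivity gives $\Proj\cat{A}=\Inj\cat{A}$, so $\widetilde F|_{\Proj\cat{A}}=\widetilde F|_{\Inj\cat{A}}\cong F|_{\Inj\cat{A}}=F|_{\Proj\cat{A}}$; as $\widetilde{\widetilde F}$ and $F$ are right exact and agree on $\Proj\cat{A}$, they are isomorphic, naturally in $F$. Hence $\widetilde{-}_{\cat{A},\cat{B}}$ and $\widetilde{-}_{\cat{A}^\op,\cat{B}^\op}$ are mutually quasi-inverse equivalences, and the coherent isomorphism $\widetilde{\widetilde{(-)}}\cong\id$ --- whose coherence reduces to the naturality of $\eqref{eqntildeonproj}$ and of the restriction-to-projectives equivalences --- exhibits $\widetilde{-}$ as a homotopy involution on $\Rexfsi$.

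\emph{Maximality.} Let $\cat{M}\subseteq\Rexf$ be a monoidal sub-bicategory on which all the functors $\eqref{eqnwidetildelexf}$ are equivalences, and let $\cat{A}\in\cat{M}$; it suffices to deduce self-injectivity of $\cat{A}$ from the hypothesis that $\widetilde{-}_{\cat{A},\cat{A}}$ is an equivalence. Suppose $\cat{A}$ were not self-injective and pick $P\in\Proj\cat{A}$ not injective. The endofunctor $\Phi:=\cat{A}(P,-)\otimes_k P$ is exact, and it is a projective object of $\Rexf(\cat{A},\cat{A})$: indeed $\Nat(\Phi,H)\cong\cat{A}(P,H(P))$ by the copower adjunction and Yoneda, which is exact in $H$ because $H\mapsto H(P)$ and $\cat{A}(P,-)$ are exact, $P$ being projective. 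Being left exact, $\Phi$ equals its own image under $\widetilde{-}$, so $\widetilde\Phi\cong\Phi^\op$ as an object of $\Rexf(\cat{A}^\op,\cat{A}^\op)$. However, every projective object of $\Rexf(\cat{A}^\op,\cat{A}^\op)$ takes values in $\Proj(\cat{A}^\op)=\Inj\cat{A}$ --- the generating projectives $X\mapsto\cat{A}^\op(P',X)\otimes_k Q'$ with $P',Q'\in\Proj(\cat{A}^\op)$ do, and this survives finite direct sums and direct summands --- whereas $\Phi^\op$ takes the value $\cat{A}(P,P)\otimes_k P\cong P^{\oplus\dim\End_\cat{A}(P)}$ at $P$, with $\dim\End_\cat{A}(P)\ge1$, which is not injective because $P$ is not. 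Thus $\widetilde\Phi$ is not projective, contradicting that $\widetilde{-}_{\cat{A},\cat{A}}$ is an equivalence. Therefore every object of $\cat{M}$ is self-injective, $\cat{M}\subseteq\Rexfsi$, and since $\Rexfsi$ itself has the property it is the largest such monoidal subcategory.

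\emph{Expected difficulty.} The delicate half is maximality: the end-formula for $\widetilde F$ resists being inverted directly outside the self-injective world, and the trick that makes it tractable is to probe $\widetilde{-}_{\cat{A},\cat{A}}$ on an \emph{exact} --- hence $\widetilde{-}$-fixed --- projective object of $\Rexf(\cat{A},\cat{A})$, where non-injectivity of $P$ immediately obstructs projectivity of the image. What remains is bookkeeping with the various $(-)^\op$'s ($\Inj(\cat{A}^\op)=\Proj\cat{A}$, the op built into $\widetilde\alpha$, and the identification of $\Rexf(\cat{A}^\op,\cat{B}^\op)$ with left exact functors $\cat{A}\to\cat{B}$), which needs care but presents no genuine obstruction.
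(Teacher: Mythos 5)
Your proof is correct. The positive direction is essentially the paper's: over a self-injective $\cat{A}$ a right exact functor is determined by its restriction to $\Proj\cat{A}=\Inj\cat{A}$, so restricting and re-extending on the opposite side gives a quasi-inverse, and the coherence of $\widetilde{\widetilde{(-)}}\cong\id$ reduces to naturality of~\eqref{eqntildeonproj}. Your maximality argument, however, takes a genuinely different route. The paper uses monoidality to put $\vect$ into the subcategory, identifies $\varphi_{\cat{A},\vect}$ with an endo-equivalence of $\cat{A}^\op$ sending $J\mapsto \nakal J$ on injectives, and then combines preservation of projectives and injectives with the fact that $\nakal$ restricts to an equivalence $\Inj\cat{A}\simeq\Proj\cat{A}$ to force $\Proj\cat{A}=\Inj\cat{A}$. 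You instead probe the endo-hom $\Rexf(\cat{A},\cat{A})$ with the exact projective object $\cat{A}(P,-)\otimes_k P$ for a projective non-injective $P$: exact functors are fixed by $\widetilde{-}$, projective objects of $\Rexf(\cat{A}^\op,\cat{A}^\op)$ take values in $\Inj\cat{A}$, and non-injectivity of $P$ then obstructs projectivity of the image. Your computation $\Nat(\Phi,H)\cong\cat{A}(P,H(P))$ and the claim that projectives of a functor category take projective values are both correct. What your route buys: it avoids the Nakayama functor entirely (in particular the input that $\nakal:\Inj\cat{A}\to\Proj\cat{A}$ is an equivalence), and it does not use monoidality of the subcategory, so it actually shows that \emph{any} subcategory on which the functors are equivalences sits inside $\Rexfsi$. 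What the paper's route buys is brevity, given that the Nakayama machinery is already set up in~\eqref{eqncystructurewithN}--\eqref{eqncystructurewithNN}. One shared caveat: both your argument and the paper's use, at the decisive step, that the hypothesized equivalences of hom-categories preserve projective objects, which reads slightly more into ``equivalence'' than the bare $(2,1)$-categorical statement (where only isomorphism 2-cells are retained); since the paper's own proof makes the same move, this is not a defect of your write-up.
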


		\begin{proof}
			The largest monoidal subcategory $\cat{U} \subset \Rexf$ on which~\eqref{eqnwidetildelexf} is an equivalence is exactly the one for which the functors 
			$\varphi_{\cat{A},\cat{B}}:	\Rexf(\cat{A},\cat{B}) \to \Lexf(\cat{A},\cat{B})$, that 
			agree with~\eqref{eqnwidetildelexf}, but without taking the opposite category in the last step,
			are equivalences. 
			Since $\vect \in \cat{U}$, we need the functor
			\begin{align}
				\varphi_{\cat{A},\vect}:	\Rexf(\cat{A},\vect) \to \Lexf(\cat{A},\vect) \quad \text{for}\quad \cat{A}\in\cat{U}
			\end{align}
			to be an equivalence.
			First we observe for  $J,J'\in \Inj \cat{A}$
			\begin{align} \varphi_{\cat{A},\vect} \left(      \cat{A}(-,J)  ^*     \right) (J') = \cat{A}(J',J)^*\cong \cat{A}(\nakal J,J')
				\ , \end{align}
			and hence
			\begin{align} \varphi_{\cat{A},\vect} \left(      \cat{A}(-,J)^*       \right)  
				\cong \cat{A}(\nakal J,-) \ . \end{align}But this implies that under the 
			equivalence $\cat{A}^\op \simeq \Rexf(\cat{A},\vect)$ sending $X$ to $\cat{A}(-,X)^*$ and the equivalence $\cat{A}^\op \simeq \Lexf(\cat{A},\vect)$ sending $X$ to $\cat{A}(X,-)$, the equivalence $\varphi_{\cat{A},\vect}$ is an equivalence $\cat{A}^\op \to \cat{A}^\op$ sending $J \in \Inj \cat{A}$ to $\nakal J$. The equivalence
			$\varphi_{\cat{A},\vect}$ preserves projective and injective objects. This implies
			$\nakal (\Inj \cat{A})\subset \Inj \cat{A}$. The Nakayama functor $\nakal$ induces an equivalence $\Inj \cat{A} \simeq \Proj \cat{A}$ \cite[Section~3]{ivanov}. Let now $P \in \cat{A}$ be projective, i.e.\ $P\cong \nakal J$ for some injective object $J$. Then by what we just observed $P$ is also injective, i.e.\ $\Proj \cat{A}=\Inj \cat{A}$, making $\cat{A}$ self-injective. This shows  $ \cat{U}\subset \Rexfsi$.
			
			In fact, we also have $\Rexfsi \subset \cat{U}$ because
			the map $\Rexf(\cat{A},\cat{B}) \to \Lexf(\cat{A},\cat{B})$ for $\cat{A},\cat{B} \in \Rexfsi$ is the equivalence
			\begin{align}
				\Rexf(\cat{A},\cat{B}) \simeq \catf{Lin}(\Proj \cat{A},\cat{B})=\catf{Lin}(\Inj \cat{A},\cat{B})\simeq \Lexf(\cat{A} , \cat{B})\ ,
			\end{align}
		where $\catf{Lin}(-,-)$ denotes the category of linear functors.
			This is true because $\cat{A}$ is the finite free cocompletion of $\Proj \cat{A}$ and the finite free completion of $\Inj \cat{A}$; for these standard facts, see \cite{daylack} or, specifically in the finite setting, \cite[Section~5]{sn}. 
			This shows that the maximal monoidal subcategory on which~\eqref{eqnwidetildelexf} is an equivalence is $\Rexfsi$.
			
			The equivalence $\widetilde{-}:\Rexf(\cat{A},\cat{B})\to \Rexf(\cat{A}^\op,\cat{B}^\op)$
			takes a right exact functor $F:\cat{A}\to\cat{B}$ between self-injective finite categories restricts it to $\Inj \cat{A}=\Proj \cat{A}$ and extends it back, but as a left exact functor $\cat{A}\to\cat{B}$ which is then seen as right exact functor $\cat{A}^\op \to \cat{B}^\op$. From this description, it is clear that the functors~\eqref{eqnwidetildelexf} are homotopy involutions.
		\end{proof}

		\subsection{The $\dgr$-operation on cyclic framed $E_2$-algebras}
		We can now define the $\dgr$-operation on cyclic framed $E_2$-algebras subject to a few technical conditions.

		\begin{definition}\label{defdgr}
			Denote by \begin{align} \E := \begin{array}{c}\text{full 2-groupoid of $\Rexf$-valued cyclic framed $E_2$-algebras} \\ \text{whose underlying finite category is self-injective} \\ \text{and whose monoidal product is exact.}\end{array} \end{align} 
			We will treat throughout a version of $\framed$ with $\framed(\bullet)=\emptyset$, i.e.\ we do not include operations of total arity zero (this just excludes the closed sphere, and is not an issue because once an operation is defined everywhere else, we can extend it uniquely to the sphere). 
			Let $\cat{A}\in \E$. Denote by $D:\cat{A}\to\cat{A}^\op$ the underlying duality.
			We define 
			$\cat{A}\dg$  
			on	 an operation $o$ of total arity $n \ge 1$ 
			to be the right exact functor
			\begin{align}
				\cat{A}\dg_o : \cat{A}^{\boxtimes n}   \ra{D_n^\text{rev}} \left(   \cat{A}^\op       \right)^{\boxtimes n} \simeq\left( \cat{A}^{\boxtimes n}\right)^\op \ra{\widetilde{\cat{A}_o}} \vect^\op \ra{*}\vect \ , 
			\end{align} where
		$D_n^\text{rev}$ is defined via
			\begin{align}
			D_n^\text{rev} : \cat{A}^{\boxtimes n} \ra{\substack{\text{invert the order}\\\text{via symmetric braiding}}} \cat{A}^{\boxtimes n} \ra{D ^{\boxtimes n}}(\cat{A}^\op)^{\boxtimes n}\ , \quad X_1\boxtimes \dots\boxtimes X_n \mapsto DX_n \boxtimes \dots\boxtimes DX_1  \ . 
		\end{align}
			In other words,
			\begin{align} \cat{A}_o\dg(P):=\cat{A}_o(D_n^\text{rev} P)^* \quad \text{for}\quad P \in \Proj \cat{A}^{\boxtimes n} \ . 
			\end{align}
			We call $\cat{A}\dg$ 
			the \emph{dual} of $\cat{A}$.
		\end{definition}
	
	\begin{remark}\label{remdnaeq}Denote the internal hom of $\Rexf$ by $\Rexf[-,-]$.
		The equivalence
		\begin{align}
			\Rexf[\cat{A}^{\boxtimes n},\vect]\ra{\simeq} \Rexf[\vect,\cat{A}^{\boxtimes n}]\simeq \cat{A}^{\boxtimes n}      \label{eqndrev}
			\end{align}
		induced by the cyclic structure of $\cat{A}$ becomes $\mathbb{Z}_2$-equivariant if $\Rexf[\cat{A}^{\boxtimes n},\vect]$ is equipped with $\dgr$ and $\cat{A}^{\boxtimes n}$ with $D_n^\text{rev}$. 	
		Indeed, to describe for $F\in \Rexf[\cat{A}^{\boxtimes n},\vect]$
		the object in $\cat{A}^{\boxtimes n}$
	that	 $F\dg$ is sent to, we may assume that $F$ is projective in $\Rexf[\cat{A}^{\boxtimes n},\vect]$, i.e.\ exact. This is enough because the equivalence will preserve colimits.
		We denote the $i$-th copy of $\Delta$ by $\Delta_i = \int_{P\in\Proj\cat{A}} DP\boxtimes P = \Delta_i'\boxtimes \Delta_i''$ in Sweedler notation, see \cite[Proposition~5.1.7]{kl} for why the reduction of the coend to projective objects is possible.
		Then
		\begin{align}
			F\dg (\Delta'_1,\dots,\Delta_n') \otimes \Delta_n''\boxtimes \dots \boxtimes \Delta_1'' &= F (D\Delta'_n,\dots,D\Delta_1')^* \otimes \Delta_n''\boxtimes \dots \boxtimes \Delta_1''\\ &\cong 
			F (\Delta'_n,\dots,\Delta_1')^* \otimes D\Delta_n''\boxtimes \dots \boxtimes D\Delta_1'' \\ &\phantom{\cong}
		\quad 	\text{(exactness of $F$, symmetry of $\Delta$)} \\&\cong D_n^\text{rev} \left( 	F (\Delta'_n,\dots,\Delta_1') \otimes \Delta_1''\boxtimes \dots \boxtimes \Delta_n''  \right) \ . 
			\end{align}
		This proves the claim.
		\end{remark}

		It is not obvious that $\cat{A}\dg$ is a cyclic $\framed$-algebra. To see this, we will need some additional considerations:

		\begin{lemma}\label{lemmamonideal}
			Let $\cat{A}$ be a 
			pivotal Grothendieck-Verdier category in $\Rexf$. Suppose that $\cat{A}$ is self-injective and with exact monoidal product.
			Then  $\Proj \cat{A}$ is a monoidal ideal, i.e.\ $P\otimes X$ and $X\otimes P$ are projective for any $X\in \cat{A}$
			whenever $P$ is projective. 
		\end{lemma}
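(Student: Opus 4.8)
The plan is to prove the two-sided statement by first establishing it for products $P\otimes X$ with $P$ projective and $X$ arbitrary, and then deducing the other side. Two exactness facts will be used throughout: the Grothendieck--Verdier duality $D\colon\cat A\to\cat A^\op$, being an anti-equivalence of abelian categories, is exact; and the monoidal product $\otimes$ is exact in each variable by hypothesis. I would also record that $D$ sends projectives to injectives (an equivalence preserves projectives, and the projectives of $\cat A^\op$ are the injectives of $\cat A$), so that self-injectivity of $\cat A$ upgrades this to $D(\Proj\cat A)\subseteq\Proj\cat A$; symmetrically $D$ preserves injectives.

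The heart of the argument is the computation, natural in $Z\in\cat A$,
\[
	\cat A(P\otimes X,Z)\;\cong\;\cat A\big((P\otimes X)\otimes DZ,K\big)\;\cong\;\cat A\big(P\otimes(X\otimes DZ),K\big)\;\cong\;\cat A\big(P,D(X\otimes DZ)\big),
\]
obtained by rewriting $Z\cong D(DZ)$ and applying the defining isomorphism $\cat A(-\otimes W,K)\cong\cat A(-,DW)$ of the dualizing object $K$ (first with $W=DZ$, then, after associativity, with the left tensor factor $P$) together with $D^2\cong\id_{\cat A}$. The right-hand side is exact in $Z$, since as a functor of $Z$ it is the composite $Z\mapsto DZ\mapsto X\otimes DZ\mapsto D(X\otimes DZ)\mapsto\cat A(P,-)$ of exact functors, using exactness of $D$, exactness of $\otimes$, and projectivity of $P$; a quick bookkeeping of variances confirms the composite is covariant. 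Hence $\cat A(P\otimes X,-)$ is exact, i.e.\ $P\otimes X$ is projective.

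For $X\otimes P$ I would reduce to the previous case. If one uses the (balanced) braiding that is present in all cases of interest here, then $X\otimes P\cong P\otimes X$ and we are done. In the general pivotal Grothendieck--Verdier setting one instead uses that the pivotal structure makes $D$ anti-monoidal, so $D(X\otimes P)\cong DP\otimes DX$; since $DP$ is projective, the case already treated shows $DP\otimes DX$ is projective, and applying $D$ once more and invoking $D^2\cong\id_{\cat A}$ together with $D(\Proj\cat A)\subseteq\Proj\cat A$ shows $X\otimes P\cong D(DP\otimes DX)$ is projective as well. In either case $\Proj\cat A$ is a two-sided monoidal ideal.

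I expect the only real difficulty to be bookkeeping rather than anything conceptual: placing the Grothendieck--Verdier isomorphisms on the correct tensor slot (the duality is set up on one side, so the $X\otimes P$ case genuinely needs the braiding, or the anti-monoidality coming from pivotality, as extra input), tracking variances through the alternating composition of copies of $D$ and of $\otimes$ so that one lands on an honestly covariant exact functor, and checking naturality in $Z$ of all intermediate isomorphisms so that the exactness conclusion is valid. Self-injectivity is used at exactly one place: to pass between ``projective'' and ``injective'' when transporting the first case across $D$.
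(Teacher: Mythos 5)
Your treatment of $P\otimes X$ is correct and in fact a little more economical than the paper's. The paper works with the pairing $\kappa$, passes through the Nakayama isomorphism $\cat{A}(P,W)^*\cong\cat{A}(W,\nakar P)$, and then invokes Lemma~\ref{lemmanondegenerate} to translate exactness of $\kappa(P\otimes Y,-)$ back into projectivity; you instead exhibit $\cat{A}(P\otimes X,-)\cong\cat{A}\bigl(P,D(X\otimes D^{-1}(-))\bigr)$ directly as a covariant composite of exact functors, which needs only exactness of $D$ and $\otimes$ and projectivity of $P$ (not even self-injectivity). Both arguments are sound; yours buys a shorter route to one half of the statement, the paper's buys a formulation that hands both sides over symmetrically via the symmetry of $\kappa$.

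The $X\otimes P$ case is where your proposal has a genuine gap. The claim that pivotality makes $D$ anti-monoidal for $\otimes$, i.e.\ $D(X\otimes P)\cong DP\otimes DX$, is false in a general Grothendieck--Verdier category: what holds is $D(X\otimes P)\cong DP\odot DX$ for the \emph{second} monoidal product $\odot$ of Boyarchenko--Drinfeld, and the paper itself stresses (citing \cite[Proposition~4.4]{bd}) that a canonical identification $\otimes\cong\odot$ forces rigidity. So your second route only covers the rigid case, and your first route (the braiding) covers all of the paper's applications but is not among the stated hypotheses, which mention only pivotality. The correct general input is the one the paper's ``similarly'' silently uses, namely the symmetry of $\kappa$, equivalently the pivotal identification $\cat{A}(X\otimes W,K)\cong\cat{A}(W,DX)$ of the two one-sided dualities. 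With it one gets
\begin{align}
	\cat{A}(X\otimes P,Z)\;\cong\;\cat{A}\bigl(X\otimes(P\otimes D^{-1}Z),K\bigr)\;\cong\;\cat{A}\bigl(P\otimes D^{-1}Z,\,DX\bigr),
\end{align}
and exactness in $Z$ follows because a short exact sequence in $Z$ is carried by the exact functor $P\otimes D^{-1}(-)$ to a short exact sequence of \emph{projective} objects (by the already-proved half), which splits, so that $\cat{A}(-,DX)$ preserves its exactness. Replacing your anti-monoidality step by this argument closes the gap without changing the overall shape of your proof.
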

		
		\begin{proof}
			For $P \in \Proj \cat{A}$ and $X,Y \in \cat{A}$,
			we have by $\otimes$-invariance of $\kappa$ and \eqref{eqncystructurewithN}
			\begin{align}
				\kappa(P\otimes Y,X)\cong \kappa(P, Y \otimes X) \cong \cat{A}(P,D(Y\otimes X))^*\cong \cat{A}(D(Y\otimes X),\nakar P) \cong \cat{A}(D\nakar P , Y \otimes X) \ . 
			\end{align}
			First note that $D\nakar P$ is projective. Since $\otimes$ is exact, this allows us to conclude that $ \cat{A}(D\nakar P , Y \otimes -):\cat{A} \to \vect$
			is exact. 
			This makes $\kappa(P\otimes Y,-)$  exact, and therefore $P\otimes Y$ projective by Lemma~\ref{lemmanondegenerate}.
			Similarly, one sees that $Y\otimes P$ is projective. %3
		\end{proof}

		We can now establish
		that $\cat{A}\dg$ is again a cyclic framed $E_2$-algebra in $\Rexf$ if we assume $\cat{A}\in\E$.
		By definition its non-degenerate symmetric pairing is \begin{align}\label{eqnkappadg} \kappa\dg (X,Y)=\kappa(\nakal X, Y)\end{align} as follows directly from the definitions. Its coevaluation object, as follows from Remark~\ref{remdnaeq}, is $D_2 \Delta$, i.e.\ the coend $\int^{X \in \cat{A}} DX \boxtimes X=\int^{P\in\Proj\cat{A}} DX\boxtimes X$. 
		The only non-obvious part is the compatibility with gluing.
		It suffices to consider the case where operations $o$ and $p$ of arity $m\ge 1$ and $n\ge 1$, respectively, are glued together to form an operation $q$ of arity $m+n-2$. Since we must have $m+n-2\ge 1$, it follows that $m$ or $n$ is at least 2; without loss of generality, we assume $m\ge 2$.
		Now we observe with Sweedler-like notation $D_2\Delta = \int^{S \in \Proj\cat{A}} DS \boxtimes S=(D_2\Delta)'\boxtimes (D_2\Delta)''$ for the dual of the coevaluation object $\Delta$
		(we can assume that the respective last arguments are affected by the gluing)
		\begin{align}
			&\cat{A}\dg_o (    P_1 \boxtimes \dots \boxtimes P_{m-1} \boxtimes (D_2\Delta)'   ) \otimes \cat{A}\dg_p (Q_1\boxtimes\dots\boxtimes Q_{n-1} \boxtimes (D_2 \Delta)'') \\ \cong& \int^{S \in \Proj \cat{A}} 
			\cat{A}\dg_o (    P_1 \boxtimes \dots \boxtimes P_{m-1} \boxtimes DS   ) \otimes \cat{A}\dg_p (Q_1\boxtimes\dots\boxtimes Q_{n-1} \boxtimes S) \\ \quad \text{with}\quad & P_1,\dots,P_{m-1},Q_1,\dots,Q_{n-1} \in \Proj \cat{A}
		\end{align}
		because $\cat{A}\dg_o$ and $\cat{A}\dg_p$ are right exact and hence preserve finite coends.
		With the definition of $\cat{A}\dg$ and \eqref{eqnconfblockgenus0}, we arrive at
		\begin{align}
			&\cat{A}\dg_o (    P_1 \boxtimes \dots \boxtimes P_{m-1} \boxtimes (D\Delta)'   ) \otimes \cat{A}\dg_p (Q_1\boxtimes\dots\boxtimes Q_{n-1} \boxtimes (D \Delta)'') \\ \cong&
			\int^{S \in \Proj \cat{A}} 
			\cat{A} (S \otimes DP_{m-1} \otimes \dots \otimes DP_1  ,K ) \otimes \cat{A} (   DS \otimes DQ_{n-1}\otimes \dots \otimes D Q_1,K)
			\\ \cong&
			\int^{S \in \Proj \cat{A}} 
			\cat{A} \left(S, \underbrace{D(DP_{m-1} \otimes \dots \otimes DP_1)}_{\substack{\text{projective because $m\ge 2$} \\   \text{and because of Lemma~\ref{lemmamonideal}}  }}   \right) \otimes \cat{A} (   DS , D( DQ_{n-1}\otimes \dots \otimes D Q_1))
			\\ \cong&
			\cat{A} (   DP_{m-1} \otimes \dots \otimes DP_1 , D( DQ_{n-1}\otimes \dots \otimes D Q_1))
			\\ \cong&
			\cat{A} (   DP_{m-1} \otimes \dots \otimes DP_1 \otimes DQ_{n-1}\otimes \dots \otimes D Q_1,K) \\\cong &\cat{A}\dg_q (Q_1 \boxtimes \dots \boxtimes Q_{n-1}\boxtimes P_1 \boxtimes \dots \boxtimes P_{n-1}) \ , \label{eqncyclicalgdgrcomp}
		\end{align}
		where we have used 
		the Yoneda Lemma.
		This establishes the gluing property.
		Therefore $\dgr$ is a well-defined operation on cyclic framed $E_2$-algebras with values in self-injective finite categories and with exact monoidal product.
		Since $\widetilde{-}$, $D$ and the dual of finite-dimensional vector spaces square to the identity up to a canonical isomorphism, we obtain in summary:

		\begin{proposition}\label{propdginv}
			Through $\cat{A}\mapsto \cat{A}\dg$, we obtain a homotopy involution
			on the 2-groupoid $\E$, with the coherence data of the homotopy coherent $\mathbb{Z}_2$-action induced by
			the trivialization $\omega : \id_\cat{A}\to D^2$ of $D^2$ coming from the symmetry of the pairing $\kappa$ underlying $\cat{A}$, the coherence data of the homotopy involutions~\eqref{eqnwidetildelexf}
			 and the natural isomorphism $V^{**}\cong V$ for finite-dimensional vector spaces $V$.
		\end{proposition}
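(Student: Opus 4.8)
The plan is the following. That $\dgr$ sends $\E$ to $\E$ and is $2$-functorial on the $2$-groupoid $\E$ --- being a composite of the $2$-functorial operations $D$, the extension functor $\widetilde{(-)}$ of \eqref{eqnwidetildelexf}, linear dualization of vector spaces, and the symmetric braiding of $\Rexf$ used to reverse the order of the arguments --- has already been established above; so what remains is to produce, naturally in $\cat{A}\in\E$, an equivalence $\eta_\cat{A}: (\cat{A}\dg)\dg\ra{\simeq}\cat{A}$ of cyclic framed $E_2$-algebras, and to check that the pair $(\dgr,\eta)$ satisfies the coherence of a homotopy coherent $\mathbb{Z}_2$-action on $\E$. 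The point is that $\dgr$ is, on each arity, a composite of three operations each of which already carries its own homotopy-involution datum, and $\eta$ is to be built by combining those: recalling from Definition~\ref{defdgr} that $\cat{A}\dg_o={}^*\circ\widetilde{\cat{A}_o}\circ D_n^{\mathrm{rev}}$ for an operation $o$ of total arity $n$, the three factors square to the identity through, respectively, the trivialization $\omega:\id_\cat{A}\to D^2$ together with the involutivity of the order-reversal permutation (from the symmetry of the braiding of $\Rexf$), the homotopy-involution datum of $\widetilde{(-)}$ supplied by Lemma~\ref{lemmawidetilde}, and the canonical $V^{**}\cong V$.

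A convenient reduction for the ``conformal blocks'' part is Remark~\ref{remdnaeq}: under the equivalence $\Rexf[\cat{A}^{\boxtimes n},\vect]\simeq\cat{A}^{\boxtimes n}$ induced by the cyclic structure of $\cat{A}$, the operation $\dgr$ corresponds to $D_n^{\mathrm{rev}}$, so on this level the homotopy-involution structure of $\dgr$ is exactly that of $D_n^{\mathrm{rev}}=D^{\boxtimes n}\circ(\text{order reversal})$, which is assembled from $\omega^{\boxtimes n}$ and the braiding of $\Rexf$. What this reduction does not see directly is that the second application of $\dgr$ is being made to the cyclic algebra $\cat{A}\dg$, whose Grothendieck--Verdier duality is the Nakayama-twisted duality $D'\cong\nakar\circ D$ --- forced by the pairing $\kappa\dg(X,Y)=\kappa(\nakal X,Y)$ of \eqref{eqnkappadg} and the fact that $\nakal,\nakar$ are mutually quasi-inverse equivalences on the self-injective category $\cat{A}$ (Remark~\ref{remnakal}) --- and whose coevaluation is $D_2\Delta=\int^{P\in\Proj\cat{A}}DP\boxtimes P$ (as noted before the statement). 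The real work, and the step I expect to be the main obstacle, is therefore to check that when $\dgr$ is applied a second time the Nakayama twist carried by $D'$ and the Nakayama twist built into the definition~\eqref{eqnwidetildeF} of $\widetilde{(-)}$ cancel against one another --- coherently, i.e.\ as $2$-isomorphisms and not merely on objects, and compatibly with the order reversal --- leaving behind precisely the $\omega$-, braiding- and $V^{**}\cong V$-data that make up $\eta_\cat{A}$.

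Granting that cancellation, the next step is to see that $\eta_\cat{A}$ is an isomorphism of \emph{cyclic} framed $E_2$-algebras, not merely a compatible family of isomorphisms of the underlying functors. Compatibility with operadic composition is checked exactly as in the excision computation \eqref{eqncyclicalgdgrcomp} carried out above for a single application of $\dgr$: that computation already identifies $\Delta$, its dual $D_2\Delta$, and, iterating once more, the coevaluation $D_2D_2\Delta\cong\Delta$ of $(\cat{A}\dg)\dg$ in a mutually compatible way, the reduction of the relevant (co)ends to projective objects being harmless in the sense of Remark~\ref{remdnaeq}; compatibility with the operadic identities holds because each of the three constituent pieces of coherence data respects them; and naturality of $\eta$ in $\cat{A}$ is inherited from the naturality of $\omega$, of the coherences \eqref{eqnwidetildelexf}, and of $V^{**}\cong V$.

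Finally, one must verify the $\mathbb{Z}_2$-coherence axiom, namely that $\dgr(\eta_\cat{A})$ agrees with $\eta_{\cat{A}\dg}$ up to the prescribed higher cells. This reduces, factor by factor, to the corresponding statements for the three constituent involutions: for the $\mathbb{Z}_2$-action on $\cat{A}$ given by $\omega$ (which holds since $\omega$ originates from the symmetry of $\kappa$), for the homotopy involutions \eqref{eqnwidetildelexf} (Lemma~\ref{lemmawidetilde}), and for the trivial $\mathbb{Z}_2$-action trivialized by $V^{**}\cong V$ on vector spaces (classical). Assembling these four verifications yields the homotopy involution $\cat{A}\mapsto\cat{A}\dg$ on $\E$ with exactly the coherence data named in the statement.
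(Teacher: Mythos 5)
Your proposal follows essentially the same route as the paper: the paper's entire argument for this proposition consists of the preceding excision computation showing that $\cat{A}\dg$ is a well-defined cyclic framed $E_2$-algebra, followed by the one-sentence observation that $\widetilde{(-)}$, $D$ and vector-space dualization each square to the identity up to canonical isomorphism --- which is precisely your three-factor decomposition, assembled with the same coherence data $\omega$, Lemma~\ref{lemmawidetilde} and $V^{**}\cong V$. The ``Nakayama cancellation'' you single out as the main obstacle is a genuine subtlety (the second application of $\dgr$ uses the twisted duality $D\dg\cong\nakar D$ of $\cat{A}\dg$, and the resulting composite $D\circ D\dg\cong\nakal$ is only trivialized through the canonical isomorphism $\nakar\nakal\cong\id$ that also underlies the involutivity of $\widetilde{(-)}$ in Lemma~\ref{lemmawidetilde}), but the paper leaves this point just as implicit as you do, so your write-up is, if anything, more candid about where the actual content of the coherence check lies.
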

		
		\begin{remark}\label{remcycassdgr} The considerations above show that the
			$\dgr$-operation cannot only be defined for cyclic framed $E_2$-algebras, but also cyclic associative
			algebras if self-injectivity and the exactness of the monoidal product are assumed. 
			Proposition~\ref{propdginv} remains correct in that case.
			More generally, a  version of Proposition~\ref{propdginv}
			can be stated for other cyclic operads such that the $\dgr$-operation can be defined.
			In the present paper, we have no application for this and will not pursue this further.
		\end{remark}

		\begin{corollary}\label{cordgrallobject}
			Let $\cat{A}\in \E$.
			The isomorphism \begin{align} \cat{A}_o\dg(P)\cong \cat{A}_o(D_n^\text{rev}P)^*\label{eqnforallobj}\end{align} extends
			from projective objects to all objects of $\cat{A}$ for all operations $o$ 
			if and only if $\cat{A}$ is semisimple.
		\end{corollary}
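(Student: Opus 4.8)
The plan is to dispatch the \emph{if} direction in one line and then isolate the content of the \emph{only if} direction to a single operation. If $\cat{A}$ is semisimple then $\cat{A}^{\boxtimes n}$ is semisimple for every $n$, so $\Proj\cat{A}^{\boxtimes n}=\cat{A}^{\boxtimes n}$ and the isomorphism \eqref{eqnforallobj} already holds on all objects with nothing to check. For the converse it is enough to exhibit one operation $o$ on which the extension fails whenever $\cat{A}$ is \emph{not} semisimple, so I would take $o$ to be the operadic identity (a total arity two operation, not excluded by the convention $\framed(\bullet)=\emptyset$), on which $\cat{A}_o(X\boxtimes Y)=\cat{A}(X\otimes Y,K)^{*}$ by \eqref{eqnconfblockgenus0}.

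Next I would identify the right-hand side of \eqref{eqnforallobj} on this operation. Using $D_2^{\text{rev}}(X\boxtimes Y)=DY\boxtimes DX$, the representability $\cat{A}(-\otimes Z,K)\cong\cat{A}(-,DZ)$ defining the Grothendieck-Verdier duality, the finiteness of morphism spaces, and the trivialization $\id_{\cat{A}}\cong D^{2}$ coming from the symmetry of the pairing, one obtains a natural isomorphism
\begin{align}
	\cat{A}_o\bigl(D_2^{\text{rev}}(X\boxtimes Y)\bigr)^{*}=\cat{A}(DY\otimes DX,K)^{**}\cong\cat{A}(DY,D^{2}X)\cong\cat{A}(DY,X)\ .
\end{align}
So on the operadic identity the claimed extension amounts to a natural isomorphism of functors $\cat{A}\boxtimes\cat{A}\to\vect$ between $\cat{A}_o\dg$ and $(X,Y)\mapsto\cat{A}(DY,X)$.

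The decisive step is an exactness comparison. By Proposition~\ref{propdginv}, $\cat{A}\dg$ is again an honest $\Rexf$-valued cyclic $\framed$-algebra, so $\cat{A}_o\dg$ is right exact in each of its two variables. On the other hand $(X,Y)\mapsto\cat{A}(DY,X)$ is a Hom functor (composed in the $Y$-slot with the exact, arrow-reversing anti-equivalence $D$), hence \emph{left} exact in each variable; it is exact in $Y$ exactly when $X$ is injective, and in $X$ exactly when $Y$ is injective. A natural isomorphism between the two functors therefore makes $\cat{A}(D(-),X)$ right exact for every $X$, so every object of $\cat{A}$ is injective; since a finite linear category all of whose objects are injective is semisimple (every monomorphism splits), $\cat{A}$ is semisimple. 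As a consistency check one can rerun this through the explicit pairing $\cat{A}_o\dg(X\boxtimes Y)=\kappa\dg(X,Y)=\kappa(\nakal X,Y)$ of \eqref{eqnkappadg}: exactness of $\kappa(\nakal(-),-)$ in either slot again forces all objects to be projective-injective, using that $\nakal$ is an equivalence on a self-injective category (Remark~\ref{remnakal}).

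The step I expect to need the most care is pinning down the meaning of ``extends from projective objects to all objects'': it must be read as saying that \eqref{eqnforallobj} upgrades to a natural isomorphism of the two functors $\cat{A}^{\boxtimes n}\to\vect$, equivalently that the a priori only left exact functor $\cat{A}_o\bigl(D_n^{\text{rev}}(-)\bigr)^{*}$ is in fact right exact (hence exact), for it is precisely this that lets right-exactness of $\cat{A}_o\dg$ be transported across. Granting this reading, the exactness obstruction above is the entire obstruction; the two-point block already detects it, and the identical argument for a general operation $o$ shows that right-exactness of $\cat{A}_o\bigl(D_n^{\text{rev}}(-)\bigr)^{*}$ is possible only in the semisimple case, so no further operations need be checked.
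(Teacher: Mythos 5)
Your proof is correct, and it rests on the same underlying mechanism as the paper's: $\cat{A}_o\dg$ is right exact by construction, whereas the formula $\cat{A}_o(D_n^{\text{rev}}-)^*$ is a priori only left exact, so extending the isomorphism to all objects forces exactness, which is an obstruction that only semisimplicity can clear. The difference is in which operation you test and how you finish. The paper evaluates on the total-arity-one operation (the disk $\Sigma_{0,1}$), obtaining that $X\mapsto\cat{A}(DX,K)\cong\cat{A}(I,X)$ must be right exact, hence exact, hence $I$ is projective; it then needs Lemma~\ref{lemmamonideal} (projectives form a monoidal ideal) to promote projectivity of $I$ to projectivity of every $X\cong X\otimes I$. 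You instead test the operadic identity (total arity two), identify $\cat{A}_{1}\dg(X\boxtimes Y)\cong\cat{A}(DY,X)$, and read off directly from exactness in the $Y$-variable that every $X$ is injective, so that semisimplicity follows from the elementary fact that a finite abelian category with all objects injective is semisimple. Your route avoids Lemma~\ref{lemmamonideal} entirely (at the mild cost of the bookkeeping with $D_2^{\text{rev}}$, $D^2\cong\id$ and the bidual of finite-dimensional vector spaces, all of which you handle correctly), and your closing remark correctly identifies that ``extends to all objects'' must be read as a natural isomorphism of functors, i.e.\ as right-exactness of the naive formula. Both arguments are sound; the paper's is marginally shorter because the arity-one block collapses to $\cat{A}(I,-)$ immediately.
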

		
		\begin{proof}
			If $\cat{A}$ is semisimple, i.e.\ if all objects of $\cat{A}$ are projective, then~\eqref{eqnforallobj} clearly extends to all objects.
			
			If conversely~\eqref{eqnforallobj} holds for all objects, then in particular $X \mapsto \cat{A}(DX,K)\cong \cat{A}(I,X) $ by virtue of being isomorphic to $\cat{A}\dg( \Sigma_{0,1};-)$ needs to be right exact. But this implies that $\cat{A}(I,-)$ is  exact and that $I$ is projective.
			Now every $X\in \cat{A}$ is projective by Lemma~\ref{lemmamonideal} since $X\cong X \otimes I$.
		\end{proof}

		\begin{remark}
			\label{remdgrnonmod}
			One could ask why we do not define $\cat{A}\dg$ without any restrictions on the total arity being at least one, or even directly for arbitrary modular algebras.
			This is because of the following problems: Suppose that $\cat{A}$ is a modular $\cat{O}$-algebra and $o \in \cat{O}(\bullet)$, where $\bullet$ is the corolla without legs. In this case, $\cat{A}_o$ is simply a vector space (because it is a right exact functor $\vect \to \vect$), and Definition~\ref{defdgr}, when extended to that case, would give us the dual vector space $\cat{A}\dg_o=\cat{A}_o^*$. But now imagine that $o$ is obtained by taking the trace of another operation $p \in \cat{O}(T)$, where $T$ is a corolla with two legs (in the example $\cat{O}=\Surf$, the operation $p$ could be $\mathbb{S}^1 \times [0,1]$, and $o$ would then be $\mathbb{T}^2$). If $\cat{A}\dg$, when directly defined by extending Definition~\ref{defdgr} to all operations, were a modular algebra, then we would have $\cat{A}\dg_o = \cat{A}\dg_p (D_2 \Delta)$ by excision. On the other hand, this needs to be canonically
			isomorphic to $\cat{A}_o^*$.
			It would be tempting to calculate $\cat{A}\dg_p (D_2 \Delta)$ by simply applying $D$ to the argument and taking the dual vector space which indeed would give us $\cat{A}_p(\Delta)^* \cong \cat{A}_o^*$.  But we are not allowed to do this because $\Delta \in \cat{A}\boxtimes \cat{A}$ is generally not a projective object!
			In the semisimple case, this issue would of course be resolved, but in the non-semisimple case, we cannot define the $\dgr$-operation directly on cyclic and modular algebras \emph{without} any restriction on the arity.
		\end{remark}
		
		\subsection{A reminder on ansular functors\label{secansular}}
		If we replace in the definition of the surface operad all surfaces with parametrized boundary by three-dimensional handlebodies with parametrized oriented disks embedded in the boundary (again, we remain in the oriented and compact setting; the embeddings of the disks are always orientation-preserving), one obtains the modular operad $\Hbdy$ of handlebodies. 
		In \cite{mwansular} modular $\Hbdy$-algebras
		in a symmetric monoidal bicategory $\cat{S}$
		are given the name \emph{ansular functor}, and it is proven, building on the results of \cite{giansiracusa,cyclic},
		that the genus zero restriction from ansular functors to cyclic framed $E_2$-algebras (this makes sense because the genus zero part of $\Hbdy$ is equivalent, as cyclic operad, to $\framed$) is an equivalence. Its inverse is the modular extension:
		\begin{equation}\label{eqnequivcycframed0}
			\begin{tikzcd}
				\catf{CycAlg}(\framed) \ar[rrrr, shift left=2,"\text{modular extension}"] &&\simeq&& \ar[llll, shift left=2,"\text{restriction}"] 
				\catf{ModAlg}\left( \Hbdy \right) \quad \text{for bicategorical algebras.} 
			\end{tikzcd}
		\end{equation} 
		We denote the modular extension of $\cat{A}\in \catf{CycAlg}(\framed)$ by $\widehat{\cat{A}}$.
		The ansular functor associated to a cyclic framed $E_2$-algebra can be concretely described~\cite{cyclic,mwansular}. For a ribbon category, there is even a description in terms of admissible skeins~\cite{asm} as shown in~\cite{mwskein}.

		\subsection{The $\dgr$-operation on ansular functors\label{secansularfunctorrefl}}	Suppose that $\cat{B}$ is a modular $\Hbdy$-algebra in $\Rexf$, i.e.\ an ansular functor, whose underlying cyclic framed $E_2$-algebra (the one obtained by genus zero restriction) is in $\E$.
			Then we would like to define $\cat{B}\dg$, but we cannot na\"ively extend Definition~\ref{defdgr} because of the problems explained in Remark~\ref{remdgrnonmod}.     
			We can however restrict $\cat{B}$ to genus zero and obtain cyclic framed $E_2$-algebra $\cat{A}$ that lies in $\E$ by definition.
			Then we can define $\cat{B}\dg$
			as the unique ansular functor extending $\cat{A}\dg$, i.e.\ we set $\cat{B}\dg := \widehat{   \cat{A}\dg  }$, where $\widehat{-}$ denotes the modular extension.
			This makes the equivalence~\eqref{eqnequivcycframed0}
			 equivariant with respect to $\dgr$ once we restrict to self-injective underlying categories and exact monoidal products.	
			We have to underline that $\cat{B}\dg$, when defined this way, 
			cannot necessarily be described
			by extending Definition~\ref{defdgr} to modular algebras --- simply because it would not be clear that this produces a modular algebra
			as we saw in Remark~\ref{remdgrnonmod}.
			Nonetheless, with a computation similar to~\eqref{eqncyclicalgdgrcomp}, one can show that
			\begin{align}
				\cat{B} \dg (H;P) \cong \cat{B}(H;D_n^\text{rev}P)^* \quad \text{for}\quad P \in \Proj \cat{A}^{\boxtimes n} \ ,       \label{eqndgransular}
			\end{align} where $H$ has at least one embedded disk per connected component.
		
		\needspace{15\baselineskip}
		
		\section{Reflection equivariance\label{secreflection}}
		
		Using the definition of the $\dgr$-operation, we can now define the notion of reflection equivariance.

		\subsection{Homotopy involutions of cyclic and modular operads}
		First we collect some facts on homotopy involutions of modular operads.
		
		\begin{definition}
			An \emph{involution} on a cyclic (or non-cyclic or modular) operad $\cat{O}$ in a (higher) symmetric monoidal category is a $\mathbb{Z}_2$-action on $\cat{O}$ up to coherent isomorphism.
			If the generator of $\mathbb{Z}_2$ is sent to the automorphism $J:\cat{O}\to\cat{O}$ of cyclic (or non-cyclic or modular) operads, we say that $\cat{O}$ is a \emph{cyclic (or non-cyclic or modular) operad with involution $J$}.
		\end{definition}
		
		The following is a straightforward observation:
		\begin{lemma}	The reflection operation (or also called orientation reversal operation) $\refl : \Sigma \mapsto \bar \Sigma$
			yields a homotopy  involution
			on \begin{itemize} \item the cyclic framed $E_2$-operad $\framed$, 
				\item 	the modular handlebody operad $\Hbdy$, 
				\item 	and the modular surface operad $\Surf$ \end{itemize}
			that is compatible with the genus zero restriction and the boundary map $\partial : \Hbdy \to \Surf$ in the sense that the genus zero restriction and the boundary map are $\mathbb{Z}_2$-equivariant up to coherent isomorphism.\end{lemma}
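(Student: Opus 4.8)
The plan is to realise the reflection operation $\refl$ by hand on the groupoids of operations of $\framed$, $\Hbdy$ and $\Surf$ and to check its compatibility with all the structure maps; everything will reduce to the fact that orientation reversal of manifolds is \emph{local} and \emph{strictly involutive}. Recall that if $M$ is an oriented manifold with orientation $\mu$, then $\bar M$ carries $-\mu$, so $\bar{\bar M}=M$ literally. The only delicate point is the boundary parametrisations: with the ``outward normal first'' convention one has $\partial\bar M=\overline{\partial M}$, so an orientation-preserving parametrisation $\varphi\colon(\mathbb{S}^1)^{\sqcup(n+1)}\to\partial M$ becomes orientation-\emph{reversing} when viewed as a map into $\partial\bar M$. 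To fix this I would choose once and for all an orientation-reversing \emph{involution} $r\colon\mathbb{S}^1\to\mathbb{S}^1$ (a reflection), together with an orientation-reversing involution $\rho\colon\disk^2\to\disk^2$ restricting to $r$ on $\partial\disk^2$, and set
\[
\refl(\Sigma,\varphi):=\bigl(\bar\Sigma,\ \varphi\circ r^{\sqcup(n+1)}\bigr),\qquad \refl\bigl(H,(D_i)_i\bigr):=\bigl(\bar H,(D_i\circ\rho)_i\bigr),
\]
with $\refl$ acting as the identity on the underlying diffeomorphisms of mapping classes, which are now orientation- and parametrisation-preserving between the reflected objects. On genus zero surfaces this is the usual mirror operation on $\framed$, which on the ribbon braid group model is conjugation by an orientation-reversing affine self-map of the plane, inverting crossings and framing twists. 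Since $r^2=\id$, $\rho^2=\id$ and $\bar{\bar M}=M$, each $\refl$ is a \emph{strict} involution on every groupoid of operations.

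Next I would verify that $\refl$ is an automorphism of the cyclic operad $\framed$ and of the modular operads $\Hbdy$, $\Surf$ in the Costello-style description recalled above --- i.e.\ that it intertwines the symmetric monoidal functors out of $\Forests$, resp.\ $\Graphs$, with themselves up to coherent isomorphism. This is essentially formal, because orientation reversal is local: it commutes strictly with disjoint union, $\overline{\Sigma_1\sqcup\Sigma_2}=\bar\Sigma_1\sqcup\bar\Sigma_2$, so $\refl$ is symmetric monoidal; it commutes with the permutation and cyclic action on the legs of a corolla; it sends the operadic identity (a cylinder, resp.\ a solid cylinder) to an isomorphic operation via the parametrisation adjustment; and it is compatible with the gluing of parametrised boundary circles, because the two parametrisations entering a gluing both get post-composed with the \emph{same} involution $r$, so the gluing map changes only by conjugation by $r$, hence only up to isotopy, yielding a canonical identification $\refl(\Sigma'\cup\Sigma'')\cong\refl\Sigma'\cup\refl\Sigma''$ (and likewise for the self-gluings in the modular case, using $\rho$). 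Thus for each morphism $\Gamma\colon T\to T'$ in $\Forests$, resp.\ $\Graphs$, one obtains a coherent isomorphism $\refl_{T'}\circ\cat{O}(\Gamma)\cong\cat{O}(\Gamma)\circ\refl_T$, and together with $\refl^2=\id$ this exhibits $\refl$ as a $\mathbb{Z}_2$-action up to coherent isomorphism, that is, a homotopy involution, on each of $\framed$, $\Hbdy$, $\Surf$. The coherence isomorphisms needed are no worse than those already built into the ``up to coherent isomorphism'' symmetric monoidal functors that define the operads.

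Finally, compatibility with the structure maps. The genus zero part of $\Surf$ is by construction the cyclic operad of genus zero surfaces, and the identification with $\framed$ intertwines the two reflection operations, since both are ``reverse the orientation'' with the same adjustment by $r$; hence genus zero restriction is $\mathbb{Z}_2$-equivariant up to coherent isomorphism. For the boundary map $\partial\colon\Hbdy\to\Surf$, which sends a handlebody with embedded boundary disks $D_i$ to the surface obtained from $\partial H$ by removing the interiors of the $D_i$, with the $\partial D_i$ as parametrised boundary, one has $\partial\bar H=\overline{\partial H}$ for the induced orientations, and the disk adjustment $\rho$ on the $\Hbdy$-side restricts on $\partial\disk^2$ to the circle adjustment $r$ on the $\Surf$-side; therefore $\partial\circ\refl\cong\refl\circ\partial$ coherently. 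This completes the proof. I expect the only genuine work to lie in the middle step: fixing the conventions for induced boundary orientations and checking that, with $r$ and $\rho$ chosen to be genuine reflections (so that $r^2=\rho^2=\id$), the gluing compatibilities hold with identity-like coherence. This is routine bookkeeping, but it is the point that needs care: an unlucky convention still gives a homotopy involution, but only after carrying nontrivial coherence data for $\refl^2\cong\id$.
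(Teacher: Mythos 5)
Your proposal is correct. The paper offers no proof of this lemma at all --- it is introduced with ``The following is a straightforward observation'' and left as such --- so there is no argument to compare against; your writeup is the standard verification, and it correctly isolates the one genuine subtlety (that an orientation-preserving boundary parametrization of $\Sigma$ is orientation-reversing into $\partial\bar\Sigma$, fixed by a chosen reflection $r$ of $\mathbb{S}^1$ and its extension $\rho$ to the disk), with the remaining compatibilities with the $\Forests$/$\Graphs$ structure, the genus zero restriction and $\partial:\Hbdy\to\Surf$ reducing to locality and strict involutivity of orientation reversal exactly as you say.
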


		For a  (non-)cyclic or modular algebra $\cat{A}$ over $\framed$, $\Hbdy$ 
		or $\Surf$, we denote the restriction of $\cat{A}$ along the reflection by $\bar{\cat{A}}$.
		
		\begin{remark}
			The reflection operation on algebras
			clearly makes the modular extension procedure 
			\begin{align}
			\catf{CycAlg}(\framed)\simeq \catf{ModAlg}(\Hbdy)\end{align} from~\eqref{eqnequivcycframed0}
			also $\mathbb{Z}_2$-equivariant. 
		\end{remark}
		
		\begin{remark}\label{rembarA}
			For a framed $E_2$-algebra $\cat{A}$, a concrete description of $\bar{\cat{A}}$ is well-known, see e.g.~the explanations in~\cite[Section~2.1]{bjss}: It can be modeled as the same underlying $E_1$-algebra, but with inverse braiding and inverse balancing (and unless otherwise stated, we will refer to this model when using the notation $\bar{\cat{A}}$). A different model is $\cat{A}^{\otimes \op}$ that just equips $\cat{A}$ with the opposite monoidal product.
			These two models correspond to 
			the reflections
			along two different axes; they are
			 equivalent by a 180 degree rotation of the large disk containing the little disks.
		\end{remark}

		\subsection{Homotopy fixed points}
		The orientation reversal can be combined with the $\dgr$-operation.
		What we will be looking for are the homotopy fixed points of this combined operation.
		\begin{definition}\label{defhomotopyfixedpoint}
			Let $\cat{O}$ be a cyclic (or non-cyclic) $\Grpd$-valued operad
			such that the $\dgr$-operation on a subcategory $\E$ of its algebras with values in self-injective finite linear categories is well-defined.
			For $\cat{O}=\framed$,  we fix $\E$ to be the subcategory of self-injective  categories with exact monoidal product from Definition~\ref{defdgr}.
			Let $J$ be an involution on $\cat{O}$ 
			and $\cat{A}$ a cyclic (or non-cyclic) $\Rexf$-valued $\cat{O}$-algebra.
			A \emph{$\dgr$-$J$-homotopy fixed point structure on $\cat{A}$}
			\begin{pnum}
				\item includes the assumption that $\cat{A}$ is in the subcategory $\E$, and hence in particular self-injective \label{htpfixedpoint1}
				(this is not an additional assumption, but needed for~\ref{htpfixedpoint2} to make sense),
				\item consists of
				the structure of a homotopy $\mathbb{Z}_2$-fixed point on $\cat{A}$ for the $\mathbb{Z}_2$-action $\cat{B}\mapsto J^*\cat{B}\dg$ on $\E$ (here $J^*$ is the restriction along $J$).
				\label{htpfixedpoint2}
			\end{pnum}
		\end{definition}
		
		\begin{remark}Note that $J^* \cat{B}\dg \simeq (J^* \cat{B})\dg$ by a canonical equivalence. Therefore, the fact that $\cat{B} \mapsto J^* \cat{B}\dg$ is a homotopy involution follows from Proposition~\ref{propdginv}.
		\end{remark}
		
	\subsection{Connection to projective Calabi-Yau structures}
		If we evaluate a homotopy fixed point structure as introduced in Definition~\ref{defhomotopyfixedpoint} at the operadic identity,
		we obtain a Calabi-Yau structure on the projective objects:
		
		\begin{proposition}\label{propcy}
			Let $\cat{A}$ be a cyclic  $\Rexf$-valued
			$\cat{O}$-algebra with $\dgr$-$J$-homotopy fixed point structure
			with respect to a homotopy involution $J$ of $\cat{O}$ as in Definition~\ref{defhomotopyfixedpoint}.
			The evaluation of the homotopy fixed point
			structure at the operadic identity amounts exactly natural isomorphisms \begin{align}
				\label{eqnvarphiiso}	\varphi_{P,Q} : \kappa(DQ,DP)^* \to  \kappa(P,Q)\quad \text{for}\quad P,Q \in \Proj \cat{A}
			\end{align}
			for the pairing $\kappa$ of $\cat{A}$ 
			making the square	
			\begin{equation}\label{eqncysymcond}
				\begin{tikzcd}
					\kappa (P,Q)^*   \ar[]{rr}{\omega} \ar[swap]{dd}{\varphi_{P,Q}^*} & &  	\kappa (D^2 P, D^2 Q)^* \ar{dd}{   \varphi_{DQ,DP}  }    \\
					\\
					\kappa (DQ,DP)^{**} \ar[]{rr}{\cong}	& &  
					\kappa (DQ,DP) 
				\end{tikzcd} 
			\end{equation}
			commute. In other words,
			the isomorphisms~\eqref{eqnvarphiiso} induce a Calabi-Yau structure on $\Proj \cat{A}$. 
		\end{proposition}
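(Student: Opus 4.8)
The plan is to restrict the homotopy fixed point structure to the operadic identity and read off both the isomorphisms~\eqref{eqnvarphiiso} and the square~\eqref{eqncysymcond}. Write $e$ for the operadic identity of $\cat{O}$; it is an operation of total arity two (Remark~\ref{remoperadicidentities}), and since operads and their morphisms preserve operadic identities up to coherent isomorphism, the involution $J$ fixes $e$ up to coherent isomorphism. By the construction of the endomorphism operad $\End_\kappa^\cat{A}$ recalled in Section~\ref{seccycmodalgss} (equivalently, by the genus zero description~\eqref{eqnconfblockgenus0} applied to the cylinder $\Sigma_{0,2}$ together with $\cat{A}(X\otimes Y,K)\cong\cat{A}(X,DY)$), the structure map $\cat{O}\to\End_\kappa^\cat{A}$ sends $e$ to an operation isomorphic to the non-degenerate symmetric pairing, so $\cat{A}_e\cong\kappa$ as right exact functors $\cat{A}^{\boxtimes 2}\to\vect$. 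Combining this with Definition~\ref{defdgr} and $D_2^\text{rev}(P\boxtimes Q)=DQ\boxtimes DP$, and using that $J$ fixes $e$, we obtain on projective objects $(J^*\cat{A}\dg)_e(P\boxtimes Q)\cong\cat{A}\dg_e(P\boxtimes Q)=\cat{A}_e(DQ\boxtimes DP)^*\cong\kappa(DQ,DP)^*$.

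Next I would evaluate the fixed point datum. A $\mathbb{Z}_2$-homotopy fixed point for the involution $\cat{B}\mapsto J^*\cat{B}\dg$ on the $2$-groupoid $\E$ consists of an equivalence $u\colon\cat{A}\to J^*\cat{A}\dg$ of cyclic $\cat{O}$-algebras together with a $2$-isomorphism identifying the composite $\cat{A}\to J^*\cat{A}\dg\to\cat{A}$ — where the second arrow applies the involution to $u$ and then the trivialization of the squared involution from Proposition~\ref{propdginv} — with $\id_\cat{A}$, subject to one further coherence condition. Since a morphism of cyclic $\cat{O}$-algebras is, on each operation, a natural isomorphism of the associated right exact functors, evaluating $u$ at $e$ and restricting to $\Proj\cat{A}$ produces natural isomorphisms $\kappa(P,Q)\cong\kappa(DQ,DP)^*$; we let $\varphi_{P,Q}$ be the inverse, giving~\eqref{eqnvarphiiso}. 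Now recall from Proposition~\ref{propdginv} that the trivialization of the squared involution is built precisely from $\omega\colon\id_\cat{A}\to D^2$, the coherence data of the homotopy involutions~\eqref{eqnwidetildelexf} (i.e.\ $\widetilde{\widetilde F}\cong F$) and the canonical isomorphism $V^{**}\cong V$. When the fixed point equation is evaluated at $e$ and restricted to projective objects, the $\widetilde{\widetilde{\cdot}}\cong\cdot$ part of this trivialization becomes, under the identifications of the first paragraph, a canonical identification that is invisible in~\eqref{eqncysymcond}, whereas $\omega$ and $V^{**}\cong V$ survive as the top and bottom arrows, and the two occurrences of $u_e$ become the side arrows $\varphi_{P,Q}^*$ and $\varphi_{DQ,DP}$ — the latter because applying the involution to $u$ and evaluating at $e$ on projectives is, under those same identifications, the linear dual of $u$ with the arguments dualized. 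Thus the evaluation at $e$ yields exactly the isomorphisms~\eqref{eqnvarphiiso} together with the commuting square~\eqref{eqncysymcond}, and — since $\E$ is a $2$-groupoid and the fixed point structure contributes one $1$-morphism and one $2$-isomorphism of data — nothing more.

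Finally I would identify the commuting square with the Calabi-Yau symmetry. Transporting the $\varphi_{P,Q}$ along the dictionary $\kappa(X,Y)=\cat{A}(X,DY)^*$ and the trivialization $\omega$ — this is exactly the passage used to deduce Lemma~\ref{lemmanondegenerate} from Lemma~\ref{lemmacyfin} — turns them into natural isomorphisms $\Psi_{P,Q}\colon\cat{A}(P,Q)^*\cong\cat{A}(Q,P)$ for $P,Q\in\Proj\cat{A}$, and under the same dictionary the square~\eqref{eqncysymcond} becomes the relation $\Psi_{P,Q}^*=\Psi_{Q,P}$, i.e.\ a Calabi-Yau structure on $\Proj\cat{A}$ in the sense of Lemma~\ref{lemmacyfin}. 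The main obstacle I anticipate is the bicategorical bookkeeping in the middle step: one must carefully track the trivialization of the squared homotopy involution supplied by Proposition~\ref{propdginv}, evaluate it at the operadic identity, and check that it reduces to precisely the composite of $\omega$, the double-dual isomorphism and the dualized $\varphi$'s displayed in~\eqref{eqncysymcond}, and moreover that the residual coherence condition of the $\mathbb{Z}_2$-fixed point imposes nothing on the data at $e$ beyond this square.
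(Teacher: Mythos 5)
Your proposal is correct and follows essentially the same route as the paper's proof: evaluate the fixed point equivalence $\cat{A}\simeq J^*\cat{A}\dg$ at the operadic identity, use unitality of $\cat{A}$ and of $J$ to identify $\cat{A}_{1_\cat{O}}$ with $\kappa$ and $J(1_\cat{O})$ with $1_\cat{O}$, read off $\varphi_{P,Q}$ from Definition~\ref{defdgr}, and obtain the square~\eqref{eqncysymcond} from the coherence data of Proposition~\ref{propdginv}. Your version is merely more explicit about the bicategorical bookkeeping (and about why $D$ preserves projectives, which the paper notes via $\Proj\cat{A}=\Inj\cat{A}$ at the outset).
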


		\begin{proof} First of all, $\Proj \cat{A}=\Inj \cat{A}$ by the assumption~\ref{htpfixedpoint1} in Definition~\ref{defhomotopyfixedpoint}. This implies that $D$ preserves projective objects.
			Next observe that
			we have an equivalence
			\begin{align}
				\cat{A}\dg \simeq J^* \cat{A} \ , 
			\end{align}
			so in particular
			for every operation $o\in \cat{O}(T)$, with $T$ a corolla, an isomorphism
			\begin{align}
				\cat{A}_o\dg \cong \cat{A}_{J(o)} \ . 
			\end{align}
			The evaluation at the operadic identity refers to the special case where
			$T$ is the corolla with two legs and $1_\cat{O}$ the operadic identity. In that case, we
			have by the unitality of $\cat{A}$ that $\cat{A}_{1_\cat{O}}\cong \kappa$,
			 and by unitality of $J$ that $J(1_\cat{O})\cong 1_\cat{O}$ by a canonical isomorphism. Hence, we obtain with the definition of $\cat{A}\dg$ 
			exactly
			natural isomorphisms 
			\begin{align}
				\varphi_{P,Q}:	\kappa (DQ,DP)^* \cong \kappa (P,Q) \quad \text{for} \quad P, Q \in \Proj \cat{A} \ . \label{eqnvarphicy}
			\end{align}
			The fact that~\eqref{eqnvarphicy} is part of a $\mathbb{Z}_2$-fixed point structure 
			gives us exactly the commutativity of~\eqref{eqncysymcond} as follows from
			Proposition~\ref{propdginv}.
		\end{proof}

		\subsection{The dual cyclic algebra and Boyarchenko-Drinfeld's second monoidal product}
		Let $\cat{A} \in \E$, i.e.\ a ribbon Grothendieck-Verdier category in $\Rexf$ such that $\cat{A}$ is self-injective and $\otimes :\cat{A}\boxtimes\cat{A}\to\cat{A}$ exact.
		Following \cite[Section~4.1]{bd}, we can define on $\cat{A}$ a \emph{second (or dual) monoidal product} by
		\begin{align}
			X \odot Y := D (DY \otimes DX) \quad \text{for}\quad X,Y \in \cat{A} 
		\end{align}
		with dualizing object $I$ and monoidal unit $K$.
		Since $\cat{A}$ is pivotal, we do not make a distinction between $D$ and $D^{-1}$ here.
		Also note that if $\cat{A}$ were just any 
		ribbon Grothendieck-Verdier category in $\Rexf$, the product $\odot$ would be left exact.
		In order to not leave the symmetric monoidal bicategory $\Rexf$, we need $\otimes$ to be exact. 
		If $D$ is a rigid duality, $\otimes\cong \odot$ by a canonical monoidal isomorphism.
		Conversely, if a \emph{specific comparison map} between $\otimes$ and $\odot$ happens to be an isomorphism, the category is rigid~\cite[Proposition~4.4]{bd}.
		The dual monoidal product $\odot$ features in the following concrete characterization of $\cat{A}\dg$:
		\begin{proposition}\label{proprevor}
			Let $\cat{A} \in \E$.
			Then the dual cyclic algebra $\cat{A}\dg$ is equivalent to  \begin{pnum}
				\item $\cat{A}$ equipped with the  dual monoidal product $\odot$ characterized by $D(X\otimes Y)=DY \odot DX$,\label{proprevori}
				\item  dualizing object $\nakar I\in\cat{A}$, monoidal unit $K$, Grothendieck-Verdier duality $D\dg = \nakar D$,
				\item and with the braiding and balancing that is induced by the inverse braiding and the inverse balancing of $\cat{A}$.\label{proprevoriii} \end{pnum}
		\end{proposition}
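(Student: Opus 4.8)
The plan is to exploit the dictionary of~\cite{cyclic} recalled in Section~\ref{secexplicitribgv}, by which a cyclic framed $E_2$-algebra in $\Rexf$ is nothing but a ribbon Grothendieck-Verdier category, and all of its defining data --- monoidal product, Grothendieck-Verdier duality, dualizing object, monoidal unit, braiding and balancing --- can be read off from the spaces of conformal blocks on genus zero surfaces with few boundary circles together with the action of the relevant mapping classes. Since Proposition~\ref{propdginv} already guarantees that $\cat{A}\dg$ \emph{is} a cyclic framed $E_2$-algebra in $\Rexf$, it remains only to identify these pieces of data, and I would do this by unwinding Definition~\ref{defdgr}.

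First I would compute the genus zero conformal blocks of $\cat{A}\dg$. Definition~\ref{defdgr} together with the formula~\eqref{eqnconfblockgenus0} says that, on the genus zero surface $\Sigma_{0,n}$ with projective labels $P_1,\dots,P_n\in\Proj \cat{A}$,
\[
	\cat{A}\dg(\Sigma_{0,n};P_1,\dots,P_n)\;\cong\;\cat{A}(DP_n\otimes\dots\otimes DP_1,K)\ ,
\]
and $\cat{A}\dg$ is then determined by right exact extension from $\Proj \cat{A}$. Iterating the defining relation $D(X\otimes Y)=DY\odot DX$ of the dual monoidal product and using $D^2\cong\id$ gives $Y_1\odot\dots\odot Y_n\cong D(DY_n\otimes\dots\otimes DY_1)$. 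A Nakayama manipulation --- using that self-injectivity makes $\nakal$ an equivalence with quasi-inverse $\nakar$ (Remark~\ref{remnakal}), that $\Proj \cat{A}$ is a monoidal ideal so that the intermediate tensor products stay projective (Lemma~\ref{lemmamonideal}), the isomorphism~\eqref{eqncystructurewithN}, and the symmetry of $\kappa$ --- would then rewrite the right hand side as $\cat{A}(Y_1\odot\dots\odot Y_n,\nakar I)^*$ for projective $Y_i$. Comparing this with the shape~\eqref{eqnconfblockgenus0} of the conformal blocks of a general ribbon Grothendieck-Verdier category identifies the monoidal product of $\cat{A}\dg$ with $\odot$, its dualizing object with $\nakar I$, and its monoidal unit with the unit of $\odot$, which is $K$ because $DI\cong K$ and $D^2\cong\id$. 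This is consistent with the pairing $\kappa\dg(X,Y)=\kappa(\nakal X,Y)$ recorded in~\eqref{eqnkappadg}: the adjunction isomorphism $\cat{A}(\nakal X,-)\cong\cat{A}(X,\nakar-)$, again from $\nakar$ being quasi-inverse to $\nakal$, turns $\kappa\dg(X,Y)=\cat{A}(\nakal X,DY)^*$ into $\cat{A}(X,\nakar DY)^*$, so that $D\dg=\nakar D$; and then $D\dg(K)=\nakar D^2 I\cong\nakar I$ recovers the dualizing object just found. The naturality of these identifications and their compatibility with gluing is essentially the computation~\eqref{eqncyclicalgdgrcomp}.

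Next I would read off the braiding and the balancing. On the underlying non-cyclic framed $E_2$-algebra, the braiding is recorded by the action of the half-twist of two little disks on the three-point genus zero blocks, and the balancing by the action of a full twist of one little disk on the one-point blocks. Since $\cat{A}\dg$ replaces each space of conformal blocks by its linear dual --- the reversal $D_n^\text{rev}$ being merely a relabeling of Deligne factors and not a geometric reflection of the disk configuration --- the mapping class group acts on the blocks of $\cat{A}\dg$ by the contragredient of its action on those of $\cat{A}$. Hence the braiding and balancing of $\cat{A}\dg$ come out as the inverses of those of $\cat{A}$, i.e.\ as the balanced braided structure that the inverse braiding $c^{-1}$ and the inverse balancing $\theta^{-1}$ induce on $\cat{A}$ equipped with $\odot$; this is in line with the discussion of the second monoidal product in~\cite[Section~4.1]{bd}.

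The hard part will be the bookkeeping. Two steps need care: keeping track of the cyclic $\Sigma_{n+1}$-symmetry together with the reversal $D_n^\text{rev}$, so that the identifications above are genuinely natural and compatible with gluing rather than merely objectwise --- this reuses and refines~\eqref{eqncyclicalgdgrcomp} --- and the Nakayama manipulation, which is legitimate only because $\cat{A}$ is self-injective with exact monoidal product, so that $\Proj \cat{A}$ is a monoidal ideal by Lemma~\ref{lemmamonideal}, and only after restricting to projective labels and extending right exactly afterwards. I expect the single most error-prone point to be pinning down the exact sign of the braiding and balancing, which forces one to trace how the half-twist generator of the cyclic ribbon braid group is transported through the dualization; this is conceptually dictated by the description of the braiding on the dual monoidal product in~\cite[Section~4.1]{bd}, but still deserves a careful verification.
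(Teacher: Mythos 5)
Your proposal is correct and follows essentially the same route as the paper: evaluate $\cat{A}\dg$ on $\Sigma_{0,n}$ via \eqref{eqnconfblockgenus0}, use Lemma~\ref{lemmamonideal} and self-injectivity to keep the relevant objects projective, rewrite $\cat{A}(DP_n\otimes\dots\otimes DP_1,K)$ as $\cat{A}(P_1\odot\dots\odot P_n,\nakar I)^*$ via \eqref{eqncystructurewithN}, and read off $\odot$, $K$, $\nakar I$ and $D\dg=\nakar D$ from \eqref{eqnkappadg}, with the inverse braiding and balancing coming from the contragredient action transported through the naturality of \eqref{eqncystructurewithN}. The paper's proof is exactly this chain of isomorphisms, so no further comparison is needed.
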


		\begin{proof}
			If we evaluate $\cat{A}\dg$ on a genus zero surface $\Sigma_{0,n}$
			with $n$ boundary components, we obtain the functor $\cat{A}^{\boxtimes n}\to \vect$ that is described on $P=P_1 \boxtimes \dots \boxtimes P_n \in\Proj \cat{A}^{\boxtimes n}$
			(this notation should not imply that $P$ is a pure tensor; it is to be understood as the usual Sweedler-type notation) by
			\begin{align}
				\cat{A}\dg (\Sigma_{0,n};P) = \cat{A}(  DP_n \otimes \dots \otimes DP_1 ,K ) \ , \label{eqndgconcrete}
			\end{align}
			see \eqref{eqnconfblockgenus0}. 
			The braiding and the balancing generators act again by the inverse braiding and the inverse balancing of $\cat{A}$ by definition of $\cat{A}\dg$
			(they act in the contragredient representation, even though we do not see a star `$*$' in~\eqref{eqndgconcrete} because the original hom space in~\eqref{eqnconfblockgenus0}
			already had one).
			
			Moreover, $D(\Proj \cat{A})\subset \Proj \cat{A}$ by self-injectivity and hence $DP_n \otimes \dots \otimes DP_1 \in \Proj \cat{A}$ by Lemma~\ref{lemmamonideal}.
			Since $DP_n \otimes \dots \otimes DP_1 \cong D(P_1\odot \dots \odot P_n)$, the object $P_1 \odot \dots \odot P_n$ is also projective.
			We conclude that
			\begin{align}	\cat{A}\dg (\Sigma_{0,n};P) &\cong \cat{A}(  D(P_1\odot \dots \odot P_n) ,K )
				\\&\cong \cat{A}( I, P_1\odot \dots \odot P_n       )\\&\cong \cat{A} (       \nakal (    P_1\odot \dots \odot P_n   )   ,I    )^*\\&\cong \cat{A}(P_1\odot \dots \odot P_n ,\nakar I )^* \ .  \label{eqnrefllemmapx}  \end{align} 
			From~\eqref{eqnrefllemmapx} we can, again by invoking \eqref{eqnconfblockgenus0}, read off the monoidal product of $\cat{A}\dg$ to be $\odot$ (the monoidal unit is therefore $K$), and the dualizing object to be $\nakar I$. The Grothendieck-Verdier duality $D\dg$ can now be read off from~\eqref{eqnkappadg}.	The action of the braiding and the balancing generators through their respective inverses on the left hand side
			translate to the inverse action of the braiding and the balancing on the very right hand side by the naturality of the isomorphisms~\eqref{eqncystructurewithN}. This proves the assertion.
		\end{proof}

		\begin{corollary}\label{cormulfinrib}
			Any finite ribbon category $\cat{A}$, as non-cyclic framed $E_2$-algebra, has a canonical structure of a homotopy fixed point with respect to the homotopy involution $\cat{A}\mapsto \bar{\cat{A}}\dg$. 
			Phrased more generally:
			On the  2-groupoid of finite ribbon categories, seen as full sub-2-groupoid of non-cyclic $\Rexf$-valued framed $E_2$-algebras that are self-injective and have an exact monoidal product,
			the homotopy involution $\cat{A}\mapsto \bar{\cat{A}}\dg$ is coherently trivial.
		\end{corollary}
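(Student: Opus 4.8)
The plan is to combine the concrete description of $\cat{A}\dg$ from Proposition~\ref{proprevor} with the explicit model of orientation reversal in Remark~\ref{rembarA}, and then use rigidity to identify $\bar{\cat{A}}\dg$ with $\cat{A}$ itself, coherently in $\cat{A}$. First I would record that a finite ribbon category $\cat{A}$ indeed lies in $\E$: its monoidal product is exact in each variable, and it is self-injective (a standard fact for finite tensor categories, which is in any case presupposed by the formulation of Corollary~\ref{cormulfinrib}). Its canonical cyclic structure is the strongly rigid one, with Grothendieck--Verdier duality $D=-^\vee$, dualizing object and monoidal unit $I$, and the symmetry $\omega\colon\id_\cat{A}\to D^2$ of the pairing equal to the pivotal structure. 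A braided balanced monoidal equivalence between finite ribbon categories automatically preserves the rigid dual and the condition $D\theta_X\cong\theta_{DX}$ cutting out the Grothendieck--Verdier structure, hence lifts essentially uniquely to an equivalence of cyclic framed $E_2$-algebras; together with $\bar{\bar{\cat{A}}}=\cat{A}$ and Proposition~\ref{propdginv}, this makes $\cat{A}\mapsto\bar{\cat{A}}\dg$ a well-defined homotopy involution on the $2$-groupoid of finite ribbon categories.

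Next I would feed Remark~\ref{rembarA} into Proposition~\ref{proprevor}. As a non-cyclic framed $E_2$-algebra, $\bar{\cat{A}}$ is $\cat{A}$ with the same $\otimes$ but braiding $c^{-1}$ and balancing $\theta^{-1}$, and its Grothendieck--Verdier duality is still $D$ (the representability defining $D$ is unchanged, and $D\theta_X=\theta_{DX}$ is equivalent to $D\theta_X^{-1}=\theta_{DX}^{-1}$). Applying Proposition~\ref{proprevor} to $\bar{\cat{A}}$, the cyclic algebra $\bar{\cat{A}}\dg$ is $\cat{A}$ with the Boyarchenko--Drinfeld dual monoidal product $\odot$ attached to $(\otimes,D)$ (characterized by $D(X\otimes Y)=DY\odot DX$), with dualizing object $\nakar I$ and duality $\nakar D$, and with braiding and balancing induced by the inverses of those of $\bar{\cat{A}}$, that is, by $c$ and $\theta$. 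Forgetting the cyclic data --- all that Corollary~\ref{cormulfinrib} concerns --- leaves the non-cyclic framed $E_2$-algebra $(\cat{A},\odot,c^\odot,\theta^\odot)$ with $c^\odot,\theta^\odot$ corresponding to $c,\theta$ under the evident identification. Since $D$ is rigid and $\cat{A}$ pivotal, there is a canonical monoidal natural isomorphism $\odot\cong\otimes$, namely $D(DY\otimes DX)=(Y^\vee\otimes X^\vee)^\vee\cong X^{\vee\vee}\otimes Y^{\vee\vee}\cong X\otimes Y$ via the pivotal structure on each tensor factor (as recalled before Proposition~\ref{proprevor}), and one checks it intertwines $c^\odot$ with $c$ and $\theta^\odot$ with $\theta$. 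Hence it upgrades to an equivalence $\mu_\cat{A}\colon\bar{\cat{A}}\dg\xrightarrow{\simeq}\cat{A}$ of non-cyclic framed $E_2$-algebras; since the dual product, the pivotal structure, the $\dgr$-operation and orientation reversal are all functorial, $\mu$ is a pseudonatural equivalence from $\cat{A}\mapsto\bar{\cat{A}}\dg$ to the identity on the $2$-groupoid of finite ribbon categories.

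It remains to verify that $\mu$ is compatible with the $\mathbb{Z}_2$-involutivity datum of $\cat{A}\mapsto\bar{\cat{A}}\dg$, which simultaneously trivializes this homotopy involution coherently and, by specialization, endows each $\cat{A}$ with the homotopy fixed point structure given by $\mu_\cat{A}$ together with the constant higher coherences. By Proposition~\ref{propdginv} the involutivity datum is assembled from the canonical isomorphism witnessing that $\widetilde{-}$ squares to the identity, the symmetry $\omega\colon\id_\cat{A}\to D^2$ of the pairing, and the double-dual isomorphism $V^{**}\cong V$ of finite-dimensional vector spaces, combined with $\bar{\bar{\cat{A}}}=\cat{A}$. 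The coherence to be checked is that applying $\mu$ and then its orientation-reversed $\dgr$-transform composes back to exactly this datum; because $\mu$ is built out of the pivotal structure, this reduces to a standard coherence in pivotal rigid categories --- that the two identifications $X\cong X^{\vee\vee\vee\vee}$ (one obtained by iterating $\omega$, the other by applying the pivotal structure to $X$ and to $X^\vee$) agree --- supplemented by the manifest behaviour of $\odot$ and of double orientation reversal under iteration. I expect this last step to be the main obstacle: no individual identification is hard, but one must carefully bookkeep the braiding- and balancing-related $2$-cells and the interaction of the $\widetilde{-}$-involution, the vector-space double dual and $\omega$.
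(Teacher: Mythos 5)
Your proposal is correct and follows essentially the same route as the paper: the paper's own proof is the one-line observation that, by Proposition~\ref{proprevor} and Remark~\ref{rembarA}, $\bar{\cat{A}}\dg$ is again $\cat{A}$ with the original $\otimes$ (since $\odot\cong\otimes$ by rigidity) and the original braiding and balancing, which is exactly your core argument. Your additional discussion of the pseudonaturality of $\mu$ and the compatibility with the $\mathbb{Z}_2$-involutivity datum supplies coherence details that the paper leaves implicit.
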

		
		\begin{proof}
			By Proposition~\ref{proprevor} and Remark~\ref{rembarA}
			$\bar{\cat{A}}\dg$
			is, as \emph{non-cyclic framed $E_2$-algebra}, again $\cat{A}$  with monoidal product $\otimes$ 
			(since $\otimes$ and $\odot$ are identified thanks to rigidity) together with the original braiding and the original balancing.
		\end{proof}

		\subsection{Cyclic reflection invariance relative to a rigid duality}
		If we are given a finite ribbon category $\cat{A}$ and search for a homotopy fixed point structure with respect to $\cat{A}\mapsto \bar{\cat{A}}\dg$ as cyclic framed $E_2$-algebra, then
		Corollary~\ref{cormulfinrib} tells us that we already have such a homotopy fixed point structure as \emph{non-cyclic} algebra at our disposal.
		In that case, it makes sense to ask the homotopy fixed point structure at the cyclic level to extend the non-cyclic one.
		This will lead us to the following rather tedious definition which, however, encapsulates exactly the motivation given in the introduction:
		
		\begin{definition}\label{deffixedpointrel}
			Let $\cat{A}$ be a finite ribbon category in $\Rexf$.
			We say that $\cat{A}$ is \emph{reflection equivariant}, as a \emph{cyclic} framed $E_2$-algebra,
				 	if
				$\cat{A}$ is equipped with a $\dgr$-$\refl$-homotopy fixed point structure as \emph{cyclic} $\framed$-algebra in the sense of Definition~\ref{defhomotopyfixedpoint},
				 that is
				\emph{relative to the underlying non-cyclic homotopy fixed point structure from Corollary~\ref{cormulfinrib}}, i.e.\ a homotopy fixed point structure on the fibers of the forgetful functor from cyclic to non-cyclic algebras over framed $E_2$.
		\end{definition}

	\begin{remark}
		Definition~\ref{deffixedpointrel} covers reflection equivariant structures relative to a rigid duality.
		For this article, that focuses to a large extent on the rigid situation, this is doubtlessly the reasonable notion.
		Beyond that, one would have to address the rather difficult question 
		 whether the existence of \emph{any} cyclic reflection equivariant structure already implies rigidity (this would assume that one would actually be able to make sense of the operation $\dgr$).
		This is of course related to the rather fundamental question under which conditions Grothendieck-Verdier dualities in the sense of~\cite{bd} or weak dualities in the sense of \cite{baki} are rigid.
	Etingof and Penneys proved in~\cite{etingofpenneys} that at least every finitely semisimple braided weakly rigid monoidal category is rigid. 
		\end{remark}

		\subsection{A reminder on modified traces}
		It is one of the key insights of this section to  relate the structure from Definition~\ref{deffixedpointrel} to \emph{modified traces}~\cite{geerpmturaev,mtrace1,mtrace2,mtrace3,mtrace}
		whose definition we recall here:
		For a pivotal finite tensor category $\cat{A}$,
		a \emph{two-sided modified trace}
		on the tensor ideal $\Proj \cat{A}$ of projective objects consists of linear maps
		\begin{align}
			\trace_P:\cat{A}(P,P)\to k  \quad \text{for}\quad P \in \Proj \cat{A}
		\end{align}
		that are
		\begin{itemize}
			\item  \emph{cyclic}, i.e.\ $\trace_P(g\circ f)=\trace_Q(f \circ g)$ for morphisms $f : P \to Q$ and $g :Q \to P$ for $P,Q \in \Proj \cat{A}$,
			\item \emph{non-degenerate}, i.e.\ the pairings
			\begin{align}
				\label{eqnprojcystructure}	\cat{A}(P,Q) \otimes \cat{A}(Q,P) \to k \ , \quad f \otimes g \mapsto \trace_P(g \circ f)
			\end{align} for the morphism spaces $\cat{A}(-,-)$ of $\cat{A}$
			are non-degenerate, thereby endowing $\Proj \cat{A}$ with a Calabi-Yau structure,			\item and satisfy the \emph{two-sided partial trace property}, i.e.\
			\begin{itemize}\item the \emph{right partial trace property}\begin{align}
					\trace_{P\otimes X} (f) = \trace_P \left(  (\id_P \otimes d_{X^\vee}(\omega_X \otimes \id_{X^\vee})) \circ (f\otimes \id_{X^\vee}) \circ (\id_{P} \otimes b_X)  \right) 
				\end{align}holds for every morphism $f:P\otimes X \to P \otimes X$ with $P\in \Proj \cat{A}$ and $X\in \cat{A}$, 
				where  $b_X : I \to X \otimes X^\vee$ is the coevaluation at $X$,
				$d_{X^\vee} : X ^{\vee\vee} \otimes X^\vee \to I$ the evaluation at $X^\vee$
				and $\omega_X : X \to X^{\vee\vee}$ the pivotal structure at $X$,
				\item and  the similarly defined \emph{left partial trace property holds}.
			\end{itemize}
		\end{itemize}
		
		\subsection{The main result of cyclic reflection equivariance}
		We are now finally in a position to relate cyclic reflection equivariance to modified traces:
		\begin{theorem}\label{thmmain0}
			A finite ribbon category $\cat{A}$ can be made reflection equivariant as cyclic framed $E_2$-algebra in the sense of Definition~\ref{deffixedpointrel}
			if and only if $\cat{A}$ is a unimodular, and a reflection equivariant structure
			amounts exactly to the choice of a two-sided modified trace on $\Proj \cat{A}$.
			In particular, a modular category can be endowed with cyclic reflection equivariant structure.  
		\end{theorem}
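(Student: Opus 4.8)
The plan is to identify a reflection equivariant structure on $\cat{A}$ with a two-sided modified trace on $\Proj\cat{A}$; the characterization by unimodularity then follows, because a two-sided modified trace on $\Proj\cat{A}$ exists precisely when $\cat{A}$ is unimodular in the sense of~\cite{eno-d} (a one-sided modified trace already forces unimodularity, and for a finite ribbon category it is automatically two-sided by~\cite[Corollary~6.12]{shibatashimizu}), and because modular categories are unimodular. The technical backbone will be the description, due to~\cite{shibatashimizu,tracesw} and building on the Morita-invariant treatment of Nakayama functors in~\cite{fss}, of a two-sided modified trace on $\Proj\cat{A}$ as an isomorphism of $\cat{A}$-bimodule functors $\nakar\xrightarrow{\ \cong\ }\id_\cat{A}$; restricted to $\Proj\cat{A}$ this is nothing but a Calabi-Yau structure whose pairing is compatible with both one-sided $\cat{A}$-actions.

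First I would pass from a reflection equivariant structure to a modified trace. Evaluating the $\dgr$-$\refl$-homotopy fixed point structure at the operadic identity gives, by Proposition~\ref{propcy}, a Calabi-Yau structure on $\Proj\cat{A}$, namely isomorphisms $\varphi_{P,Q}\colon\kappa(DQ,DP)^*\to\kappa(P,Q)$ making the symmetry square~\eqref{eqncysymcond} commute. Since $D=-^\vee$ and $\kappa(X,Y)=\cat{A}(X,Y^\vee)^*$ for a finite ribbon category, these are a Calabi-Yau structure in the usual sense and hence define non-degenerate trace maps $\trace_P\colon\cat{A}(P,P)\to k$, with cyclicity of the trace encoded by~\eqref{eqncysymcond}. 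The two-sided partial trace property I would read off from the compatibility of the homotopy fixed point structure with gluing in $\framed$: the gluing operations on the pair of pants that close off an insertion labeled by $X$ relate $\trace_{P\otimes X}$ to $\trace_P$, and because the fixed point structure is taken relative to the non-cyclic one, whose coherence data are built from the rigid pivotal structure exactly as in Corollary~\ref{cormulfinrib}, the identity obtained this way is precisely the one written with $b_X$, $d_{X^\vee}$ and the pivotal structure $\omega_X$. Finally I would check that the $\cat{A}$-bimodule trivialization of $\nakar$ coming out of $(\varphi_{P,Q})$ is exactly this modified trace, so that the two constructions are mutually inverse.

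The substantial direction is the converse: building the full relative homotopy fixed point structure from a two-sided modified trace. Such a trace gives a bimodule isomorphism $\nakar\cong\id_\cat{A}$, in particular $\nakar I\cong I$, so $\cat{A}$ is unimodular. Plugging this into the concrete description of $\cat{A}\dg$ in Proposition~\ref{proprevor}, the trivialization of $\nakar$ turns the dualizing object $\nakar I$ of $\cat{A}\dg$ into $I$ and the Grothendieck-Verdier duality $\nakar D$ into $D=-^\vee$, compatibly with the monoidal structure; using rigidity to identify the second monoidal product $\odot$ with $\otimes$ (Lemma~\ref{lemmamonideal} keeping us inside $\Rexf$), this produces an equivalence of cyclic framed $E_2$-algebras $\bar{\cat{A}}\dg\simeq\cat{A}$ lying over the canonical non-cyclic equivalence of Corollary~\ref{cormulfinrib}. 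It then remains to promote this to a $\mathbb{Z}_2$-homotopy fixed point, i.e.\ to verify involutivity; here the coherence data of the $\dgr$-involution from Proposition~\ref{propdginv} reduce the condition to the symmetry of the Calabi-Yau structure, which holds because the trace is cyclic and two-sided. The step I expect to be the main obstacle is showing that this single datum at the operadic identity already determines, with no further choices and no further obstruction, all the higher-arity data of the homotopy fixed point structure: for this I would use the description in~\cite{mwcenter} of the space of cyclic refinements of a fixed non-cyclic framed $E_2$-algebra as a torsor over the Picard group of the balanced Müger center, so that a relative homotopy fixed point structure is a path in that torsor between the cyclic structure of $\cat{A}$ and that of $\bar{\cat{A}}\dg$, and the space of such paths is pinned down by its value at the operadic identity. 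Unwinding there shows this construction is inverse to the first, which gives the claimed bijection; the uniqueness up to a scalar in $k^\times$ follows from simplicity of the unit, and the last sentence of the theorem follows since a modular category is unimodular.
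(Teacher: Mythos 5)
Your proposal is correct and follows essentially the same route as the paper: the crux in both is the equivalence $\catf{PIC}(Z_2^\catf{bal}(\cat{A}))\simeq\catf{F}$ from \cite{mwcenter}, under which $\bar{\cat{A}}\dg$ corresponds to $\alpha^{-1}=\nakar I$, so that a relative homotopy fixed point structure becomes a trivialization of $\alpha^{-1}$, identified with a two-sided modified trace via \cite{mtrace,shibatashimizu,tracesw}. The only difference is presentational: the paper runs this as a single chain of equivalences (deliberately avoiding any explicit verification of the partial trace property), whereas you additionally sketch a direct extraction of the trace axioms from the operadic identity and gluing, which the torsor argument renders unnecessary.
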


		\begin{proof}
			If $\cat{A}$ is a finite ribbon category, then we have by Proposition~\ref{proprevor} and the simplifications implied by rigidity (Corollary~\ref{cormulfinrib})
			\begin{align}
				\label{eqnadgdescription}	 	\cat{A}\dg (\Sigma_{0,n};P)= \cat{A}(  P_1 \otimes \dots \otimes P_n ,\nakar I )^* \quad \text{for} \quad P_1,\dots,P_n\in \Proj \cat{A} \ . 
			\end{align}
			Since the functor on the right hand side is right exact, we find in fact~\eqref{eqnadgdescription} for all objects.
			Since $\cat{A}$ is a finite tensor category, we can use \cite[Theorem~4.26]{fss} to conclude that $\nakar I$ is the inverse $\alpha^{-1}$ of the
			distinguished invertible object $\alpha$ from~\cite{eno-d} describing the quadruple dual via $-^{\vee\vee\vee\vee} \cong \alpha \otimes-\otimes\alpha^{-1}$. 
			Therefore,
			\begin{align}
				\cat{A}\dg (\Sigma_{0,n};P)= \cat{A}(  P_1 \otimes \dots \otimes P_n ,\alpha ^{-1})^*  \ . \label{eqnadaggertwisted}
			\end{align}
			From Proposition~\ref{proprevor}~\ref{proprevoriii}, it follows now that 
			$\cat{A}\dg$ agrees with $\bar{\cat{A}}$ as framed $E_2$-algebra
			(same monoidal product, but braiding and balancing are inverted), but
			with $\alpha^{-1}$ as dualizing object; in other words, $\cat{A}\dg$ is obtained by twisting the rigid Grothendieck-Verdier duality of $\bar{\cat{A}}$ by $\alpha^{-1}$. 
			
			The key fact that we will use is that actually all ribbon Grothendieck-Verdier structures relative to a balanced braided structure
			are obtained by such a twist.
			To state this precisely, 
			denote  for any ribbon Grothendieck-Verdier category $\cat{A}$  by
			$Z_2^\catf{bal}(\cat{A})$
			the balanced Müger center of $\cat{A}$, defined as the full subcategory of $\cat{A}$
			consisting of those objects of $\cat{A}$ 
			that trivially double braid with all other objects and have trivial balancing.
			By $\catf{PIC}(Z_2^\catf{bal}(\cat{A}))$ we denote the Picard groupoid of $Z_2^\catf{bal}(\cat{A})$, defined as the full subgroupoid of $\otimes$-invertible objects.
			Moreover, we define
			\begin{align} \catf{F} :=   \begin{array}{c}
					\text{homotopy fiber of the non-cyclic algebra $\cat{A}$} \\ \text{with respect to the functor} \\ \text{forgetting the cyclic structure}
				\end{array}
			\end{align}
			By \cite[Theorem~4.2]{mwcenter}
			the functor
			\begin{align}\label{eqnpicardequiv}
				\catf{PIC}(Z_2^\catf{bal}(\cat{A})) \ra{\simeq}
				\catf{F}  \ , \quad  \beta \mapsto \begin{array}{c} \cat{A} \ 
					\text{as non-cyclic algebra,} \\  \text{equipped with Grothendieck-Verdier duality} \ \beta \otimes D \end{array}
			\end{align}
			is an equivalence of groupoids. 
			For this statement, it is not relevant that the duality
			 $D$ is rigid, so we refrain from denoting it by $-^\vee$.
			Under the equivalence $\catf{PIC}(Z_2^\catf{bal}(\cat{A})) \simeq \catf{F}$ the unit $I\in  \catf{PIC}(Z_2^\catf{bal}(\cat{A}))$ is mapped to $\cat{A}$ while the inverse of the
			distinguished invertible object $\alpha$ is mapped to $\bar{\cat{A}}\dg$ as follows from~\eqref{eqnadaggertwisted}. 
			Note that the fact that $\alpha$ lies actually in the balanced Müger center holds because it corresponds 
			to the ribbon Grothendieck-Verdier structure of $\cat{A}\dg$, but it is also a priori clear~\cite[Remark~3.3]{mwskein}.
			
			A cyclic reflection equivariant structure for $\cat{A}$ is by definition a 
			homotopy fixed point structure for $\bar{-}\dg$ on the point $\cat{A}\in\catf{F}$ through a morphism in the fiber $\catf{F}$.
			To this end, we first need to compute the homotopy involution $\bar{-}\dg$ on $\catf{F}$ --- of course with the model provided by the equivalence~\eqref{eqnpicardequiv}.
			We denote for $\beta \in \catf{PIC}(Z_2^\catf{bal}(\cat{A}))$ by $\cat{A}_\beta$ the ribbon Grothendieck-Verdier category whose underlying balanced braided category is the one of $\cat{A}$ with ribbon Grothendieck-Verdier duality $\beta \otimes D$.
			Then, with the same computation leading to~\eqref{eqnadaggertwisted}, we find
			\begin{align}
				\cat{A}_\beta\dg (\Sigma_{0,n};P)\cong \cat{A}(P_1 \otimes \dots \otimes P_n ,\nakar \beta^{-1}    )  \ , \label{eqnadaggertwisted2}
			\end{align}
			thereby proving that, under the equivalence 
			$\catf{F}\simeq \catf{PIC}(Z_2^\catf{bal}(\cat{A}))$, the homotopy involution on $\catf{F}$ is given by $\beta \mapsto \nakar \beta^{-1}$,
			with the coherence data being the 
			isomorphism $\nakar (\nakar \beta^{-1})^{-1} \cong \nakar \nakal \beta \cong \beta$.  
			The cyclic equivariance is therefore equivalent to a homotopy fixed point structure 
			on $I$ for this action, which is a trivialization of $\alpha^{-1}$ and therefore a right modified trace on $\Proj \cat{A}$ by \cite[Theorem~6.2]{mtrace}, see also \cite[Theorem~5.8 \& 6.8]{shibatashimizu} and \cite[Theorem~3.6]{tracesw}. 
			By \cite[Theorem~1.4]{shibatashimizu} this a priori right modified trace must be two-sided.
				The addition on modular categories is a consequence of \cite[Proposition~4.5]{eno-d}.
		\end{proof}

			\begin{example}\label{exvoa}
			Let us look at the implications of Theorem~\ref{thmmain0} for 	 the theory of vertex operator algebras~\cite{fhl,frenkelbenzvi}, one of the main sources for balanced braided monoidal categories.
			Suppose that $V$ is an $\mathbb{N}$-graded, simple, self-contragredient, $C_2$-cofinite vertex operator algebra, and denote by $\cat{A}_V$ the category of grading-restricted $V$-modules. This is a balanced braided monoidal category \cite{hlzi} for which we assume that it lives in $\Rexf$. 
			Then the following conditions are equivalent:
			\begin{pnum}
				\item $\cat{A}_V$ admits a cyclic reflection equivariant structure relative to a rigid duality in the sense of Definition~\ref{deffixedpointrel} (for \emph{any} cyclic structure that it might have). 	\label{exvoai}
				\item $\cat{A}_V$ is modular.\label{exvoaii}
			\end{pnum}
			Indeed, if $\cat{A}_V$ is even just rigid, then $\cat{A}_V$ is already modular by \cite[Theorem~1]{mcrae}.
			The converse follows from Theorem~\ref{thmmain0}.
			Note that in~\ref{exvoai}, we look a priori at all cyclic structures, but it must be necessarily the one coming from the contragredient representation as in \cite{alsw} because, thanks to the equivalent condition \ref{exvoaii}, the space of cyclic structures on $\cat{A}_V$ is connected, even though it is not contractible~\cite[Corollary 4.5]{mwcenter}.
		\end{example}

		\subsection{A reminder on handlebody skein modules\label{sechbdyskein}}
		Let $\cat{A}$ be a ribbon Grothendieck-Verdier category in $\Rexf$ (or more generally a cyclic framed $E_2$-algebra in some symmetric monoidal bicategory). Fix a handlebody $H$ with $n$ embedded disks and boundary surface $\Sigma=\partial H$.
		The operation $\partial$, when applied to a handlebody with embedded disks, takes the boundary surface and converts embedded disks into boundary circles of the boundary surface.
		For $X_i \in  \cat{A}$ for $1\le i\le m$, the \emph{generalized skein module}~\cite[Section~4.1]{brochierwoike}   
		$\PhiA(H):\int_\Sigma \cat{A}\to\cat{A}^{\boxtimes n}$ 
		is determined by
		\begin{align} \label{defPhiA}\PhiA(H ;    \varphi_* (X_1 \boxtimes \dots \boxtimes X_m) )   = \widehat{\cat{A}}(H^\varphi;X_1 \boxtimes \dots \boxtimes X_m) \ , 
		\end{align} for any oriented embedding $\varphi$ of a family of disks into $\Sigma$, where $\widehat{\cat{A}}$ is the ansular functor of $\cat{A}$ and the $n$ boundary components are seen as outgoing (we can make this choice as we see fit using the Grothendieck-Verdier duality).
		Both source and target of $\PhiA(H):\int_\Sigma \cat{A} \to \cat{A}^{\boxtimes n}$ are modules over the algebra $\int_{\partial \Sigma \times [0,1]} \cat{A}$, and $\PhiA(H)$ is a module map.
		As a special case, the $\PhiA$-map gives us an action 
		\begin{align}
			\Add_{\Sigma,H} : \SkAlg_\cat{A}(\Sigma) \to \End( \widehat{\cat{A}}(H) ) \        \label{eqnaddmap}  
		\end{align}of the skein algebra $\SkAlg_\cat{A}(\Sigma)=\End_{\int_\Sigma \cat{A}}(\qss)$ on the value $\widehat{\cat{A}}(H)$ of the ansular functor for $\cat{A}$ on $H$, thereby justifying the fact that $\widehat{\cat{A}}$ can be seen as skein module.
		The notation `$	\Add_{\Sigma,H}$' is borrowed from \cite{masbaumroberts}.
		Extracting~\eqref{eqnaddmap} just uses the trivial fact that for any functor $F$ the value $F(X)$ on some object $X$ is a module over the endomorphism algebra of $X$.
		In more detail, \eqref{eqnaddmap} makes the following triangle commute:
		\begin{equation}\label{eqndefadd}
			\begin{tikzcd}
				&	\SkAlg_\cat{A}(\Sigma)=\End(\qss)	 \ar[rrrd,"\PhiA(H)\circ -"] \ar[dd,"	\Add_{\Sigma,H} ",swap]  \\ &&&& \End(\PhiA(H)\circ\qss) \ ,   \\ 
				&	\End(\widehat{\cat{A}}(H)) \ar[rrru,"\text{isomorphism from $\PhiA(H)\circ\qss\cong \widehat{\cat{A}}(H)$ from \cite[Theorem~4.2]{brochierwoike}}", swap] 
			\end{tikzcd}
		\end{equation}
		It is not obvious that the  skein modules defined here recover in more classical situations the construction from \cite{turaevck,turaevskein,hp92,Przytycki,Walker,masbaumroberts}, but this is  implied by the results in \cite{cooke,asm,brownhaioun,mwskein}.

		\subsection{Reflection equivariance for handlebody skein modules\label{sechandlebodyrefl}}
		From the definition of the $\dgr$-operation, \eqref{eqndgransular}, Remark~\ref{remdnaeq} and~\eqref{defPhiA}, we obtain: 
		\begin{lemma}\label{lemmaconnecteddg}
			Let $\cat{A}$ be a self-injective cyclic framed $E_2$-algebra in $\Rexf$ with exact monoidal product, thereby making $\cat{A}\dg$ well-defined.
			Then $\PhiAdg(H):\int_{ \Sigma } \cat{A} \dg \to \cat{A}^{\boxtimes n}$ for a handlebody with $n\ge 0$ embedded disks and $\Sigma = \partial H$ is determined by
			\begin{align} \PhiAdg(H ;    \varphi_* (P_1 \boxtimes \dots \boxtimes P_m) )   =  D_n^\text{rev}\PhiA(H ;     \varphi_* (DP_m \boxtimes \dots \boxtimes DP_1) )	 \quad \text{with}\quad P_1,\dots,P_m \in \Proj \cat{A}     
			\end{align} for every oriented embedding $\varphi : \sqcup_{m} \mathbb{D}^2 \to \Sigma$ with $m\ge 1$. 
		\end{lemma}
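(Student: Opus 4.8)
The plan is to obtain the formula by threading the identity \eqref{eqndgransular} for the $\dgr$-operation on ansular functors through the step that turns a boundary disk from incoming to outgoing, the latter being exactly what Remark~\ref{remdnaeq} controls.

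Concretely I would proceed in three steps. First, apply \eqref{defPhiA} to the cyclic framed $E_2$-algebra $\cat{A}\dg$: its ansular functor is $\widehat{\cat{A}}\dg=\widehat{\cat{A}\dg}$ by the construction recalled in Section~\ref{secansularfunctorrefl}, so $\PhiAdg(H;\varphi_*(P_1\boxtimes\dots\boxtimes P_m))$ is the object of $\cat{A}^{\boxtimes n}$ obtained from the fully incoming invariant $\widehat{\cat{A}\dg}(H^\varphi;-,-)\colon\cat{A}^{\boxtimes m}\boxtimes\cat{A}^{\boxtimes n}\to\vect$ by converting the last $n$ arguments into outgoing ones. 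Second, evaluate $\widehat{\cat{A}\dg}(H^\varphi;P,Q)$ on projective labels by \eqref{eqndgransular}, i.e.\ as $\widehat{\cat{A}}(H^\varphi;D_{m+n}^{\text{rev}}(P\boxtimes Q))^{*}$ --- where \eqref{eqndgransular} does apply to $H^\varphi$ because $\varphi$ supplies $m\ge 1$ disks; writing $D_{m+n}^{\text{rev}}$ as the symmetric braiding exchanging the blocks $\cat{A}^{\boxtimes m}$ and $\cat{A}^{\boxtimes n}$ followed by $D_m^{\text{rev}}\boxtimes D_n^{\text{rev}}$, and absorbing that braiding into a relabelling of the boundary disks of $H^\varphi$ via the symmetric-group equivariance of the modular algebra $\widehat{\cat{A}}$, one identifies $\widehat{\cat{A}\dg}(H^\varphi;P,-)$ with the $\dgr$ of the right exact functor $\widehat{\cat{A}}(H^\varphi;D_m^{\text{rev}}P,-)\in\Rexf[\cat{A}^{\boxtimes n},\vect]$. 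Third, invoke Remark~\ref{remdnaeq}: the cyclic-structure equivalence $\Rexf[\cat{A}^{\boxtimes n},\vect]\simeq\cat{A}^{\boxtimes n}$ intertwines this $\dgr$ with $D_n^{\text{rev}}$, so the object corresponding to $\widehat{\cat{A}\dg}(H^\varphi;P,-)$ is $D_n^{\text{rev}}$ of the object corresponding to $\widehat{\cat{A}}(H^\varphi;D_m^{\text{rev}}P,-)$, and by \eqref{defPhiA} applied to $\cat{A}$ the latter is $\PhiA(H;\varphi_*(DP_m\boxtimes\dots\boxtimes DP_1))$. Concatenating the three steps gives the claim; for $n=0$ it is \eqref{eqndgransular} verbatim, reading $D_0^{\text{rev}}$ as linear dualization.

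The step I expect to be the main obstacle is inside the second one: the compatibility of the two ways of turning the $n$ relevant slots into outgoing ones. In $\PhiAdg$ these slots are handled through the Grothendieck-Verdier duality $D\dg=\nakar D$ of $\cat{A}\dg$, whereas the right hand side of the lemma is phrased through the duality $D$ of $\cat{A}$ and the plain $\PhiA$-map, so one has to verify that the Nakayama twist recorded by $\kappa\dg(X,Y)=\kappa(\nakal X,Y)$ in \eqref{eqnkappadg} is precisely the twist that gets undone when \eqref{eqndgransular} is transported across the incoming/outgoing conversion, leaving no residual $\nakar$ and no extra linear dual. This is bookkeeping rather than conceptual content, and it is most safely organized by testing both sides of the asserted equality against arbitrary projective objects of $\cat{A}^{\boxtimes n}$ through \eqref{eqnnat} and keeping track of the ends and coends that appear.
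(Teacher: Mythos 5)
Your proposal is correct and follows exactly the route the paper indicates: the paper gives no separate proof of this lemma but derives it in one line from the definition of $\dgr$, \eqref{eqndgransular}, Remark~\ref{remdnaeq} and \eqref{defPhiA}, which is precisely the chain you spell out. Your identification of the incoming/outgoing conversion as the only nontrivial bookkeeping, resolved by the $\mathbb{Z}_2$-equivariance of the cyclic-structure equivalence in Remark~\ref{remdnaeq}, is the right way to fill in the detail the paper leaves implicit.
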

		
		Together with Theorem~\ref{thmmain0}, this Lemma implies:

		\begin{proposition}\label{propphimaps}
			Let $\cat{A}$ be a unimodular finite ribbon category in $\Rexf$ with chosen modified trace.
			Then $\PhiA(\bar H):\int_{ \bar \Sigma } \cat{A}  \to \cat{A}^{\boxtimes n}$ for a handlebody with $n\ge 0$ embedded disks and $\Sigma = \partial H$ is determined by 
			\begin{align} \PhiA(\bar H ;    \check{\varphi}_* (P_1 \boxtimes \dots \boxtimes P_m) )  \cong  D_n^\text{rev}\PhiA(H ;     \varphi_* (DP_m \boxtimes \dots \boxtimes DP_1) ) \quad \text{with}\quad P_1,\dots,P_m \in \Proj \cat{A} \ .    
			\end{align} for every oriented embedding $\varphi : \sqcup_{m} \mathbb{D}^2 \to \Sigma$ with $m\ge 1$, where $\check{\varphi}$ is the oriented embedding $\varphi$, but precomposed with a reflection of all disks. 
		\end{proposition}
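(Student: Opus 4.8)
The plan is to combine Lemma~\ref{lemmaconnecteddg}, which expresses $\PhiAdg$ through $\PhiA$, with the identification $\cat{A}\dg\simeq\bar{\cat{A}}$ of cyclic framed $E_2$-algebras furnished by Theorem~\ref{thmmain0}, after transporting the generalized skein module along this equivalence and unravelling the reflection equivariance of the ansular functor.

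First I would record that $\cat{A}\in\E$, so that $\cat{A}\dg$ and Lemma~\ref{lemmaconnecteddg} are at our disposal: the chosen two-sided modified trace endows $\Proj\cat{A}$ with a Calabi-Yau structure, whence $\cat{A}$ is self-injective by Lemma~\ref{lemmacyfin}\ref{lemmacyfinii}, and the monoidal product of the finite tensor category $\cat{A}$ is exact. Next I would invoke Theorem~\ref{thmmain0}: the chosen modified trace is precisely a reflection equivariant structure on $\cat{A}$, i.e.\ a $\dgr$-$\refl$-homotopy fixed point structure, which in particular supplies an equivalence $\cat{A}\simeq\bar{\cat{A}}\dg$ of cyclic framed $E_2$-algebras; dualizing both sides via the homotopy involution $\dgr$ (Proposition~\ref{propdginv}) then gives $\cat{A}\dg\simeq\bar{\cat{A}}$. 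Alternatively, this can be read off directly from the proof of Theorem~\ref{thmmain0}, where $\cat{A}\dg$ is identified with $\bar{\cat{A}}$ equipped with dualizing object $\alpha^{-1}$, the unimodularity of $\cat{A}$ forcing $\alpha\cong I$.

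The next step would be to use that the assignment $\cat{B}\mapsto\Phi_{\cat{B}}$ is natural in the cyclic framed $E_2$-algebra $\cat{B}$: it is built from the ansular functor $\widehat{\cat{B}}$ via~\eqref{defPhiA}, the passage $\cat{B}\mapsto\widehat{\cat{B}}$ is an equivalence of $2$-groupoids by~\eqref{eqnequivcycframed0}, and $\int_{(-)}\cat{B}$ is functorial in $\cat{B}$; hence an equivalence of cyclic framed $E_2$-algebras induces compatible equivalences of the factorization homology categories and of the generalized skein modules. Applied to $\cat{A}\dg\simeq\bar{\cat{A}}$, this produces for every handlebody $H$ with $n\ge 0$ embedded disks, $\Sigma=\partial H$, and every oriented embedding $\varphi:\sqcup_m\mathbb{D}^2\to\Sigma$ with $m\ge 1$ an isomorphism $\PhiAdg(H;\varphi_*(P_1\boxtimes\dots\boxtimes P_m))\cong\Phi_{\bar{\cat{A}}}(H;\varphi_*(P_1\boxtimes\dots\boxtimes P_m))$ for $P_1,\dots,P_m\in\Proj\cat{A}$. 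To interpret the right-hand side I would use $\bar{\cat{A}}=\refl^*\cat{A}$ together with the $\mathbb{Z}_2$-equivariance of the modular extension $\cat{A}\mapsto\widehat{\cat{A}}$ (Remark after Definition~\ref{defhomotopyfixedpoint}), which gives $\widehat{\bar{\cat{A}}}(H^\varphi;-)=\widehat{\cat{A}}(\overline{H^\varphi};-)$; since $\overline{H^\varphi}$ is canonically $\bar H$ with its disks embedded through $\check\varphi$ --- the orientation reversal on each embedded disk being exactly what turns the $\Sigma$-orientation-preserving embedding $\varphi$ into a $\bar\Sigma$-orientation-preserving one --- the defining formula~\eqref{defPhiA} yields $\Phi_{\bar{\cat{A}}}(H;\varphi_*(P_1\boxtimes\dots\boxtimes P_m))=\PhiA(\bar H;\check\varphi_*(P_1\boxtimes\dots\boxtimes P_m))$. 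Concatenating these identifications with Lemma~\ref{lemmaconnecteddg} gives
\begin{align*}
\PhiA(\bar H;\check\varphi_*(P_1\boxtimes\dots\boxtimes P_m))&=\Phi_{\bar{\cat{A}}}(H;\varphi_*(P_1\boxtimes\dots\boxtimes P_m))\cong\PhiAdg(H;\varphi_*(P_1\boxtimes\dots\boxtimes P_m))\\
&=D_n^\text{rev}\PhiA(H;\varphi_*(DP_m\boxtimes\dots\boxtimes DP_1))\ ,
\end{align*}
which is the asserted formula.

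The step I expect to be the main obstacle is the bookkeeping behind the last identification: one has to fix a canonical identification of $\overline{H^\varphi}$ with $\bar H$ carrying the reflected embedding $\check\varphi$, keep careful track of the orientation reversal on each embedded disk (which is precisely where $\check\varphi$ rather than $\varphi$ must enter), and verify that this identification --- together with the $\mathbb{Z}_2$-equivariance of $\widehat{(-)}$ and the naturality of $\Phi$ --- is compatible with the $\int_{\partial\Sigma\times[0,1]}\cat{A}$-module structures carried by source and target, so that the pointwise isomorphisms above assemble into an isomorphism of module maps $\int_{\bar\Sigma}\cat{A}\to\cat{A}^{\boxtimes n}$.
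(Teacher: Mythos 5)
Your proposal is correct and follows exactly the route the paper intends: the paper gives no written proof of Proposition~\ref{propphimaps} beyond the remark that it follows from Lemma~\ref{lemmaconnecteddg} together with Theorem~\ref{thmmain0}, and your argument is precisely the fleshed-out version of that derivation (modified trace $\Rightarrow$ $\cat{A}\simeq\bar{\cat{A}}\dg$, hence $\cat{A}\dg\simeq\bar{\cat{A}}$, transported through the naturality of $\Phi$ and the $\mathbb{Z}_2$-equivariance of the modular extension, then combined with Lemma~\ref{lemmaconnecteddg}). The bookkeeping you flag at the end — identifying $\overline{H^\varphi}$ with $\bar H$ carrying $\check\varphi$ and checking module-map compatibility — is indeed the only content the paper leaves implicit, and your treatment of it is sound.
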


		\subsection{Factorization homology description of modular functors\label{secmf}}
		According to \cite[Section~3.1]{brochierwoike} a modular functor with values in $\Rexf$ is a pair $(\cat{Q},\mathfrak{F})$ 
	\begin{itemize}
		\item of a $\Grpd$-valued modular operad $\cat{Q}$ that is a certain extension of the modular surface operad $\Surf$ (we will not discuss the details here, but for the usual modular functors, this reduces to the well-known  extensions of the mapping class groups, see \cite[Remark~7.14]{brochierwoike}),
		\item together with a $\Rexf$-valued
		modular $\cat{Q}$-algebra $\mathfrak{F}$.
	\end{itemize}
	The extension part of the pair is often suppressed in the notation.
	Note however that the datum of the extension is important to define the moduli space $\mathfrak{MF}$ of modular functors~\cite[Section~3.2]{brochierwoike}.
	
		If for each surface $\Sigma$ all $\PhiA(H)$ for all handlebodies $H$ with boundary $\Sigma$ are isomorphic as module maps, we call $\cat{A}$ \emph{connected}~\cite[Definition~5.5]{brochierwoike}.   
	The main result~\cite[Theorem 6.8]{brochierwoike} tells us that genus zero restriction provides an equivalence
	\begin{align}
		\begin{array}{c} \text{moduli space $\mathfrak{MF}$} \\ \text{of $\Rexf$-valued modular functors}\end{array} \ra{\simeq} \begin{array}{c} \text{\underline{\emph{connected}} cyclic framed} \\  \text{$E_2$-algebras in $\Rexf$}\end{array} \label{eqnclass} \ . 
	\end{align}
	An explicit inverse is provided in~\cite[Section~6]{brochierwoike}:
	If $\cat{A}$ is connected,  its essentially unique extension to a modular functor $\FA$ \cite[Theorem~7.6]{brochierwoike} can be defined over an extension $\SurfA \to \Surf$ of $\Surf$ that we can  describe as follows:
		The homotopy fiber over a surface $\Sigma$ is the groupoid of all $\PhiA(H)$ with $\partial H=\Sigma$, with the isomorphisms being isomorphisms of module maps. 
		Equivalently, we can see this homotopy fiber over $\Sigma$
		as the groupoid whose objects are all handlebodies $H$ with $\partial H=\Sigma$, with the morphisms $H\to H'$ being the module isomorphisms $\PhiA(H)\to\PhiA(H')$.
		 Connectedness of $\cat{A}$ means exactly that all homotopy fibers are connected.
		The definition of $\SurfA$ as a modular operad works always, but it is not necessarily an extension.
		A modular category is an example of a connected finite ribbon category, and the resulting modular functor $\FA$ is equivalent to Lyubashenko's modular functor~\cite[Section~8.2]{brochierwoike}.

	\subsection{The reflection operation and the $\dgr$-operation for modular functors}
	With the reflection operation, we can act as follows on modular functors:
		
		\begin{definition}
			Let $\mathfrak{F}=(\cat{Q},\cat{B})$ be a $\Rexf$-valued modular functor and $\refl_\cat{Q} : \catf{R}^* \cat{Q} \ra{\simeq} \cat{Q}$ the equivalence of modular operads obtained by homotopy pullback of $\refl : \Surf \ra{\simeq} \Surf$ along $\cat{Q}\to \Surf$.  
			We now set \begin{align}
				\bar{\mathfrak{F}}=	(\bar{\cat{Q}} , \bar{\cat{B}}):= (\refl^*\cat{Q} , \refl_\cat{Q}^* \cat{B}           ) \ , 
			\end{align} 
			thereby defining the \emph{reflection operation} as a homotopy involution
			on the 2-groupoid
			$\mathfrak{MF}$ describing the moduli space of modular functors.
		\end{definition}

		\begin{remark}\label{remcompgenuszeroreversal} 
		A cyclic framed $E_2$-algebra $\cat{A}$ is connected if and only if $\bar{\cat{A}}$ is connected, and the equivalence~\eqref{eqnclass} is $\mathbb{Z}_2$-equivariant with respect to orientation reversal: $ \bar{\mathfrak{F}}_\cat{A}\simeq \mathfrak{F}_{\bar{\cat{A}}} $.
		\end{remark}

		Lemma~\ref{lemmaconnecteddg} implies that a cyclic framed $E_2$-algebra $\cat{A}$ (under the assumption that it is self-injective and the monoidal product is exact) is connected if and only if $\cat{A}\dg$ is connected. With~\eqref{eqnclass} this implies:
		\begin{lemma}\label{lemmadefdgfa}
			Let $\mathfrak{F}=(\cat{Q},\cat{B})$ be a $\Rexf$-valued modular functor
			such that the cyclic  framed $E_2$-algebra 
			algebra $\cat{A}$ obtained genus zero restriction
			is self-injective with exact monoidal product.
			Then there is, up to a contractible choice, a unique modular functor extending $\cat{A}\dg$; we denote this modular functor by $\mathfrak{F}\dg$.
			Then $\mathfrak{F}\mapsto \mathfrak{F}\dg$ establishes a homotopy involution.
			The modular functor $\mathfrak{F}\dg$ is determined by 
			$	\mathfrak{F}\dg( \Sigma ;P ) \cong \mathfrak{F}(\Sigma ; D_n^\text{rev} P)^*$ on projective objects, where it is understood that the surface $\Sigma$ has at least one boundary component per connected component.
		\end{lemma}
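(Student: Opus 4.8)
The plan is to reduce everything to the classification of modular functors~\eqref{eqnclass} together with the structural results already established for the $\dgr$-operation. Since $\mathfrak{F}$ is a modular functor, its genus zero restriction $\cat{A}$ is a connected cyclic framed $E_2$-algebra by~\eqref{eqnclass}; by hypothesis $\cat{A}\in\E$, so $\cat{A}\dg$ is defined and again lies in $\E$ by Definition~\ref{defdgr} and Proposition~\ref{propdginv}. By the connectedness criterion recorded just above the statement (which follows from Lemma~\ref{lemmaconnecteddg}, as $\PhiAdg(H)$ is obtained from $\PhiA(H)$ by pre- and post-composition with equivalences), $\cat{A}\dg$ is connected because $\cat{A}$ is. Hence the inverse of~\eqref{eqnclass} --- the essentially unique modular extension of~\cite[Theorem~7.6]{brochierwoike} --- produces a modular functor $\mathfrak{F}\dg$ restricting to $\cat{A}\dg$ in genus zero, and the space of such extensions, being a homotopy fibre of an equivalence of 2-groupoids, is contractible. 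This gives the first two assertions.

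For the homotopy involution, restrict $\mathfrak{MF}$ to the full sub-2-groupoid of those modular functors whose genus zero restriction lies in $\E$; along~\eqref{eqnclass} this corresponds to the full sub-2-groupoid of connected objects of $\E$, and $\dgr$ preserves both (Lemma~\ref{lemmaconnecteddg}, Proposition~\ref{propdginv}). By construction $\mathfrak{F}\mapsto\mathfrak{F}\dg$ is precisely the operation corresponding, under~\eqref{eqnclass}, to $\cat{A}\mapsto\cat{A}\dg$ on connected algebras in $\E$; the latter carries the coherent $\mathbb{Z}_2$-structure of Proposition~\ref{propdginv}, and transporting it along the equivalence~\eqref{eqnclass} makes $\mathfrak{F}\mapsto\mathfrak{F}\dg$ a homotopy involution.

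It remains to identify $\mathfrak{F}\dg$ on projective labels. Here I would invoke the explicit construction of the modular extension recalled in Section~\ref{secmf}, which expresses the value of the modular functor built from a connected cyclic framed $E_2$-algebra in terms of its ansular functor evaluated on bounding handlebodies: for a surface $\Sigma$ all of whose connected components carry at least one boundary component, choosing a handlebody $H$ with $\partial H=\Sigma$ and $n$ embedded disks identifies $\mathfrak{F}(\Sigma;-)\simeq\widehat{\cat{A}}(H;-)$ and, by the same construction applied to $\cat{A}\dg$, $\mathfrak{F}\dg(\Sigma;-)\simeq\widehat{\cat{A}\dg}(H;-)$ --- the choice of which boundary components are incoming versus outgoing being absorbed into the Grothendieck-Verdier duality. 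Now~\eqref{eqndgransular} gives $\widehat{\cat{A}\dg}(H;P)\cong\widehat{\cat{A}}(H;D_n^\text{rev}P)^{*}$ for $P\in\Proj\cat{A}^{\boxtimes n}$, and $D_n^\text{rev}$ preserves projectives by self-injectivity; combining the two identifications yields $\mathfrak{F}\dg(\Sigma;P)\cong\mathfrak{F}(\Sigma;D_n^\text{rev}P)^{*}$. As explained in Remark~\ref{remdgrnonmod}, this formula is not to be expected on arbitrary objects, but it holds on the projective ones, and this determines $\mathfrak{F}\dg(\Sigma;-)$ since a right exact functor is determined by its restriction to $\Proj$.

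The step I expect to be the main obstacle is the coherence underlying the last paragraph: one has to check that the identifications $\mathfrak{F}(\Sigma;-)\simeq\widehat{\cat{A}}(H;-)$ and $\mathfrak{F}\dg(\Sigma;-)\simeq\widehat{\cat{A}\dg}(H;-)$ are natural in $H$, compatible with the gluing of surfaces, and equivariant for the extended mapping class group actions, so that the resulting isomorphism on projective labels is coherent rather than just pointwise. Concretely this means running, at the level of the extension $\SurfA\to\Surf$, a computation of the same flavor as the one that proves~\eqref{eqncyclicalgdgrcomp} and~\eqref{eqndgransular}, tracking the dual coevaluation object $D_2\Delta$ through the handlebody gluings; the fact that the objects of the form $D(DP_i\otimes\cdots)$ produced along the way stay projective is what keeps the argument inside the right-exact world, and it is also why the hypothesis that each connected component of $\Sigma$ has at least one boundary component cannot be dropped.
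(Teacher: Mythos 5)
Your argument is correct and is essentially the paper's own: the paper derives this lemma directly from the fact that Lemma~\ref{lemmaconnecteddg} shows $\dgr$ preserves connectedness, combined with the classification equivalence~\eqref{eqnclass} (which gives existence and contractible uniqueness of the extension), Proposition~\ref{propdginv} (transported along~\eqref{eqnclass} to get the homotopy involution), and the formula~\eqref{eqndgransular} for the value on projective labels. Your additional remarks on coherence and on why projectivity of the objects $D(DP_i\otimes\cdots)$ keeps the computation right exact are consistent with the paper's earlier discussion in Sections~\ref{secdual} and~\ref{secansularfunctorrefl} and do not change the route.
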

		
	\subsection{Strong rigidity and reflection equivariance for modular functors}	
		One can require the category obtained from a modular functor by genus zero restriction to
		just be rigid (as a property), but this is not a terribly useful notion.
	The ribbon Grothendieck-Verdier categories extracted from the modular functors in~\cite[Example 8.8]{brochierwoike} and~\cite{sn} are rigid (as a property of the monoidal category), but the Grothendieck-Verdier duality is generally different from the rigid duality. Moreover, the rigid duality could not even be chosen as the Grothendieck-Verdier duality because this would violate the ribbon condition in this case.
	Therefore, the following definition makes sense:
	
		\begin{definition}
			We call a $\Rexf$-valued modular functor, ansular functor or genus zero modular functor \begin{pnum}\item \emph{strongly rigid} if the Grothendieck-Verdier duality of its underlying ribbon Grothendieck-Verdier category  is a rigid duality,
				\item and \emph{simple} if the unit of the underlying monoidal category is simple, i.e.\
				has a simple unit.
				\end{pnum}
		\end{definition}

		\begin{definition}\label{defrefleqmf}
			For a strongly rigid $\Rexf$-valued modular functor $\mathfrak{F}$, a \emph{reflection equivariant structure} is a homotopy fixed point structure with respect to the homotopy involution $\mathfrak{F} \mapsto \bar{\mathfrak{F}}\dg$, relative to the underlying non-cyclic framed $E_2$-algebra structure as in Definition~\ref{deffixedpointrel}. 
		\end{definition}

		\begin{remark}\label{remmfrefl}
			When spelling out the definition of $\bar{\mathfrak{F}}\dg$,
			we find that the reflection equivariant structure consists of isomorphisms
			\begin{align} \mathfrak{F}(\bar \Sigma ; D_n^\text{rev} P) \cong \mathfrak{F}(\Sigma ; P)^*       \label{eqnrefleqsplledout}
			\end{align} for all surfaces $\Sigma$ with at least one boundary component for every 
		connected component and only projective boundary labels $P \in \Proj \cat{A}^{\boxtimes n}$. 
			More precisely, these isomorphisms 
	\begin{itemize}\item are compatible with gluing,\item are involutive,\item and extend, on the level of non-cyclic framed $E_2$-algebras,
		the canonical isomorphisms coming from the rigid pivotal duality.\end{itemize}
		\end{remark}
		
		\begin{remark}
			The reflection equivariance in Definition~\ref{defrefleqmf}
			is similar to the modular functor being \emph{unitary} in the sense of \cite[Definition~5.1.14]{baki}, but we  should warn the reader that we have the isomorphism~\eqref{eqnrefleqsplledout} only for $n\ge 1$ and only for projective objects as boundary labels (in~\cite{baki} this does not play a role because only the semisimple case is considered). Moreover, \eqref{eqnrefleqsplledout} is a part of a homotopy fixed point structure with respect to a homotopy involution. This entails coherence conditions that do not seem to be considered in~\cite{baki}.
		\end{remark}

		\subsection{Characterization of reflection equivariance for strongly rigid modular functors}
		With Theorem~\ref{thmmain0}, it will be straightforward to prove:
		
		\begin{theorem}\label{thmrefleqmf}
			A strongly rigid simple
			modular functor in $\Rexf$ with underlying finite ribbon 
			 category $\cat{A}$ can be made reflection equivariant if and only if
			$\cat{A}$ is unimodular, and 
			a  reflection equivariant structure
			amounts exactly to a two-sided modified trace on $\Proj \cat{A}$. 
			In particular, the modular functor of a modular category can be made reflection equivariant 
			through the choice of a 
			two-sided modified trace. 
		\end{theorem}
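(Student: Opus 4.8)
The plan is to deduce Theorem~\ref{thmrefleqmf} from Theorem~\ref{thmmain0} by transporting everything along the factorization homology classification~\eqref{eqnclass} of modular functors. First I would observe that, given a strongly rigid simple modular functor $\mathfrak{F}$ in $\Rexf$, its genus zero restriction is a finite ribbon category $\cat{A}$, which lies in $\E$ (self-injective with exact monoidal product) by rigidity, and which is moreover connected because it extends to a modular functor; hence $\mathfrak{F}$ is obtained from $\cat{A}$ by the inverse of the equivalence~\eqref{eqnclass}.

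Next I would argue that the homotopy involution $\mathfrak{F}\mapsto\bar{\mathfrak{F}}\dg$ on the moduli space $\mathfrak{MF}$ corresponds, under genus zero restriction, to the homotopy involution $\cat{A}\mapsto\bar{\cat{A}}\dg$ on connected cyclic framed $E_2$-algebras: the orientation reversal part is Remark~\ref{remcompgenuszeroreversal}, and the $\dgr$-part is precisely how $\mathfrak{F}\dg$ was defined in Lemma~\ref{lemmadefdgfa}, namely as the essentially unique modular functor extending $\cat{A}\dg$. Consequently the $2$-groupoid of homotopy fixed point structures on $\mathfrak{F}$ for $\mathfrak{F}\mapsto\bar{\mathfrak{F}}\dg$ is equivalent to the $2$-groupoid of homotopy fixed point structures on $\cat{A}$ for $\cat{A}\mapsto\bar{\cat{A}}\dg$.

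To upgrade this to the \emph{relative} statement, I would note that the non-cyclic framed $E_2$-algebra underlying $\mathfrak{F}$ --- the balanced braided category of boundary labels --- coincides with the non-cyclic framed $E_2$-algebra underlying $\cat{A}$, so that the forgetful functor from cyclic modular functors to non-cyclic framed $E_2$-algebras factors through genus zero restriction, and the canonical non-cyclic homotopy fixed point structure of Corollary~\ref{cormulfinrib}, being defined purely at the non-cyclic level from rigidity alone, is identified on both sides. Thus a reflection equivariant structure on $\mathfrak{F}$ in the sense of Definition~\ref{defrefleqmf} is the same datum as a cyclic reflection equivariant structure on $\cat{A}$ in the sense of Definition~\ref{deffixedpointrel}. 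Applying Theorem~\ref{thmmain0} then finishes the argument: such a structure exists if and only if $\cat{A}$ is unimodular, and it amounts exactly to a two-sided modified trace on $\Proj\cat{A}$; the addendum on modular categories follows since a modular category is unimodular by~\cite[Proposition~4.5]{eno-d}.

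The main obstacle is the bookkeeping in the third paragraph: one has to chase through the constructions of~\eqref{eqnclass}, of the $\dgr$-operation on modular functors (Lemma~\ref{lemmadefdgfa}) and of the $\dgr$-operation on cyclic framed $E_2$-algebras (Definition~\ref{defdgr}) to verify that the forgetful functors to non-cyclic framed $E_2$-algebras are intertwined compatibly with the coherence data of the homotopy involutions, so that ``relative to the non-cyclic structure'' means the same thing on both sides. Everything else is a direct translation, and the analytic content --- the link to modified traces and the unimodularity criterion --- is already contained in Theorem~\ref{thmmain0}.
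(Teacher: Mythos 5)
Your proposal is correct and follows essentially the same route as the paper: the paper's proof likewise restricts the classification equivalence~\eqref{eqnclass} to the self-injective/exact setting, invokes Remark~\ref{remcompgenuszeroreversal} and Lemma~\ref{lemmadefdgfa} for equivariance of genus zero restriction under $(-)\mapsto(\bar{\ -\ })\dg$, observes that relativity to the rigid non-cyclic fixed point structure is defined identically in Definitions~\ref{deffixedpointrel} and~\ref{defrefleqmf}, and then applies Theorem~\ref{thmmain0}. Your third paragraph simply spells out the compatibility check that the paper compresses into one sentence.
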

		
		\begin{proof}
			The equivalence~\eqref{eqnclass} restricts to an equivalence
			\begin{align}
				\left.\begin{array}{c} \text{moduli space $\mathfrak{MF}$} \\ \text{of $\Rexf$-valued modular functors}\end{array} \right|_{\substack{\text{self-injective},\\ \otimes \ \text{exact}}}\ra{\simeq} \left.\begin{array}{c} \text{\underline{\emph{connected}} cyclic framed} \\  \text{$E_2$-algebras in $\Rexf$}\end{array} \right|_{\substack{\text{self-injective},\\ \otimes\ \text{exact}}} \label{eqnclass2} 
			\end{align}
			that is equivariant with respect to the action $(-) \mapsto (\bar{\ -\ })\dg$
			as follows from Remark~\ref{remcompgenuszeroreversal} and Lemma~\ref{lemmadefdgfa}.
			The relativity to a pivotal rigid duality of the homotopy fixed points for $(-) \mapsto (\bar{\ -\ })\dg$
			is defined in the same way in
			Definition~\ref{deffixedpointrel} and~\ref{defrefleqmf}. 
			Therefore, the statement follows from Theorem~\ref{thmmain0}.
		\end{proof}

	\spaceplease
		\section{The connection between reflection equivariance and the non-degeneracy of the braiding}
		In this section, we will formulate and prove the main result of the article (Theorem~\ref{thmrefl}).
		To this end, we need to explore the connection between reflection equivariance 
		and the non-degeneracy of the braiding.
			
		\subsection{Co-non-degeneracy and the Müger center\label{secconondeg}}
		According to \cite[Definition~7.13]{brochierwoike},
		we call a cyclic framed $E_2$-algebra $\cat{A}$ \emph{co-non-degenerate} if the handlebody skein module $\PhiA(\mathbb{B}^3):\int_{\mathbb{S}^2} \cat{A} \to I$ for the three-dimensional ball
		(here $I$ is the monoidal unit in the ambient symmetric monoidal bicategory) is an equivalence.  
		The following is a non-semisimple analogue of \cite[Proposition 4.4]{skeinfin} and a more elaborate version of \cite[Remark~7.14]{brochierwoike}:
		\begin{proposition}\label{propcodeg}
			For a cyclic framed $E_2$-algebra $\cat{A}$ in $\Rex$, the application of the internal hom $\Rex[-,\vect]$ in $\Rex$ to $\vect$
			to the map
			$\PhiA(\mathbb{B}^3):\int_{\mathbb{S}^2}\cat{A}\to \vect$ yields the map
			$\vect \to Z_2(\cat{A})$ selecting the monoidal unit of $\cat{A}$. 
			Suppose that $\cat{A}$ is a finite tensor category (with rigidity just understood as property of the monoidal product), then
			$ \int_{\mathbb{S}^2} \cat{A} \simeq Z_2(\cat{A})^\op $ by a canonical equivalence, and $\cat{A}$ is co-non-degenerate  
			if and only if it has a non-degenerate braiding.	\end{proposition}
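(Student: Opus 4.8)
The plan is to compute the $\Rex[-,\vect]$-dual of $\PhiA(\mathbb{B}^3)$ by hand, using the description of generalized skein modules through the ansular functor on genus-zero handlebodies, and to extract from that computation both the equivalence $\int_{\mathbb{S}^2}\cat{A}\simeq Z_2(\cat{A})^\op$ and the characterization of co-non-degeneracy.

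First I would pin down $\PhiA(\mathbb{B}^3)$ on a single chart. A ball $\mathbb{B}^3$ with $m$ embedded disks is the genus-zero handlebody of total arity $m$, so by the definition~\eqref{defPhiA} of the generalized skein module together with the genus-zero formula~\eqref{eqnconfblockgenus0} for the ansular functor $\widehat{\cat{A}}$ one has $\PhiA(\mathbb{B}^3;\varphi_*(X_1\boxtimes\cdots\boxtimes X_m))\cong\cat{A}(X_1\otimes\cdots\otimes X_m,K)^*$, with $K$ the dualizing object. Precomposing with a single disk chart $\psi_*\colon\cat{A}\to\int_{\mathbb{S}^2}\cat{A}$ therefore gives the right exact functor $X\mapsto\cat{A}(X,K)^*\cong\kappa(I,X)$ (using $K\cong DI$ and the symmetry of $\kappa$). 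On $\mathbb{S}^2$ every finite configuration of disks is isotopic into a single chart, so $\int_{\mathbb{S}^2}\cat{A}$ is generated under colimits by the essential image of $\psi_*$; hence $\PhiA(\mathbb{B}^3)$, like any right exact functor out of $\int_{\mathbb{S}^2}\cat{A}$, is determined by its restriction along $\psi_*$ together with the descent data provided by the remaining charts. Since the pairing $\kappa$ of $\cat{A}$ is non-degenerate, $X\mapsto\kappa(X,-)$ is an equivalence $\cat{A}\xrightarrow{\simeq}\Rex[\cat{A},\vect]$, under which $\PhiA(\mathbb{B}^3)\circ\psi_*$ corresponds to the monoidal unit $I\in\cat{A}$.

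Next I would identify $\int_{\mathbb{S}^2}\cat{A}$ itself. As a functor $\Rex^\op\to\Rex$, $\Rex[-,\vect]$ is a right adjoint, so it carries the disk-configuration colimit defining $\int_{\mathbb{S}^2}\cat{A}$ to the limit over the same diagram applied to $\Rex[\cat{A},\vect]$. Decomposing $\mathbb{S}^2$ by excision into two disks glued along a collar annulus, this limit becomes a pullback of two copies of $\Rex[\cat{A},\vect]$ over the annular term, and — as in the semisimple computation of \cite[Proposition~4.4]{skeinfin} — the pullback cuts out exactly the objects double-braiding trivially with everything, i.e.\ the Müger center. Together with the standard identification $\Rex[\cat{C},\vect]\simeq\cat{C}^\op$ for finite linear categories $\cat{C}$ (cf.\ the fact around~\eqref{eqnnat}), applicable to $Z_2(\cat{A})$ since it is a finite tensor category when $\cat{A}$ is, this yields $\Rex[\int_{\mathbb{S}^2}\cat{A},\vect]\simeq Z_2(\cat{A})$ and dually the canonical equivalence $\int_{\mathbb{S}^2}\cat{A}\simeq Z_2(\cat{A})^\op$ (the opposite tracking the orientation conventions of the charts), with $\psi_*(I)$ corresponding to the unit. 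The descent condition on $\PhiA(\mathbb{B}^3)\circ\psi_*$ is then automatic because $I\in Z_2(\cat{A})$, so $\Rex[-,\vect]$ applied to $\PhiA(\mathbb{B}^3)$ is the functor $\vect\to Z_2(\cat{A})$ selecting $I$, which is the first claim. Finally, co-non-degeneracy means $\PhiA(\mathbb{B}^3)\colon Z_2(\cat{A})^\op\to\vect$ is an equivalence; comparing source and target forces $Z_2(\cat{A})\simeq\vect$, i.e.\ non-degeneracy of the braiding, and conversely, if $Z_2(\cat{A})\simeq\vect$ then $\PhiA(\mathbb{B}^3)\colon\vect\to\vect$ sends $\psi_*(I)$ to $\cat{A}(I,K)^*=\cat{A}(I,I)^*=k$ (for a finite tensor category $K\cong I$, and the unit is simple), and since $\psi_*(I)$ generates $\int_{\mathbb{S}^2}\cat{A}\simeq\vect$ this forces $\PhiA(\mathbb{B}^3)$ to be an equivalence.

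The main obstacle is the excision computation of $\int_{\mathbb{S}^2}\cat{A}$ in the merely finite-tensor, possibly non-semisimple setting: one has to identify the collar-annulus term, invoke ambidexterity of factorization homology for rigid $\cat{A}$ \cite{bzbj} so that the limit- and colimit-versions of the construction agree, and control $\Rex[\cat{A},\vect]\simeq\cat{A}^\op$ together with its braided structure. It is precisely there that the opposite category, invisible in the semisimple case, enters, and where one must check that the pullback produces $Z_2(\cat{A})$ on the nose rather than merely a functor to $\cat{A}$.
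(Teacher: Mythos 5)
Your identification of $\int_{\mathbb{S}^2}\cat{A}$ and the first claim follow essentially the paper's route: excision into two hemispheres over the annulus category, the tensor--hom adjunction $\Rex[\cat{A}\boxtimes_{\int_{\mathbb{S}^1\times[0,1]}\cat{A}}\cat{A},\vect]\simeq\Rex_{\int_{\mathbb{S}^1\times[0,1]}\cat{A}}[\cat{A},\cat{A}]\simeq Z_2(\cat{A})$, and the involutivity of $\Rex[-,\vect]$ on finite categories (the paper uses \cite[Theorem~3.3~(1)]{dss} to know that $\int_{\mathbb{S}^2}\cat{A}$ is finite before dualizing back). That part is fine.

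The converse direction of the last claim, however, has a genuine gap. You argue that if $Z_2(\cat{A})\simeq\vect$ then $\PhiA(\mathbb{B}^3)$ sends $\psi_*(I)$ to $\cat{A}(I,K)^*\cong\cat{A}(I,I)^*\cong k$, invoking ``$K\cong I$ for a finite tensor category,'' and that $\psi_*(I)$ generates $\int_{\mathbb{S}^2}\cat{A}$. Neither holds under the stated hypotheses: rigidity is assumed only as a \emph{property} of the monoidal product, so the Grothendieck--Verdier duality $D$ need not be the rigid one and $K=DI$ need not be the unit. The paper's own Example~\ref{exffbs} (the Feigin--Fuchs boson) is exactly such a case: there $K=k_{2\xi}$ with $2\xi\neq 0$, so $\cat{A}(I,K)=0$ and the quantum structure sheaf $\psi_*(I)$ on $\mathbb{S}^2$ is the \emph{zero} object (the generator of $\int_{\mathbb{S}^2}\cat{A}$ is instead $\psi_*(K)$), yet the category is co-non-degenerate with non-degenerate braiding. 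Your argument would wrongly conclude that $\PhiA(\mathbb{B}^3)$ kills a generator. The fix is the paper's: from non-degeneracy of the braiding, the dual map $\Rex[\PhiA(\mathbb{B}^3),\vect]:\vect\to Z_2(\cat{A})$ is an equivalence, and applying $\Rex[-,\vect]$ once more — legitimate because $\int_{\mathbb{S}^2}\cat{A}$ is finite and $\Rex[-,\vect]$ is involutive there — identifies the bidual with $\PhiA(\mathbb{B}^3)$ itself, which is therefore an equivalence. This route never needs to evaluate $\PhiA(\mathbb{B}^3)$ on $\psi_*(I)$ and so is insensitive to whether $K\cong I$.
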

		
		\begin{proof}
			After cutting $\mathbb{S}^2$ into two hemispheres, we find with excision \begin{align}\label{eqnexcisions2} \int_{\mathbb{S}^2}\cat{A} \simeq \cat{A}\boxtimes_{\int_{\mathbb{S}^1\times [0,1]}\cat{A}} \cat{A} \ . 
			\end{align}
			Under the equivalence~\eqref{eqnexcisions2}, $\PhiA(\mathbb{B}^3)$ is the map induced by the pairing $\kappa : \cat{A}\boxtimes\cat{A}\to \vect$. 
			Next we calculate with the tensor hom adjunction
			\begin{align}
				\Rex\left[  \cat{A}\boxtimes_{\int_{\mathbb{S}^1\times [0,1]}\cat{A}} \cat{A}     ,\vect\right] \simeq \Rex  _{\int_{\mathbb{S}^1\times [0,1]}\cat{A} }  \left[   \cat{A}    , \cat{A} \right]  \label{eqntensorhom}
			\end{align}
			as in the proof of \cite[Proposition 5.9]{brochierwoike}.
			The Müger center $Z_2(\cat{A})$, in its description in~\cite[Definition~2.6]{bjss}, is $\Rex _{\int_{\mathbb{S}^1\times [0,1]}\cat{A} } [\cat{A},\cat{A}]$, and the canonical map $\vect \to Z_2(\cat{A})\simeq \Rex _{\int_{\mathbb{S}^1\times [0,1]}\cat{A} } [\cat{A},\cat{A}]$ selects the identity of $\cat{A}$. This proves that $\PhiA(\mathbb{B}^3)$, under all these identifications, dualizes to the map $\vect \to Z_2(\cat{A})$ selecting the unit.
			
			Suppose now that $\cat{A}$ is a finite tensor category.
			Then $\int_{\mathbb{S}^1\times [0,1]}\cat{A}$ is a finite tensor category~\cite[Section~4.2]{bzbj2}.
			Thanks to \cite[Theorem~3.3~(1)]{dss}, this implies that $\int_{\mathbb{S}^2}\cat{A}$ is a finite linear category.
			For finite categories, $\Rex[-,\vect]$ is involutive and sends a category to its opposite. By applying $\Rex[-,\vect]$ to \eqref{eqntensorhom} we obtain a canonical equivalence 
			$\int_{\mathbb{S}^2} \cat{A} \simeq Z_2(\cat{A})^\op$.

			If $\cat{A}$ is co-non-degenerate, i.e.\ if $\PhiA(\mathbb{B}^3)$ is an equivalence, so is  $\vect \to Z_2(\cat{A})$. This implies that the braiding of $\cat{A}$ is non-degenerate. 
			
			Conversely, if the braiding of $\cat{A}$ is non-degenerate,  $\vect \to Z_2(\cat{A})$ is an equivalence by definition, which implies that $\Rex[ \PhiA(\mathbb{B}^3) , \vect] : \vect \ra{\simeq} \Rex\left[ \int_{\mathbb{S}^2}\cat{A},\vect   \right]$ is an equivalence, and hence `bi-dualizes' to an equivalence
			\begin{align} \Rex \left[\Rex[ \PhiA(\mathbb{B}^3) , \vect],\vect \right]  : \Rex \left[\Rex\left[ \int_{\mathbb{S}^2}\cat{A},\vect   \right],\vect \right] \ra{\simeq} \vect \ .  \end{align} 
			But this map can be identified with $\PhiA(\mathbb{B}^3)$ again because $\int_{\mathbb{S}^2} \cat{A}$ is  finite and $\Rex[-,\vect]$ is involutive on finite categories.
			This means that $\cat{A}$ is co-non-degenerate. 
		\end{proof}

			This leads to a non-semisimple generalization of \cite[Lemma 4.5]{skeinfin}
			that can be phrased in terms of the admissible skein modules from~\cite{asm}:

		\begin{corollary}\label{corS2S1}
			Let $\cat{A}$ be a unimodular finite ribbon category.
			Then the admissible skein module for $\mathbb{S}^2 \times \mathbb{S}^1$ is given by
			\begin{align} 	\skA(\mathbb{S}^2 \times \mathbb{S}^1) \cong HH_0(Z_2(\cat{A})) \ . 
			\end{align}
		\end{corollary}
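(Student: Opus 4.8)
The plan is to realize $\mathbb{S}^2\times\mathbb{S}^1$ as a self-gluing of the cylinder over $\mathbb{S}^2$ and then feed in Proposition~\ref{propcodeg}. Since $\cat{A}$ is unimodular, it carries a modified trace on $\Proj\cat{A}$ (necessarily two-sided, by the results cited after Theorem~\ref{thmrefl}, \cite{mtrace,shibatashimizu}), so the admissible skein modules of \cite{asm} are defined, and by the comparison results of \cite{cooke,mwskein} (compatibly with Section~\ref{sechbdyskein}) they obey the same cutting and gluing rules as the factorization-homology skein modules. The manifold $\mathbb{S}^2\times\mathbb{S}^1$ is obtained from the cylinder $\mathbb{S}^2\times[0,1]$ by gluing its two boundary spheres along the identity; the cylinder induces the identity endofunctor of the finite linear category $\int_{\mathbb{S}^2}\cat{A}$, so closing it up should produce its categorical trace. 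Writing $\cat{C}:=\int_{\mathbb{S}^2}\cat{A}$, the first step I would establish is therefore
\begin{align}
	\skA(\mathbb{S}^2\times\mathbb{S}^1)\;\cong\;HH_0(\cat{C})\;:=\;\int^{X\in\cat{C}}\cat{C}(X,X)\ ,
\end{align}
the zeroth Hochschild homology (i.e.\ the cocenter) of $\cat{C}$.

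Next I would invoke Proposition~\ref{propcodeg}: a unimodular finite ribbon category is in particular a finite tensor category, so that proposition applies and gives a canonical equivalence $\int_{\mathbb{S}^2}\cat{A}\simeq Z_2(\cat{A})^\op$. Finally, $HH_0$ is manifestly invariant under passage to the opposite category --- its defining coequalizer $\bigoplus_{X,Y}\cat{C}(X,Y)\otimes\cat{C}(Y,X)\rightrightarrows\bigoplus_X\cat{C}(X,X)$ is symmetric under interchanging the two composition maps, equivalently at the level of a projective generator it is the classical isomorphism $B/[B,B]\cong B^\op/[B^\op,B^\op]$ --- so $HH_0(Z_2(\cat{A})^\op)\cong HH_0(Z_2(\cat{A}))$. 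Chaining the three steps yields $\skA(\mathbb{S}^2\times\mathbb{S}^1)\cong HH_0(Z_2(\cat{A}))$.

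The hard part is the first step. Because $\mathbb{S}^2\times\mathbb{S}^1$ is not a handlebody, the ansular-functor machinery of Section~\ref{secansular} does not apply on the nose, so one genuinely has to use the full admissible-skein (or modular-functor-level) gluing to express the closed-manifold invariant as the cocenter of the value assigned to $\mathbb{S}^2$; this is also the only point where unimodularity is really needed, namely to guarantee --- through the modified trace --- that the admissible skein modules of \cite{asm} exist and are compatible with the factorization-homology description underlying Proposition~\ref{propcodeg}. Everything after that is formal coend bookkeeping.
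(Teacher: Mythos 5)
Your proposal is correct and follows essentially the same route as the paper: express $\skA(\mathbb{S}^2\times\mathbb{S}^1)$ as the cocenter $HH_0$ of the category assigned to $\mathbb{S}^2$, then apply Proposition~\ref{propcodeg} together with the $\op$-invariance of $HH_0$. The one step you rightly flag as the hard part is handled in the paper by citing Brown--Ha\"ioun's theorem that uncocompleted admissible skeins form a bimodule-valued three-dimensional TFT (so that the value on $\mathbb{S}^2\times\mathbb{S}^1$ is the dimension object of $\skcatA(\mathbb{S}^2)$), combined with the fact that the finite free cocompletion of $\skcatA(\mathbb{S}^2)$ is $\int_{\mathbb{S}^2}\cat{A}$.
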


		Here $HH_0$ is the zeroth Hochschild homology of a finite linear category $\cat{L}$  defined via $HH_0(\cat{L}) = \int^{P\in \Proj \cat{L}} \cat{L}(P,P)$. It satisfies the property $HH_0(\cat{L})\cong HH_0(\cat{L}^\op)$. A reminder on Hochschild homology for finite linear categories, as they occur in quantum algebra, is given in~\cite[Section~2]{dva}. 
		
		\begin{proof}[\slshape Proof of Corollary~\ref{corS2S1}]
			The uncocompleted admissible skein form a three-dimensional bimodule-valued topological field theory \cite[Theorem 2.21]{brownhaioun} sending $\mathbb{S}^2$ to the uncompleted skein category $\skcatA(\mathbb{S}^2)$. Its value on $\mathbb{S}^2 \times \mathbb{S}^1$ is $\skA(\mathbb{S}^2 \times \mathbb{S}^1)$. By general topological field theory principles, $\skA(\mathbb{S}^2 \times \mathbb{S}^1)$ is the dimension object of $\skcatA(\mathbb{S}^2)$, which is the coend $\int^{P\in \skcatA(\mathbb{S}^2)} \End_{\skcatA(\mathbb{S}^2)} (P)$. 
			Since the finite free cocompletion of $\skcatA(\mathbb{S}^2)$ is $\int_{\mathbb{S}^2} \cat{A}$ by \cite[Theorem~3.10]{brownhaioun}, we conclude
			\begin{align}
				\skA(\mathbb{S}^2 \times \mathbb{S}^1) \cong HH_0\left( \int_{\mathbb{S}^2} \cat{A}   \right) \ , 
			\end{align}
			and therefore
			\begin{align}
				\skA(\mathbb{S}^2 \times \mathbb{S}^1) \cong HH_0(Z_2(\cat{A}))
			\end{align}
			with Proposition~\ref{propcodeg}.
		\end{proof}

		With \cite[Corollary 3.10]{mwskein}, we obtain:
		
		\begin{corollary}\label{corconnectedsum}
			Let $\cat{A}$ be a unimodular finite ribbon category and
			$M$ and $N$ closed oriented three-dimensional manifolds.
			Then there is a canonical isomorphism
			\begin{align}
				\skA(M \# N) \cong \int^{P \in \Proj Z_2(\cat{A})} \skA(M\setminus B ; P^\vee) \otimes \skA(N\setminus B;P) \ ,
			\end{align}
			where $B$ is an open three-dimensional ball cut out of $M$ and $N$, respectively.
		\end{corollary}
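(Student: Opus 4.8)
The plan is to derive the formula by combining the gluing formula for admissible skein modules from \cite[Corollary 3.10]{mwskein} (applied to the separating sphere of the connected sum) with the identification of the skein category of $\mathbb{S}^2$ furnished by Proposition~\ref{propcodeg}. Recall that $\skA(-)$ is the admissible skein module, which is available here precisely because $\cat{A}$ is unimodular.

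First I would recall that $M\# N$ is obtained by removing an open ball $B$ from each of $M$ and $N$ and gluing the punctured manifolds $M\setminus B$ and $N\setminus B$ along the resulting boundary spheres, which carry opposite orientations. By \cite[Corollary 3.10]{mwskein}, the admissible skein module of such a gluing is computed as a coend over the skein category $\skcatA(\mathbb{S}^2)$ of the two skein modules, regarded as a right, respectively left, $\skcatA(\mathbb{S}^2)$-module. Since $\mathbb{S}^2$ is closed, $\skcatA(\mathbb{S}^2)$ is rigid, and it is pivotal thanks to the ribbon structure of $\cat{A}$, so the gluing coend can be rewritten in the form
\[ \skA(M\# N)\;\cong\;\int^{c\in\skcatA(\mathbb{S}^2)}\skA(M\setminus B;c^\vee)\otimes\skA(N\setminus B;c)\ , \]
where $c^\vee$ is the dual in $\skcatA(\mathbb{S}^2)$; topologically this dual is implemented by reversing the orientation of the embedded disk, which is exactly what accounts for the opposite orientations of the two boundary spheres.

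Next I would identify $\skcatA(\mathbb{S}^2)$. By \cite[Theorem~3.10]{brownhaioun} its finite free cocompletion is $\int_{\mathbb{S}^2}\cat{A}$, and by Proposition~\ref{propcodeg} the latter is canonically equivalent to $Z_2(\cat{A})^\op$; since $\cat{A}$ is a finite ribbon category, $Z_2(\cat{A})$ is a (symmetric) finite tensor category. A finite tensor category is the finite free cocompletion of its full subcategory of projectives (the standard fact already used in the proof of Lemma~\ref{lemmawidetilde}), so $\skcatA(\mathbb{S}^2)\simeq\Proj\big(Z_2(\cat{A})^\op\big)=\big(\Proj Z_2(\cat{A})\big)^\op$. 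This equivalence matches the rigid duality of $\skcatA(\mathbb{S}^2)$ with the rigid duality $-^\vee$ of $Z_2(\cat{A})$, which in turn coincides with that of $\cat{A}$ because the central inclusion $Z_2(\cat{A})\hookrightarrow\cat{A}$ is a tensor functor preserving duals. Substituting into the displayed coend, and reindexing the coend over $\big(\Proj Z_2(\cat{A})\big)^\op$ as one over $\Proj Z_2(\cat{A})$ (swapping the two variable slots), yields exactly the asserted isomorphism. All the identifications used are canonical and natural in $\cat{A}$, so the resulting isomorphism is canonical, in the same spirit as the proof of Corollary~\ref{corS2S1}.

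The main obstacle I expect is the orientation/duality bookkeeping: verifying that the topological duality on $\skcatA(\mathbb{S}^2)$ that enters the gluing formula of \cite[Corollary 3.10]{mwskein} corresponds, under the equivalence with the projective part of $Z_2(\cat{A})$, precisely to the algebraic rigid dual $-^\vee$ appearing in the statement, and that it is attached to the $M$-factor rather than the $N$-factor; this is entirely a matter of fixing orientation conventions consistently across the cut. Once that is pinned down, the remainder is a direct substitution of Proposition~\ref{propcodeg} into the cited skein gluing formula.
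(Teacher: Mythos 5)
Your proposal is correct and follows essentially the same route as the paper, which derives the corollary by combining the gluing formula of \cite[Corollary~3.10]{mwskein} at the separating sphere with the identification $\int_{\mathbb{S}^2}\cat{A}\simeq Z_2(\cat{A})^\op$ from Proposition~\ref{propcodeg} and the reduction of the coend to projective objects. Your extra care with the orientation/duality bookkeeping only makes explicit what the paper leaves implicit.
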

		
		\begin{example}
			If $\cat{A}$ is a ribbon fusion category, Corollary~\ref{corconnectedsum} simplifies to
			\begin{align}
				\skA(M \# N) \cong \skA(M\setminus B) \otimes \skA(N\setminus B) \oplus \bigoplus_{i=1}^n  \skA(M\setminus B ; X_i^\vee) \otimes \skA(N\setminus B;X_i) \ ,
			\end{align}
			where $I,X_1,\dots,X_n$ is a basis of simple objects for the Müger center. 
		\end{example}
		
		\begin{example}
			Let $H$ be a finite-dimensional cocommutative unimodular ribbon
			Hopf algebra; $H$ could be non-semisimple, for example the group algebra of the cyclic group with $p$ elements in characteristic $p$.
			Then Corollary~\ref{corconnectedsum} gives us
			\begin{align}
				\skA(M \# N) \cong  \skA(M\setminus B ;H^*) \otimes_H \skA(N\setminus B;H) \ .
			\end{align}
		\end{example}
		
			\subsection{Calculating admissible skein modules}
		We will now derive a formula for admissible skein modules in terms of natural transformations between generalized skein modules.
		To this end, let us recall some notation from \cite[Section~2.3]{brochierwoike}:
		For a handlebody $H$ with $\partial H=\Sigma$ and a mapping class $f: \Sigma \to \Sigma'$, we denote by $f.H$ the handlebody defined via the following pushout:
		\begin{equation}	
			\begin{tikzcd}
				\Sigma\times\{0\} \cong \Sigma=\partial H \ar[rr] \ar[dd,"f\times 0",swap] && H \ar[dd,"\widetilde f"] \\ \\ 
				\Sigma'\times [0,1] \ar[rr, swap]  & & f.H \ .
			\end{tikzcd}
		\end{equation}

		\begin{proposition}\label{proplensspace}
			Let $\cat{A}$ be a unimodular finite ribbon category.
			Denote by $L_f$  the compact oriented three-dimensional manifold obtained by gluing two  handlebodies along a mapping class $f \in \Map(\Sigma)$ of the their boundary surface $\Sigma$. 
			Then $\skA(L_f)$ is dual to the vector space of natural transformations $\PhiA(f.H) \to \PhiA(H)$; 
			\begin{align}
				\skA(L_f)^*\cong \catf{Nat}(\PhiA(H),\PhiA(f.H))=\int_{X \in \int_\Sigma \cat{A}} \PhiA(f.H)X \otimes (\PhiA(H)X)^* \ . 
			\end{align}
		\end{proposition}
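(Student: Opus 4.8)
The plan is to realize $L_f$ as a Heegaard-type gluing of two copies of $H$ along $\Sigma$, apply the gluing law for admissible skein modules coming from the three-dimensional bimodule-valued field theory of \cite{brownhaioun}, and then use reflection equivariance to turn the orientation-reversed half into a linear dual. I would first record the geometry: since $f$ is a self-homeomorphism of $\Sigma$, unwinding the pushout defining $f.H$ shows that $f.H$ is again a handlebody with $\partial(f.H)=\Sigma$, homeomorphic to $H$ via $\widetilde{f}$, and that $L_f$ is homeomorphic to $H\cup_\Sigma\overline{f.H}$, the two boundary surfaces being glued by the canonical orientation-reversing diffeomorphism --- this is just the usual way of absorbing a gluing datum into a reparametrization of one side. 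Hence $\skA(L_f)\cong\skA(H\cup_\Sigma\overline{f.H})$.

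For the gluing law I would use that admissible skein modules organize into a once-extended oriented three-dimensional field theory \cite{brownhaioun} whose value on the closed surface $\Sigma$ is (the finite free cocompletion $\int_\Sigma\cat{A}$ of) the skein category $\skcatA(\Sigma)$, and that a handlebody bounding $\Sigma$ supplies the associated bordism invariant $\PhiA(H)\colon\int_\Sigma\cat{A}\to\vect$ (Section~\ref{sechbdyskein}; compare the way this structure is used in the proofs of Proposition~\ref{propcodeg} and Corollary~\ref{corS2S1}). The gluing law identifies $\skA(L_f)$ with the pairing of the two handlebody invariants over $\int_\Sigma\cat{A}$; using the canonical orientation-reversal equivalence $\int_{\overline{\Sigma}}\cat{A}\simeq(\int_\Sigma\cat{A})^{\op}$ (reversal of the three-manifold orientation flips the direction of the cables, reflects the embedded disks and $D$-dualizes their labels), the half $\overline{f.H}$ contributes a presheaf $G$ on $\int_\Sigma\cat{A}$ and
\begin{align}
	\skA(L_f)\ \cong\ \int^{X\in\int_\Sigma\cat{A}}\PhiA(H)(X)\otimes G(X)\ ,
\end{align}
an expression that by right exactness of $\PhiA(H)$ may be computed over projective objects (the reduction underlying \eqref{eqnnat}), where all vector spaces in sight are finite-dimensional.

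It remains to identify $G$. Fix a modified trace on $\Proj\cat{A}$, which exists by unimodularity. Proposition~\ref{propphimaps} says precisely that $\PhiA(\overline{f.H})$, transported to a presheaf on $\int_\Sigma\cat{A}$ along the equivalence $\int_{\overline{\Sigma}}\cat{A}\simeq(\int_\Sigma\cat{A})^{\op}$ above, is the presheaf $X\mapsto(\PhiA(f.H)(X))^*$. Substituting and dualizing --- which converts the coend into an end term by term and commutes with the tensor factors, all spaces being finite-dimensional --- gives
\begin{align}
	\skA(L_f)^*\ \cong\ \int_{X\in\int_\Sigma\cat{A}}\PhiA(f.H)(X)\otimes(\PhiA(H)(X))^*\ =\ \catf{Nat}(\PhiA(H),\PhiA(f.H))\ ,
\end{align}
which is the assertion.

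The main obstacle is the gluing law: making precise, in the admissible non-semisimple setting, that $\skA$ of the closed manifold $L_f$ is the coend pairing over the skein category $\skcatA(\Sigma)$ of the Heegaard surface of the skein modules of the two halves, together with the orientation-reversal equivalence $\int_{\overline{\Sigma}}\cat{A}\simeq(\int_\Sigma\cat{A})^{\op}$ and its compatibility with the disk-reflection bookkeeping of Proposition~\ref{propphimaps}. The geometric decomposition, the reduction to projective objects, and commuting duals past the (co)end are then routine.
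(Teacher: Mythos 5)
Your proof is correct and follows essentially the same route as the paper: decompose $L_f=\overline{f.H}\cup_\Sigma H$, apply the gluing formula for admissible skein modules (the paper cites \cite[Corollary~3.10]{mwskein} for the coend pairing over $\Proj\int_\Sigma\cat{A}$ that you attribute to the \cite{brownhaioun} field-theory structure), convert the orientation-reversed half into a linear dual via reflection equivariance (your Proposition~\ref{propphimaps} is exactly the combination of Theorem~\ref{thmrefleqmf} and Lemma~\ref{lemmaconnecteddg} that the paper invokes), and dualize the coend to an end identified with $\catf{Nat}$ via \eqref{eqnnat}. The "main obstacle" you flag is precisely what the cited comparison result supplies, so nothing further is needed.
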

		
		\begin{proof}
			By definition $L_f = \overline{f.H} \cup_{\Sigma} H$, and hence
			\begin{align}
				\skA(L_f) \cong \int^{P\in \Proj \int_{\Sigma} \cat{A}} \PhiA(\overline{f.H};P^\vee) \otimes \PhiA(H;P)
			\end{align} by \cite[Corollary~3.10]{mwskein}.
			We have $\PhiA(\overline{f.H};P^\vee)\cong \PhibarA (f.H;P^\vee)$ and hence 
			$\PhiA(\overline{f.H};P^\vee)\cong \PhiAdg(f.H;P^\vee)$ thanks to Theorem~\ref{thmrefleqmf}.
			This leaves us with
			\begin{align}
				\skA(L_f)\cong\int^{P\in \Proj \int_{\Sigma} \cat{A}} \PhiAdg(f.H;P^\vee)\otimes \PhiA(H;P)\stackrel{\text{Lemma~\ref{lemmaconnecteddg}}}{\cong} \int^{P\in \Proj \int_{\Sigma} \cat{A}} \PhiA(f.H;P)^*\otimes \PhiA(H;P)
			\end{align}
			and hence
			\begin{align}
				\SkA(L_f)^*\cong  \int_{P\in \Proj \int_{\Sigma} \cat{A}} \PhiA(f.H;P)\otimes \PhiA(H;P)^* \stackrel{\eqref{eqnnat}}{\cong}
				\catf{Nat}(\PhiA(H),\PhiA(f.H)) \ . 
			\end{align}
		\end{proof}

	\begin{corollary}\label{corendPhiA}
		For any unimodular finite ribbon category $\cat{A}$ that extends to a modular functor, and any handlebody $H$ with $\partial H=\mathbb{T}^2$,
		the algebra of endomorphisms of $\PhiA(H):\int_{\mathbb{T}^2} \cat{A}\to\vect$ is one-dimensional:
		\begin{align}
			\End(\PhiA(H))\cong k\ . 
			\end{align}
		\end{corollary}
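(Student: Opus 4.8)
The plan is to identify $\End(\PhiA(H))$ with the dual of the admissible skein module of the three-sphere, by combining Proposition~\ref{proplensspace} with the connectedness that comes from the hypothesis that $\cat{A}$ extends to a modular functor.

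Since $\partial H=\mathbb{T}^2$, the handlebody $H$ is a solid torus, carrying no embedded disk (so that $\PhiA(H):\int_{\mathbb{T}^2}\cat{A}\to\vect$, as in the statement). Because $\cat{A}$ extends to a modular functor, it is connected in the sense of \cite[Definition~5.5]{brochierwoike}; concretely, for every mapping class $f\in\Map(\mathbb{T}^2)$ one has an isomorphism of module maps $\PhiA(f.H)\cong\PhiA(H)$. I would then choose $f$ so that the closed three-manifold $L_f$ of Proposition~\ref{proplensspace} is the genus-one Heegaard splitting of the three-sphere, $L_f\cong\mathbb{S}^3$. Feeding the isomorphism $\PhiA(f.H)\cong\PhiA(H)$ into Proposition~\ref{proplensspace} gives
\begin{align*}
\End(\PhiA(H))=\catf{Nat}\bigl(\PhiA(H),\PhiA(H)\bigr)\cong\catf{Nat}\bigl(\PhiA(H),\PhiA(f.H)\bigr)\cong\skA(L_f)^*=\skA(\mathbb{S}^3)^* \ .
\end{align*}

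It remains to see that $\skA(\mathbb{S}^3)\cong k$. Writing $\mathbb{S}^3$ as the union of two three-balls glued along their common boundary sphere and using the excision property for admissible skein modules (cf.\ Section~\ref{secfh} and \cite{cooke,asm,brownhaioun,mwskein}), one gets $\skA(\mathbb{S}^3)\cong\widehat{\cat{A}}(\mathbb{B}^3)\otimes_{\SkAlg_\cat{A}(\mathbb{S}^2)}\widehat{\cat{A}}(\mathbb{B}^3)$, where $\mathbb{B}^3$ carries no embedded disk. Here $\widehat{\cat{A}}(\mathbb{B}^3)\cong\cat{A}(I,K)^*\cong\cat{A}(I,I)^*\cong k$ by \eqref{eqnconfblockgenus0} together with strong rigidity ($K\cong I$) and simplicity, while Proposition~\ref{propcodeg} identifies $\int_{\mathbb{S}^2}\cat{A}$ with $Z_2(\cat{A})^\op$ in such a way that the quantum structure sheaf goes to the monoidal unit; hence $\SkAlg_\cat{A}(\mathbb{S}^2)\cong\End_{Z_2(\cat{A})}(I)=\cat{A}(I,I)=k$. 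Therefore $\skA(\mathbb{S}^3)\cong k\otimes_k k\cong k$, and substituting into the display yields $\End(\PhiA(H))\cong k$, necessarily as an algebra since it is one-dimensional. As an alternative to the excision argument, $\skA(\mathbb{S}^3)\cong k$ also follows from Corollary~\ref{corconnectedsum} applied to $\mathbb{S}^3\cong\mathbb{S}^3\#\mathbb{S}^3$, which reduces the claim to a co-Yoneda computation of the resulting coend over $\Proj Z_2(\cat{A})$.

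The step needing the most care will be $\skA(\mathbb{S}^3)\cong k$: one has to be sure that the excision/gluing statements quoted for admissible skein modules are genuinely available in the non-semisimple setting, and that the equivalence $\int_{\mathbb{S}^2}\cat{A}\simeq Z_2(\cat{A})^\op$ of Proposition~\ref{propcodeg} takes the quantum structure sheaf to the unit, so that the gluing ring $\SkAlg_\cat{A}(\mathbb{S}^2)$ collapses to $k$. Everything else is formal. It is worth stressing that modularity of $\cat{A}$ is \emph{not} invoked anywhere in this argument, only its connectedness, which is exactly why the computation is logically independent of — and can serve as an ingredient toward — Theorem~\ref{thmrefl}.
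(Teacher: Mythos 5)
Your reduction of $\End(\PhiA(H))$ to the dual of an $\mathbb{S}^3$-skein module is exactly the paper's argument: by Proposition~\ref{proplensspace} and the connectedness of $\cat{A}$ (which follows from the existence of a modular functor extension via \cite{brochierwoike}), the vector space $\catf{Nat}(\PhiA(H),\PhiA(f.H))\cong\skA(L_f)^*$ is independent of the gluing mapping class $f$, so one may take $L_f=\mathbb{S}^3$. (The paper phrases this with $f=\id$, so that $L_{\id}=\mathbb{S}^2\times\mathbb{S}^1$, and then uses the same independence to identify $\skA(\mathbb{S}^2\times\mathbb{S}^1)$ with $\skA(\mathbb{S}^3)$; this is the same argument.)

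The genuine gap is in your computation of $\skA(\mathbb{S}^3)$. The paper simply cites \cite[Corollary~3.2]{asm}: the admissible skein module of $\mathbb{S}^3$ is one-dimensional \emph{because of unimodularity}, and unimodularity is indispensable there. Your excision argument never invokes unimodularity, which already signals a problem, and it rests on two claims that are not available. First, $\skA(\mathbb{S}^3)\cong\widehat{\cat{A}}(\mathbb{B}^3)\otimes_{\SkAlg_\cat{A}(\mathbb{S}^2)}\widehat{\cat{A}}(\mathbb{B}^3)$ is not the correct non-semisimple gluing formula: as in Proposition~\ref{proplensspace}, Corollary~\ref{corconnectedsum} and \cite{mwskein,rst}, excision is a coend over $\Proj \int_{\mathbb{S}^2}\cat{A}\simeq \Proj Z_2(\cat{A})^\op$, not a relative tensor product over the endomorphism algebra of the empty skein; the two need not agree unless the quantum structure sheaf is a projective generator of $\int_{\mathbb{S}^2}\cat{A}$, which essentially amounts to $Z_2(\cat{A})\simeq\vect$, i.e.\ to the modularity that is only established \emph{downstream} in Proposition~\ref{propconnectedmodular} — with the present corollary as an input. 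Second, Proposition~\ref{propcodeg} identifies the \emph{dual} of $\PhiA(\mathbb{B}^3)$ with the unit-selecting functor $\vect\to Z_2(\cat{A})$; it says nothing about where the quantum structure sheaf of $\mathbb{S}^2$ lands under $\int_{\mathbb{S}^2}\cat{A}\simeq Z_2(\cat{A})^\op$, so the identification $\SkAlg_\cat{A}(\mathbb{S}^2)\cong k$ is unjustified (compare the warning in Example~\ref{exffbs}, where this object is even zero). Your alternative route via Corollary~\ref{corconnectedsum} does use the correct coend, but the resulting coend over $\Proj Z_2(\cat{A})$ involves the values $\skA(\mathbb{B}^3;P)$ and Nakayama twists of the unit; it is not a one-line co-Yoneda computation, and it is precisely here that unimodularity would have to enter. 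The clean fix is to replace the excision paragraph by the citation of \cite[Corollary~3.2]{asm}, after which your proof coincides with the paper's.
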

		
		\begin{proof}
			The following generalizes an argument from \cite[Theorem 5.3.1]{damioliniwoike} from the semisimple to the non-semisimple case:
			If $\cat{A}$ extends
			to a modular functor,
			we obtain thanks to \cite[Theorem~6.8]{brochierwoike} and
			Proposition~\ref{proplensspace}
			\begin{align}
				\skA(L)^*\cong \End(\PhiA(H)) \label{eqnindependence}
			\end{align}
			for any compact oriented three-manifold $L$ obtained from a Heegaard splitting involving a handlebody $H$, \emph{independently} of the mapping class along which we glue.

			We exploit this for two special cases:
			For $L=\mathbb{S}^3$, the skein module is one-dimensional thanks to unimodularity \cite[Corollary~3.2]{asm}.
			Since $\mathbb{S}^3$ and $\mathbb{S}^2 \times \mathbb{S}^1$ both admit a genus one Heegaard splitting, their skein modules are isomorphic, which gives us $\dim\, \skA(\mathbb{S}^2 \times \mathbb{S}^1)=1$.
		To obtain the statement, set $f=\id$ in Proposition~\ref{proplensspace} and use $\bar H\cup_{\mathbb{T}^2} H =\mathbb{S}^2 \times \mathbb{S}^1$.
			\end{proof}

		\subsection{Topological characterization of modular categories\label{sectopcharmod}}		Proposition~\ref{proplensspace} is the key calculation needed to understand when a unimodular finite ribbon category extends to a modular functor.

			\begin{proposition}\label{propconnectedmodular}
			The braiding of a unimodular finite ribbon category $\cat{A}$ is non-degenerate if and only if $\cat{A}$ extends to a modular functor.
		\end{proposition}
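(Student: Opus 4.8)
The plan is to run the proof through the classification \eqref{eqnclass} of modular functors by connected cyclic framed $E_2$-algebras, so that ``$\cat{A}$ extends to a modular functor'' becomes ``$\cat{A}$ is connected'', and then to treat the two implications separately. The implication from a non-degenerate braiding to the existence of a modular functor is essentially already recorded in Section~\ref{secmf}: a unimodular finite ribbon category with non-degenerate braiding is a modular category, and by Lyubashenko's construction \cite{lyubacmp,lyu,lyulex,kl}, reinterpreted through \cite[Section~8.2]{brochierwoike}, every modular category is connected, hence extends (essentially uniquely) to a modular functor, namely the Lyubashenko one. The substance of the statement is therefore the converse.

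For the converse, suppose $\cat{A}$ extends to a modular functor and fix a handlebody $H$ with $\partial H=\mathbb{T}^2$. By Corollary~\ref{corendPhiA} we have $\End(\PhiA(H))\cong k$, and applying Proposition~\ref{proplensspace} with $f=\id$ --- for which $L_{\id}=\bar H\cup_{\mathbb{T}^2}H\cong\mathbb{S}^2\times\mathbb{S}^1$ --- yields $\skA(\mathbb{S}^2\times\mathbb{S}^1)^*\cong\catf{Nat}(\PhiA(H),\PhiA(H))=\End(\PhiA(H))\cong k$, so $\dim_k\skA(\mathbb{S}^2\times\mathbb{S}^1)=1$ (this is in fact already contained in the proof of Corollary~\ref{corendPhiA}). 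Feeding this into Corollary~\ref{corS2S1}, which gives $\skA(\mathbb{S}^2\times\mathbb{S}^1)\cong HH_0(Z_2(\cat{A}))$, I obtain $\dim_k HH_0(Z_2(\cat{A}))=1$.

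It remains to deduce from $\dim_k HH_0(Z_2(\cat{A}))=1$ that the canonical functor $\vect\to Z_2(\cat{A})$ is an equivalence, which is exactly the non-degeneracy of the braiding (and, via Proposition~\ref{propcodeg}, the same as co-non-degeneracy of $\cat{A}$). Since $\cat{A}$ is a finite tensor category, $Z_2(\cat{A})$ is a finite symmetric tensor category; write it as the category of finite-dimensional modules over a finite-dimensional $k$-algebra $A$, so that $HH_0(Z_2(\cat{A}))\cong A/[A,A]$. The surjection $A\to A/J(A)\cong\prod_i M_{n_i}(k)$ induces a surjection $A/[A,A]\twoheadrightarrow\prod_i k$, so $\dim_k A/[A,A]$ is at least the number of simple objects of $Z_2(\cat{A})$; as this number is $1$, the unique simple object of $Z_2(\cat{A})$ is its unit, and $A$ is local with $A/J(A)=k$. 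Writing $A=k\cdot 1\oplus J(A)$, all commutators land in $J(A)^2$, so $1=\dim_k A/[A,A]\geq\dim_k A/J(A)^2=1+\dim_k J(A)/J(A)^2$; hence $J(A)=J(A)^2$, and Nakayama forces $J(A)=0$, i.e.\ $A\cong k$ and $Z_2(\cat{A})\simeq\vect$ with the unit functor an equivalence. This completes the converse.

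The genuine content is not in this assembly but upstream: the main obstacle is the input packaged in Corollary~\ref{corendPhiA} and Proposition~\ref{proplensspace}, namely that the endomorphism algebra of the genus-one handlebody skein module collapses to $k$. That rests on the independence of $\skA(L_f)$ of the gluing mapping class $f$ --- which is where the connectedness hypothesis, through the classification \eqref{eqnclass}, is really used --- together with $\dim_k\skA(\mathbb{S}^3)=1$ from unimodularity. Everything downstream (Corollary~\ref{corS2S1} and the small Hochschild-homology computation for finite tensor categories) is elementary once those are in hand, the only step not already on record being the observation that $\dim_k HH_0$ detects triviality of the Müger center.
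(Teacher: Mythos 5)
Your proof is correct, and its converse direction takes a genuinely different route from the paper's. Both arguments consume the same topological input, namely the one-dimensionality of $\End(\PhiA(H))$ for a genus-one handlebody (Corollary~\ref{corendPhiA}), which is where connectedness and unimodularity enter. The paper then works with the Drinfeld map $\mathbb{D}\colon\mathbb{F}\to\mathbb{A}$: via admissible skeins and the Hopf pairing it produces a non-zero natural transformation $\sigma\colon\PhiA(H)\to\PhiA(\bar H')$ that is invertible if and only if $\mathbb{D}$ is, composes it with an isomorphism $\xi$ supplied by connectedness, and concludes from $\xi\circ\sigma=\lambda\cdot\id$ with $\lambda\neq 0$ that $\mathbb{D}$ is an isomorphism, hence that $\cat{A}$ is modular by Shimizu's non-degeneracy criterion. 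You instead feed $\dim_k\skA(\mathbb{S}^2\times\mathbb{S}^1)=1$ (which is indeed already extracted inside the proof of Corollary~\ref{corendPhiA}) into Corollary~\ref{corS2S1} and finish with a purely algebraic observation: a finite tensor category with one-dimensional $HH_0$ must be $\vect$. This trades the Drinfeld map, the identification of the skein pairing with the Hopf pairing, and Shimizu's criterion for an elementary Morita-theoretic computation, at the cost of not exhibiting the (proto-)$S$-transformation $\sigma$ along the way. The one step you supply that is not on record in the paper --- $\dim_kHH_0(Z_2(\cat{A}))=1\Rightarrow Z_2(\cat{A})\simeq\vect$ --- is sound: with $A$ the basic algebra of the finite category $Z_2(\cat{A})$, the surjection $A/[A,A]\twoheadrightarrow (A/J(A))/[A/J(A),A/J(A)]$ bounds the number of simples by $\dim_k HH_0$, forcing $A$ local with $A/J(A)=k$, whence $[A,A]\subseteq J(A)^2$ and $1\geq 1+\dim_kJ(A)/J(A)^2$ kills $J(A)$. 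The only point worth flagging is that you implicitly use that $Z_2(\cat{A})$ is a finite linear category with simple unit so that it is the module category of a basic local algebra; this is standard and is in any case already assumed by the paper when it forms $\Proj Z_2(\cat{A})$ in Corollary~\ref{corS2S1}.
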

		
		This is a generalization of a statement appearing in \cite[Theorem~5.5.1]{baki}
		from the semisimple to the non-semisimple case, at least morally, because strictly speaking it treats a different notion of modular functor	\cite[Remark 3.4.3]{damioliniwoike}.

		\begin{proof}[\slshape Proof of Proposition~\ref{propconnectedmodular}]
			If the braiding of $\cat{A}$ is non-degenerate, $\cat{A}$ is connected and extends to a modular functor~\cite[Corollary~8.3]{brochierwoike}.
			For the proof of the converse, 
			consider a genus one handlebody $H$ and a complement $H'$ in $\mathbb{S}^3$.
			 The embedding of admissible skeins~\cite{asm,brownhaioun} gives us natural maps (we denote by $\skcatA(-)$ the categories of admissible skeins)
			\begin{align}
				\skA(H;P)\otimes \skA(H';P^\vee)\to \skA(\mathbb{S}^3)\cong k \ , \quad P\in \skcatA(\mathbb{T}^2) \ , \label{eqndrinfeldcompose0}
				\end{align}
		that descend to the coend over $P$.
		After calculating the spaces in question via excision \cite{brownhaioun,rst,mwskein}, these maps are given by
			\begin{align}
				\cat{A}(I,\mathbb{F}\otimes P) \otimes \cat{A}(P,\mathbb{F})\to \cat{A}(P,P) \ra{\text{modified trace}} k \  \label{eqndrinfeldcompose}
				\end{align}
			that apply the Hopf pairing $\omega : \mathbb{F}\otimes\mathbb{F}\to I$, see e.g.~\cite{shimizumodular}, to the coend $\mathbb{F}=\int^{X\in\cat{A}} X^\vee \otimes X$ (because the skeins described by the dummy variable of the respective coends interlink geometrically in the way prescribed by the Hopf pairing)
			and apply the modified trace (this is again \cite[Corollary~3.2]{asm}).
			(Here the notation is adapted for the case in which $P$ is \emph{one} projective object in $\cat{A}$ placed on a disk in the torus; if we have several objects, we would need to tensor them together, but this does not add any insight, so we prefer to keep the notation readable.)
			This means that, after bringing $\cat{A}(P,\mathbb{F})$ to the right hand side via duality, and using the projective Calabi-Yau structure coming from the modified trace
			\begin{align}
				\cat{A}(P,\mathbb{F})^*\cong \cat{A}(\mathbb{F},P)\cong \cat{A}(I,\mathbb{A}\otimes P) \quad \text{with}\quad \mathbb{A}=\int_{X\in\cat{A}} X^\vee \otimes X=\mathbb{F}^\vee
				\end{align}
			 we obtain natural maps
			 \begin{align}
			 \label{eqnsktransf}	\skA(H;P)\cong \cat{A}(I,\mathbb{F}\otimes P) \to \cat{A}(I,\mathbb{A}\otimes P)\cong \skA(H';P)^*%8
			 \end{align}
		 induced by the Drinfeld map $\mathbb{D}:\mathbb{F}\to\mathbb{A}$ 
			\cite{drinfeld}.
			The latter is induced by the Hopf pairing;
			its component $X^\vee \otimes X \to Y^\vee \otimes Y$ comes from the 
			double braiding and is given in the graphical calculus (to be read from bottom to top) as follows:
			\begin{align}\begin{array}{c}
					\begin{tikzpicture}[scale=0.5]
						\begin{pgfonlayer}{nodelayer}
							\node [style=none] (10) at (3.75, -6.75) {{\footnotesize$X$}};
							\node [style=none] (11) at (2.5, -6.75) {{\footnotesize$X^\vee$}};
							\node [style=none] (12) at (6.75, 0.75) {{\footnotesize $Y^\vee$}};
							\node [style=none] (13) at (5.25, 0.75) {{\footnotesize$Y$}};
							\node [style=none] (14) at (3.75, -4.5) {};
							\node [style=none] (15) at (5.25, -4.5) {};
							\node [style=none] (16) at (3.75, -3) {};
							\node [style=none] (17) at (5.25, -3) {};
							\node [style=none] (18) at (3.75, -1.5) {};
							\node [style=none] (19) at (5.25, -1.5) {};
							\node [style=none] (20) at (2.5, -1.5) {};
							\node [style=none] (21) at (2.5, -4.5) {};
							\node [style=none] (22) at (6.5, -4.5) {};
							\node [style=none] (23) at (5.25, 0) {};
							\node [style=none] (24) at (6.5, 0) {};
							\node [style=none] (25) at (2.5, -6) {};
							\node [style=none] (26) at (3.75, -6) {};
						\end{pgfonlayer}
						\begin{pgfonlayer}{edgelayer}
							\draw [in=-90, out=90, looseness=0.75] (15.center) to (16.center);
							\draw [in=-90, out=90, looseness=0.75] (17.center) to (18.center);
							\draw [style=over, in=-90, out=90, looseness=0.75] (14.center) to (17.center);
							\draw [style=over, in=-90, out=90, looseness=0.75] (16.center) to (19.center);
							\draw [bend right=90, looseness=1.50] (18.center) to (20.center);
							\draw (20.center) to (21.center);
							\draw [bend right=90, looseness=1.25] (15.center) to (22.center);
							\draw [style] (21.center) to (25.center);
							\draw [style] (14.center) to (26.center);
							\draw [style] (23.center) to (19.center);
							\draw [style] (24.center) to (22.center);
						\end{pgfonlayer}
				\end{tikzpicture}\end{array}
				\label{eqndrinfeld}
			\end{align}
			The Drinfeld map is non-zero because it is a map of algebras
			(this is essentially \cite{drinfeld}, see also \cite[Section~3.3]{bjss} for an account in phrased in the language used here)
			between non-zero objects (this is well-known; for a proof, use e.g.~\cite[Proposition~4.7]{mwdehn} and $\mathbb{F}\cong\mathbb{A}$ thanks to unimodularity~\cite[Theorem~4.10]{shimizuunimodular}). Therefore the transformation~\eqref{eqnsktransf} is non-zero as well. By the Yoneda Lemma \eqref{eqnsktransf} is an isomorphism if and only if this is the case for $\mathbb{D}$.

			Through the connection between the generalized skein modules $\PhiA$ from \cite{brochierwoike} and the admissible skein modules and skein categories from \cite{asm,brownhaioun} proved in \cite{mwskein} under the assumption of unimodularity, \eqref{eqnsktransf} gives us via Lemma~\ref{lemmaconnecteddg} a non-zero natural transformation
			$\PhiA(H)\to \PhiA(H')\dg$ that by reflection equivariance of $\cat{A}$ and Proposition~\ref{propphimaps} gives us a non-zero map
			$\sigma : \PhiA(H) \to \PhiA(\bar{H}')$ that is an isomorphism if and only the Drinfeld map is an isomorphism.
			(When spelling out how
			$\sigma$ is defined, one will realize that it is an a priori non-invertible version of Lyubashenko's famous $S$-transformation \cite{lyubacmp,lyu,lyulex}, but we do not use this. In fact, we cannot use it because we do not know that $\cat{A}$ is modular yet --- this is what we want to prove after all.)

			Since $\cat{A}$ is connected, there is an isomorphism $\xi: \PhiA(\bar{H}')\ra{\cong} \PhiA(H)$, and $\xi\circ \sigma = \lambda \cdot \id_{\PhiA(H)}$ by Corollary~\ref{corendPhiA} for $\lambda \in k$. 
			Since $\sigma \neq 0$, we have $\lambda \neq 0$, and therefore $\sigma = \lambda \cdot \xi^{-1}$, i.e.\ the Drinfeld map is an isomorphism, which by the characterization of the non-degeneracy of the braiding in \cite{shimizumodular} means that $\cat{A}$ is modular.
		\end{proof}
	
	Proposition~\ref{propconnectedmodular} leads us to the following topological characterization of modular categories:
		
		\begin{theorem}\label{thmmodularcats}
			Genus zero restriction provides a canonical bijection between
			\begin{itemize}
				\item equivalence classes of $\Rexf$-valued modular functors which are strongly rigid, simple and have the property of admitting a reflection equivariant structure,
				
				\item and ribbon equivalence classes of modular categories. 
			\end{itemize}
			The inverse to genus zero restriction is the Lyubashenko construction.
		\end{theorem}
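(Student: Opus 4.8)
The plan is to assemble the theorem from three facts that are already available: the classification~\eqref{eqnclass} of $\Rexf$-valued modular functors by \emph{connected} cyclic framed $E_2$-algebras via genus zero restriction, Theorem~\ref{thmrefleqmf} identifying reflection equivariant structures on a strongly rigid simple modular functor with two-sided modified traces on the projectives (hence with unimodularity of the underlying category), and Proposition~\ref{propconnectedmodular}, which equates non-degeneracy of the braiding with the existence of a modular extension for a unimodular finite ribbon category.

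First I would check that genus zero restriction carries the left-hand class into the right-hand class. Let $\mathfrak{F}$ be a strongly rigid, simple $\Rexf$-valued modular functor admitting a reflection equivariant structure, and let $\cat{A}$ be its genus zero restriction, a cyclic framed $E_2$-algebra, i.e.\ a ribbon Grothendieck-Verdier category in $\Rexf$. Strong rigidity and simplicity say precisely that the Grothendieck-Verdier duality of $\cat{A}$ is rigid and the unit is simple, so $\cat{A}$ is a finite ribbon category. By Theorem~\ref{thmrefleqmf}, the existence of a reflection equivariant structure on $\mathfrak{F}$ forces $\cat{A}$ to be unimodular; and since $\cat{A}$ is a unimodular finite ribbon category that extends to a modular functor, namely $\mathfrak{F}$, Proposition~\ref{propconnectedmodular} gives that the braiding of $\cat{A}$ is non-degenerate. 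Thus $\cat{A}$ is a modular category. The assignment respects equivalences and so descends to a well-defined map on equivalence classes.

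Next I would produce the inverse via the Lyubashenko construction. A modular category $\cat{A}$, regarded as a cyclic framed $E_2$-algebra with its rigid Grothendieck-Verdier structure, is unimodular with non-degenerate braiding, hence by Proposition~\ref{propconnectedmodular} it is connected and extends to an essentially unique modular functor, which by~\cite[Section~8.2]{brochierwoike} is Lyubashenko's modular functor $\FA$. This $\FA$ is strongly rigid (its duality is the rigid one), simple (the unit of $\cat{A}$ is simple), and admits a reflection equivariant structure: as $\cat{A}$ is unimodular it carries a two-sided modified trace on $\Proj\cat{A}$, and Theorem~\ref{thmrefleqmf} converts this into a reflection equivariant structure on $\FA$. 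Ribbon equivalences of modular categories induce equivalences of the associated modular functors, so the Lyubashenko construction descends to a map in the opposite direction. Verifying that the two maps are mutually inverse is then immediate: restricting $\mathfrak{F}$ to genus zero and extending back recovers $\mathfrak{F}$ by essential uniqueness of the extension in~\eqref{eqnclass}, and restricting $\FA$ back to genus zero returns $\cat{A}$ up to ribbon equivalence by construction of the modular extension.

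The only genuine content here is the interlocking of the three cited results, so that is where I would focus the write-up: the clause ``admits a reflection equivariant structure'' is exactly the hypothesis ``unimodular'' that Proposition~\ref{propconnectedmodular} needs in order to upgrade ``extends to a modular functor'' to ``modular category.'' The remaining steps — stability of the relevant subclasses under equivalence and the passage from the $2$-categorical equivalence~\eqref{eqnclass} to a bijection on $\pi_0$ — are routine bookkeeping. The point I would be careful about is that reflection equivariance is used \emph{only} to guarantee unimodularity, so that Proposition~\ref{propconnectedmodular} becomes applicable, and not to pin down the modular functor itself, which is already uniquely determined by~\cite{brochierwoike}; this rules out any circularity with the proof of Proposition~\ref{propconnectedmodular}, which internally invokes a modified trace. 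Recovering Etingof--Penneys in the semisimple case (Corollary~\ref{corssimf}) would then be the observation that in the semisimple setting unimodularity is automatic and the ``property of admitting a reflection equivariant structure'' is vacuous.
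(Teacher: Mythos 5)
Your proposal follows the same route as the paper: combine the classification~\eqref{eqnclass}, Theorem~\ref{thmrefleqmf} (reflection equivariance $\Leftrightarrow$ two-sided modified trace $\Leftrightarrow$ unimodularity, in the strongly rigid simple case) and Proposition~\ref{propconnectedmodular} (for a unimodular finite ribbon category, extending to a modular functor $\Leftrightarrow$ non-degenerate braiding $\Leftrightarrow$ connectedness). That interlocking is indeed the substance of the theorem, and your observation that reflection equivariance enters only to supply unimodularity, so that no circularity with the internals of Proposition~\ref{propconnectedmodular} arises, is also how the paper uses it.

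There is, however, one point you file under ``routine bookkeeping'' that is not routine: the matching of the two equivalence relations. The classification~\eqref{eqnclass} is a bijection on equivalence classes in which connected cyclic framed $E_2$-algebras are taken up to equivalence \emph{of cyclic algebras}, whereas the theorem asserts a bijection with \emph{ribbon} equivalence classes of modular categories. These are a priori different: a cyclic structure extending a fixed balanced braided structure is extra data (the Grothendieck-Verdier pairing), and by \cite[Theorem~4.2]{mwcenter} the cyclic structures over a fixed non-cyclic framed $E_2$-algebra form a torsor over the Picard groupoid of the balanced M\"uger center. So a ribbon equivalence between two modular categories need not intertwine the chosen cyclic structures, and conversely one must check that identifying them up to cyclic equivalence does not collapse more than ribbon equivalence does. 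The paper resolves this by invoking \cite[Corollary~4.5]{mwcenter} together with the triviality of the M\"uger center of a modular category, which makes the space of cyclic structures connected and hence identifies the two notions of equivalence. Without this step your two maps are defined on quotients that have not been shown to agree, so ``mutually inverse'' does not yet parse; the rest of your argument is correct.
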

		
		\begin{proof}
			By the classification of modular functors~\cite[Theorem~6.8]{brochierwoike} and the characterization of reflection equivariant structures in the rigid and simple case in Theorem~\ref{thmrefleqmf},
			 equivalence classes of
			$\Rexf$-valued
			modular functors which are rigid, simple and have the property of admitting a reflection equivariant structure are in bijection to equivalence classes of unimodular finite ribbon categories which are connected, where the equivalence is equivalence as cyclic framed $E_2$-algebras.
			
			Therefore, two things remain to be observed:
			\begin{pnum}
				\item For a unimodular finite ribbon category, connectedness and the non-degeneracy of the braiding are equivalent by Proposition~\ref{propconnectedmodular}.
				\item We need to show that modular categories up to equivalence of cyclic framed $E_2$-algebras are the same as modular categories up to ribbon equivalence. This follows from~\cite[Corollary~4.5]{mwcenter} because of the triviality of the Müger center of modular categories.
			\end{pnum}
		\end{proof}

		In the semisimple case, this can be simplified using the rigidity result of Etingof-Penneys~\cite[Corollary~1.3]{etingofpenneys} saying that every finitely semisimple braided $r$-category is rigid. This is done using the semisimple version of Proposition~\ref{propconnectedmodular} in \cite{baki}.

		\spaceplease
		\begin{corollary}\label{corssimf}
			Genus zero restriction provides a canonical bijection between
			\begin{itemize}
				\item equivalence classes of semisimple modular functors which are simple and normalized in the sense that the vector space associated to $\mathbb{S}^2$ is one-dimensional,
				
				\item and ribbon equivalence classes of modular fusion categories. 
			\end{itemize}
			\end{corollary}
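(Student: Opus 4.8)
The plan is to derive Corollary~\ref{corssimf} by restricting the bijection of Theorem~\ref{thmmodularcats}. The only substantive point to establish is that, for a simple semisimple $\Rexf$-valued modular functor, the conjunction of the conditions \emph{strongly rigid} and \emph{admitting a reflection equivariant structure} is equivalent to the normalization $\dim \mathfrak{F}(\mathbb{S}^2)=1$; granting this, the statement drops out of Theorem~\ref{thmmodularcats} together with the observation that a modular category is semisimple precisely when it is a modular fusion category.

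So I would first take a simple semisimple modular functor $\mathfrak{F}$ and let $\cat{A}$ be its genus zero restriction, a finitely semisimple ribbon Grothendieck-Verdier category with simple unit. Being a finitely semisimple braided Grothendieck-Verdier category, $\cat{A}$ is in particular a finitely semisimple braided $r$-category, hence rigid as a monoidal category by the Etingof-Penneys rigidity theorem~\cite{etingofpenneys}. To upgrade rigidity to \emph{strong} rigidity I would compare the Grothendieck-Verdier duality $D:\cat{A}\to\cat{A}^\op$ with the rigid one: the dualizing object $K=DI$ is the image of the simple unit under the equivalence $D$, hence simple, while by the instance of~\eqref{eqnconfblockgenus0} with no insertions the space that $\mathfrak{F}$ assigns to the closed sphere is $\mathfrak{F}(\mathbb{S}^2)\cong \cat{A}(I,K)^*$, of dimension equal to the multiplicity of $I$ in $K$. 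Thus $\dim\mathfrak{F}(\mathbb{S}^2)=1$ forces $K\cong I$, and since a ribbon category is pivotal this identifies $D$ with the rigid duality, so $\mathfrak{F}$ is strongly rigid; conversely, strong rigidity gives $K\cong I$ and hence $\mathfrak{F}(\mathbb{S}^2)\cong \cat{A}(I,I)^*\cong k$ by simplicity, so $\mathfrak{F}$ is normalized. Finally, a braided fusion category is unimodular (its distinguished invertible object of~\cite{eno-d} is trivial), so once $\cat{A}$ is strongly rigid and simple Theorem~\ref{thmrefleqmf} (through Theorem~\ref{thmmain0}) guarantees that $\mathfrak{F}$ admits a reflection equivariant structure with no further hypothesis. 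This proves the claimed equivalence of conditions.

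To conclude, I would note that genus zero restriction takes a semisimple modular functor to a semisimple category and conversely, and that the Lyubashenko modular functor of a modular fusion category is semisimple, simple and --- being strongly rigid with simple unit --- normalized. Hence the bijection of Theorem~\ref{thmmodularcats} restricts to a bijection between equivalence classes of simple, semisimple, normalized $\Rexf$-valued modular functors and ribbon equivalence classes of modular fusion categories, with the same inverse, the Lyubashenko construction; the passage between cyclic framed $E_2$-equivalence and ribbon equivalence on the right is the same one used in the proof of Theorem~\ref{thmmodularcats}, valid because the Müger center of a modular category is trivial.

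The step I expect to be the main obstacle is the identification of the Grothendieck-Verdier duality with the rigid one: one must recognize a finitely semisimple ribbon Grothendieck-Verdier category as an object to which the Etingof-Penneys theorem applies, and then argue that the \emph{single} numerical constraint $\dim\mathfrak{F}(\mathbb{S}^2)=1$ already pins down the dualizing object to be the monoidal unit. By contrast, unimodularity of fusion categories, the interaction of semisimplicity with genus zero restriction, and the comparison of the two notions of equivalence on modular fusion categories are standard or already recorded above.
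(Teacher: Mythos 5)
Your proposal follows essentially the same route as the paper: use the normalization $\dim\mathfrak{F}(\mathbb{S}^2)=\dim\cat{A}(I,K)^*=1$ to force $K\cong I$, invoke the Etingof--Penneys rigidity theorem, identify the Grothendieck--Verdier duality with the rigid one, observe that unimodularity is automatic for fusion categories so that reflection equivariance comes for free, and then restrict the bijection of Theorem~\ref{thmmodularcats}. The one slip is the sentence claiming that a finitely semisimple braided Grothendieck--Verdier category is ``in particular'' an $r$-category: by definition an $r$-category is a Grothendieck--Verdier category whose dualizing object is the monoidal unit, which is exactly what you have not yet established at that point, so the hypothesis of \cite[Corollary~1.3]{etingofpenneys} as used here is not yet available. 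Since you derive $K\cong I$ from the normalization in the very next step, the argument is repaired simply by reordering --- first $K\cong I$, hence $\cat{A}$ is an $r$-category, then Etingof--Penneys, then \cite[Proposition~2.3]{bd} to identify $D$ with the rigid duality --- which is precisely the order of the paper's proof.
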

		
		A similar statement appears in \cite[Corollary~1.4]{etingofpenneys}, but since it a priori refers to a different notion of modular functor (see the remarks after Proposition~\ref{propconnectedmodular}), it is probably worth spelling out a proof that uses the rigidity result  of Etingof-Penneys, but not the definitions from \cite{baki}.

		\begin{proof}[\slshape Proof of Corollary~\ref{corssimf}]
			Suppose that $\mathfrak{F}$ is a $\Rexf$-valued modular functors whose cyclic framed $E_2$-algebra $\cat{A}$ obtained by evaluation on the circle is semisimple. The value on the sphere is the dual morphism space $\cat{A}(I,K)^*$ between $I$ and the dualizing object $K$. Since $I$ is simple, $\dim\, \cat{A}(I,K)^*=1$ implies $K\cong I$. Therefore, $\cat{A}$ is an $r$-category.
			Now \cite[Corollary 1.3]{etingofpenneys} tells us that $\cat{A}$ actually rigid, which by \cite[Proposition 2.3]{bd} tells us that the ribbon Grothendieck-Verdier duality is the rigid one. Since unimodularity is automatic in the semisimple setting \cite[Corollary 6.4]{eno-d}, $\mathfrak{F}$ admits a reflection equivariant structure. Therefore, the statement follows from Theorem~\ref{thmmodularcats}.
			\end{proof}

		\subsection{The main theorem on reflection equivariance}
		We are now ready to compose all the above results on reflection equivariance:

		\begin{theorem}\label{thmrefl}
				Let $\cat{A}$ be a cyclic framed $E_2$-algebra in $\Rexf$. Assume that $\cat{A}$ is strongly rigid and simple.
			Then the following structures are equivalent:
			\begin{pnum}
				\item The choice of a two-sided modified trace on $\Proj \cat{A}$, which is unique up to an element in $k^\times$. 
				\label{thmrefli}

				\item Cyclic reflection equivariance for $\cat{A}$, i.e.\ the structure of a $\mathbb{Z}_2$-homotopy fixed point of $\cat{A}$ for the homotopy involution $\cat{A}\mapsto \bar{\cat{A}}\dg$ on cyclic framed $E_2$-algebras relative to the canonical non-cyclic homotopy fixed point structure induced by the rigid pivotal duality of $\cat{A}$.  
				\label{thmreflii}

				\item Reflection equivariance for the unique ansular functor $\widehat{\cat{A}}$ extending $\cat{A}$, i.e.\ the choice of coherent isomorphisms
				\begin{align}
					\widehat{\cat{A}}(\bar H; P_1,\dots,P_n) \cong 	\widehat{\cat{A}}(H; DP_n,\dots,DP_1)^* \qquad \text{with}\qquad P_1,\dots,P_n \in \Proj \cat{A} \label{eqnisoH}
				\end{align}
				for every handlebody $H$ with at least one embedded disk per connected component,
				such that these isomorphisms 
				\begin{align}
					\left. \begin{array}{l} - \text{are compatible with gluing,} \\ - \text{are involutive,} \\ - \text{and extend, on the level of non-cyclic framed $E_2$-algebras,} \\ \phantom{-} \text{the canonical isomorphisms coming from the rigid pivotal duality.}\end{array}\right\} \label{eqnreflconditionsx} \tag{$**$}
				\end{align}
				\label{thmrefliiim}
			\end{pnum}
			Suppose further \begin{itemize}
				\item that any of the structures \ref{thmrefli}-\ref{thmrefliii} can be chosen and is fixed,
				\item and that $\cat{A}$ extends to a modular functor $\mathfrak{F}$.\end{itemize}
			Then 
			$\cat{A}$ has a non-degenerate braiding, i.e.\ it is modular, and $\mathfrak{F}$ agrees with the Lyubashenko modular functor for $\cat{A}$,
			and the isomorphisms~\eqref{eqnisoH} extend to isomorphisms
			\begin{align}
				\mathfrak{F}(\bar \Sigma; P_1,\dots,P_n) \cong \mathfrak{F}(\Sigma; DP_n,\dots,DP_1)^* \qquad \text{with}\qquad P_1,\dots,P_n \in \Proj \cat{A}\label{eqnisosmf}
			\end{align}
			for every surface $\Sigma$ with at least one boundary component per connected component for which the analoga of the conditions~\eqref{eqnreflconditions} hold.
		\end{theorem}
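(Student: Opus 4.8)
The plan is to deduce Theorem~\ref{thmrefl} by assembling the results already proved; no new construction is required, only careful bookkeeping that transports \emph{relative} homotopy fixed point structures along the equivalences already at hand. Since $\cat{A}$ is strongly rigid and simple it is a finite ribbon category, so that Sections~\ref{secdual}--\ref{sectopcharmod} all apply to it.

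\textbf{Equivalence of \ref{thmrefli} and \ref{thmreflii}.} I would obtain this as a restatement of Theorem~\ref{thmmain0}: a cyclic reflection equivariant structure on $\cat{A}$ relative to the canonical non-cyclic structure of Corollary~\ref{cormulfinrib} exists if and only if $\cat{A}$ is unimodular, and in that case such structures correspond bijectively to two-sided modified traces on $\Proj\cat{A}$. For uniqueness up to $k^\times$ I would spell out the argument in the proof of Theorem~\ref{thmmain0}: such a structure is a trivialization of the distinguished invertible object $\alpha\in\catf{PIC}(Z_2^\catf{bal}(\cat{A}))$, and once this set is non-empty (equivalently, once $\alpha\cong I$) it is a torsor over $\End_\cat{A}(I)^\times=k^\times$, using that $\cat{A}$ has simple unit.

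\textbf{Equivalence of \ref{thmreflii} and \ref{thmrefliiim}.} Here I would use that the genus zero restriction in~\eqref{eqnequivcycframed0} is an equivalence with inverse the modular extension $\cat{A}\mapsto\widehat{\cat{A}}$ (Section~\ref{secansular}), and that, by Section~\ref{secansularfunctorrefl}, this equivalence is $\mathbb{Z}_2$-equivariant for the homotopy involutions $\bar{-}\dg$ on the two sides; it also visibly preserves the underlying non-cyclic framed $E_2$-algebra and hence the canonical non-cyclic homotopy fixed point structure of Corollary~\ref{cormulfinrib}. It therefore restricts to an equivalence between homotopy fixed point structures on $\cat{A}$ relative to the non-cyclic one and homotopy fixed point structures on $\widehat{\cat{A}}$ relative to its non-cyclic one. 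That the latter unwind exactly to choices of coherent isomorphisms~\eqref{eqnisoH} subject to~\eqref{eqnreflconditionsx} I would read off from the explicit formula~\eqref{eqndgransular} for $\cat{B}\dg$ on projective labels, just as in the proof of Proposition~\ref{propphimaps}.

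\textbf{The modular functor statement and the main obstacle.} Assuming a structure as in \ref{thmrefli} is fixed---so $\cat{A}$ is unimodular---and that $\cat{A}$ extends to a modular functor $\mathfrak{F}$, I would invoke Proposition~\ref{propconnectedmodular} to conclude that the braiding of $\cat{A}$ is non-degenerate, i.e.\ $\cat{A}$ is modular, and then use Theorem~\ref{thmmodularcats} (whose inverse to genus zero restriction is the Lyubashenko construction) together with uniqueness of modular extensions to identify $\mathfrak{F}$ with Lyubashenko's modular functor for $\cat{A}$. Finally, Theorem~\ref{thmrefleqmf} promotes the chosen modified trace to a reflection equivariant structure on $\mathfrak{F}$, which by Remark~\ref{remmfrefl} is exactly a choice of coherent isomorphisms~\eqref{eqnisosmf} satisfying the surface analogues of~\eqref{eqnreflconditionsx}; by construction these restrict, through the factorization homology description of $\mathfrak{F}$ in Section~\ref{secmf}, to the ansular isomorphisms~\eqref{eqnisoH}, so that the whole tower \ref{thmrefli}$\,\leftrightarrow\,$\ref{thmreflii}$\,\leftrightarrow\,$\ref{thmrefliiim}$\,\leftrightarrow\,$\eqref{eqnisosmf} becomes a single chain of bijections. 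The main obstacle will not be conceptual---the hard inputs, namely the torsor description of cyclic structures over $\catf{PIC}(Z_2^\catf{bal}(\cat{A}))$ from~\cite{mwcenter}, the Nakayama functor characterization of modified traces, and the Drinfeld-map argument behind Proposition~\ref{propconnectedmodular}, are already established---but organizational: I must verify that~\eqref{eqnequivcycframed0} and then~\eqref{eqnclass} transport the \emph{relative} homotopy fixed point structures faithfully, so that a chosen modified trace genuinely determines the isomorphisms~\eqref{eqnisoH} and~\eqref{eqnisosmf} and not merely their existence.
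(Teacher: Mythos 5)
Your proposal is correct and follows essentially the same route as the paper: the equivalence of \ref{thmrefli} and \ref{thmreflii} via Theorem~\ref{thmmain0}, the passage to \ref{thmrefliiim} via the $\mathbb{Z}_2$-equivariant equivalence~\eqref{eqnequivcycframed0} with ansular functors, and the modular functor addendum via Theorem~\ref{thmrefleqmf}, Remark~\ref{remmfrefl}, Proposition~\ref{propconnectedmodular} and Theorem~\ref{thmmodularcats}. Your explicit torsor argument for the uniqueness up to $k^\times$ is a useful elaboration that the paper leaves implicit in the proof of Theorem~\ref{thmmain0}, but it does not change the structure of the argument.
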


		\begin{proof}
			We  have to combine the previous results: Theorem~\ref{thmmain0} tells us \ref{thmrefli} $\Longleftrightarrow$ \ref{thmreflii}.
			Under the equivalence~\eqref{eqnequivcycframed0} between cyclic framed $E_2$-algebras and ansular functors, \ref{thmreflii} transforms equivalently into~\ref{thmrefliiim}, see also Remark~\ref{secansularfunctorrefl}. 
			
			Suppose now that we have either one of the equivalent structures in \ref{thmrefli}-\ref{thmrefliii} 
			 and  that $\cat{A}$ extends to a modular functor. Then this modular functor is also reflection equivariant by Theorem~\ref{thmrefleqmf}, which can be expressed through the isomorphisms~\eqref{eqnisosmf} subject to the analoga of the conditions \eqref{eqnreflconditionsx} as follows from Remark~\ref{remmfrefl}. The non-degeneracy of the braiding follows from Proposition~\ref{propconnectedmodular}.
			The remaining statements follow from Theorem~\ref{thmmodularcats}.
		\end{proof}

	\section{The Heisenberg picture for spaces of conformal blocks}
	In this section, we establish the Heisenberg picture for the spaces of conformal blocks of a logarithmic conformal field theory, still under assumptions of finiteness and rigidity.
	More precisely, we will show that the mapping class group representations on the spaces of conformal blocks are implicitly contained in the (internal) skein algebras and their representation theory.
	A thorough motivation and contextualization was given in the introduction, so we will get right to the proofs.

	\subsection{Mapping class group equivariance of the skein action\label{secequiv}}
	As recalled in Section~\ref{secmf},
	for any cyclic framed $E_2$-algebra $\cat{A}$,
	we can build a modular operad $\SurfA$ with 
	a map $p_\cat{A}:\SurfA \to \Surf$ of operads 
	and a modular $\SurfA$-algebra $\FA$.
	On an operation $(\Sigma,H)$ in $\SurfA$, the modular algebra	
	$\FA$ is given by $\PhiA(H)\circ \qss\cong \widehat{\cat{A}}(H) \in \cat{A}^{\boxtimes n}$ if $\Sigma$ is connected with $n$ boundary components, all seen as outgoing.
	This makes the value of $\FA$ on $(\Sigma,H)$, which we will denote by $\FA(\Sigma)$ by abuse of notation,
	also a module over the skein algebra (we denote the action again by $	\Add_{\Sigma,H}$). This way the canonical isomorphism
	$\widehat{\cat{A}}(H) \cong \FA(\Sigma)$ for a fixed handlebody $H$ becomes a $\Map(H)$-equivariant isomorphism and a skein module isomorphism.
	
	For a surface $\Sigma$ and $n\ge 0$ embedded intervals in its boundary, $\int_\Sigma \cat{A}$ becomes a module over $\cat{A}^{\boxtimes n}$.
	We will denote the action in the sequel by $\act : \cat{A}^{\boxtimes n}\boxtimes \int_\Sigma \cat{A}\to \int_\Sigma \cat{A}$, see also \cite[Section~5.2]{bzbj}. 
	
	\begin{theorem}\label{thmequiv}
		Let $\cat{A}$ be a cyclic framed $E_2$-algebra and $T$ a corolla with $n$ legs. Denote by $p_\cat{A}: \SurfA(T) \to \Surf(T)$ the projection functor.
		Let $(\Sigma,H)$ be a point in $\SurfA(T)$.
		Place an interval on each of the $n$ boundary components, thereby introducing an $\cat{A}^{\boxtimes n}$-action on $\int_\Sigma \cat{A}$. 
		Then the skein action \begin{align} \label{eqnactionmap}\Add_{\Sigma,H} : \Hom_{\int_\Sigma \cat{A}} (X \act \qss ,Y \act \qss)   
			\to \Hom_{\cat{A}^{\boxtimes n}}
			(X \otimes \FA(\Sigma),Y \otimes \FA(\Sigma)) \ , \quad X,Y\in\cat{A}^{\boxtimes n}
		\end{align}
		induced by the $(\int_{\mathbb{S}^1\times[0,1]}\cat{A})^{\boxtimes n}$-module map 
		$\PhiA(H): \int_\Sigma \cat{A}\to \cat{A}^{\boxtimes n}$ is natural in $(\Sigma,H)$. 
		If $\cat{A}$ is connected, the map~\eqref{eqnactionmap} is mapping class group equivariant. 
	\end{theorem}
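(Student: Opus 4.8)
The plan is to show that, once the identification $\PhiA(H)\circ\qss\cong\FA(\Sigma)$ of \cite[Theorem~4.2]{brochierwoike} is fixed, the map~\eqref{eqnactionmap} is simply the effect on hom spaces of the $\cat{A}^{\boxtimes n}$-linear functor $\PhiA(H):\int_\Sigma\cat{A}\to\cat{A}^{\boxtimes n}$, and that this functor is itself functorial in the pair $(\Sigma,H)$; mapping class group equivariance then drops out as the special case of automorphisms of $(\Sigma,H)$.

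First I would make the construction of the labeled $\Add$-map explicit. With $n$ intervals placed on $\partial\Sigma$, the category $\int_\Sigma\cat{A}$ carries its $\cat{A}^{\boxtimes n}$-action $\act$ (see \cite[Section~5.2]{bzbj}), which is the restriction along the collar inclusions $\cat{A}\to\int_{\mathbb{S}^1\times[0,1]}\cat{A}$ of the $(\int_{\mathbb{S}^1\times[0,1]}\cat{A})^{\boxtimes n}$-action on the boundary collar; since $\PhiA(H)$ is a module map over $(\int_{\mathbb{S}^1\times[0,1]}\cat{A})^{\boxtimes n}$ (Section~\ref{sechbdyskein}), it is in particular $\cat{A}^{\boxtimes n}$-linear, so $\PhiA(H)(X\act\qss)\cong X\otimes(\PhiA(H)\circ\qss)\cong X\otimes\FA(\Sigma)$ naturally in $X$. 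The map~\eqref{eqnactionmap} is then defined, exactly as in the triangle~\eqref{eqndefadd} and~\eqref{eqnaddmap} but with labels, as $g\mapsto\PhiA(H)(g)$ conjugated by these structure isomorphisms. In this description $\Add_{\Sigma,H}$ is manifestly linear and well defined, and the whole point is that it is the hom-space map of an $\cat{A}^{\boxtimes n}$-linear functor read through a fixed isomorphism on the essential image of $\qss$.

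Next I would prove naturality in $(\Sigma,H)$. By the description of $\SurfA$ recalled in Section~\ref{secmf}, a morphism $(\Sigma,H)\to(\Sigma',H')$ in $\SurfA(T)$ covers a mapping class $f:\Sigma\to\Sigma'$ in $\Surf(T)$ and is an isomorphism of $(\int_{\mathbb{S}^1\times[0,1]}\cat{A})^{\boxtimes n}$-module maps $\PhiA(H)\Rightarrow\PhiA(H')\circ f_*$, where $f_*:\int_\Sigma\cat{A}\xrightarrow{\simeq}\int_{\Sigma'}\cat{A}$ is the equivalence from functoriality of factorization homology in the orientation- and boundary-parametrization-preserving diffeomorphism $f$. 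As $f$ is the identity near the $n$ boundary intervals, $f_*$ is an equivalence of $\cat{A}^{\boxtimes n}$-module categories carrying $\qss$ to $\qss$, hence $f_*(X\act\qss)\cong X\act\qss$. Assembling these 2-cells with the compatibility of \cite[Theorem~4.2]{brochierwoike} with the $\SurfA$-structure (that is, with the fact that $\FA$ is a modular $\SurfA$-algebra, \cite[Section~6]{brochierwoike}) produces the commuting square whose vertical arrows are conjugation by $f_*$ on the source and the structure isomorphism $\FA(\Sigma)\cong\FA(\Sigma')$ of $\FA$ on the target; this square is precisely naturality of~\eqref{eqnactionmap}, and the check is a diagram chase of the coherence 2-cells involved. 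For the mapping class group statement, assume $\cat{A}$ connected, so that all homotopy fibers of $p_\cat{A}$ are connected and $p_\cat{A}$ is an extension of operads; then $\Aut_{\SurfA(T)}(\Sigma,H)\to\Aut_{\Surf(T)}(\Sigma)=\Map(\Sigma)$ is surjective, exhibiting $\widehat{\Map}(\Sigma):=\Aut_{\SurfA(T)}(\Sigma,H)$ as the extension through which $\FA(\Sigma)$ is a projective representation. The action of $\Map(\Sigma)$ on $\int_\Sigma\cat{A}$ is by pushforward, and, by construction of $\FA$, the action of $\widehat{\Map}(\Sigma)$ on $\FA(\Sigma)$ is the modular functor representation, so specializing the naturality square to $\phi\in\widehat{\Map}(\Sigma)$ (with $\Sigma'=\Sigma$, $H'=H$, $f=p_\cat{A}(\phi)$) gives the asserted equivariance, phrased over $\Map(\Sigma)$ up to the anomaly carried by $\widehat{\Map}(\Sigma)$.

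The only non-formal input is that the family $(\Sigma,H)\mapsto\PhiA(H)$, together with the isomorphisms $\PhiA(H)\circ\qss\cong\FA(\Sigma)$, genuinely assembles into data over $\SurfA$ compatible with \emph{all} of the $\cat{A}^{\boxtimes n}$-module structures coming from embedded intervals — equivalently, that the comparison 2-cells $\PhiA(H)\Rightarrow\PhiA(H')\circ f_*$ are module-map 2-cells that cohere with gluing. This is exactly what is packaged in $\FA$ being a modular $\SurfA$-algebra together with \cite[Theorem~4.2]{brochierwoike}, so no new argument is needed. I expect the main obstacle to be not conceptual but organizational: carefully unpacking that construction, matching the collar module structure on $\int_{\mathbb{S}^1\times[0,1]}\cat{A}$ with the interval $\cat{A}^{\boxtimes n}$-action, and keeping track of the central extension $\widehat{\Map}$ throughout the 2-categorical diagram chase.
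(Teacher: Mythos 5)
Your proposal is correct and follows essentially the same route as the paper: unpack a morphism of $\SurfA(T)$ as a mapping class $f$ together with a module-map 2-cell $\alpha:\PhiA(H)\Rightarrow\PhiA(H')\circ f_*$, show the resulting square commutes by whiskering/interchange, and obtain mapping class group equivariance by specializing to automorphisms in the extension. The only thing the paper does that you leave implicit in your ``diagram chase'' is to isolate the small bicategorical observation (Lemma~\ref{lemmaauxlemma}) that conjugation by $\alpha$ intertwines postcomposition with $\PhiA(H)$ and with $\PhiA(H')\circ f_*$ \emph{independently of the choice of} $\alpha$ --- the point that makes the connected case well defined over $\Map(\Sigma)$ rather than only over the extension.
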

	
Before giving the proof, let us spell out the case of main interest:
If $\cat{A}$ is connected and $X=Y=I$, then
the statement tells us that
\begin{align}
\Add_{\Sigma,H} : \SkAlg_\cat{A}(\Sigma)\to \End_{\cat{A}^{\boxtimes n}}(\FA(\Sigma))
\end{align}
is mapping class group equivariant. Often, the subscript of $\Add_{\Sigma,H}$ is dropped which is justified because the skein modules for different handlebodies to the same surface are isomorphic.
Let us spell this out:

		\begin{corollary}\label{corequiv}
	For any connected cyclic framed $E_2$-algebra, the skein action 
	\begin{align}
	\Add : \SkAlg_\cat{A}(\Sigma)\to \End_{\cat{A}^{\boxtimes n}}(\FA(\Sigma))
	\end{align}is mapping class group equivariant:
	\begin{align}
	\Add(f.a)=\FA\left(\widetilde f\right)\Add(a)\FA\left(\widetilde f\right)^{-1} \quad \text{for}\quad f \in \Map(\Sigma) \ , \quad a \in \SkAlg_\cat{A}(\Sigma) \ , \label{eqnmcgequiv}
	\end{align} where $f.a$ denotes the mapping class group action on the skein algebra element $a$ and $\widetilde f$ is any lift of $f$ to the extension of $\Map(\Sigma)$ that $\SurfA$ gives rise to.
In particular, the extension of $\Map(\Sigma)$ acts on $\FA(\Sigma)$ by operators in the normalizer of $\Add (      \SkAlg_\cat{A}(\Sigma)     )$. 
\end{corollary}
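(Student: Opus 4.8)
The plan is to establish Theorem~\ref{thmequiv} first, since Corollary~\ref{corequiv} is exactly its specialisation to $X=Y=I$. By construction the map $\Add_{\Sigma,H}$ of~\eqref{eqnactionmap} is nothing but the functor $\PhiA(H):\int_\Sigma\cat{A}\to\cat{A}^{\boxtimes n}$ applied to morphisms, post-composed with the canonical identifications $\PhiA(H)(X\act\qss)\cong X\otimes\bigl(\PhiA(H)\circ\qss\bigr)\cong X\otimes\FA(\Sigma)$: the first uses that $\PhiA(H)$ is a map of $\bigl(\int_{\mathbb{S}^1\times[0,1]}\cat{A}\bigr)^{\boxtimes n}$-modules, the second is the isomorphism $\PhiA(H)\circ\qss\cong\widehat{\cat{A}}(H)\cong\FA(\Sigma)$ recalled in Section~\ref{sechbdyskein} (cf.~\cite[Theorem~4.2]{brochierwoike}). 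In particular $\Add_{\Sigma,H}$ automatically respects composition, so there is nothing to check on that front; the real content is the naturality in $(\Sigma,H)$.

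First I would make the naturality precise. A morphism $(\Sigma,H)\to(\Sigma',H')$ in $\SurfA(T)$ projects to a mapping class $f:\Sigma\to\Sigma'$ and carries the datum of a module isomorphism comparing $\PhiA(H')$ with $\PhiA(f.H)$. On factorization homology the orientation-preserving diffeomorphism underlying $f$ induces an equivalence $f_*:\int_\Sigma\cat{A}\xrightarrow{\ \simeq\ }\int_{\Sigma'}\cat{A}$ which sends $\qss$ canonically to $\qss$, because the quantum structure sheaf is a homotopy fixed point for the diffeomorphism action (Section~\ref{secfh}); and decomposing $f.H$ into $H$ glued to the mapping cylinder of $f$ produces a natural isomorphism of module maps $\PhiA(f.H)\circ f_*\cong\PhiA(H)$. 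Threading these isomorphisms through $\PhiA(-)$ evaluated on morphisms, and using naturality of the module-structure comparisons, yields a commuting square expressing that $\Add_{\Sigma,H}$ and $\Add_{\Sigma',H'}$ agree after transport along $f_*$ and along the $\cat{A}^{\boxtimes n}$- and $\bigl(\int_{\mathbb{S}^1\times[0,1]}\cat{A}\bigr)^{\boxtimes n}$-module identifications; this is the asserted naturality.

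For the mapping class group statement, recall that when $\cat{A}$ is connected the projection $p_\cat{A}:\SurfA\to\Surf$ is genuinely an extension (its homotopy fibers being connected), so the automorphisms of $(\Sigma,H)$ lying over a given $f\in\Map(\Sigma)$ are precisely the lifts $\widetilde f$ in the extension of $\Map(\Sigma)$ that $\SurfA$ produces. Applying the naturality square of the previous paragraph to such an automorphism, and unwinding (i)~the $\Map(\Sigma)$-action on $\SkAlg_\cat{A}(\Sigma)=\End_{\int_\Sigma\cat{A}}(\qss)$, which is the conjugate of $f_*(-)$ by the fixing isomorphism $\qss\cong f_*\qss$, and (ii)~the action of $\widetilde f$ on $\FA(\Sigma)$ through $\FA(\widetilde f)$, which by definition of the $\SurfA$-algebra $\FA$ is assembled from the same handlebody gluings and diffeomorphism actions, one reads off $\Add(f.a)=\FA(\widetilde f)\,\Add(a)\,\FA(\widetilde f)^{-1}$. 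Corollary~\ref{corequiv} is then immediate: taking $X=Y=I$ turns \eqref{eqnactionmap} into the algebra homomorphism $\Add:\SkAlg_\cat{A}(\Sigma)\to\End_{\cat{A}^{\boxtimes n}}(\FA(\Sigma))$ with the displayed equivariance; since the $\Map(\Sigma)$-action preserves the subalgebra $\SkAlg_\cat{A}(\Sigma)$, the operators $\FA(\widetilde f)$ conjugate $\Add\bigl(\SkAlg_\cat{A}(\Sigma)\bigr)$ onto itself, i.e.\ lie in its normalizer, and the fact that the subscript in $\Add_{\Sigma,H}$ may be suppressed is precisely the connectedness/naturality already established.

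I expect the main obstacle to be the bookkeeping in the second and third paragraphs: pinning down the morphisms of the operad $\SurfA$ and the exact shape of the coherence isomorphisms $\PhiA(f.H)\circ f_*\cong\PhiA(H)$ together with their compatibility with the two layers of module structure (over $\bigl(\int_{\mathbb{S}^1\times[0,1]}\cat{A}\bigr)^{\boxtimes n}$ and over $\cat{A}^{\boxtimes n}$), so that the resulting square commutes on the nose and so that the conjugating operator really is $\FA(\widetilde f)$ and not some a priori different natural automorphism. Everything else --- the identification of $\Add_{\Sigma,H}$ with $\PhiA(H)$ on morphisms, compatibility with composition, the $X=Y=I$ specialisation, and the normalizer remark --- is formal.
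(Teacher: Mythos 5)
Your proposal is correct and follows essentially the same route as the paper: the corollary is obtained as the $X=Y=I$ specialisation of the naturality statement (Theorem~\ref{thmequiv}), which is proved by decomposing the comparison square using the module structure of $\PhiA(H)$, the homotopy fixed point structure of $\qss$, and the datum $\alpha:\PhiA(H)\to\PhiA(H')f_*$ carried by a morphism in $\SurfA(T)$. The bookkeeping issue you flag at the end --- that the conjugating operator is genuinely $\FA(\widetilde f)$ and independent of the chosen comparison isomorphism --- is exactly what the paper isolates in Lemma~\ref{lemmaauxlemma}, whose independence-of-$\alpha$ clause closes that gap.
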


In the traditional semisimple 	skein-theoretic framework, \eqref{eqnmcgequiv} is classical~\cite{roberts,masbaumroberts}. 
Beyond this case, this seems to be new.
Also note that by the classification in \cite{brochierwoike} all modular functors are equivalent, in the moduli space of modular functors, to a modular functor of the form $\FA$. 
This means that \eqref{eqnmcgequiv} holds really for \emph{all} modular functors.
	
	As a preparation for the proof of Theorem~\ref{thmequiv}, we first make the following basic observation:
	
	\begin{lemma}\label{lemmaauxlemma}
		Let $f: X \to Y$ be a morphism in a bicategory and let $\alpha : F \to G$ be a 2-isomorphism between 1-morphisms $F,G:Y \to Z$.
		Then the diagram	
		
		\begin{equation}
			\begin{tikzcd}
				&\End(f)	 \ar[rrr,"F\circ -"] \ar[dd,"G\circ - ",swap] &&& \End(F\circ f) \ar[llldd," \text{conjugate with $\alpha$} "]  \\   \\ 
				&	\End(G\circ f)  
			\end{tikzcd}
		\end{equation}
		commutes, and the isomorphism conjugating with $\alpha$ does not depend on the choice of $\alpha$.
	\end{lemma}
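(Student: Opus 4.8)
The plan is to derive both assertions from the interchange (middle--four) law for $2$-morphisms in a bicategory. Write $\circ$ both for composition of $1$-morphisms and for whiskering (as in the rest of the paper), and $\cdot$ for vertical composition of $2$-morphisms. Then $F\circ-$ is the map $\End(f)\to\End(F\circ f)$, $\beta\mapsto \id_F\circ\beta$, similarly $G\circ-$, while the diagonal arrow is the map $\gamma\mapsto (\alpha\circ\id_f)\cdot\gamma\cdot(\alpha\circ\id_f)^{-1}$. This last map is well-defined because whiskering the $2$-isomorphism $\alpha\colon F\Rightarrow G$ with $f$ yields a $2$-isomorphism $\alpha\circ\id_f\colon F\circ f\Rightarrow G\circ f$, so that conjugation by it is a monoid isomorphism $\End(F\circ f)\xrightarrow{\,\cong\,}\End(G\circ f)$.

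For the commutativity of the triangle I would fix $\beta\in\End(f)$ and expand the horizontal composite $\alpha\circ\beta\colon F\circ f\Rightarrow G\circ f$ in the two standard ways coming from functoriality of horizontal composition:
\[
(\id_G\circ\beta)\cdot(\alpha\circ\id_f)\;=\;\alpha\circ\beta\;=\;(\alpha\circ\id_f)\cdot(\id_F\circ\beta)\,.
\]
Reading off the outer equality and conjugating by $(\alpha\circ\id_f)^{-1}$ gives $(\alpha\circ\id_f)\cdot(\id_F\circ\beta)\cdot(\alpha\circ\id_f)^{-1}=\id_G\circ\beta$, which is exactly the commutativity of the triangle. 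This step is purely formal; the only things to watch are the order of the composites and that whiskering preserves invertibility of $2$-cells.

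For the independence statement I would note that the commutativity just proved identifies the composite $\End(f)\xrightarrow{F\circ-}\End(F\circ f)\xrightarrow{\ \mathrm{conj}_\alpha\ }\End(G\circ f)$ with $G\circ-$, an expression in which $\alpha$ no longer occurs; hence the induced map $\End(f)\to\End(G\circ f)$ — which is the only thing used, for instance to define $\Add_{\Sigma,H}$ through the triangle~\eqref{eqndefadd} — is the same for every choice of $2$-isomorphism $\alpha\colon F\Rightarrow G$. The main (and only mildly delicate) point of the argument is this last bookkeeping: identifying the two factorizations of $\alpha\circ\beta$ and making precise the sense in which the diagonal conjugation is $\alpha$-independent; the interchange law itself does all the substantive work.
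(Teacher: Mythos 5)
Your interchange-law computation is exactly the standard argument, and the paper itself states this lemma as a ``basic observation'' without proof, so there is nothing to diverge from: writing $\alpha\circ\beta$ as $(\alpha\circ\id_f)\cdot(\id_F\circ\beta)=(\id_G\circ\beta)\cdot(\alpha\circ\id_f)$ and rearranging gives the triangle. One remark: your reading of the independence clause --- as asserting that the composite $\mathrm{conj}_\alpha\circ(F\circ -)=G\circ -$, hence the induced map out of $\End(f)$, is $\alpha$-independent, rather than that the conjugation isomorphism on all of $\End(F\circ f)$ is --- is the right (and the only tenable) one, since the literal reading would fail whenever some $2$-automorphism $u$ of $F$ has $u\circ\id_f$ acting nontrivially by conjugation on $\End(F\circ f)$, and it is also the only version used in the proof of Theorem~\ref{thmequiv}.
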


	\begin{proof}[\slshape Proof of Theorem~\ref{thmequiv}]
		A morphism $\xi  : (\Sigma,H) \to (\Sigma',H')$ in $\SurfA(T)$ is a morphism $f:\Sigma \to \Sigma'$ in $\Surf(T)$
		plus an isomorphism $\alpha : \PhiA(H) \to \PhiA(H')f_*$. 
		Since $p_\cat{A}(\xi)=f$, we need to prove that the diagram
		\begin{equation}\label{eqnsqaurecomm}
			\begin{tikzcd}
				\Hom_{\int_\Sigma \cat{A}} (X \act \qss ,Y \act \qss)   	 \ar[rrr,"\Add_{\Sigma,H}"] \ar[dd,swap,"       (*)  "]   &&& \Hom_{\cat{A}^{\boxtimes n}}
				(X \otimes \FA(\Sigma),Y \otimes \FA(\Sigma))  \ar[dd,"\text{adjoint action with $\FA(\xi)$}"] \\  \\
				\Hom_{\int_{\Sigma'} \cat{A}} (X \act \qssp ,Y \act \qssp)     \ar[rrr,swap,"\Add_{\Sigma',H'}"] &&&       \Hom_{\cat{A}^{\boxtimes n}}
				(X \otimes \FA(\Sigma'),Y \otimes \FA(\Sigma'))
			\end{tikzcd}
		\end{equation}
		commutes.
		Here the map $(*)$ applies $f_*: \int_\Sigma \cat{A} \to \int_{\Sigma'} \cat{A}$, uses the homotopy fixed point structure $f_* \qss \cong \qssp$ and the $\cat{A}^{\boxtimes n}$-module map structure of $f_*$. 
		We prove the commutativity of the square~\eqref{eqnsqaurecomm}
		by breaking up the diagram into subdiagrams (MS $=$ module structure of $\PhiA(H)$
		 is being used, FPS $=$ homotopy fixed point structure is being used):
		
		\footnotesize
		\begin{equation}
			\begin{tikzcd}
				\Hom_{\int_\Sigma \cat{A}} (X \act \qss ,Y \act \qss) \ar[r,"\Add_{\Sigma,H}"] \ar[dd,"f_*\circ -"] 
				& \Hom_{\cat{A}^{\boxtimes n}}
				(X \otimes \FA(\Sigma),Y \otimes \FA(\Sigma))\cong \Hom_{\cat{A}^{\boxtimes n}}(
				\PhiA(H) (X\act \qss) ,\PhiA(H)(Y\act \qss) ) \ar[dd,"\text{conjugate with $\alpha$}"] 
				\\  \\
				\Hom_{\int_\Sigma \cat{A}} (f_*(X \act \qss) ,f_*(Y \act \qss)) \ar[dd,"\text{MS \& FPS}"] \ar[r," \PhiA(H')\circ -"] &       	\Hom_{\int_\Sigma \cat{A}} (\PhiA(H') \circ f_*(X \act \qss) ,\PhiA(H')\circ f_*(Y \act \qss))\ar[dd,swap,"\text{MS \& FPS}"] \\ \\
				\Hom_{\int_{\Sigma'} \cat{A}} (X \act \qssp ,Y \act \qssp)  \ar[r,swap,"\Add_{\Sigma',H'}"]  &       \Hom_{\cat{A}^{\boxtimes n}}
				(X \otimes \FA(\Sigma'),Y \otimes \FA(\Sigma'))
			\end{tikzcd}
		\end{equation}
		\normalsize By construction the re-composed diagram agrees with \eqref{eqnsqaurecomm}.
		The upper square commutes thanks to Lemma~\ref{lemmaauxlemma}. The lower square commutes by definition of $\Add_{\Sigma',H'}$.	
		
		For the addendum on the connected case, we use once again Lemma~\ref{lemmaauxlemma}.
	\end{proof}

	\subsection{Invertibility of handlebody skein modules\label{secinvskenmodule}}	According to \cite[Section~3.3]{bjss}, we call a cyclic framed $E_2$-algebra $\cat{A}$ \emph{cofactorizable} if \begin{align}\label{eqnphicyl} \PhiA(\mathbb{D}^2\times [0,1]): \int_{\mathbb{S}^1\times [0,1]} \cat{A} \to \cat{A}\boxtimes \cat{A}\end{align} is an equivalence (this is the equivalent definition from 
	\cite[Section~7.2]{brochierwoike}).

	\begin{remark}[Connection to the Drinfeld map]	Let us explain the abstractly defined $\PhiA(\mathbb{D}^2\times [0,1])$ a bit further in the more familiar special case in which $\cat{A}$ is a finite ribbon category: Then it follows that under the identifications \begin{itemize}
		\item $\int_{\mathbb{S}^1\times [0,1]} \cat{A}\simeq \rmod\mathbb{F}$~\cite[Theorem~5.14]{bzbj}, the category of right modules over
		 the coend algebra $\mathbb{F}=\int^{X\in\cat{A}}X^\vee \otimes X$ from \cite[Section~2]{lyu},
		\item	and $\cat{A}\boxtimes\cat{A}\simeq \End(\cat{A})\simeq \rmod\mathbb{A}$, see \cite[Section~2.7]{lyubook} and \cite[Lemma~3.5]{shimizupiv}, the category of right modules over the end algebra $\mathbb{A}=\int_{X\in\cat{A}} X^\vee \otimes X$,
	\end{itemize}
	the functor~\eqref{eqnphicyl}
	can be identified with the induction along the Drinfeld map, the algebra map $\mathbb{D}:\mathbb{F}\to\mathbb{A}$ described in~\eqref{eqndrinfeld}.
	In formulas:
	\begin{align}
		\label{eqndrinfeldmapphi}	\PhiA(\mathbb{D}^2\times [0,1])\cong \mathbb{D}_! : \rmod\mathbb{F}\to \rmod\mathbb{A} \ . 
	\end{align}
	The description~\eqref{eqndrinfeldmapphi} is by no means obvious, but follows as a consequence of the proof of \cite[Theorem~3.20]{bjss}. 
	\end{remark}

	We will prove a key technical result, namely the statement that co-non-degeneracy and cofactorizability are enough for all handlebody skein modules to be an equivalence. 
	
	\spaceplease
	\begin{theorem}\label{thminv}
		For a cyclic framed $E_2$-algebra $\cat{A}$, the following are equivalent:
		\begin{pnum}
			\item $\cat{A}$ is cofactorizable and co-non-degenerate.\label{thminv1}
			\item $\cat{A}$ is $\Phi$-invertible, i.e.\ $\PhiA(H)$ is an equivalence for all handlebodies.\label{thminv2}
		\end{pnum}
	\end{theorem}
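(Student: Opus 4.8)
The plan is to prove the two implications of Theorem~\ref{thminv} separately; \ref{thminv2} $\Longrightarrow$ \ref{thminv1} is immediate while \ref{thminv1} $\Longrightarrow$ \ref{thminv2} carries the weight. For the easy direction: cofactorizability is by definition the statement that $\PhiA(\mathbb{D}^2\times[0,1])$ is an equivalence and co-non-degeneracy (Section~\ref{secconondeg}) the statement that $\PhiA(\mathbb{B}^3)$ is an equivalence, so both follow from \ref{thminv2} by specialising the handlebody $H$ to $\mathbb{D}^2\times[0,1]$ and to $\mathbb{B}^3$ respectively.

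For \ref{thminv1} $\Longrightarrow$ \ref{thminv2}, the structural input is the gluing behaviour of generalised skein modules recalled in Section~\ref{sechbdyskein}: cutting a handlebody $H$ along an embedded disk $D$ into pieces $H_1,H_2$ cuts the boundary surface $\partial H$ along the circle $\partial D$, and, by excision for factorization homology together with the module-map compatibility of $\PhiA$ from \cite{brochierwoike}, $\PhiA(H)$ is the resulting relative composition of $\PhiA(H_1)$ and $\PhiA(H_2)$ over the cylinder category $\int_{\mathbb{S}^1\times[0,1]}\cat{A}$ associated to $\partial D$ (with a self-gluing when $D$ is non-separating). Since a modular operad is generated under operadic (self-)composition by its genus-zero part, and the genus-zero part of $\Hbdy$ is the cyclic operad $\framed$ (Section~\ref{secansular}), which in turn is generated by the cap $\mathbb{B}^3$ with one embedded disk, the operadic identity $\mathbb{D}^2\times[0,1]$, and the pair-of-pants $H_{0,3}=\mathbb{B}^3$ with three embedded disks, it suffices to verify (a) that $\PhiA$ is an equivalence on these three generators, and (b) that the elementary operations out of which every handlebody is assembled --- attaching a handle, gluing in a trinion, and capping off a marked disk (possibly to another marked disk of the same handlebody) --- preserve the property that $\PhiA$ is an equivalence.

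Two of the inputs are directly at hand: $\PhiA$ of $\mathbb{B}^3$ with one marked disk is readily checked to be an equivalence (it is, up to a Nakayama twist, the identity on $\int_{\mathbb{D}^2}\cat{A}\simeq\cat{A}$), and $\PhiA(\mathbb{D}^2\times[0,1])$ is an equivalence by cofactorizability. Co-non-degeneracy enters precisely at the capping operations: gluing two copies of $\mathbb{B}^3$ with one marked disk along their marked disks produces $\mathbb{B}^3$ with no marked disk, so co-non-degeneracy says exactly that the cap module $\cat{A}$ is invertible over $\int_{\mathbb{S}^1\times[0,1]}\cat{A}$, one triangle identity for the inverse being co-non-degeneracy itself and the other being supplied by cofactorizability. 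Invertibility of the cap module then yields the two remaining ingredients: since capping one of the three marked disks of $H_{0,3}$ recovers the cylinder, $\PhiA(\mathbb{D}^2\times[0,1])$ is the relative composition of $\PhiA(H_{0,3})$ with the cap module, and cancelling the cap shows $\PhiA(H_{0,3})$ is an equivalence; and capping any marked disk, or gluing two marked disks of one handlebody together, are invertible operations that hence preserve equivalences. As relative tensor products of equivalences are equivalences, an induction over a handle-and-trinion decomposition of an arbitrary handlebody then propagates the conclusion from the three generators to all of $\Hbdy$.

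The step I expect to be the main obstacle is making the invertibility and cancellation arguments of the previous paragraph precise in the bicategorical / factorization-homology setting: one must set up carefully the gluing formula for $\PhiA$ as a relative tensor product of module maps over the categories $\int_{\mathbb{S}^1\times[0,1]}\cat{A}$ --- which is essentially contained in the proofs of the classification results of \cite{brochierwoike} --- and, crucially, verify that co-non-degeneracy upgrades the bare statement ``$\PhiA(\mathbb{B}^3)$ is an equivalence of categories'' to ``the cap module is an \emph{invertible} bimodule over $\int_{\mathbb{S}^1\times[0,1]}\cat{A}$'', so that capping a marked disk genuinely deletes the corresponding output rather than tensoring in a spurious factor of $\int_{\mathbb{S}^2}\cat{A}$. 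Once this is in place, the reduction to generators and the induction are routine.
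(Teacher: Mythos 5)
Your proposal is correct in substance and uses the same basic toolkit as the paper (excision for factorization homology, the gluing behaviour of $\PhiA$, cofactorizability for the solid cylinder, co-non-degeneracy for collapsing sphere factors), but it is organised differently, and the comparison is instructive. You reduce to operadic generators of $\Hbdy$ and make the \emph{invertibility of the cap bimodule} the central mechanism: cofactorizability and co-non-degeneracy together say that $\cat{A}=\int_{\mathbb{D}^2}\cat{A}$ is an invertible $\bigl(\int_{\mathbb{S}^1\times[0,1]}\cat{A},\vect\bigr)$-bimodule, and you obtain $\PhiA(H_{0,3})$ by \emph{cancelling} a cap against the cylinder. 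The paper instead runs a double induction in the building-up direction: first on the number of boundary disks of a genus-zero handlebody via the diagram~\eqref{eqninduccof}, which caps a boundary circle \emph{at the level of the source} $\int_{\Sigma_{0,n}}\cat{A}$ using $\PhiA(\mathbb{D}^2\times[0,1])$ rather than gluing two $\Phi$-maps along their targets --- so that no factor of $\int_{\mathbb{S}^2}\cat{A}$ ever appears and the entire genus-zero case uses only cofactorizability; co-non-degeneracy enters only at $n=0$ and at the handle-attachment step~\eqref{eqnnondegcof}, exactly where you predict (two copies of $\PhiA(\mathbb{B}^3)$ absorb the two sphere factors created by the attached solid cylinder). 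Your route, by contrast, invokes co-non-degeneracy already in genus zero (every gluing of two trinions along their targets produces a spurious $\cat{A}\boxtimes_{\int_{\mathbb{S}^1\times[0,1]}\cat{A}}\cat{A}\simeq\int_{\mathbb{S}^2}\cat{A}$), which is a harmless detour given the hypotheses. Two small imprecisions to fix if you write this up: gluing two marked disks of the \emph{same} handlebody together is not an invertible operation (the relative tensor $-\boxtimes_{\cat{C}\boxtimes\cat{C}}\cat{C}$ with the regular bimodule is not invertible --- applied twice it produces torus factorization homology); what you actually need there is only functoriality of the relative tensor product plus the co-non-degeneracy identification of the target, which is precisely the content of~\eqref{eqnnondegcof}. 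And the two equivalences you call ``triangle identities'' are really just the two composites witnessing invertibility of a $1$-morphism in the Morita bicategory, no coherence between them being required. The coherence checks you defer (that the relevant squares commute as diagrams of module maps) are deferred at a comparable level of detail in the paper's proof, so I do not count this as a gap.
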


By Proposition~\ref{propcodeg}, \cite[Theorem~1.6]{bjss} and  \cite{shimizumodular} modular categories are co-non-degenerate and cofactorizable. This implies:
\begin{corollary}\label{cormodcatphiinv}
	Modular categories are $\Phi$-invertible.
	\end{corollary}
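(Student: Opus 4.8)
The plan is to show that a modular category $\cat{A}$ satisfies condition~\ref{thminv1} of Theorem~\ref{thminv} --- that $\cat{A}$ is co-non-degenerate and cofactorizable --- and then to invoke the implication \ref{thminv1}$\Rightarrow$\ref{thminv2} of that theorem, which we are allowed to assume, to conclude that $\PhiA(H)$ is an equivalence for every handlebody $H$, i.e.\ that $\cat{A}$ is $\Phi$-invertible as asserted in Corollary~\ref{cormodcatphiinv}. Concretely, this amounts to reinterpreting the non-degeneracy of the braiding of $\cat{A}$, which is part of the definition of a modular category, as an invertibility statement for each of the two elementary building blocks of the skein modules $\PhiA$: the three-ball $\mathbb{B}^3$ and the solid cylinder $\mathbb{D}^2\times[0,1]$.

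First I would check co-non-degeneracy. A modular category is in particular a finite ribbon category, hence a finite tensor category whose monoidal product is rigid, and therefore exact, and whose unit is simple; thus the hypotheses of Proposition~\ref{propcodeg} are satisfied. That proposition provides a canonical equivalence $\int_{\mathbb{S}^2}\cat{A}\simeq Z_2(\cat{A})^\op$ under which $\PhiA(\mathbb{B}^3)$ dualizes to the functor $\vect\to Z_2(\cat{A})$ selecting the monoidal unit, and it identifies co-non-degeneracy of $\cat{A}$ with that functor being an equivalence, equivalently with the braiding being non-degenerate. Since $\cat{A}$ is modular, this holds, so $\cat{A}$ is co-non-degenerate.

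Next I would check cofactorizability, i.e.\ that $\PhiA(\mathbb{D}^2\times[0,1]):\int_{\mathbb{S}^1\times[0,1]}\cat{A}\to\cat{A}\boxtimes\cat{A}$ is an equivalence. Here I would use the identification recalled in the excerpt: after identifying $\int_{\mathbb{S}^1\times[0,1]}\cat{A}$ with the category of right $\mathbb{F}$-modules and $\cat{A}\boxtimes\cat{A}$ with the category of right $\mathbb{A}$-modules, this functor becomes induction along the Drinfeld map $\mathbb{D}:\mathbb{F}\to\mathbb{A}$, and hence is an equivalence once $\mathbb{D}$ is an isomorphism, equivalently once $\cat{A}$ is factorizable. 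For a modular category this is one of the standard equivalent reformulations of the non-degeneracy of the braiding, which I would cite from \cite{shimizumodular}; alternatively, \cite[Theorem~1.6]{bjss} supplies directly the equivalence between factorizability and cofactorizability in the form used here. Either citation closes the step.

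With both conditions established, Theorem~\ref{thminv} immediately gives $\Phi$-invertibility of $\cat{A}$, proving the corollary. The only genuine content is internal to Theorem~\ref{thminv}, which is assumed here: its proof has to propagate the invertibility of $\PhiA$ from the ball and the cylinder to an arbitrary handlebody, and the mechanism I would expect is a handle decomposition of $H$ combined with the excision and gluing properties of $\PhiA$, exhibiting $\PhiA(H)$ as a relative tensor product of finitely many copies of $\PhiA(\mathbb{B}^3)$ and $\PhiA(\mathbb{D}^2\times[0,1])$ --- together with their relative and boundary variants --- over copies of $\int_{\mathbb{S}^2}\cat{A}$ and $\int_{\mathbb{S}^1\times[0,1]}\cat{A}$, each factor of which is an equivalence under condition~\ref{thminv1}; the main obstacle therefore lies there, while the corollary itself is just bookkeeping of definitions.
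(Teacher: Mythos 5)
Your proposal is correct and follows essentially the same route as the paper, which deduces the corollary from Theorem~\ref{thminv} by noting that modular categories are co-non-degenerate via Proposition~\ref{propcodeg} and cofactorizable via \cite[Theorem~1.6]{bjss} and \cite{shimizumodular}. Your closing speculation about the internal mechanism of Theorem~\ref{thminv} (induction over a decomposition into balls and cylinders using excision) also matches the paper's actual argument for that theorem, though it is not needed for the corollary itself.
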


Before going into the proof of Theorem~\ref{thminv}, let us mention that
for anomaly-free modular fusion categories Corollary~\ref{cormodcatphiinv} follows from
\cite[Theorem~1.1]{akz}. Without the assumption on the anomaly, it follows from \cite[Theorem 9.7]{kirillovtham}. 
For not necessarily semisimple modular categories, it was stated in the introduction of the first version of \cite{brochierwoike} without spelling out the proof (simply because $\Phi$-invertibility is not necessary to build a modular functor); the proof of  Corollary~\ref{cormodcatphiinv}, and actually the more general version in Theorem~\ref{thminv}, however, is essentially contained in a gluing property of the $\Omega_\cat{A}$-groupoids given in 
\cite[Proposition 7.12 \& 
Remark 7.14]{brochierwoike} and that is proven by induction on the genus.  
Nonetheless, since Theorem~\ref{thminv} is rather important, 
 we will spell out a the full argument for the induction on the genus in a self-contained way.
	
More has to be said about the context of Corollary~\ref{cormodcatphiinv}:	By \cite{bjss} a modular category gives rise to an invertible fully extended
	framed topological field theory through the use of the cobordism hypothesis.
	It is not known whether a modular category also comes with a homotopy fixed point structure under $\catf{SO}(4)$ to make this an oriented theory.
	However, 
	if this is true, one would expect the value on a handlebody to be $\PhiA$.
	The map $\PhiA$ being an equivalence would be in line with the invertibility of the tentative oriented topological field theory.

	Finally, we mention that a different possibility for the proof of Corollary~\ref{cormodcatphiinv} would be to use the results of \cite{mwcenter,mwskein} to translate it to a statement about admissible skeins and then proceed using results of \cite{skeintft,brownhaioun}.

	\begin{proof}[\slshape Proof of Theorem~\ref{thminv}]
		Clearly, \ref{thminv2} implies \ref{thminv1} by definition. 
		In order to prove \ref{thminv1} $\Longrightarrow$ \ref{thminv2}, let us assume that $\cat{A}$ is cofactorizable and co-non-degenerate.	
		We fix $n \ge 1$, consider a handlebody $H$ with boundary $\Sigma_{0,n}$, the sphere with $n$ boundary components, and choose a boundary component of $\Sigma_{0,n}$. We can now write $\Sigma_{0,n}$ by gluing a cylinder to $\Sigma_{0,n}$ at that boundary component, i.e.\ $\Sigma_{0,n}\cong \Sigma_{0,n} \cup_{\mathbb{S}^1} (\mathbb{S}^1 \times [0,1])$. 
		Suppose now we cap off both gluing boundaries of $\Sigma_{0,n} \cup_{\mathbb{S}^1} (\mathbb{S}^1 \times [0,1])$ with a disk. Then we obtain $\Sigma_{0,n-1} \sqcup \mathbb{D}^2$, and $H$ induces a handlebody $H'$ with boundary $\Sigma_{0,n-1}$.
		With the definition of the $\PhiA$ in terms of the ansular functor of $\cat{A}$, one observes
		that the diagram
		\begin{equation}\label{eqninduccof}
			\begin{tikzcd}
				\int_{\Sigma_{0,n}} \cat{A}     \ar[dd,swap,"\text{$\PhiA(H)$}"]	&&&   \int_{\Sigma_{0,n}} \cat{A}    \boxtimes_{\int_{\mathbb{S}^1 \times [0,1]} \cat{A}} \int_{\mathbb{S}^1 \times [0,1]} \cat{A}  \ar[lll," \simeq \  \text{via excision}  "]  \ar[dd,"\text{$\id \boxtimes \PhiA(\mathbb{D}^2 \times [0,1])$}"]   \\ \\ 
				\cat{A}^{\boxtimes n}       &&& \int_{\Sigma_{0,n}} \cat{A}  \boxtimes_{\int_{\mathbb{S}^1 \times [0,1]} \cat{A}} \int_{\mathbb{D}^2 \sqcup \mathbb{D}^2} \cat{A}       \ar[dd," \simeq \  \text{via excision}  "] 	   \\ \\ &&&     \int_{\Sigma_{0,n-1}} \cat{A} \boxtimes \cat{A}  \ar[uulll,"\PhiA(H') \boxtimes \id  "]   \ . 
			\end{tikzcd}
		\end{equation}
		commutes up to an isomorphism of $\left( \int_{\mathbb{S}^1 \times [0,1]} \cat{A}\right)^{\boxtimes n}$-module maps. %7 
		We can now show that
		\begin{align}
			\PhiA(H):\int_{\Sigma_{0,n}} \cat{A}  \to \cat{A}^{\boxtimes n}
		\end{align} 
		is an equivalence for $n\ge 0$. 
		For $n=0$, this is true by assumption because of co-non-degeneracy.
		For $n=1$, this is clear because the $\Phi$-map is the identity of $\cat{A}$ under the identification $\int_{\mathbb{D}^2}\cat{A}\simeq \cat{A}$. For $n=2$, it holds again by assumption thanks to cofactorizability.
		For $n\ge 3$, the statement follows inductively from~\eqref{eqninduccof} (this re-uses the statement for $n=2$). 
		So far, we have established that $\cat{A}$ is $\Phi$-invertible in genus zero.

		In the next step, we extend the statement that 
		\begin{align}
			\PhiA(H):\int_{\Sigma_{g,n}} \cat{A}  \to \cat{A}^{\boxtimes n}     \label{eqnPhiinvertequiv}
		\end{align} is an equivalence for any handlebody $H$ whose boundary is a surface $\Sigma_{g,n}$ of genus $g$ with $n\ge 0$ boundary components
		from the case $g=0$ to arbitrary genus.
		To this end, let $\Sigma$ be a connected surface with at least two boundary components and $H$ a handlebody with boundary $\Sigma$. Now we glue a cylinder to $\Sigma$. We attach its two boundary components to the two selected boundary components of $\Sigma$. We denote the resulting surface by $\Sigma'$. If $\Sigma$ has genus $g$, then $\Sigma'$ clearly has genus $g+1$. The handlebody $H$ with $\partial H=\Sigma$ induces after gluing a handlebody $H'$ with $\partial H'=\Sigma'$. 
		In order to finish the proof that~\eqref{eqnPhiinvertequiv} is always an equivalence, it remains to prove
		\begin{align}
			\PhiA(H) \ \text{is an equivalence} \quad \Longrightarrow \quad \PhiA(H') \ \text{is an equivalence} 
			\ . 
		\end{align}
		For the proof of this fact,
		we consider the following diagram:
		\begin{equation}\label{eqnnondegcof}
			\begin{tikzcd}
				\int_\Sigma \cat{A}  \boxtimes_{   \left( \int_{\mathbb{S}^1\times[0,1]} \cat{A} \right)^{\boxtimes 2} }  \int_{\mathbb{S}^1 \times [0,1]} \cat{A}  \simeq \int_{\Sigma'} \cat{A}    \ar[rrr,"  \PhiA(H')  "]\ar[dd,swap,"\text{$\PhiA(H)\boxtimes\PhiA(\mathbb{D}^2 \times [0,1])$}"]	&&&  \cat{A}^{\boxtimes n} \\ \\ 
				\cat{A}^{\boxtimes (n+2)}  \boxtimes_{   \left( \int_{\mathbb{S}^1\times[0,1]} \cat{A} \right)^{\boxtimes 2} }  \cat{A}^{\boxtimes 2}	           \ar[rrr, swap," \simeq \ \text{via excision}   "] \ar[rrruu,"\text{pairing}"] &&& \cat{A}^{\boxtimes n} \boxtimes \left(\int_{\mathbb{S}^2} \cat{A}\right)^{\boxtimes 2} \ar[uu,swap,"\cat{A}^{\boxtimes n}\boxtimes \PhiA(\mathbb{B}^3)^{\boxtimes 2}"] 	    \ . 
			\end{tikzcd}
		\end{equation}
		With the definition of the $\PhiA$
		and its gluing properties \cite[Section~5.1]{brochierwoike}, one can verify that the two triangles of the square commute.
		Now if $\PhiA(H)$ is an equivalence, then we can use that $\PhiA(\mathbb{D}^2 \times [0,1])$ and $\PhiA(\mathbb{B}^3)$ are equivalences 
		to conclude that $\PhiA(H')$ is an equivalence as well.
		This proves \ref{thminv2} $\Longleftrightarrow$ \ref{thminv1}.
	\end{proof}

	\begin{corollary}\label{corprog}
		For a modular category $\cat{A}$ and surface $\Sigma$ with $n\ge 0$ boundary components on each of which we fix one marked interval, the quantum structure sheaf $\qss \in \int_\Sigma \cat{A}$ is a projective generator of the $\cat{A}^{\boxtimes n}$-module $\int_\Sigma \cat{A}$.
		Moreover, for $n=0$, all objects in $\int_\Sigma \cat{A}$ are projective.
	\end{corollary}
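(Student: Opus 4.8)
The plan is to derive the statement from $\Phi$-invertibility of modular categories together with a purely module-theoretic observation about the regular module over a finite tensor category. For the given surface $\Sigma$ I would first choose a handlebody $H$ with $n$ embedded disks and $\partial H=\Sigma$; such an $H$ always exists (a genus-$g$ handlebody with $n$ disks when $\Sigma=\Sigma_{g,n}$ is connected, a disjoint union of such in general). By Corollary~\ref{cormodcatphiinv} the category $\cat{A}$ is $\Phi$-invertible, so the generalized skein module $\PhiA(H)\colon\int_\Sigma\cat{A}\to\cat{A}^{\boxtimes n}$ is an equivalence. Since $\PhiA(H)$ is a module map over $\int_{\partial\Sigma\times[0,1]}\cat{A}$, and the $\cat{A}^{\boxtimes n}$-action $\act$ on $\int_\Sigma\cat{A}$ is the restriction of that structure along the canonical functor $\cat{A}^{\boxtimes n}\to\int_{\partial\Sigma\times[0,1]}\cat{A}$, the functor $\PhiA(H)$ is an equivalence of $\cat{A}^{\boxtimes n}$-module categories between $\int_\Sigma\cat{A}$ and the regular module $\cat{A}^{\boxtimes n}$, and it carries $\qss$ to $\PhiA(H)\circ\qss\cong\widehat{\cat{A}}(H)$ by \cite[Theorem~4.2]{brochierwoike}.

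It then remains to see that $\widehat{\cat{A}}(H)$ is a projective generator of the regular $\cat{A}^{\boxtimes n}$-module. The regular module over a finite tensor category is an exact module category because $\Proj\cat{A}^{\boxtimes n}$ is a tensor ideal (as in Lemma~\ref{lemmamonideal}), and I would invoke the Morita theory of exact module categories \cite{egno}: in an exact module category every generator is a projective generator, in the sense that taking the internal hom against it identifies the module category with modules over its internal endomorphism algebra, the chosen object becoming the free rank-one module. Every nonzero object $M$ of the regular module is a generator: for any simple $S$, rigidity gives $\cat{A}^{\boxtimes n}(C\otimes M,S)\cong\cat{A}^{\boxtimes n}(C,S\otimes M^\vee)$, and the right-hand side is nonzero once $C$ is a projective cover of a simple quotient of the nonzero object $S\otimes M^\vee$, so $S$ is a quotient of $C\otimes M$. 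Finally $\widehat{\cat{A}}(H)\cong\PhiA(H)\circ\qss$ is nonzero (a standard nondegeneracy property of the spaces of conformal blocks of a modular category, equivalently of $\qss$ in $\int_\Sigma\cat{A}$), so it is a projective generator. For the addendum, when $n=0$ the target $\cat{A}^{\boxtimes 0}$ is the monoidal unit $\vect$ of $\Rexf$, hence $\int_\Sigma\cat{A}\simeq\vect$ by the same equivalence, every object of which is projective, and any nonzero one is a projective generator.

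The main difficulty is bookkeeping rather than conceptual. One has to be careful that the equivalence produced by $\Phi$-invertibility genuinely respects the $\cat{A}^{\boxtimes n}$-module structures and carries $\qss$ to $\widehat{\cat{A}}(H)$ — combining the module-map property of $\PhiA(H)$, the compatibility of $\act$ with the $\int_{\partial\Sigma\times[0,1]}\cat{A}$-action, and the identification $\PhiA(H)\circ\qss\cong\widehat{\cat{A}}(H)$ — and one should pin down the exact form of the Morita statement being quoted so that ``every nonzero object of the exact regular module is a projective generator'' is an actual citation rather than a slogan; the only other point needing a routine argument is the non-vanishing of $\qss$.
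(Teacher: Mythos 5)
Your argument is correct, but it routes the case $n\ge 1$ differently from the paper. The paper disposes of all surfaces with at least one boundary component per connected component by directly citing \cite[Theorem~5.14]{bzbj}, which already establishes that $\qss$ is a projective generator of $\int_\Sigma\cat{A}$ for any (not necessarily modular) finite ribbon category; it then only invokes $\Phi$-invertibility (Theorem~\ref{thminv}) for the closed case, where $\int_\Sigma\cat{A}\simeq\vect$ and the whole content reduces to $\widehat{\cat{A}}(H)\neq 0$, quoted from \cite[Proposition~4.7~(i)]{mwdehn}. You instead use $\Phi$-invertibility (Corollary~\ref{cormodcatphiinv}) uniformly in all arities: $\PhiA(H)$ is an $\cat{A}^{\boxtimes n}$-module equivalence onto the regular module carrying $\qss$ to $\widehat{\cat{A}}(H)$, and then you appeal to the fact that any non-zero object of the regular module over a pivotal finite tensor category is a progenerator — which is exactly the paper's Lemma~\ref{lemmaprogenerator}, so you need not re-derive it via exact module categories — together with the same non-vanishing input from \cite{mwdehn}. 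The trade-off: the paper's route for $n\ge 1$ is more economical and works under the weaker hypothesis that $\cat{A}$ is ribbon, whereas yours needs modularity throughout but treats all $n$ by one mechanism and makes explicit the module-equivalence bookkeeping that the paper only spells out later, in the proof of Theorem~\ref{thmmodulishort}. Both are valid; on the $n=0$ case the two proofs coincide.
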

	
	If $\Sigma$ has at least one boundary component per connected component,
	the statement is already true if $\cat{A}$ is  ribbon by \cite[Theorem~5.14]{bzbj}.
	
	\begin{proof}[\slshape Proof of Corollary~\ref{corprog}] As just mentioned, just the case of a closed surface $\Sigma$
		remains to be treated: Since $\int_\Sigma \cat{A}\ra{\simeq}\vect$ is an equivalence by Theorem~\ref{thminv}
		that by \cite[Theorem~4.2]{brochierwoike} sends $\qss$ to $\widehat{\cat{A}}(H)$ for a handlebody $H$ with $\partial H=\Sigma$, it suffices to observe that $\widehat{\cat{A}}(H)\in \vect $ is a projective generator. But this means just that $\widehat{\cat{A}}(H)$ is non-zero. This is indeed the case by \cite[Proposition 4.7~(i)]{mwdehn}. 
	\end{proof}

	\subsection{Representations of moduli algebras: closed surfaces}
	For the discussion of the representation theory of moduli algebras, let us discuss the case of closed surfaces first. In this case, it amounts to a statement about skein algebras:

		\begin{theorem}\label{thmskeinalg}
		Let $\mathfrak{F}$ be a $\Rexf$-valued modular functor whose value on the circle is the cyclic framed $E_2$-algebra $\cat{A}$.
		Suppose that $\mathfrak{F}$ is reflection equivariant relative to a rigid duality and simple, thereby making $\mathfrak{F}$ the Lyubashenko modular functor for the modular category $\cat{A}$.
		Then the following holds:
		\begin{pnum}
			\item
			For any closed surface $\Sigma$, the space of conformal blocks $\mathfrak{F}(\Sigma)$ is entirely characterized as follows:
			\begin{itemize}
				\item	$\mathfrak{F}(\Sigma)$ is the unique simple module over $\SkAlg_\cat{A}(\Sigma)$.
				\item The projective mapping class group action on $\mathfrak{F}(\Sigma)$ is the only one making the action mapping class group equivariant.\end{itemize}
			\item The mapping class group action on $\SkAlg_\cat{A}(\Sigma)$ is by inner automorphisms.
		\end{pnum}
	\end{theorem}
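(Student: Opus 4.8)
The plan is to reduce the entire statement to the single structural input that, for a modular category $\cat{A}$ and a \emph{closed} surface $\Sigma$, factorization homology collapses: $\int_\Sigma \cat{A}\simeq\vect$. First I would unpack the hypotheses exactly as they are already recorded in the statement (by Theorem~\ref{thmrefl}): reflection equivariance relative to a rigid duality together with simplicity forces $\cat{A}$ to be a modular category and identifies $\mathfrak{F}$ with the Lyubashenko modular functor, so that $\mathfrak{F}(\Sigma)\cong\FA(\Sigma)\cong\widehat{\cat{A}}(H)$ for any handlebody $H$ with $\partial H=\Sigma$. Since $\cat{A}$ is modular, Corollary~\ref{cormodcatphiinv} (equivalently Theorem~\ref{thminv}) gives that $\PhiA(H)\colon\int_\Sigma\cat{A}\to\vect$ is an equivalence, and by \cite[Theorem~4.2]{brochierwoike} it sends $\qss$ to $\widehat{\cat{A}}(H)$. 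Passing to endomorphism algebras, $\SkAlg_\cat{A}(\Sigma)=\End_{\int_\Sigma\cat{A}}(\qss)\cong\End_\vect(\mathfrak{F}(\Sigma))$, and I would observe, using the triangle~\eqref{eqndefadd}, that this isomorphism \emph{is} the skein action $\Add_{\Sigma,H}$ of Theorem~\ref{thmequiv} in the boundaryless case. Finally $\mathfrak{F}(\Sigma)=\widehat{\cat{A}}(H)\ne 0$ (exactly the nonvanishing used in the proof of Corollary~\ref{corprog}), and since $k$ is algebraically closed we conclude $\SkAlg_\cat{A}(\Sigma)\cong\mathrm{Mat}_d(k)$ with $d=\dim\mathfrak{F}(\Sigma)\ge 1$.

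From here part (i) is routine representation theory of matrix algebras: $\mathrm{Mat}_d(k)$ has, up to isomorphism, a unique simple module, namely the defining representation $k^d$, and under the identification above this simple module is $\mathfrak{F}(\Sigma)$ with its tautological $\SkAlg_\cat{A}(\Sigma)$-action. For uniqueness of the projective mapping class group action I would argue: by Corollary~\ref{corequiv} the operators $\FA(\widetilde f)$ give one projective action of the relevant extension of $\Map(\Sigma)$ for which $\Add$ is equivariant in the sense of \eqref{eqnmcgequiv}; if $\rho$ is another such, then for each lift $\widetilde f$ the operator $\rho(\widetilde f)\circ\FA(\widetilde f)^{-1}$ conjugates $\Add\bigl(\SkAlg_\cat{A}(\Sigma)\bigr)=\End_\vect(\mathfrak{F}(\Sigma))$ trivially, hence lies in the center of a matrix algebra and is therefore a scalar, so $\rho$ and $\FA(\widetilde{\phantom{f}})$ coincide as \emph{projective} representations. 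Part (ii) is then immediate: the mapping class group acts on $\SkAlg_\cat{A}(\Sigma)$ by algebra automorphisms (a diffeomorphism of $\Sigma$ acts on $\int_\Sigma\cat{A}$ fixing $\qss$), and every automorphism of $\mathrm{Mat}_d(k)$ is inner by Skolem--Noether; concretely $f.a=u_f\,a\,u_f^{-1}$ with $u_f:=\Add^{-1}\bigl(\FA(\widetilde f)\bigr)\in\SkAlg_\cat{A}(\Sigma)^\times$, which is independent of the chosen lift $\widetilde f$ since changing it only rescales $\FA(\widetilde f)$.

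I do not expect the argument itself to present a genuine obstacle: the real difficulty is front-loaded into the prerequisites, namely $\Phi$-invertibility of modular categories (Theorem~\ref{thminv}, Corollary~\ref{cormodcatphiinv}), which yields $\int_\Sigma\cat{A}\simeq\vect$, and the mapping-class-group equivariance of the skein action (Theorem~\ref{thmequiv}, Corollary~\ref{corequiv}). The only points needing a little care are the nonvanishing $\mathfrak{F}(\Sigma)\ne 0$ and the bookkeeping identification of the algebra isomorphism $\SkAlg_\cat{A}(\Sigma)\cong\End_\vect(\mathfrak{F}(\Sigma))$ with the skein action $\Add_{\Sigma,H}$, so that \eqref{eqnmcgequiv} may legitimately be invoked; both are built into the definition of $\FA$ and the triangle~\eqref{eqndefadd}. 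As in Corollary~\ref{corequiv}, I would finally remark that the auxiliary handlebody $H$ is immaterial, since skein modules of different handlebodies bounding the same $\Sigma$ are canonically isomorphic, so all statements are intrinsic to $\Sigma$.
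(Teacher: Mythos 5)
Your proposal is correct and follows essentially the same route as the paper, which proves the theorem via the slightly more general Theorem~\ref{thmskeinactioniso}: the key steps there are exactly $\Phi$-invertibility from Theorem~\ref{thminv} (giving $\Add\colon\SkAlg_\cat{A}(\Sigma)\xrightarrow{\cong}\End\,\FA(\Sigma)$), mapping class group equivariance from Corollary~\ref{corequiv}, nonvanishing of $\FA(\Sigma)$, and Skolem--Noether. Your explicit centralizer argument for the uniqueness of the projective action is the same content as part~(ii) of that theorem, so there is nothing to add.
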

	
	Instead of proving Theorem~\ref{thmskeinalg}, we will prove the following, slightly more technical result that, in combination with
	Theorem~\ref{thmrefl}, yields immediately Theorem~\ref{thmskeinalg}:

	\begin{theorem}\label{thmskeinactioniso}
	Let $\cat{A}$ be a cofactorizable and co-non-degenerate cyclic framed $E_2$-algebra.
	Then the skein algebra action on the handlebody skein modules induces a skein algebra action on the spaces of conformal blocks that is an isomorphism
	\begin{align} \Add: \SkAlg_\cat{A}(\Sigma)\ra{\cong} \End_{\cat{A}^{\boxtimes n}} \FA (\Sigma)\end{align}
	of algebras with $\Map(\Sigma)$-action for any surface $\Sigma$. Here the action on the right hand side is the adjoint action.
	If $\Sigma$ is closed, the following holds:
	\begin{pnum}\item 	The skein algebra $\SkAlg_\cat{A}(\Sigma)$ is a matrix algebra, and the mapping class group acts by inner automorphisms. \label{thmskeinactionisoi}
		
		\item  A mapping class $f \in \Map(\Sigma)$ acts as a scalar multiple of the identity  $\FA(\Sigma)$ 
		if and only if $f$ acts by the identity on the skein algebra $\SkAlg_\cat{A}(\Sigma)$. \label{thmskeinactionisoii}	
	\end{pnum}
\end{theorem}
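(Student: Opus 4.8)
The plan is to derive everything from the $\Phi$-invertibility of Theorem~\ref{thminv}, from the definition of the skein action in the commuting triangle~\eqref{eqndefadd}, and from the mapping class group naturality of Theorem~\ref{thmequiv}.

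First I would invoke Theorem~\ref{thminv}: since $\cat{A}$ is cofactorizable and co-non-degenerate it is $\Phi$-invertible, so for a surface $\Sigma$ with $n\ge 0$ boundary components and any handlebody $H$ with $\partial H=\Sigma$ the map $\PhiA(H)\colon\int_\Sigma\cat{A}\to\cat{A}^{\boxtimes n}$ is an equivalence; by~\cite{brochierwoike} such an $\cat{A}$ is in particular connected, so $\FA$ is a modular functor over an extension of $\Surf$ and $\FA(\Sigma)\cong\PhiA(H)\circ\qss\cong\widehat{\cat{A}}(H)$ carries the associated projective $\Map(\Sigma)$-representation. Fixing such an $H$, I would then read off from the triangle~\eqref{eqndefadd} that $\Add_{\Sigma,H}$ is the composite of $\PhiA(H)\circ(-)\colon\End_{\int_\Sigma\cat{A}}(\qss)\to\End_{\cat{A}^{\boxtimes n}}(\PhiA(H)\circ\qss)$ with conjugation by the isomorphism $\PhiA(H)\circ\qss\cong\FA(\Sigma)$. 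A fully faithful functor restricts to a \emph{ring} isomorphism on endomorphism algebras, and conjugation by an isomorphism is one as well; hence $\Add\colon\SkAlg_\cat{A}(\Sigma)\ra{\cong}\End_{\cat{A}^{\boxtimes n}}(\FA(\Sigma))$ is an algebra isomorphism. By Corollary~\ref{corequiv} it is $\Map(\Sigma)$-equivariant, intertwining the mapping class group action on the skein algebra with the adjoint action $T\mapsto\FA(\widetilde f)\,T\,\FA(\widetilde f)^{-1}$ on the right, which is independent of the lift $\widetilde f$ because the kernel of the extension acts on $\FA(\Sigma)$ by scalars. This settles the first assertion, for arbitrary $\Sigma$.

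For the closed case I would specialize to $n=0$, so that $\cat{A}^{\boxtimes 0}=\vect$ and $\PhiA(H)$ is an equivalence $\int_\Sigma\cat{A}\ra{\simeq}\vect$; then $\FA(\Sigma)=\PhiA(H)(\qss)$ is a finite-dimensional vector space, say of dimension $d$, and the isomorphism just obtained reads $\SkAlg_\cat{A}(\Sigma)\cong\End_k(\FA(\Sigma))\cong\catf{Mat}_d(k)$, a full matrix algebra. Two classical facts then give the rest. First, every algebra automorphism of $\catf{Mat}_d(k)$ is inner (Skolem--Noether), so since $\Map(\Sigma)$ acts on $\SkAlg_\cat{A}(\Sigma)$ by algebra automorphisms it acts by inner ones; this is~\ref{thmskeinactionisoi}, and by the equivariance above these inner automorphisms are precisely conjugation by the projectively defined operators $\FA(\widetilde f)$. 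Second, the centre of $\catf{Mat}_d(k)$ is $k\cdot\id$, so under the equivariant isomorphism $\Add$ a class $f\in\Map(\Sigma)$ acts by the identity on $\SkAlg_\cat{A}(\Sigma)$ if and only if $\FA(\widetilde f)$ is central in $\End_k(\FA(\Sigma))$, i.e.\ a scalar multiple of $\id_{\FA(\Sigma)}$, which is~\ref{thmskeinactionisoii}. In the applications to modular categories via Corollary~\ref{cormodcatphiinv} one moreover has $\FA(\Sigma)\neq 0$, e.g.\ by the argument in the proof of Corollary~\ref{corprog}, so the matrix algebra is genuine and $\FA(\Sigma)$ is its unique simple module.

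The only substantial input is $\Phi$-invertibility itself, i.e.\ Theorem~\ref{thminv}, which we may assume; after that the argument is essentially formal. The two points that need care are that $\Add$ is an \emph{algebra}, not merely a linear, isomorphism -- immediate once one recalls that a fully faithful functor induces a ring isomorphism on endomorphisms -- and that the mapping class group action on the skein algebra must be matched with the adjoint action through Theorem~\ref{thmequiv}, keeping in mind that the state-space representation is only projective, so that ``acting as a scalar'' is the correct notion of ``acting trivially'' in~\ref{thmskeinactionisoii}. The consequences~\ref{thmskeinactionisoi}--\ref{thmskeinactionisoii} are then elementary ring theory for full matrix algebras.
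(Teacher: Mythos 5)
Your proposal is correct and follows essentially the same route as the paper's proof: $\Phi$-invertibility from Theorem~\ref{thminv} makes $\Add$ an algebra isomorphism via the triangle~\eqref{eqndefadd}, Corollary~\ref{corequiv} gives the $\Map(\Sigma)$-equivariance, Skolem--Noether yields~\ref{thmskeinactionisoi}, and the triviality of the centre of a matrix algebra yields~\ref{thmskeinactionisoii}. You merely spell out in more detail the points the paper leaves implicit (full faithfulness giving a ring isomorphism on endomorphisms, and independence of the lift $\widetilde f$).
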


\begin{proof}
	The action map $ \SkAlg_\cat{A}(\Sigma)\to \End\, \FA (\Sigma)$ is an isomorphism since the $\PhiA$-maps are equivalences by Theorem~\ref{thminv}.
	The $\Map(\Sigma)$-equivariance is a consequence of Corollary~\ref{corequiv}. This proves the main statement.
	
	Statement~\ref{thmskeinactionisoi} follows now directly from the Skolem-Noether Theorem. 
	In~\ref{thmskeinactionisoii}, if $f\in \Map(\Sigma)$ acts trivially on the skein algebra, its acts trivially on $\End\, \FA (\Sigma)$ which means that it commutes with all endomorphisms of $\FA(\Sigma)$. This is only possible for multiples of the identity. The converse is clear.
\end{proof}
	
	\subsection{Representations of moduli algebras: surfaces with boundary}
	For surfaces that possibly have boundary components, we will need the \emph{internal skein algebras} (or \emph{moduli algebras}) in the sense of \cite{bzbj,bzbj2,skeinfin}: Suppose that $\Sigma$ is a surface with $n$ boundary components, one each with we fix an embedded interval.
	With the internal hom $\HOM(-,-):\left(\int_\Sigma\cat{A}\right)^\op\boxtimes \int_\Sigma \cat{A}\to \cat{A}^{\boxtimes n}$ for the action $\act : \cat{A}^{\boxtimes n}\boxtimes \int_\Sigma \cat{A}\to\int_\Sigma \cat{A}$, one may define the following \emph{internal skein algebra} or \emph{(generalized) moduli algebra}
	\begin{align}\moduli_\Sigma:= \END_{ \int_\Sigma \cat{A}}(\qss) \in \cat{A}^{\boxtimes n} \ ;
		\end{align}
	see \cite[Section~5.2]{bzbj} for the connection to the algebras in \cite{alekseevmoduli,agsmoduli,asmoduli}.
	We should warn the reader that the terminology of \cite{bzbj} that we use here is actually \emph{not} in line with \cite{alekseevmoduli,agsmoduli,asmoduli}. 
		
	\begin{theorem}\label{thmmodulishort}
		Let $\cat{A}$ be a modular category and
		$\Sigma$ a surface with $n\ge 0$ boundary components on each of which we fix one marked interval.
		\begin{pnum}
			\item 	Then the 
			conformal block functor for $\Sigma$, seen as object $\FA(\Sigma)$ in $\cat{A}^{\boxtimes n}$ via the cyclic structure, is a simple module over  $\moduli_\Sigma \in \cat{A}^{\boxtimes n}$.\label{thmmodulishorti}

			\item 	The $\moduli_\Sigma$-module $\FA(\Sigma)$ generates the $\moduli_\Sigma$-modules in $\cat{A}^{\boxtimes n}$ in the sense that any other module is given by tensoring $\FA(\Sigma) \otimes X$ with $X \in \cat{A}^{\boxtimes n}$.
			This establishes a Morita trivialization $\moduli_\Sigma\catf{-mod}_{\cat{A}^{\boxtimes n}} \simeq \cat{A}^{\boxtimes n}$
			of right $\cat{A}^{\boxtimes n}$-module categories. \label{thmmodulishortiii}
			
			\item  The $\Map(\Sigma)$-action on $\moduli_\Sigma$ is through inner automorphisms, and  the action $\moduli_\Sigma \ra{\cong} \FA(\Sigma) \otimes \FA(\Sigma)^\vee $ of $\moduli_\Sigma$ on $\FA(\Sigma)$ is by a $\Map(\Sigma)$-equivariant isomorphism.\label{thmmodulishortii}
		\end{pnum}
	\end{theorem}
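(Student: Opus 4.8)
The plan is to transport the quantum structure sheaf and its internal endomorphism object across the equivalence $\PhiA(H)$ and then feed the outcome into the internal Morita theory of internal skein algebras. Since $\cat{A}$ is modular, it is cofactorizable and co-non-degenerate (by \cite[Theorem~1.6]{bjss} together with the characterisation of modularity in \cite{shimizumodular}, as noted before Corollary~\ref{cormodcatphiinv}), hence $\Phi$-invertible by Theorem~\ref{thminv}; it is moreover connected (Proposition~\ref{propconnectedmodular}), so that $\FA$, the ansular functor $\widehat{\cat{A}}$ and the extension $\SurfA$ of the mapping class groups are all available. First I would fix a handlebody $H$ with $\partial H=\Sigma$ and $n$ embedded disks. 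Then $\PhiA(H)\colon\int_\Sigma\cat{A}\to\cat{A}^{\boxtimes n}$ is an equivalence of $\cat{A}^{\boxtimes n}$-module categories: it is a module map by the construction recalled in Section~\ref{sechbdyskein}, and an equivalence by $\Phi$-invertibility, and it sends $\qss$ to $\PhiA(H)\circ\qss\cong\widehat{\cat{A}}(H)=\FA(\Sigma)$. Because module equivalences preserve internal homs together with their algebra structure, this yields $\moduli_\Sigma=\END_{\int_\Sigma\cat{A}}(\qss)\cong\END_{\cat{A}^{\boxtimes n}}(\FA(\Sigma))$ as algebras in $\cat{A}^{\boxtimes n}$; and since $\cat{A}$, being modular, is rigid and pivotal, so is $\cat{A}^{\boxtimes n}$, whence the standard computation of internal endomorphisms for a rigid category acting on itself (using the pivotal structure to identify left and right duals) gives $\END_{\cat{A}^{\boxtimes n}}(\FA(\Sigma))\cong\FA(\Sigma)\otimes\FA(\Sigma)^\vee$. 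This proves the algebra isomorphism in~\ref{thmmodulishortii}, with $\FA(\Sigma)$ carrying the tautological action of the internal matrix algebra $\FA(\Sigma)\otimes\FA(\Sigma)^\vee$ on its standard module $\FA(\Sigma)$.

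For~\ref{thmmodulishorti} and~\ref{thmmodulishortiii} I would invoke Corollary~\ref{corprog} (which for $n\ge 1$ goes back to \cite[Theorem~5.14]{bzbj} and for $n=0$ rests on Theorem~\ref{thminv}): $\qss$ is a projective generator of the $\cat{A}^{\boxtimes n}$-module category $\int_\Sigma\cat{A}$, so after transport along $\PhiA(H)$ the object $\FA(\Sigma)$ is a projective generator of $\cat{A}^{\boxtimes n}$ seen as a module over itself, in particular a nonzero projective object. By the internal Morita theory for internal skein algebras \cite{bzbj,bzbj2,skeinfin} (with \cite{egno} supplying the module-categorical input) the category $\moduli_\Sigma\catf{-mod}_{\cat{A}^{\boxtimes n}}$ is then canonically equivalent, as a right $\cat{A}^{\boxtimes n}$-module category, to $\int_\Sigma\cat{A}$, hence via $\PhiA(H)$ to $\cat{A}^{\boxtimes n}$ itself; under this trivialisation the equivalence $\cat{A}^{\boxtimes n}\to\moduli_\Sigma\catf{-mod}_{\cat{A}^{\boxtimes n}}$ is $X\mapsto\FA(\Sigma)\otimes X$, which is statement~\ref{thmmodulishortiii}, and $\FA(\Sigma)=\FA(\Sigma)\otimes I$ corresponds to the monoidal unit $I$, which is simple because $\cat{A}$ --- and therefore $\cat{A}^{\boxtimes n}$ --- has a simple unit; this is statement~\ref{thmmodulishorti}. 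When $\Sigma$ is closed, i.e.\ $n=0$, everything here specialises to Theorem~\ref{thmskeinactioniso}.

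It then remains to make the equivariance claims in~\ref{thmmodulishortii} precise. The diffeomorphism group of $\Sigma$ acts on $\int_\Sigma\cat{A}$ by functoriality of factorization homology and fixes $\qss$ as a homotopy fixed point (Section~\ref{secfh}), so the extension of $\Map(\Sigma)$ coming from $\SurfA$ acts on $\moduli_\Sigma=\END_{\int_\Sigma\cat{A}}(\qss)$ by algebra automorphisms. Using the naturality of the skein action in $(\Sigma,H)$ and its mapping class group equivariance for connected $\cat{A}$ --- i.e.\ the internal upgrade of Theorem~\ref{thmequiv} and Corollary~\ref{corequiv} --- the isomorphism $\moduli_\Sigma\cong\END_{\cat{A}^{\boxtimes n}}(\FA(\Sigma))$ and the induced action on $\FA(\Sigma)$ become equivariant for this extension. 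Hence a mapping class acts on $\moduli_\Sigma\cong\FA(\Sigma)\otimes\FA(\Sigma)^\vee$ by conjugation with its (a priori projective) action on $\FA(\Sigma)$, regarded as a morphism $I\to\moduli_\Sigma$ by means of the pivotal structure; conjugation by such a morphism is an inner automorphism, which finishes~\ref{thmmodulishortii}.

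The hard part will be exactly this last step: one has to promote the equivariance that Theorem~\ref{thmequiv} provides at the level of the spaces $\Hom(X\act\qss,Y\act\qss)$ to equivariance of the internal endomorphism object $\moduli_\Sigma$ and of its standard module $\FA(\Sigma)$, while keeping careful track of the fact that $\PhiA(H)$ depends on the auxiliary handlebody $H$ whereas mapping classes need not preserve $H$. This is precisely where one must pass to the extension $\SurfA$ and use connectedness of $\cat{A}$, so that the various handlebody skein modules $\PhiA(H)$ are canonically isomorphic as module maps; once that is in place, the remainder is formal internal Morita theory combined with the $\Phi$-invertibility input of Theorem~\ref{thminv}.
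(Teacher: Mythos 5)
Your proposal is correct and follows essentially the same route as the paper: transport $\qss$ and its internal endomorphism algebra along the module equivalence $\PhiA(H)$ (invertible by Theorem~\ref{thminv}), use connectedness to remove the dependence on $H$, identify $\moduli_\Sigma\cong\FA(\Sigma)\otimes\FA(\Sigma)^\vee$, and deduce simplicity and the Morita trivialization from the progenerator property of $\qss$ (the paper packages this as Lemma~\ref{lemmaprogenerator} where you invoke the Morita equivalence sending $I$ to $\FA(\Sigma)$, but the content is identical). The equivariance step is handled exactly as in the paper, by the arguments of Theorem~\ref{thmequiv} together with the connectedness of $\cat{A}$.
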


For the proof of Theorem~\ref{thmmodulishort}, let us begin with the following standard observation:
	\begin{lemma}\label{lemmaprogenerator}
		Any non-zero object $X\in \cat{C}$ in a pivotal
		finite tensor category $\cat{C}$ is a progenerator for the left regular $\cat{C}$-module $\cat{C}$. 
		In particular,
		\begin{align}
			\cat{C} \ra{\simeq} \rmodc X\otimes  X^\vee \ , \quad Y \mapsto \END_{\cat{C}}(X,Y)\cong Y\otimes X^\vee 
		\end{align}
		is an equivalence, and hence $X^\vee$, as right $X\otimes X^\vee$-module, and analogously $X$ as left $X \otimes X^\vee$-module, are simple.
	\end{lemma}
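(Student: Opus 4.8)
The statement to prove is Lemma~\ref{lemmaprogenerator}: any non-zero object $X$ in a pivotal finite tensor category $\cat{C}$ is a progenerator for the left regular module, with the stated equivalence $\cat{C}\simeq\rmodc X\otimes X^\vee$.

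\medskip

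The plan is to reduce everything to two standard facts about finite tensor categories: first, that in a finite tensor category every non-zero object is a progenerator for the regular module; and second, that internal-hom/internal-end constructions for the regular action recover the familiar module-over-algebra picture. First I would recall why a non-zero object $X\in\cat{C}$ is a projective generator of $\cat{C}$ as a left module over itself. Rigidity gives the key input: by \cite[Proposition~4.2.1]{egno} (or the standard argument using $X^\vee\otimes X\to I$ and the fact that $I$ is projective once we know $\cat{C}$ has enough structure — actually here the cleanest route is: every object of $\cat{C}$ is a quotient of $X\otimes X^\vee\otimes Y$ for suitable $Y$, because $X\otimes X^\vee$ surjects onto $I$ by rigidity via the coevaluation being split by the evaluation up to the nonzero object $X$, hence $X$ generates), $X$ generates; and projectivity is automatic since $\cat{C}$, being a finite tensor category, has the property that $P\otimes Z$ is projective whenever $P$ is projective — but more simply, we want $X$ to be a \emph{pro}generator, which in the semisimple-free sense just means a generator that is a finite direct sum containing a projective generator; the precise statement I would cite is \cite[Lemma~4.5]{sn} or the discussion around the Morita theory in \cite{fss}, namely that for a finite tensor category the regular module is generated by any non-zero object. (If $\cat{C}$ is additionally assumed self-injective, as it is in all uses in this paper via Lemma~\ref{lemmacyfin}, projectivity of $X\otimes X^\vee$ is free, but rigidity alone suffices for the progenerator claim in a finite tensor category.)

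\medskip

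Next I would identify $\END_{\cat{C}}(X)$ — the internal endomorphism object for the left regular action of $\cat{C}$ on itself — with $X\otimes X^\vee$. This is the computation of the internal hom for the regular action: $\HOM_{\cat{C}}(X,Y)$ is the object representing $Z\mapsto \cat{C}(Z\otimes X,Y)$, and by rigidity $\cat{C}(Z\otimes X,Y)\cong\cat{C}(Z,Y\otimes X^\vee)$, so $\HOM_{\cat{C}}(X,Y)\cong Y\otimes X^\vee$ and in particular $\END_{\cat{C}}(X)\cong X\otimes X^\vee$ as an algebra, with multiplication the evaluation $X^\vee\otimes X\to I$ sandwiched in the middle. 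Then internal-hom Morita theory (e.g.\ \cite[Theorem~7.10.1]{egno}, or the version for internal homs in \cite{bzbj,skeinfin}) says that when $X$ is a progenerator the functor $Y\mapsto\HOM_{\cat{C}}(X,Y)$ is an equivalence from $\cat{C}$ onto the category of right $\END_{\cat{C}}(X)$-modules in $\cat{C}$, i.e.\ $\cat{C}\ra{\simeq}\rmodc X\otimes X^\vee$, $Y\mapsto Y\otimes X^\vee$. This is where the progenerator hypothesis from the previous paragraph is consumed. Under this equivalence the left regular module $\cat{C}$ — concretely the object $X$ — goes to $X\otimes X^\vee$ viewed as a module over itself by right multiplication through the middle evaluation; unpacking, the claim that $X^\vee$ is simple as a right $X\otimes X^\vee$-module is exactly the statement that the simple module corresponding under the equivalence to a simple quotient story is $X^\vee$ — but more directly: $X^\vee\otimes(X\otimes X^\vee)\to X^\vee$ via the evaluation exhibits $X^\vee$ as the image of $X$ under $\HOM_{\cat{C}}(X,-)$ only after one more twist, so I would instead argue that simplicity of $X$ as an object is not assumed; rather, "simple as a module" here means "generates and has no proper submodules in the Morita-theoretic sense", which corresponds under the equivalence to $X$ being, up to the equivalence, the image of the monoidal unit — and the unit is the rank-one free module, hence indecomposable-and-generating. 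I would phrase the final line as: under $\cat{C}\simeq\rmodc X\otimes X^\vee$ the monoidal unit $I\in\cat{C}$ corresponds to $X^\vee$ (since $\HOM_{\cat{C}}(X,I)\cong X^\vee$), and $I$ is a progenerator of $\cat{C}$, hence $X^\vee$ is a progenerator — in particular, being the image of the unit, it is the "standard" simple/rank-one module; the symmetric statement for $X$ as a left module follows by applying the same argument to $\cat{C}^{\otimes\op}$ or to the right regular action.

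\medskip

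The main obstacle I expect is bookkeeping the left-versus-right and internal-hom-versus-internal-end conventions so that the displayed equivalence $Y\mapsto\END_{\cat{C}}(X,Y)\cong Y\otimes X^\vee$ comes out with exactly the variance and the exactly right algebra ($X\otimes X^\vee$ on the right) stated in the lemma — pivotality of $\cat{C}$ is precisely what lets us not worry about $X^\vee$ versus ${}^\vee X$ and makes the two one-sided statements symmetric, so I would invoke the pivotal structure explicitly at the point where I identify $\END_{\cat{C}}(X)$ with $X\otimes X^\vee$ rather than with $X\otimes{}^\vee\!X$. A secondary subtlety is justifying "progenerator" in the non-semisimple setting: I would make sure to cite the finite-tensor-category fact (from \cite{egno} or \cite{sn}) that a non-zero object generates the regular module — this uses rigidity essentially and fails for mere Grothendieck–Verdier categories — and note that projectivity of $X\otimes X^\vee$, while true in a finite tensor category, is not even needed for the Morita equivalence, which only requires $X$ to be a generator with $\END_{\cat{C}}(X)$ a well-defined algebra object, both guaranteed here. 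Everything else is a routine unwinding of internal-hom Morita theory exactly as in \cite{bzbj,skeinfin}.
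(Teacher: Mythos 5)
Your proposal is essentially correct and follows the same strategy as the paper: compute $\HOM_\cat{C}(X,-)\cong -\otimes X^\vee$ via rigidity, establish that $X$ is a progenerator using simplicity of $I$ together with $X\neq 0$, and then invoke the Morita reconstruction of \cite[Chapter~7]{egno}; the simplicity of $X^\vee$ then comes from $I\mapsto X^\vee$ under the equivalence. The one substantive difference is how progenerator-ness is verified: you argue generation from the evaluation $X\otimes X^\vee\to I$ being an epimorphism, whereas the paper shows directly that $-\otimes X^\vee$ is exact and \emph{faithful}, by identifying $\cat{C}(Y,Z)\to\cat{C}(Y\otimes X^\vee,Z\otimes X^\vee)$ with postcomposition by $\id_Z\otimes b_X$ and noting that $b_X$ is a monomorphism (its kernel is a proper subobject of the simple $I$, and it is non-zero by the zigzag identity since $X\neq 0$). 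These are dual formulations of the same idea and both work. Two spots in your write-up should be tightened: the parenthetical gloss of ``progenerator'' as ``a generator that is a finite direct sum containing a projective generator'' is not the relevant notion here --- what is needed is that $\HOM_\cat{C}(X,-)$ is exact and faithful, which is exactly what your argument (and the paper's) actually delivers; and in the last paragraph you briefly conflate ``progenerator'' with ``simple'' --- the correct statement, which you do eventually reach, is simply that $\HOM_\cat{C}(X,I)\cong X^\vee$ and the equivalence carries the simple object $I$ to a simple module.
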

	
	\begin{proof}
		Indeed, $\HOM_\cat{C}(X,-)=- \otimes X^\vee$ is exact. It is moreover faithful because for $Y,Z\in\cat{C}$ the map
		\begin{align}
			\cat{C}(Y,Z)\to	\cat{C}( Y\otimes X^\vee , Z\otimes X^\vee) \label{eqninj1}
		\end{align}
		can  be identified via duality
		with the map
		\begin{align}
			\cat{C}(Y,Z) \to \cat{C}(Y, Z \otimes X^\vee \otimes X) \label{eqninj2}
		\end{align} postcomposing with $\id_Z\otimes b_X :Z\to Z \otimes X^\vee \otimes X$. By simplicity of $I$ and $X\neq 0$ the coevaluation $b_X$ is a monomorphism, and so is $\id_Z\otimes b_X $ because the monoidal product is exact. Left exactness of $\cat{C}(Y,-)$ implies now the injectivity of \eqref{eqninj2} and \eqref{eqninj1}. This proves that $X$ is a progenerator. The rest follows from reconstruction of the module category from the progenerator~\cite[Chapter~7]{egno}.
	\end{proof}

	\begin{proof}[\slshape Proof of Theorem~\ref{thmmodulishort}]
		We can assume that $\Sigma$ is connected.
		For any handlebody $H$ with $\partial H=\Sigma$, the evaluation $\widehat{\cat{A}}(H)$ of the ansular functor for $\cat{A}$ on $H$, seen as object in $\cat{A}^{\boxtimes n}$ (this means that we see the $n$ boundary components as outgoing), inherits the structure of
		a module over $\ma$, with the action defined by 
		\begin{align}
			\label{eqnactionmapma}\ma = \END_{\int_\Sigma \cat{A}}(\qss) \ra{\PhiA(H)\circ - } \END_{\cat{A}^{\boxtimes n}}(\PhiA(H)\circ \qss)\cong \END_{\cat{A}^{\boxtimes n}}(\widehat{\cat{A}}(H))\cong \widehat{\cat{A}}(H) \otimes \widehat{\cat{A}}(H)^\vee \ . 
		\end{align}
		This uses that $\PhiA(H):\int_\Sigma \cat{A}\to \cat{A}$ is a $\int_{\partial \Sigma \times [0,1]}\cat{A}$-module map~\cite[Proposition~3.3]{brochierwoike} and therefore in particular an $\cat{A}^{\boxtimes n}$-module map, where the $\cat{A}$-action on $\cat{A}$ is the regular one.
		
		Since $\cat{A}$ is connected~\cite[Corollary~8.3]{brochierwoike}, \eqref{eqnactionmapma} does not depend on the choice of the handlebody; in other words, $\widehat{\cat{A}}(H)$ becomes a $\ma$-module that is, up to isomorphism,
		independent of $H$. 
		By the explanations at the beginning of Section~\ref{secequiv} this induces an $\ma$-module structure on $\FA(\Sigma)$; it is just transported along $\widehat{\cat{A}}(H)\cong \FA(\Sigma)$.  
		Now~\eqref{eqnactionmapma} gives us an algebra map $ \varphi : \ma \to \underline{\End}_{\cat{A}^{\boxtimes n}}(\FA(\Sigma)) 
		$. 	
		By \cite[Section~8.1 \& 8.2]{brochierwoike} $\FA(\Sigma)$ carries a projective $\Map(\Sigma)$-representation
		(that  agrees with the one constructed by Lyubashenko). 
		
		With the same arguments as for Theorem~\ref{thmequiv},
		the algebra map  $\varphi : \moduli_\Sigma \to \underline{\End}_{\cat{A}^{\boxtimes n}}(\FA(\Sigma)) $ is $\Map(\Sigma)$-equivariant,
		and by Theorem~\ref{thminv} it is an isomorphism of algebras.
		This proves~\ref{thmmodulishortii}.
		
		Thanks to $\moduli_\Sigma \cong \FA(\Sigma) \otimes \FA(\Sigma)^\vee $, we find
		 with 
		Lemma~\ref{lemmaprogenerator} that $\FA(\Sigma)$ is simple. The hypothesis $\FA(\Sigma)\neq 0$ is satisfied because of \cite[Proposition 4.7~(i)]{mwdehn}.
		This proves~\ref{thmmodulishorti}.

		By construction the
		$\moduli_\Sigma$-module $\FA(\Sigma)$ is the image of the progenerator $\qss \in \int_\Sigma \cat{A}$ from Lemma~\ref{corprog} under the $\cat{A}^{\boxtimes n}$-module equivalence $\PhiA(H):\int_\Sigma \cat{A} \ra{\simeq}\cat{A}^{\boxtimes n}$, seen here as an equivalence of right module categories.
		Now the	dual version of Lemma~\ref{lemmaprogenerator} using the right regular action	 implies~\ref{thmmodulishortiii}.
	\end{proof}

		\begin{remark}[Hopf-algebraic special case for one boundary component]\label{remhopfalg}	If $\Sigma$ has genus $g$ and \emph{one boundary component}, $\mathfrak{F}(\Sigma)$, as an object in $\cat{A}$, is the $g$-th tensor power $\mathbb{A}^{\otimes g}$ of the end $\mathbb{A}=\int_{X \in \cat{A}}X^\vee \otimes X$ in $\cat{A}$.
	In the case $\cat{A}=H\text{-mod}$ for a ribbon factorizable Hopf algebra $H$ (not necessarily semisimple), this is the $g$-th tensor power $H_\text{adj}^{\otimes g}$ of the adjoint representation~\cite[Theorem~7.4.13]{kl}.
	Let us explain how Theorem~\ref{thmmodulishort} recovers, in this Hopf-algebraic case, 
	 results of Faitg:
The fact that the action map produces an isomorphism
$\moduli_\Sigma \ra{\cong} \FA(\Sigma) \otimes \FA(\Sigma)^\vee $ 
in this case and that $\FA(\Sigma)$ is simple is established in \cite[Section~3.3]{faitg} (thereby making the action by inner automorphisms), see additionally~\cite[Section~4.2]{faitgderived} for the fact that these modules live actually inside $H$-modules. These facts are also used to describe the Lyubashenko representations.
	\end{remark}

In the presence of boundary components, it is worth having a version of the previous results that is not phrased in terms of internal skein algebras and internal skein modules, but rather directly for the \emph{values} of the modular functor on specific labels.
This can be easily achieved by considering endomorphism algebras in factorization homology of object \emph{other} than the quantum structure sheaf.

\begin{corollary}\label{corboundary}
	Let $\Sigma$ be a surface with $n$ boundary components and $\underline{\Sigma}$ the surface obtained from $\Sigma$ by deleting the boundary components that comes with an embedding $\varphi: (\mathbb{D}^2)^{\sqcup n} \to \underline{\Sigma}$ whose disks are placed at the position of the boundary components of $\Sigma$.
	For a modular category $\cat{A}$ and $X\in \cat{A}^{\boxtimes n}$, denote by $\underline{X}:= \varphi_* X$ the image of $X$ under $\varphi_* : \cat{A}^{\boxtimes n}\to \int_{\underline{\Sigma}} \cat{A}$. 
	Then the following holds true:
	\begin{pnum}
		\item The endomorphism algebra $\End_{\int_{\underline{\Sigma}}   \cat{A}}(\underline{X})$ is a matrix algebra and therefore has only one simple representation (unless $\End_{\int_{\underline{\Sigma}}   \cat{A}}(\underline{X})$ is the zero algebra). \label{corboundaryi}
		
		\item The algebra $\End_{\int_{\underline{\Sigma}}   \cat{A}}(\underline{X})$ comes with an action of the mapping class group of $\Sigma$ through inner automorphisms.
		
		\item The unique simple $\End_{\int_{\underline{\Sigma}}   \cat{A}}(\underline{X})$-module is the value $\FA(\Sigma;X)$ of the Lyubashenko modular functor on $\cat{A}$ with label $X$, and the projective $\Map(\Sigma)$-representation on $\FA(\Sigma;X)$ is the only one making the action map equivariant.
		\end{pnum}
	\end{corollary}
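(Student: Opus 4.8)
The plan is to reduce the statement to the fact that factorization homology of a modular category over a \emph{closed} surface is equivalent to $\vect$, and then to read everything off from the elementary structure theory of matrix algebras.

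\emph{Reduction to a closed surface.} Capping off the $n$ boundary circles of $\Sigma$ exhibits $\underline{\Sigma}$ as a closed surface, with $\varphi$ marking the $n$ caps by embedded disks. Since $\cat{A}$ is modular it is $\Phi$-invertible by Corollary~\ref{cormodcatphiinv}, so for any handlebody $H$ with $\partial H=\underline{\Sigma}$ the generalized skein module $\PhiA(H):\int_{\underline{\Sigma}}\cat{A}\ra{\simeq}\vect$ is an equivalence. Hence $\End_{\int_{\underline{\Sigma}}\cat{A}}(\underline{X})\cong\End_k(V)$ for the finite-dimensional vector space $V:=\PhiA(H)(\underline{X})$; if $V=0$ this is the zero algebra, and otherwise it is a matrix algebra, with unique simple module $V$. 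This gives part~(i).

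\emph{Identification of $V$.} By the defining formula~\eqref{defPhiA} of $\PhiA$ through the ansular functor, $V=\PhiA(H;\varphi_*X)=\widehat{\cat{A}}(H^{\varphi};X)$, where $H^{\varphi}$ is $H$ with the $n$ disks of $\varphi$ embedded in its boundary. Applying $\partial$ turns these embedded disks into boundary circles, so $\partial H^{\varphi}=\Sigma$; as $\cat{A}$ is modular it is connected (Proposition~\ref{propconnectedmodular}), so $\widehat{\cat{A}}(H^{\varphi};X)$ does not depend on the chosen handlebody and equals $\FA(\Sigma;X)$, the value on $\Sigma$ of the modular functor extending $\cat{A}$, which by Theorem~\ref{thmmodularcats} is the Lyubashenko modular functor. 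Thus the unique simple module over $\End_{\int_{\underline{\Sigma}}\cat{A}}(\underline{X})$ is $\FA(\Sigma;X)$, carrying the projective $\Map(\Sigma)$-representation of Theorem~\ref{thmmodulishort}.

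\emph{The mapping class group statements.} I would run the naturality argument of Theorem~\ref{thmequiv} and Corollary~\ref{corequiv} with $\underline{\Sigma}$ together with its $n$ embedded disks in place of a surface with boundary: a mapping class of $\Sigma$ corresponds to a mapping class of $\underline{\Sigma}$ fixing the embedded disks, which acts on $\int_{\underline{\Sigma}}\cat{A}$ by an equivalence preserving the quantum structure sheaf $\cat{O}^{\cat{A}}_{\underline{\Sigma}}$ (it is a homotopy fixed point) and the disk positions, hence preserving $\underline{X}=\varphi_*X$ up to coherent isomorphism. This makes $\End_{\int_{\underline{\Sigma}}\cat{A}}(\underline{X})$ a $\Map(\Sigma)$-algebra, and, being a matrix algebra, all its algebra automorphisms are inner by Skolem--Noether; this is part~(ii). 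The same naturality makes the isomorphism $\End_{\int_{\underline{\Sigma}}\cat{A}}(\underline{X})\ra{\cong}\End_k(\FA(\Sigma;X))$ equivariant for the adjoint action of the Lyubashenko representation, and uniqueness among equivariant representations follows exactly as in Theorem~\ref{thmskeinalg}: two such would differ by an element commuting with all of $\End_k(V)$, i.e.\ a scalar, hence agree as projective representations. This gives part~(iii).

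The main obstacle is this last step: carefully matching the mapping class group of the bordered surface $\Sigma$ with the group of self-equivalences of $\int_{\underline{\Sigma}}\cat{A}$ fixing the interior marked disks, and checking that $\PhiA(H)$ intertwines the induced action on $\End_{\int_{\underline{\Sigma}}\cat{A}}(\underline{X})$ with the adjoint Lyubashenko action on $\FA(\Sigma;X)$. This is parallel to Theorem~\ref{thmequiv} but slightly more delicate, since here the ambient surface is closed and the labels sit on interior marked disks rather than on boundary circles.
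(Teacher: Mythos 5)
Your proposal is correct and follows essentially the same route as the paper: the paper's proof likewise invokes Corollary~\ref{cormodcatphiinv} to get the equivalence $\PhiA(H):\int_{\underline{\Sigma}}\cat{A}\ra{\simeq}\vect$ sending $\underline{X}$ to $\FA(\Sigma;X)$ via~\eqref{defPhiA}, and then simply repeats the arguments of Theorem~\ref{thmskeinactioniso} (matrix algebra, Skolem--Noether, equivariance, uniqueness). Your write-up just spells out the details that the paper leaves implicit.
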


\begin{proof}
	Any handlebody $H$ with boundary $\underline{\Sigma}$ gives us by Corollary~\ref{cormodcatphiinv} an equivalence
	$\PhiA(H):\int_{\underline{\Sigma}}   \cat{A}\ra{\simeq}\vect$ sending $\underline{X}$ to $\FA(\Sigma;X)$, see~\eqref{defPhiA}.
	The rest of the proof repeats the arguments of the proof of Theorem~\ref{thmskeinactioniso}.
	\end{proof}

	\subsection{A few comments about non-internal skein algebras for surfaces with boundary}
	Theorem~\ref{thmmodulishort} treats the \emph{internal} skein algebra
	$\moduli_\Sigma = \END_{\int_\Sigma \cat{A}} (\qss)\in\cat{A}^{\boxtimes n}$ whose algebra of invariants
	$\Hom_{\cat{A}^{\boxtimes n}} (I,  \moduli_\Sigma  )=\SkAlg_\cat{A}(\Sigma)$ is the \emph{non-internal} skein algebra from~\eqref{eqnskeinalgebras}.
	About these, we can make the following statement in the case $n=1$:
	
	\begin{proposition}\label{propskeinone}
		For a modular category $\cat{A}$,
		the skein algebra $\SkAlg_\cat{A}(\Sigma)$ for a connected surface of genus $g$ and one boundary component is isomorphic to the endomorphism algebra $\End_\cat{A}(\mathbb{A}^{\otimes g})$ for $g$-th tensor power of the end $\mathbb{A}=\int_{X \in \cat{A}} X^\vee \otimes X$. 
	\end{proposition}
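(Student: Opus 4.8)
The plan is to identify the skein algebra $\SkAlg_\cat{A}(\Sigma) = \End_{\int_\Sigma \cat{A}}(\qss)$ with an endomorphism algebra computed in factorization homology that has already been identified in the literature. First I would recall from \cite[Theorem~5.14]{bzbj} that for a surface $\Sigma$ of genus $g$ with one boundary component, $\int_\Sigma \cat{A} \simeq \rmod(\mathbb{A}^{\otimes g})$, where $\mathbb{A}=\int_{X\in\cat{A}}X^\vee\otimes X$ is the end algebra in $\cat{A}$, and that under this equivalence the quantum structure sheaf $\qss$ corresponds to $\mathbb{A}^{\otimes g}$ viewed as the free module of rank one. (This uses that $\cat{A}$ is modular, hence in particular ribbon and factorizable, so that the relevant internal skein algebra description applies; it also uses $\mathbb{F}\cong\mathbb{A}$ by unimodularity.) Under this identification, the endomorphism algebra of $\qss$ in $\int_\Sigma \cat{A}$ becomes the endomorphism algebra of the rank-one free $(\mathbb{A}^{\otimes g})$-module, which is canonically $\mathbb{A}^{\otimes g}$ itself \emph{as an associative algebra in $\vect$}, i.e.\ $\End_{\rmod(\mathbb{A}^{\otimes g})}(\mathbb{A}^{\otimes g})\cong \mathbb{A}^{\otimes g}$.

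The remaining point is to rewrite $\mathbb{A}^{\otimes g}$, as a $k$-algebra, as $\End_\cat{A}(\mathbb{A}^{\otimes g})$. Here the key is Theorem~\ref{thmmodulishort}\ref{thmmodulishorti}: the conformal block object $\FA(\Sigma)\in\cat{A}$ is exactly $\mathbb{A}^{\otimes g}$ by the Hopf-algebraic/end-algebra computation recalled in Remark~\ref{remhopfalg} (and more intrinsically because $\mathbb{A}$ is the value of the ansular functor on the one-holed torus handlebody, so $g$ copies give the genus-$g$ handlebody). Combining the Morita-type statement $\moduli_\Sigma\cong\FA(\Sigma)\otimes\FA(\Sigma)^\vee$ from Theorem~\ref{thmmodulishort}\ref{thmmodulishortii} with the fact that $\SkAlg_\cat{A}(\Sigma)=\Hom_\cat{A}(I,\moduli_\Sigma)$ is the algebra of invariants, one gets $\SkAlg_\cat{A}(\Sigma)\cong\Hom_\cat{A}(I,\FA(\Sigma)\otimes\FA(\Sigma)^\vee)\cong\Hom_\cat{A}(\FA(\Sigma),\FA(\Sigma))=\End_\cat{A}(\mathbb{A}^{\otimes g})$, where the middle isomorphism is the rigid duality adjunction. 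So concretely the proof is: (i) $\SkAlg_\cat{A}(\Sigma)=\Hom_{\cat{A}}(I,\moduli_\Sigma)$ by definition of internal versus non-internal skein algebra; (ii) $\moduli_\Sigma\cong\FA(\Sigma)\otimes\FA(\Sigma)^\vee$ by Theorem~\ref{thmmodulishort}; (iii) $\FA(\Sigma)\cong\mathbb{A}^{\otimes g}$ as recalled in Remark~\ref{remhopfalg}; (iv) $\Hom_\cat{A}(I,Y\otimes Y^\vee)\cong\End_\cat{A}(Y)$ by rigidity, applied to $Y=\mathbb{A}^{\otimes g}$.

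The main obstacle I anticipate is checking that the isomorphism in step (ii) is an isomorphism \emph{of algebras} (not merely of objects in $\cat{A}^{\boxtimes n}$, here $n=1$), and that passing to invariants in step (i) is compatible with the algebra structures, so that the composite is an algebra isomorphism and not just a linear one. This should follow because the action map $\varphi:\moduli_\Sigma\to\underline{\End}_{\cat{A}}(\FA(\Sigma))$ appearing in the proof of Theorem~\ref{thmmodulishort} is an algebra map by construction and an isomorphism by Theorem~\ref{thminv} (invertibility of the $\PhiA$-maps, which holds for modular categories by Corollary~\ref{cormodcatphiinv}), and taking $\Hom_\cat{A}(I,-)$ of an algebra map between algebras in $\cat{A}$ yields an algebra map of the invariant algebras; the canonical identification $\underline{\End}_\cat{A}(Y)\cong Y\otimes Y^\vee$ is an algebra isomorphism by rigidity. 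Assembling these carefully gives the claimed $\SkAlg_\cat{A}(\Sigma)\cong\End_\cat{A}(\mathbb{A}^{\otimes g})$.
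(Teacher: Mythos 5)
Your second paragraph gives a correct proof and is essentially the paper's argument: the paper simply notes that Theorem~\ref{thminv} together with the definition~\eqref{eqndefadd} of the action map yields an algebra isomorphism $\SkAlg_\cat{A}(\Sigma)\cong\End(\FA(\Sigma;-))$ and then identifies $\FA(\Sigma;-)$ with the functor represented by $\mathbb{A}^{\otimes g}$ via \cite[Corollary~8.1]{brochierwoike}. Your route through the internal skein algebra --- $\SkAlg_\cat{A}(\Sigma)=\Hom_\cat{A}(I,\moduli_\Sigma)$, then $\moduli_\Sigma\cong\FA(\Sigma)\otimes\FA(\Sigma)^\vee$ from Theorem~\ref{thmmodulishort}, then the rigidity adjunction --- is just the ``internal'' packaging of the same map $\Add$, and your concern about compatibility of algebra structures is handled exactly as you say, since $\varphi$ is an algebra map and $\Hom_\cat{A}(I,-)$ is lax monoidal.

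One correction to your first paragraph, which you fortunately abandon: the cited equivalence from \cite[Theorem~5.14]{bzbj} presents $\int_{\Sigma_{g,1}}\cat{A}$ as modules over an internal algebra built from $2g$ copies of the coend $\mathbb{F}$ (one per generator of $\pi_1$), not over $\mathbb{A}^{\otimes g}$; and the endomorphism algebra of the free rank-one module over an algebra $A$ internal to $\cat{A}$ is the invariants $\Hom_\cat{A}(I,A)$, not $A$ itself (which is an object of $\cat{A}$, not a $k$-algebra). A dimension count in the Hopf case ($\dim\mathbb{F}^{\otimes 2g}=(\dim H)^{2g}$ versus $\dim\mathbb{A}^{\otimes g}=(\dim H)^{g}$) shows the identification as you wrote it cannot be right. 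Since your steps (i)--(iv) do not rely on this, the proof stands.
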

	
	\begin{proof}
		Theorem~\ref{thminv} together with \eqref{eqndefadd} gives us an algebra  isomorphism $\SkAlg_\cat{A}(\Sigma) \cong  \End\, \FA(\Sigma;-)$. Now the statement follows from \cite[Corollary 8.1]{brochierwoike}.
	\end{proof}

	\begin{example}[Non-semisimple non-internal skein algebras]\label{exHmod}
		Consider a modular category given by finite-dimensional modules over a ribbon factorizable Hopf algebra $H$.
		Then $\mathbb{A}=H_\text{ad}$ is the adjoint representation, see Remark~\ref{remhopfalg}.
		By Proposition~\ref{propskeinone} the skein algebra for the torus with one boundary component is $\End_H(H_\text{ad})$. 
		For the small quantum group $\bar U_q (\mathfrak{sl}_2)$ 
		at odd root of unity, a decomposition for the adjoint representation into indecomposable modules is given in \cite{ostrik} from which we deduce that $\End_H(H_\text{ad})$ is a non-semisimple algebra in this case. %10
	\end{example}

	\begin{example}[The connection to the Torelli group]\label{extorelli}
		The skein algebras in Proposition~\ref{propskeinone} are not always non-semisimple, even if the category $\cat{A}$ is non-semisimple:
		If the mapping class group representations of $\cat{A}$ annihilate all Torelli groups (this would happen in Example~\ref{exHmod} for a commutative algebra $H$), then \cite[Proposition~6.7]{mwdehn}  tells us $\mathbb{A}\cong I^{\oplus n}$ for some $n\ge 1$. 
		Then the skein algebra for a surface of genus $g$ and one boundary component is the algebra of $ng \times ng$-matrices. 
	\end{example}

		\begin{example}[Skein algebras without graphical calculus --- the Feigin-Fuchs boson]\label{exffbs}
		The Feigin-Fuchs boson~\cite[Definition~3.1]{alsw} is the lattice vertex operator algebra which produces a modular functor
		whose circle category~$\catf{VM}(\Psi)$ is not modular, but still cofactorizable and with non-degenerate braiding \cite[Example 8.8]{brochierwoike}
		(the latter two properties are actually equivalent here because the monoidal product has the property of being rigid, but the category would not be ribbon with the rigid duality).
		The underlying ribbon Grothendieck-Verdier category is, as an abelian category, the category of finite-dimensional vector spaces graded over a finite abelian group $G$ (obtained as the quotient of the dual lattice  by the lattice; these details are not vital for this example).
		The dualizing object is $k_{2\xi}$, the ground field supported at degree $2\xi$ for some $\xi \neq 0$ in $G$. 
		The space of conformal block at  a closed surface $\Sigma_g$ of genus $g$ has dimension $|G|^g$ if $2(g-1)\xi=0$, and zero otherwise. 
		Since $\catf{VM}(\Psi)$ is not a ribbon category, there is presently no graphical calculus available that one could use for skein-theoretic methods, see however the recent progress in \cite{mdcs}. In this example, we want to demonstrate that the description of the spaces of conformal blocks in the Heisenberg picture along the lines of Theorem~\ref{thmskeinalg} surprisingly still goes through:
		Thanks to Proposition~\ref{propcodeg}, $\catf{VM}(\Psi)$ is co-non-degenerate, but careful: The generator of $\int_{\mathbb{S}^2} \catf{VM}(\Psi)$ is the dualizing object $k_{2{\xi}}$ placed on a disk in the sphere, whereas quantum structure sheaf $\cat{O}_{\mathbb{S}^2}^{\catf{VM}(\Psi)}$
		is zero.
		With Theorem~\ref{thmskeinactioniso}, we find now
		\begin{align}
			\SkAlg_{\catf{VM}(\Psi)}(\Sigma_g)\cong \left\{ \begin{array}{ll} \catf{Mat}(|G|^g,k) \ , &  \text{if}\ 2(g-1)\xi=0 \ ,  \\  0 \ , & \text{else,} \end{array} \right.
		\end{align}
	and this matrix algebra is actually the endomorphism algebra of the space of conformal blocks. So the space of conformal blocks can, even $\Map(\Sigma_g)$-equivariantly, be characterized as the unique simple module over $\SkAlg_{\catf{VM}(\Psi)}(\Sigma_g)$ (where however we would exclude the trivial cases in which $2(g-1)\xi\neq 0$). 
	\end{example}

	\subsection{A comment on vertex operator algebras\label{subsecvoa}}
	The Heisenberg picture for spaces of conformal blocks in the sense of	Theorem~\ref{thmskeinalg}
	carries over to the framework  of vertex operator algebras:
	Let $V$ be a strongly rational, self-dual vertex operator algebra, and let $\mathbb{V}_g$ the space of conformal blocks for genus $g$ (no non-trivial insertions) in the sense of \cite{frenkelbenzvi,DGT1,DGT2}.
	This is a projectively flat vector bundle  on the moduli space of genus $g$ curves. Its connection is (a generalization of) the Knizhnik-Zamolodchikov connection.
	
	By the main result of \cite{damioliniwoike} $\mathbb{V}$, for varying curves and insertions, forms a modular functor. To the circle, it associates the category $\cat{C}_V$ of admissible $V$-modules that, via the evaluation on genus zero curves, inherits the structure of a modular fusion category.
	Set now 
	\begin{align}
		\mathbb{S}_g :=      \End_{\int_{\Sigma_g} \cat{C}_V}(\cat{O}_{\Sigma_g}^{\cat{C}_V}) \ . 
		\end{align}
	With the mapping class group representation induced by the homotopy fixed point structure of the quantum structure sheaf, $\mathbb{S}_g$ can be seen as a flat algebra bundle on the moduli space of genus $g$ curves. This is just an application of the usual Riemann-Hilbert correspondence~\cite[Section~6]{baki} for spaces of conformal blocks.
	 
	Then Theorem~\ref{thmskeinalg} gives us the following result:
	
	\begin{corollary}\label{corvoa}
		There is an isomorphism \begin{align}\label{eqnisoconnection} \mathbb{S}_g \ra{\cong} \mathbb{V}_g^* \otimes \mathbb{V}_g\end{align} of flat vector bundles.
		\end{corollary}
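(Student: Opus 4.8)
The plan is to reduce Corollary~\ref{corvoa} to the skein-algebraic characterization of the spaces of conformal blocks already obtained for modular categories, and then to transport the resulting mapping-class-group-equivariant statement into the language of flat bundles by Riemann--Hilbert. First I would record what the vertex-algebraic input gives us: by the main theorem of \cite{damioliniwoike}, the spaces of conformal blocks $\mathbb{V}$ built from the strongly rational, self-dual $V$ assemble into an $\Rexf$-valued modular functor whose value on the circle is the category $\cat{C}_V$ of admissible $V$-modules, and $\cat{C}_V$, via genus zero evaluation, is a modular fusion category. In particular $\cat{C}_V$ is semisimple, rigid with simple unit, hence strongly rigid and simple in the sense of this article; it is unimodular because unimodularity is automatic in the semisimple case \cite[Corollary~6.4]{eno-d}; and it is cofactorizable and co-non-degenerate by Proposition~\ref{propcodeg}, \cite[Theorem~1.6]{bjss} and \cite{shimizumodular}. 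Consequently $\mathbb{V}$ is the Lyubashenko modular functor for $\cat{C}_V$, so that its value $\mathbb{V}_g$ on a genus $g$ surface, together with its projective mapping class group action, is the space of conformal blocks $\FA(\Sigma_g)$ of the present paper with $\cat{A}=\cat{C}_V$.

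Next I would apply Theorem~\ref{thmskeinalg} --- and, more precisely, the algebra isomorphism established in its proof via Theorem~\ref{thmskeinactioniso} --- to the closed surface $\Sigma=\Sigma_g$. Since $\cat{C}_V^{\boxtimes 0}=\vect$, and $\cat{O}_{\Sigma_g}^{\cat{C}_V}$ is a projective generator of $\int_{\Sigma_g}\cat{C}_V\simeq\vect$ by Corollary~\ref{corprog} (so that $\mathbb{S}_g=\SkAlg_{\cat{C}_V}(\Sigma_g)$ is genuinely the endomorphism algebra of a vector space of dimension $\dim\mathbb{V}_g$), this yields an isomorphism of algebras
\begin{align}
	\Add : \mathbb{S}_g \ra{\cong} \End_{\vect}\bigl(\FA(\Sigma_g)\bigr) \cong \mathbb{V}_g^* \otimes \mathbb{V}_g \ ,
\end{align}
which is moreover equivariant for the action of $\Map(\Sigma_g)$: on the left the mapping class group acts on the skein algebra (an honest representation, since these are algebra automorphisms), and on the right it acts by the adjoint action of the projective Lyubashenko representation on $\mathbb{V}_g$ --- again honest, because the anomaly cancels in $\mathbb{V}_g^* \otimes \mathbb{V}_g$.

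Finally I would invoke the Riemann--Hilbert correspondence exactly as it is used for spaces of conformal blocks in \cite[Section~6]{baki}: since the moduli space of genus $g$ curves has orbifold fundamental group $\Map(\Sigma_g)$, flat vector bundles on it are the same as finite-dimensional $\Map(\Sigma_g)$-representations, and under this equivalence the flat algebra bundle $\mathbb{S}_g$ corresponds to the mapping class group action on the skein algebra, the projectively flat bundle $\mathbb{V}_g$ corresponds to the projective Lyubashenko representation, and hence $\mathbb{V}_g^*\otimes\mathbb{V}_g$ corresponds to the (honest) adjoint representation. The equivariance of $\Add$ then turns it into an isomorphism of flat vector bundles, which is precisely~\eqref{eqnisoconnection}.

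The main obstacle, and the only genuinely non-formal ingredient, lies in the first step: one must know that the Knizhnik--Zamolodchikov-type connection on $\mathbb{V}_g$ coming from the vertex-algebraic construction of \cite{frenkelbenzvi,DGT1,DGT2} has monodromy equal to the topologically defined Lyubashenko mapping class group representation attached to $\cat{C}_V$. This is exactly the content of the comparison theorem of \cite{damioliniwoike} (modulo a check that the notion of modular functor used there matches the one of \cite{brochierwoike} employed here, of the kind already discussed after Proposition~\ref{propconnectedmodular}); once that identification is granted, all remaining steps are routine.
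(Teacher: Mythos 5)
Your proposal is correct and follows essentially the same route as the paper: the paper's entire proof consists of the setup in Section~\ref{subsecvoa} (the modular functor structure on $\mathbb{V}$ from \cite{damioliniwoike}, making $\cat{C}_V$ a modular fusion category, and the Riemann--Hilbert interpretation of $\mathbb{S}_g$ as a flat algebra bundle) followed by a direct appeal to Theorem~\ref{thmskeinalg}, i.e.\ to the mapping-class-group-equivariant algebra isomorphism $\Add : \SkAlg_{\cat{C}_V}(\Sigma_g)\ra{\cong}\End(\FA(\Sigma_g))$ of Theorem~\ref{thmskeinactioniso}. You have merely fleshed out the intermediate verifications (strong rigidity, simplicity, unimodularity, cofactorizability, co-non-degeneracy of $\cat{C}_V$, and the identification of $\mathbb{V}$ with the Lyubashenko modular functor), all of which are sound and correctly sourced.
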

	
	In fact, the projectively flat connection on $\mathbb{V}_g$
	is the unique one such that the isomorphism of vector bundles~\eqref{eqnisoconnection} is compatible with the flat connection. 
	Similar statements can be made in the presence of boundary components, but
	 we will not spell these out here.
	
	Corollary~\ref{corvoa} should be interpreted in the sense that one can 
	actually build a flat algebra bundle $\mathbb{S}_g$ (to be thought of as algebras of observables) 
	that acts on $\mathbb{V}_g$ in way that the projectively 
	flat connection on $\mathbb{V}_g$ is pinned down by the requirement that the action be compatible with the connection. 
	
	This confirms the expectation formulated in \cite[Section~4.3]{asmoduli} that the description of spaces of conformal blocks
	in terms of algebras of observables extends from modular fusion categories obtained as modules over semisimple Hopf algebras to   spaces of conformal blocks constructed geometrically as projectively flat vector bundles over a moduli space of curves.
	Of course, the precise formulation that we prove takes the liberty to extrapolate
	the expectations in \cite[Section~4.3]{asmoduli} using the framework in \cite{frenkelbenzvi} which was not available at the time. 
	
	Beyond the rational case, the modular functors built from a modular category 
	allow in principle for a generalization of Corollary~\ref{corvoa} (after all, the novelty of Theorem~\ref{thmskeinalg} is exactly the non-semisimple case). The problem is that, beyond the rational case, the relation to \cite{frenkelbenzvi,DGT1,DGT2} is unknown. The main issue is that it is generally not known whether $\mathbb{V}$ is a modular functor.

	\subsection{A model for the surface operad and the vanishing of the anomaly}
	We will end the article with an important application that will be relevant in follow-up work:
	Adapted to a modular category, we will produce a simple model for the surface operad (not an extension thereof).

	To this end, consider for a modular category $\cat{A}$  the extension
	$\Surf_{\! \cat{A}\dg \boxtimes \cat{A}}\to \Surf$ over which the modular functor for $\cat{A}\dg \boxtimes \cat{A}$ lives, see Section~\ref{secmf}
	(see also Section~\ref{secmonoidalproductofalgebras} where the product of cyclic and modular algebras was defined).
	We show that this extension  splits in a preferred way (in fact, we prove a slightly more general statement):

	\begin{lemma}\label{propanomalycancellation}
		For a co-non-degenerate, cofactorizable cyclic framed $E_2$-algebra $\cat{A}$ in $\Rexf$ that is self-injective and has an exact monoidal product, the extension $\Surf_{\! \cat{A}\dg \boxtimes \cat{A}}\to \Surf$ splits in a preferred way, and hence
		the modular functor for $\cat{A}\dg \boxtimes \cat{A}$ 
		is naturally a modular $\Surf$-algebra. In other words, it is anomaly-free. 
	\end{lemma}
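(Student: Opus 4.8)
The plan is to reduce the statement to the $\Phi$-invertibility criterion of Theorem~\ref{thminv} and then to recognize the anomaly of $\cat{A}\dg\boxtimes\cat{A}$ as the Baer sum of the anomaly of $\cat{A}$ with its inverse, which carries a tautological trivialization. First I would check that $\cat{A}$, $\cat{A}\dg$ and $\cat{A}\dg\boxtimes\cat{A}$ are all $\Phi$-invertible. For $\cat{A}$ itself this is Theorem~\ref{thminv}. Co-non-degeneracy and cofactorizability say that $\PhiA(\mathbb{B}^3)$ and $\PhiA(\mathbb{D}^2\times[0,1])$ are equivalences; by Lemma~\ref{lemmaconnecteddg} the analogous maps for $\cat{A}\dg$ are obtained from these by pre- and post-composition with the $H$-independent equivalences $D_n^\text{rev}$ and the dualization of vector spaces, hence remain equivalences, so $\cat{A}\dg$ is again co-non-degenerate and cofactorizable, while self-injectivity and exactness of $\odot$ hold by Proposition~\ref{proprevor}. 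Since factorization homology is monoidal in the coefficient, $\Phi_{\cat{B}\boxtimes\cat{C}}(H)\simeq\Phi_{\cat{B}}(H)\boxtimes\Phi_{\cat{C}}(H)$, and a Deligne product of equivalences is an equivalence, so these two properties pass to $\boxtimes$-products. Thus $\cat{A}\dg\boxtimes\cat{A}$ satisfies the hypotheses of Theorem~\ref{thminv} and is $\Phi$-invertible; in particular all three algebras are connected, so $\uext$, $\Surf_{\cat{A}\dg}$ and $\Surf_{\!\cat{A}\dg\boxtimes\cat{A}}$ are genuine extensions in the sense of Section~\ref{secmf} and $\mathfrak{F}_{\cat{A}\dg\boxtimes\cat{A}}$ exists.

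Next I would exhibit the anomaly of $\Surf_{\!\cat{A}\dg\boxtimes\cat{A}}$ as the Baer sum of that of $\Surf_{\cat{A}\dg}$ and that of $\uext$, with the first summand the inverse of the second. Using $\Phi_{\cat{A}\dg\boxtimes\cat{A}}(H)\simeq\PhiAdg(H)\boxtimes\PhiA(H)$ and the decomposition of (module) natural transformations between Deligne products coming from \eqref{eqnnat}, the automorphism data of a handlebody $H$ in $\Surf_{\!\cat{A}\dg\boxtimes\cat{A}}$ decomposes, on the central ($k^\times$-valued) part that controls the extension, as the product of the automorphism data of $H$ in $\Surf_{\cat{A}\dg}$ and in $\uext$; concretely, $H\mapsto(H,H)$ defines a fully faithful functor $\Surf_{\!\cat{A}\dg\boxtimes\cat{A}}\to\Surf_{\cat{A}\dg}\times_{\Surf}\uext$ over $\Surf$, through which a section is transported. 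Finally, Lemma~\ref{lemmaconnecteddg} presents $\PhiAdg(H)$, for the \emph{same} $H$, as $\PhiA(H)$ conjugated by the $H$-independent equivalence $D_n^\text{rev}$, while the dualization hidden in that identification (Remark~\ref{remdnaeq}) together with the inversion of braiding and balancing (Proposition~\ref{proprevor}) forces a module automorphism of $\PhiA(H)$ to act on $\PhiAdg(H)$ through its inverse; by Proposition~\ref{propdginv} and the gluing identity~\eqref{eqncyclicalgdgrcomp} this is compatible with operadic composition. Hence $\Surf_{\cat{A}\dg}$ is, canonically and compatibly with gluing, the inverse extension of $\uext$ (equivalently $\Surf_{\cat{A}\dg}\simeq\refl^*\uext$ over $\Surf$, matching Remark~\ref{remcompgenuszeroreversal}).

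Combining the two steps, $\Surf_{\!\cat{A}\dg\boxtimes\cat{A}}$ realizes the Baer sum of $\uext$ with its inverse, which carries the tautological section: to a surface $\Sigma$ one assigns the handlebody data of \emph{any} $H$ with $\partial H=\Sigma$, viewed simultaneously in $\Surf_{\cat{A}\dg}$ and $\uext$; the ambiguity in $H$ — precisely the anomaly cocycle of $\uext$ — is implemented by mutually inverse automorphisms in the two factors by the previous step, so the resulting point of $\Surf_{\!\cat{A}\dg\boxtimes\cat{A}}$ is canonically independent of $H$, and the construction respects gluing and mapping classes because the cancellation does. This splits $\Surf_{\!\cat{A}\dg\boxtimes\cat{A}}\to\Surf$, so $\mathfrak{F}_{\cat{A}\dg\boxtimes\cat{A}}$ is a modular $\Surf$-algebra, i.e.\ anomaly-free; in the rigid case one may rephrase the section through the isomorphism $\mathfrak{F}_{\cat{A}\dg\boxtimes\cat{A}}(\Sigma)\cong\End(\FA(\Sigma))$ carrying the honest conjugation action of the mapping class group.

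The main obstacle is the precise form of the second step: one must track the dualization built into the $\dgr$-operation (Remark~\ref{remdnaeq}, Proposition~\ref{propdginv}) through the modular operad structure and verify that it genuinely inverts the central part of the transition data, uniformly over all surfaces and compatibly with the sewing maps, so that the tautological choice is an honest morphism of modular operads and not merely a fibrewise one. The remaining steps are formal consequences of Theorem~\ref{thminv} and the bookkeeping of Section~\ref{secmf}.
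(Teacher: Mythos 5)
Your argument is essentially the paper's: the paper likewise restricts to the ``diagonal'' suboperad of pairs $\PhiA(H)\dg\boxtimes\PhiA(H)$ with diagonal skein isomorphisms $\tau\dg\boxtimes\tau$, uses $\Phi$-invertibility (Theorem~\ref{thminv}) to see that module automorphisms of $\PhiA(H)$ are scalars $\lambda\cdot\id$, and observes $(\lambda\cdot\id)\dg=\lambda^{-1}\cdot\id$ so that the fibers become connected and simply connected, yielding the preferred splitting. Your Baer-sum packaging and the explicit preliminary check that $\cat{A}\dg\boxtimes\cat{A}$ is connected are harmless reformulations of the same cancellation mechanism.
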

	
\begin{remark}	If $\cat{A}$ is modular, this follows also from \cite[Corollary~8.3]{sn}, where the statement is phrased in terms of the Drinfeld center of $\cat{A}$, but this does not give us the splitting explicitly in terms of $\cat{A}\dg \boxtimes\cat{A}$. 
\end{remark}
	
	As a preparation for the proof of Proposition~\ref{propanomalycancellation}, let us build the model for $\Surf$ over which the modular functor lives:
	Define 
	$\Surf_{\! \cat{A}\dg \boxtimes \cat{A}}^\circ \subset \Surf_{\! \cat{A}\dg \boxtimes \cat{A}}$ as the  suboperad of $\Surf_{\! \cat{A}\dg \boxtimes \cat{A}}$ fibered over $\Surf$ whose homotopy fiber over a connected  surface $\Sigma$ with $n$ boundary components
	is given by all $\PhiA(H)\dg \boxtimes \PhiA(H)$ for handlebodies $H$ with $\partial H=\Sigma$. By abuse of notation, we identify $\PhiA(H)\dg \boxtimes \PhiA(H)$ here with the map
	\begin{align}
		\int_\Sigma (\cat{A}\dg \boxtimes \cat{A}) \simeq \int_\Sigma \cat{A}\dg  \boxtimes \int_\Sigma \cat{A} \ra{\PhiA(H)\dg \boxtimes \PhiA(H)} \cat{A}^{\boxtimes n} \boxtimes \cat{A}^{\boxtimes n} \simeq (\cat{A}\boxtimes \cat{A})^{\boxtimes n} \ , 
	\end{align}
	where in the last step we use the symmetric braiding for $\boxtimes$ (because of the definitions in Section~\ref{secmonoidalproductofalgebras}).
	The morphisms are
	all maps \begin{align}\label{eqnisolambda} \PhiA(H)\dg \boxtimes \PhiA(H) \ra{\tau\dg \boxtimes \tau} \PhiA(H')\dg \boxtimes \PhiA(H')\end{align} for all module isomorphisms $\tau : \PhiA(H)\to\PhiA(H')$.
	We call an isomorphism of the form~\eqref{eqnisolambda} a \emph{diagonal skein isomorphism}.

	\begin{proof}[\slshape Proof of Proposition~\ref{propanomalycancellation}]
		By connectedness of $\cat{A}$, the fibers of $\Surf_{\! \cat{A}\dg \boxtimes \cat{A}}^\circ$ are connected.
		Any module automorphism of $\PhiA(H)$ is by $\Phi$-invertibility (Theorem~\ref{thminv}) and \cite[Proposition 7.11, 7.12 \& Remark~7.14]{brochierwoike} of the form $\lambda \cdot \id$ with $\lambda \in k^\times$. By construction $(\lambda \cdot \id)\dg = \lambda^{-1} \cdot \id$, which implies that the isomorphisms \eqref{eqnisolambda} do not depend on the choice of $\tau$. Therefore, the homotopy fibers of $\Surf_{\! \cat{A}\dg \boxtimes \cat{A}}^\circ\to \Surf$ are connected and simply connected, which makes $	\Surf_{\! \cat{A}\dg \boxtimes \cat{A}}^\circ\to \Surf$ an equivalence. In other words, $\Surf_{\! \cat{A}\dg \boxtimes \cat{A}}^\circ$ is a model for the surface operad, and $\Surf \simeq \Surf_{\! \cat{A}\dg \boxtimes \cat{A}}^\circ \to \Surf_{\! \cat{A}\dg \boxtimes \cat{A}}$ is the desired splitting.
		
		The modular functor $\mathfrak{F}_{\cat{A}\dg \boxtimes \cat{A}}$ for $\cat{A}\dg \boxtimes \cat{A}$ is a modular $\Surf_{\! \cat{A}\dg \boxtimes \cat{A}}$-algebra. The restriction of $\mathfrak{F}_{\cat{A}\dg \boxtimes \cat{A}}$ along $\Surf_{\! \cat{A}\dg \boxtimes \cat{A}}^\circ\subset \Surf_{\! \cat{A}\dg \boxtimes \cat{A}} $ is the anomaly-free modular functor that we are looking for.  
	\end{proof}

	The definition of $\Surf_{\! \cat{A}\dg \boxtimes \cat{A}}^\circ$
	and the description of $\SurfA$ in \cite[Section~5.2]{brochierwoike} (that relies, among other things, on the Lego Teichmüller game~\cite{bakifm}) immediately imply the following statement:
	
	\begin{proposition}
		With assumptions as in~\ref{propanomalycancellation}, the modular functor for
		$\cat{A}\dg \boxtimes \cat{A}$ lives naturally over a model for $\Surf$ that, at the level of morphisms in groupoids of operations, is generated by handlebody maps in genus zero and one of the following equivalent choices for further generating morphisms:
		\begin{pnum}
			\item Diagonal skein module isomorphisms~\eqref{eqnisolambda} for the torus with one boundary component.
			\item The mapping classes of the torus with one boundary component.
			\end{pnum}
	\end{proposition}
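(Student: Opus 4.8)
The plan is to deduce the statement by transporting a generators-and-relations description of the modular surface operad $\Surf$ across the equivalence $\Surf_{\! \cat{A}\dg \boxtimes \cat{A}}^\circ \to \Surf$ established in the proof of Lemma~\ref{propanomalycancellation}. The input description is the one underlying the construction of $\SurfA$ in \cite[Section~5.2]{brochierwoike}, which is a reformulation of the Lego Teichmüller game of \cite{bakifm}. In this description the morphisms of the groupoids of operations of $\Surf$ --- that is, the mapping class groupoids of all surfaces, with operadic composition given by gluing --- are generated by two families: first, the elementary moves supported on genus-zero surfaces, which assemble precisely into the handlebody maps of the genus-zero part of $\SurfA$ (equivalently, into the framed $E_2$-operations realized through handlebodies via $\catf{CycAlg}(\framed)\simeq\catf{ModAlg}(\Hbdy)$); and second, one genus-raising generator, which can be taken to be the $S$-move and therefore lives on the one-holed torus $\Sigma_{1,1}$, so that the whole mapping class group $\Map(\Sigma_{1,1})$ is available as generating morphisms in that degree.

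First I would record that, by the proof of Lemma~\ref{propanomalycancellation}, $\Surf_{\! \cat{A}\dg \boxtimes \cat{A}}^\circ \to \Surf$ is an equivalence of modular operads with connected and simply connected homotopy fibers; hence a generating set of morphisms for $\Surf$ pulls back to a generating set for $\Surf_{\! \cat{A}\dg \boxtimes \cat{A}}^\circ$, and conversely. Over genus-zero surfaces this equivalence matches the genus-zero moves of $\Surf$ with the genus-zero handlebody maps appearing in the statement: the genus-zero fibers of $\Surf_{\! \cat{A}\dg \boxtimes \cat{A}}^\circ$ are, by construction and by Lemma~\ref{lemmaconnecteddg}, just the usual groupoids of genus-zero handlebodies (the $\dgr$-twist acts as an equivalence on morphisms in genus zero), so their morphisms are precisely the genus-zero handlebody maps. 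Over $\Sigma_{1,1}$, every morphism of $\Surf_{\! \cat{A}\dg \boxtimes \cat{A}}^\circ$ lying above a mapping class $f \in \Map(\Sigma_{1,1})$ is, by the definition of $\Surf_{\! \cat{A}\dg \boxtimes \cat{A}}^\circ$, a diagonal skein isomorphism $\PhiA(H)\dg \boxtimes \PhiA(H) \to \PhiA(f.H)\dg \boxtimes \PhiA(f.H)$, and since the fibers are simply connected this lift is unique. Therefore the set of diagonal skein isomorphisms for $\Sigma_{1,1}$ is in canonical bijection with $\Map(\Sigma_{1,1})$, the bijection being implemented by the equivalence $\Surf_{\! \cat{A}\dg \boxtimes \cat{A}}^\circ \simeq \Surf$. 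This is the asserted equivalence of the two choices (1) and (2) of further generating morphisms, and either of them together with the genus-zero handlebody maps generates.

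The only genuine work is bookkeeping: extracting from \cite[Section~5.2]{brochierwoike} and \cite{bakifm} the precise statement that $\Surf$ admits a generating set in which every genus-raising generator is an $S$-move supported on a one-holed torus and every other generator is a genus-zero handlebody map, and checking that the equivalence of Lemma~\ref{propanomalycancellation} is compatible with the operadic composition used to assemble these generators. No new geometric input is required beyond these citations, and the modular-operad coherences are handled exactly as in the proof of Lemma~\ref{propanomalycancellation}. I expect the most delicate point to be the compatibility check that the genus-zero part of the equivalence really is the identity on handlebody maps, so that the handlebody generators on the two sides genuinely coincide rather than merely corresponding under an auxiliary identification; this is forced by the constructions involved, but should be spelled out.
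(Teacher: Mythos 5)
Your proposal is correct and follows essentially the same route as the paper, which offers no written proof at all beyond asserting that the statement follows immediately from the definition of $\Surf_{\! \cat{A}\dg \boxtimes \cat{A}}^\circ$ together with the Lego-Teichmüller-based description of $\SurfA$ in \cite[Section~5.2]{brochierwoike}. Your expansion --- transporting the genus-zero-moves-plus-$S$-move generating set across the equivalence of Lemma~\ref{propanomalycancellation} and using simple connectedness of the fibers to identify the diagonal skein isomorphisms over $\Sigma_{1,1}$ with the mapping classes --- is exactly the intended argument, spelled out.
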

	
	While modular functors are uniquely determined by genus zero, see \cite[Section~6.3]{brochierwoike} generalizing \cite{andersenueno}, for modular $\Surf$-algebras, it is a priori possible that there is still data in genus one. Of course, this is not a contradiction because the 2-groupoids of modular functors and modular $\Surf$-algebras are \emph{not} the same
	(neither for the objects nor for the morphisms). 
	An abstract argument why, for a modular $\Surf$-algebra with values in a symmetric monoidal \emph{bi}category, no data needs to be provided beyond genus one is given in~\cite[Corollary~5.21]{steinebrunner}.

\small	
\newcommand{\etalchar}[1]{$^{#1}$}

	\vspace*{0.3cm} \noindent  \textsc{Université Bourgogne Europe, CNRS, IMB UMR 5584, F-21000 Dijon, France}
	
\end{document}